\renewcommand{\eqref}[1]{\hyperref[#1]{(\ref{#1})}}
\newlist{enumlist}{enumerate}{1}
\setlist[enumlist]{labelindent=0cm,label=\arabic*.,ref=\arabic*,labelwidth=2.5ex,labelsep=0.5ex,leftmargin=3ex,align=left,topsep=0.5ex,itemsep=1ex,parsep=1ex}
\newlist{itemlist}{itemize}{1}
\setlist[itemlist]{labelindent=0cm,label=$\bullet$,labelwidth=2.5ex,labelsep=0.5ex,leftmargin=3ex,align=left,topsep=0.5ex,itemsep=1ex,parsep=1ex}
\numberwithin{equation}{section}
\theoremstyle{definition}\newtheorem{definition}{Definition}[section]
\newtheorem*{definition*}{Definition}
\newtheorem{remark}[definition]{Remark}
\newtheorem{example}[definition]{Example}
\newtheorem*{example*}{Example}
\newtheorem*{examples*}{Examples}}
\newtheorem{proposition}[definition]{Proposition}
\newtheorem{lemma}[definition]{Lemma}
\newtheorem{theorem}[definition]{Theorem}
\newtheorem{corollary}[definition]{Corollary}
\newtheorem{letterthm}{Theorem}
\theoremstyle{definition}}
\newcommand{\C}{\mathbb{C}}
\newcommand{\eps}{\varepsilon}
\newcommand{\al}{\alpha}
\newcommand{\be}{\beta}
\newcommand{\ot}{\otimes}
\newcommand{\recht}{\rightarrow}
\newcommand{\Z}{\mathbb{Z}}
\newcommand{\vphi}{\varphi}
\newcommand{\cO}{\mathcal{O}}
\newcommand{\id}{\mathord{\text{\rm id}}}
\newcommand{\om}{\omega}
\newcommand{\N}{\mathbb{N}}
\newcommand{\ovt}{\mathbin{\overline{\otimes}}}
\newcommand{\Om}{\Omega}
\newcommand{\si}{\sigma}
\newcommand{\R}{\mathbb{R}}
\newcommand{\F}{\mathbb{F}}
\newcommand{\cH}{\mathcal{H}}
\newcommand{\cG}{\mathcal{G}}
\newcommand{\cK}{\mathcal{K}}
\newcommand{\cF}{\mathcal{F}}
\newcommand{\T}{\mathbb{T}}
\newcommand{\actson}{\curvearrowright}
\newcommand{\cU}{\mathcal{U}}
\newcommand{\Ker}{\operatorname{Ker}}
\newcommand{\cM}{\mathcal{M}}
\newcommand{\lspan}{\operatorname{span}}
\newcommand{\cV}{\mathcal{V}}
\newcommand{\Aut}{\operatorname{Aut}}
\newcommand{\supp}{\operatorname{supp}}
\newcommand{\per}{\operatorname{per}}
\newcommand{\Hh}{\widehat{H}}
\newcommand{\Kh}{\widehat{K}}
\newcommand{\Rh}{\widehat{R}}
\newcommand{\pih}{\widehat{\pi}}
\newcommand{\Ph}{\widehat{P}}
\DeclareMathOperator*{\bigast}{\text{\LARGE $\ast$}}
\newcommand{\alh}{\widehat{\al}}
\newcommand{\Ftil}{\widetilde{F}}
\newcommand{\ctil}{\widetilde{c}}
\newcommand{\Q}{\mathbb{Q}}
\newcommand{\diss}{\text{\rm diss}}
\newcommand{\beh}{\widehat{\beta}}
\newcommand{\Isom}{\operatorname{Isom}}
\newcommand{\Stab}{\operatorname{Stab}}
\newcommand{\ri}{\mathrm{i}}
\newcommand{\rd}{\mathrm{d}}
\newcommand{\III}{\mathrm{III}}
\begin{document}

\begin{center}
{\boldmath\LARGE\bf Nonsingular Gaussian actions:\vspace{0.5ex}\\ beyond the mixing case}

\bigskip

{\sc by Amine Marrakchi\footnote{UMPA, CNRS ENS de Lyon, Lyon (France). E-mail: amine.marrakchi@ens-lyon.fr} and Stefaan Vaes\footnote{\noindent KU~Leuven, Department of Mathematics, Leuven (Belgium). E-mail: stefaan.vaes@kuleuven.be.\\ S.V. is supported by FWO research project G090420N of the Research Foundation Flanders, and by long term structural funding~-- Methusalem grant of the Flemish Government.}}
\end{center}

\begin{abstract}\noindent
\noindent Every affine isometric action $\alpha$ of a group $G$ on a real Hilbert space gives rise to a nonsingular action $\widehat{\alpha}$ of $G$ on the associated Gaussian probability space. In the recent paper \cite{AIM19}, several results on the ergodicity and Krieger type of these actions were established when the underlying orthogonal representation $\pi$ of $G$ is mixing. We develop new methods to prove ergodicity when $\pi$ is only weakly mixing. We determine the type of $\widehat{\alpha}$ in full generality. Using Cantor measures, we give examples of type III$_1$ ergodic Gaussian actions of $\Z$ whose underlying representation is non mixing, and even has a Dirichlet measure as spectral type. We also provide very general ergodicity results for Gaussian skew product actions.
\end{abstract}

\section{Introduction}

The Gaussian construction provides a very fruitful interaction between the representation theory of a group $G$ and its probability measure preserving (pmp) actions. For example, this was used in \cite{CW80} to prove that a countable group $G$ has Kazhdan's property (T) if and only if all ergodic pmp actions of $G$ are strongly ergodic. For $G = \Z$, this provides a very natural and well studied class of ergodic pmp transformations, see e.g.\ \cite{LPT99} and the recent survey paper \cite{JRR20}. Also in Popa's deformation/rigidity theory, this possibility to translate representation theoretic properties of a group into dynamical properties turned out to be very useful, see e.g.\ \cite{Ioa07,PS09,Bou12}, and led to very interesting families of II$_1$ factors.

More precisely, the Gaussian construction associates to every real Hilbert space $H$ a probability space $(\Hh,\mu)$ characterized by an embedding $H \subset L^2_\R(\Hh,\mu)$ of $H$ as a Gaussian process. Every orthogonal transformation $u \in \cO(H)$ gives rise to a $\mu$-preserving transformation $\widehat{u} : \Hh \recht \Hh$. Very recently in \cite{AIM19}, it was realized that this can be combined with the nonsingular (measure class preserving) transformations of $\Hh$ given by \emph{translation} with a vector $\eta \in H$. Therefore, to every \emph{affine isometric action} $\al : G \actson H : \al_g(\xi) = \pi(g)\xi + c_g$ of a group $G$ on $H$ is associated a \emph{nonsingular action} $\alh$ of $G$ on $\Hh$. The first questions to study, as initiated in \cite{AIM19}, are to characterize the \emph{ergodicity} and \emph{Krieger type} (in particular, the existence of invariant measures) of the nonsingular action $\alh$ in terms of properties of the affine isometric action $\al$.

Given the importance of affine isometric group actions in geometric group theory -- think of the Haagerup property \cite{CCJJV01}, its applications for the Baum-Connes conjecture \cite{HK00}, and Kazhdan's property~(T) \cite{BV08} -- we believe that this method to translate affine isometric actions into nonsingular group actions has a lot of potential. At this stage, even seemingly simple questions as ergodicity of the action turn out to be subtly connected to properties of the group $G$ and its action on $H$.

We start by discussing when $\alh$ is conservative (i.e.\ recurrent) and when $\alh$ is dissipative (i.e.\ admits a fundamental domain). For every $t > 0$, we can scale the $1$-cocycle $c$ by $t$. More precisely, $\al^t : G \actson H : \al^t_g \xi = \pi(g) \xi + t c_g$ is a one-parameter family of affine isometric actions, with corresponding nonsingular actions $\alh^t : G \actson \Hh$. A first important result in \cite{AIM19} is the following phase transition phenomenon. Assume that $G$ is a countable group. There then exists $t_\diss(\al) \in [0,+\infty]$ such that for all $0 < t < t_\diss(\al)$, the action $\alh^t$ is conservative, while for $t > t_\diss(\al)$, the action $\alh^t$ is dissipative. Determining the precise value of $t_\diss(\al)$ is very subtle, but the following estimate was proven in \cite[Theorem B]{AIM19}, in terms of the \emph{Poincar\'{e} exponent} of $\al$:
$$\sqrt{2 \delta(\al)} \leq t_\diss(\al) \leq 2 \sqrt{2 \delta(\al)} \quad\text{where}\quad \delta(\al) = \limsup_{s \recht +\infty} \frac{\log |\{ g \in G \mid \|c_g\|^2 \leq s\}|}{s} \; .$$

The question when $\alh^t$ is ergodic is much more difficult. Recall that the pmp Gaussian action $\pih : G \actson \Hh$ is ergodic if and only if the representation $\pi : G \recht \cO(H)$ is \emph{weakly mixing}, meaning that $\pi$ has no nonzero finite-dimensional subrepresentations. When $\pi$ is \emph{mixing}, it was proven in \cite[Theorem C]{AIM19} that $\alh^t$ is ergodic for all $t < t_\diss(\al)$. However, when $\pi$ is only weakly mixing, this is far from being true (see \cite[Example 7.9]{AIM19} and the discussion after Theorem \ref{thm.non proper mixing direction} below). The first goal of this paper is to establish ergodicity criteria for nonsingular Gaussian actions $\alh$ when $\pi$ is only weakly mixing.

We secondly consider the Krieger type of nonsingular Gaussian actions. Recall that an ergodic nonsingular action $G \actson (X,\mu)$ is said to be of type~$\III$ if there is no $\sigma$-finite $G$-invariant measure on $X$ that is equivalent to $\mu$. Type $\III$ actions can be further classified by considering Krieger's \emph{ratio set} or the \emph{Maharam extension}. The latter is defined as the action $G \actson X \times \R$ given by
$$g \cdot (x,t) = \bigl( g \cdot x, t + \log \frac{\rd(g^{-1}\mu)}{\rd\mu}(x) \bigr) \; .$$
By construction, the Maharam extension preserves the infinite measure $\rd \mu(x) \, \exp(-t) \rd t$. The ergodic action $G \actson (X,\mu)$ is said to be of type $\III_1$ if the Maharam extension remains ergodic.

Obviously, the nonsingular Gaussian action $\alh$ admits an equivalent $G$-invariant probability measure if $\al$ admits a fixed point $\xi \in H$, i.e.\ when $c$ is a coboundary: $c_g = \xi - \pi(g) \xi$ for all $g \in G$. Again assuming that $\pi$ is mixing and that $c$ is not a coboundary, it was proven in \cite[Theorem C]{AIM19} that $\alh^t$ is of type $\III$ for all $t < t_\diss(\al) / \sqrt{2}$.

Our first main result says that a nonsingular Gaussian action is ergodic of type $\III_1$ once there is a ``mixing direction'' in the group along which the cocycle is non-proper. We actually prove that these actions are weakly mixing and of stable type III$_1$, meaning that the product with any ergodic pmp action remains ergodic and of type III$_1$.

\begin{letterthm}\label{thm.non proper mixing direction}
Let $H$ be a real Hilbert space, $\pi : G \recht \cO(H)$ an orthogonal representation and $c : G \recht H$ a $1$-cocycle. Consider the associated affine isometric action $\alpha :G \actson H$. Assume that $c$ is not a coboundary.

If there exists a sequence $g_n \in G$ such that $\pi(g_n) \recht 0$ weakly and $\sup_n \|c_{g_n}\| < \infty$, then $\widehat{\alpha}: G \actson (\widehat{H},\mu)$ is weakly mixing and of stable type $\III_1$.
\end{letterthm}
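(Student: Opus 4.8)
My plan is to pass to the symmetric Fock space model $L^2(\Hh,\mu)\cong\mathcal F(H_{\C})$ (with $H_{\C}$ the complexification and vacuum $\Omega=1$), in which the nonsingular dynamics becomes operator-theoretically transparent. For each $s\in\R$ the $(\tfrac12+\mathrm{i}s)$-power of the Radon--Nikodym cocycle defines a unitary representation $U^s$ of $G$ on $L^2(\Hh,\mu)$, the \emph{twisted Koopman representations}; here $U^0$ is the usual Koopman representation, and $\{U^s\}_s$ assembles into the Koopman representation of the Maharam extension. Gaussian functoriality gives the factorisation $U^s_g=W_s(c_g)\,\Gamma(\pi(g))$, where $\Gamma$ is second quantisation and $W_s(c_g)$ is a (generalised) Weyl operator built from $c_g$ and the imaginary twist. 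By the standard spectral description of ergodicity and Krieger type through the Maharam extension (as in \cite{AIM19} and the general theory), it then suffices to prove: (I) $\alh$ is ergodic; (II) for every $s\neq0$ the representation $U^s$ has no nonzero invariant vector, which yields type III$_1$; and (III) for every $s$ the representation $U^s$ has no nonzero finite-dimensional invariant subspace, which yields weak mixing and stable type III$_1$. After passing to a subsequence we may assume $\|c_{g_n}\|\to\rho$ and $c_{g_n}\to\zeta$ weakly; if $\rho=0$ we replace $g_n$ by $hg_n$ for a fixed $h$ with $c_h\neq0$ (which exists since $c$ is not a coboundary), so that $\pi(hg_n)\to0$ weakly and $\|c_{hg_n}\|\to\|c_h\|>0$. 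Hence we may assume $\rho>0$.

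The heart of the argument is a \emph{rank-one collapse} lemma along $g_n$. Since $\pi(g_n)\to0$ weakly and second quantisation preserves the particle-number grading, $\Gamma(\pi(g_n))\to P_\Omega$ in the weak operator topology, where $P_\Omega$ is the projection onto $\C\Omega$. Because $\|c_{g_n}\|$ stays bounded, the Weyl factors $W_s(c_{g_n})$ do not escape to infinity; using $c_{g_n}\to\zeta$ weakly one checks that $W_s(c_{g_n})\Omega$ converges weakly to a coherent vector. Combining these, $U^s_{g_n}$ converges in the weak operator topology to a rank-one operator $T_s=\lambda_s\,|a_s\rangle\langle b_s|$ whose range vector $a_s$ is a coherent vector, in particular an almost-everywhere nonzero function on $\Hh$, with $\|T_s\|=\lambda_s\le1$. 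I expect the strict inequality $\lambda_s<1$ for $s\neq0$: the imaginary twist contributes multiplication by $\exp(\mathrm{i}s X_{\theta_{g_n}})$ with $X_{\theta_{g_n}}$ a centred Gaussian of variance $\|c_{g_n}\|^2\to\rho^2>0$, whose persistent oscillation forces a genuine loss of mass in the weak limit.

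Granting the lemma, the three points follow cleanly. For (I), if $f\in L^\infty(\Hh)$ is $\alh$-invariant then the multiplication operator $M_f$ commutes with every $U^0_g$, hence with $T_0\neq0$; rank-one-ness forces $f\,a_0=\kappa\,a_0$ for a scalar $\kappa$, and since $a_0\neq0$ almost everywhere, $f$ is constant. For (II) and (III) fix $s$ and suppose $V$ is a nonzero finite-dimensional $U^s$-invariant subspace of dimension $k$. On $V$ the unitaries $U^s_{g_n}|_V$ subconverge to a unitary $Q$ of $V$, which must coincide with $T_s|_V$; as $\operatorname{rank}(T_s)=1$ this gives $k\le1$. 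If $k=1$ and $s\neq0$, an eigenvector $F$ would satisfy $\|T_s F\|=\|F\|\le\lambda_s\|F\|<\|F\|$, a contradiction, proving (II). If $k=1$ and $s=0$, the eigenvector is forced to be proportional to the coherent vector $a_0$, and the eigenvalue equation $U^0_g a_0\in\T a_0$ for all $g$ unwinds, via the Weyl relations, to $\pi(g)\zeta+c_g=\zeta$ for all $g$, i.e.\ $\zeta$ is a fixed point of $\al$ and $c$ is a coboundary --- contradicting the hypothesis. This establishes (III) for every $s$, and completes the proof.

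The main obstacle is the rank-one collapse lemma, and within it two points deserve care. First, identifying the weak operator limit of $U^s_{g_n}$ as genuinely rank one is delicate because $c_{g_n}$ converges only weakly, so the Weyl factors $W_s(c_{g_n})$ do not converge in any naive sense; the resolution is to exploit the particle-number grading together with the asymptotic orthogonality of $\pi(g_n)\xi$ to every fixed vector, which confines the escaping part of $\Gamma(\pi(g_n))$ to directions annihilated in the limit. Second, the strict contraction $\lambda_s<1$ for $s\neq0$ must be extracted from the Gaussian computation of the coherent-state overlaps; this is where $\rho>0$ is used decisively, and it is the quantitative core that distinguishes the present non-proper, weakly mixing situation from the mixing case treated in \cite{AIM19}.
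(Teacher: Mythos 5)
Your rank-one collapse lemma is essentially correct -- a direct computation of matrix coefficients on exponential vectors confirms that $U^s_{g_n}$ converges in the weak operator topology to a rank-one contraction -- and it is the Fock-space avatar of the paper's key Lemma on weak$^*$ limits of the Maharam extensions $J_{\xi_n}$. Your commutant argument for point (I), the ergodicity of $\alh$ itself, is also valid. But the reduction of ``weakly mixing and of stable type $\III_1$'' to the spectral conditions (II) and (III) is not: there is no such ``standard spectral description''. If $F\in L^2(\Hh,\mu)$ is a $U^s$-invariant vector, then $|F|^2\,\rd\mu$ is a finite $G$-invariant measure absolutely continuous with respect to $\mu$; hence \emph{every} ergodic action of type $\III$ -- including those of type $\III_0$, of type $\III_\lambda$, and those that are not weakly mixing -- satisfies your condition (II), and essentially also (III). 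These $L^2$-conditions on the twisted Koopman representations therefore cannot distinguish $\III_1$ from $\III_0$ or $\III_\lambda$, and cannot detect weak mixing or stability. (The paper's own spectral results, Theorems \ref{thm.actions on proper metric spaces} and \ref{spectral criterion}, only conclude ``type $\III_1$ or $\III_0$'' precisely because spectral hypotheses cannot exclude $\III_0$.) What must actually be shown is that every function in $L^\infty(Z\times\Hh\times\R)$ invariant under the product of an arbitrary ergodic pmp action $G\actson(Z,\zeta)$ with the Maharam extension of $\alh$ is constant; your proposal never engages with $L^\infty$-invariant functions of the Maharam extension, nor with the auxiliary system $(Z,\zeta)$. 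The paper closes exactly these gaps by (a) the Choquet--Deny-type rigidity of the Gaussian convolution operators $\Phi_r$ on $L^\infty(\R)$ (any bounded $\Phi_r$-fixed function is constant), which kills the $\R$-direction of the Maharam extension, and (b) a two-sided limit argument using both weak limits $\rho^Z_{g_n^{-1}}\recht T$ and $\rho^Z_{g_n}\recht T^*$ of the Koopman operators of $(Z,\zeta)$, which kills the $Z$-direction; neither step is recoverable from the rank-one collapse of the $U^s$ alone.

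A secondary issue lies in your normalisation. The quantity that produces a strict contraction in the limit is the ``variance'' $\lim_n\|c_{g_n}-\zeta\|^2$, not $\rho^2=\lim_n\|c_{g_n}\|^2$: if $c_{g_n}\recht\zeta$ in norm with $\zeta\neq 0$, then the Weyl factors converge strongly to the unitary $W_s(\zeta)$, the limit $T_s=W_s(\zeta)P_\Omega$ has norm $1$, and your argument ruling out eigenvectors for $s\neq 0$ collapses. You must first replace $c$ by the cohomologous cocycle $g\mapsto c_g+\pi(g)\zeta-\zeta$ so that the weak limit becomes $0$, and only then apply the replacement $g_n\mapsto g_ng$ (using that $\|c_{g_ng}-c_{g_n}\|=\|c_g\|>0$ prevents both recentred sequences from tending to $0$ in norm) to guarantee a positive limiting variance $r>0$; this is exactly the opening step of the paper's proof, and your version, which only treats the case $\rho=0$, does not cover it.
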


To prove Theorem \ref{thm.non proper mixing direction}, we develop a new weak limit method that we also apply to establish other results. In particular, we use this method to prove in Theorem \ref{thm.translation action} that for sequentially weakly dense additive subgroups $G \subset H$ of an infinite dimensional real Hilbert space, the translation action $G \actson \Hh$ is ergodic and of type III$_1$, while for finite dimensional $H$, this action admits a $\sigma$-finite invariant measure.

We next deal with the Krieger type of $\alh : G \actson \Hh$ in a very large generality. It turns out that determining the Krieger type of $\alh$ is a problem that is independent of proving the ergodicity of $\alh$. An arbitrary nonsingular action $G \actson (X,\mu)$ is said to be of type~$\III$ if almost every action appearing in the ergodic decomposition is of type~$\III$. This is equivalent to saying that there is no nonzero $\sigma$-finite $G$-invariant measure $\nu$ on $X$ that is absolutely continuous w.r.t.\ $\mu$.

Given an affine isometric action $\al : G \actson H$ without fixed point of a countable group $G$, we prove that there are two ways in which $\alh^t : G \actson \Hh$ can fail to be of type~$\III$. First, as we have seen above, if $t > t_\diss(\al)$, the action $\alh^t$ is dissipative and thus of type~I. Secondly, it may happen that $G$ is a dense subgroup of a locally compact, non discrete and non compact group $\cG$ and that $\al : G \actson H$ is the restriction of an affine isometric action $\be : \cG \actson H$. If the associated nonsingular Gaussian action $\beh^t: \cG \actson \Hh$ is dissipative, then $\alh^t : G \actson \Hh$ is of type II$_\infty$. Concrete examples are given by closed subgroups $\cG$ of the automorphism group of a locally finite tree, see \cite[Theorem 9.6]{AIM19}.

Our second main result says that these are the only two phenomena that can arise.

\begin{letterthm}\label{thm.Krieger type dichotomy}
Let $H$ be a real Hilbert space and $\al : G \actson H$ an affine isometric action of the countable group $G$. Assume that $\al$ has no fixed point.
\begin{itemlist}
\item If $\al$ is proper, then $\alh^t$ is of type~III for all $0 < t < t_\diss(\al)$ and of type~I for all $t > t_\diss(\al)$.
\item If $\al$ is not proper, then $t_\diss(\al) = +\infty$ and precisely one of the following statements holds.
\begin{enumlist}
\item The action $\alh^t$ is of type~III for all $t > 0$.
\item There exists a nonempty closed $\al$-invariant affine subspace $K \subset H$ such that the closure $\cG$ of $\al(G)|_K$ inside $\Isom(K)$ is locally compact, acts properly on $K$ and satisfies $t_\diss(\cG) < + \infty$. In that case, $\alh^t$ is of type~III for all $0 < t < t_\diss(\cG)$ and of type~II$_\infty$ for all $t > t_\diss(\cG)$.
\end{enumlist}
\end{itemlist}
\end{letterthm}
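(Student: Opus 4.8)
The whole statement reduces to deciding, for each $t$, whether $\alh^t$ admits a nonzero $\si$-finite invariant measure absolutely continuous with respect to $\mu$: if it does not, the action is of type~$\III$, and if it does, one must further distinguish type~I from type~II$_\infty$. I would control this through the Radon--Nikodym $1$-cocycle of $\alh^t$, whose logarithm is, for each $g$, a first Wiener chaos Gaussian variable of variance $t^2\|c_g\|^2$ shifted by a deterministic scalar; the relevant feature is that this fluctuation grows with $\|c_g\|$. Two easy reductions come first. If $\al$ is not proper, there is a sequence $g_n \recht \infty$ in $G$ with $\sup_n\|c_{g_n}\| < \infty$, so $|\{g \in G : \|c_g\|^2 \le s\}| = \infty$ for some $s$, whence $\delta(\al) = +\infty$ and $t_\diss(\al) = +\infty$ by \cite[Theorem B]{AIM19}. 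If $\al$ is proper, then $\|c_g\| \recht \infty$ forces $\al_{g_n}(0) = c_{g_n}$ to leave every ball whenever $g_n \recht \infty$, so that $\al(G)$ is a discrete subgroup of $\Isom(H)$; for $t > t_\diss(\al)$ the action $\alh^t$ is then dissipative by \cite[Theorem B]{AIM19} and hence of type~I, as recalled in the introduction.

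The core is a single structural statement: \emph{$\alh^t$ fails to be of type~$\III$ if and only if there is a closed $\al$-invariant affine subspace $K \subset H$ on which the orthogonal part of $\al$ has relatively compact image, so that the closure $\cG$ of $\al(G)|_K$ in $\Isom(K)$ is locally compact and acts properly on $K$, and moreover $\beh^t$ (the nonsingular Gaussian action of $\cG$ on $\widehat{K}$) is dissipative.} The ``if'' direction is the construction part: a dissipative action of the locally compact group $\cG$ has a fundamental domain, which yields a $\cG$-invariant $\si$-finite measure equivalent to $\mu$, a fortiori $G$-invariant; it is infinite because $\cG$ is non-compact and acts properly, so that $\alh^t$ is of type~II$_\infty$ (and of type~I in the degenerate discrete case, which is exactly the proper case). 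This is the mechanism already present in \cite[Theorem 9.6]{AIM19}.

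The hard ``only if'' direction is where the weak limit method enters. Assuming $\alh^t$ has an absolutely continuous $\si$-finite invariant measure, I would first extend its invariance to the closure $\overline{\al(G)} \subset \Isom(H)$, using that invariance of a measure is a closed condition and that the Gaussian action depends continuously on the isometry. The contrapositive of Theorem~\ref{thm.non proper mixing direction} already shows that no sequence $g_n$ can satisfy $\pi(g_n) \recht 0$ weakly with $\sup_n\|c_{g_n}\| < \infty$; the point is to upgrade this from honest mixing sequences to the entire weakly mixing part. Concretely, I would split $\pi$ into its weakly mixing part and the closed span $H_c$ of its finite-dimensional subrepresentations, and use a quantitative form of the weak limit analysis to show that an invariant measure can survive only if the weakly mixing directions carry no cocycle and $\overline{\pi(G)|_{H_c}}$ is compact in $\cO(H_c)$. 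Together with the resulting control on the translation parts, this identifies the subspace $K$ and shows that $\cG = \overline{\al(G)|_K}$ is locally compact and acts properly, while dissipativity of $\beh^t$ follows from the $\si$-finiteness of the invariant measure.

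Granting the structural statement, the theorem is immediate: $K$ and $\cG$ are canonical, hence independent of $t$, so $\alh^t$ fails to be of type~$\III$ exactly for $t > t_\diss(\cG)$, giving type~II$_\infty$ (type~I in the proper case) there and type~$\III$ for $0 < t < t_\diss(\cG)$; the alternative that no such $K$ exists is case~(1), where $\alh^t$ is of type~$\III$ for all $t$, and the two cases are mutually exclusive because case~(2) is not of type~$\III$ for large $t$. The main obstacle is the ``only if'' step: turning the bare existence of an absolutely continuous invariant measure into \emph{local compactness and properness} of $\cG$ on a canonical invariant affine subspace. This demands a genuine strengthening of the weak limit method behind Theorem~\ref{thm.non proper mixing direction}, one that detects not merely full mixing directions but any partial decay of the orthogonal parts along bounded-cocycle sequences, and then reassembles the isometric weak limits into a locally compact group acting properly. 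A secondary difficulty is to ensure that $K$ and $\cG$ are truly canonical, so that the passage from type~$\III$ to type~II$_\infty$ occurs exactly at $t_\diss(\cG)$ and no invariant measure appears while the $\cG$-action is still conservative.
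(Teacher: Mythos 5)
Your overall architecture (reduce everything to the existence of an absolutely continuous $\sigma$-finite invariant measure, prove a structural equivalence, and observe that $K$ and $\cG$ are canonical so the phase transition happens at $t_\diss(\cG)$) is reasonable, and your two easy reductions are correct. But the core structural statement, and the strategy you propose for its ``only if'' direction, contain a genuine error. You characterize the non-type-III situation by the existence of an invariant affine subspace $K$ \emph{on which the orthogonal part of $\al$ has relatively compact image}, and you propose to prove this by splitting $\pi$ into its weakly mixing part and the closed span $H_c$ of its finite-dimensional subrepresentations, showing that an invariant measure can survive only if the weakly mixing directions carry no cocycle and $\overline{\pi(G)|_{H_c}}$ is compact. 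This is contradicted by the motivating examples of case~2: take $G$ a countable dense subgroup of a closed, non-discrete, noncompact subgroup $\cG < \Aut(T)$ of a locally finite tree, with the associated affine isometric action as in \cite[Section 9]{AIM19}. There $\pi$ is weakly mixing as a representation of $G$ (it has no finite-dimensional subrepresentations, since it extends to a mixing representation of the noncompact group $\cG$ and invariant subspaces pass to the closure by density), the cocycle is not a coboundary, and yet $\alh^t$ is of type II$_\infty$ for $t > t_\diss(\cG)$. So the weakly-mixing versus almost-periodic decomposition is the wrong dichotomy: local compactness and properness of $\cG = \overline{\al(G)|_K}$ neither requires nor, in these examples, is accompanied by relative compactness of the orthogonal part.

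The paper's proof is organized around a different trichotomy. One introduces \emph{reducing} affine subspaces ($K$ is reducing if every function invariant under the Maharam extension of $\alh^t$ lies in $\cM(tK)$), passes to the smallest one using Proposition \ref{prop.intersection spaces}, and then applies Lemma \ref{three cases} to the closure of $\al(G)$ in $\Isom(K)$: either it is locally compact and acts properly (then Theorem \ref{proper aperiodic}, which itself rests on the rotation trick, Lemma \ref{no equivariant type III} and \cite[Theorem 2.3]{SW81}, finishes the argument), or there is a sequence converging weakly but not strongly to $\id$ (handled by the weak limit Lemmas \ref{weak convergence maharam} and \ref{convolution constant}), or there is a sequence with bounded translation parts whose pairwise quotients have linear parts converging weakly to a positive contraction $P \neq \id$; in this last case the recurrence Lemma \ref{recurence bounded} together with Lemma \ref{sequence not midly mixing} produces a strictly smaller reducing subspace $x + \Ker(P-\id)$, contradicting minimality. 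Your proposal contains no substitute for this last step, which is precisely the mechanism that, in your own words, ``reassembles the isometric weak limits into a locally compact group acting properly''; as written, the plan would already fail on the tree examples before reaching that point.
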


In Theorem \ref{aperiodic or reduction to proper}, we actually prove a more precise version of Theorem \ref{thm.Krieger type dichotomy}, also saying that if $\alh^t$ is ergodic, then $\alh^t$ cannot be of type III$_\lambda$ with $0 < \lambda < 1$, except perhaps at the critical value of $t$.

As we said above, proving the ergodicity of a nonsingular Gaussian action $\alh : G \actson \Hh$ is a different and difficult problem if the linear part $\pi : G \recht \cO(H)$ is not mixing but only weakly mixing. In particular for nilpotent groups, new methods are needed: if $G$ is nilpotent, it follows from \cite{AIM19} that any $1$-cocycle satisfying the assumptions of Theorem \ref{thm.non proper mixing direction} must be a coboundary (see Remark \ref{rem.not for nilpotent}). We develop such new methods in Section \ref{sec.actions nilpotent groups}. Using an explicit Cantor measure construction that we explain below, we obtain the following result as an application.

\begin{letterthm}\label{thm.examples Cantor measures}
There exist affine isometric actions $\al : \Z \actson H : \al_a(\xi) = \pi(a)\xi + c_a$ of the group $\Z$ such that $\alh^t$ is weakly mixing and of stable type III$_1$ for all $t > 0$, and yet $\pi$ is a weakly mixing representation admitting a sequence $a_n \recht +\infty$ such that $\pi(a_n) \recht 1$ strongly.
\end{letterthm}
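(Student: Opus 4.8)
The plan is to reduce everything to a careful choice of spectral data for $\pi$ together with a single generating vector for the cocycle, and then to feed this into the dichotomy of Theorem~\ref{thm.Krieger type dichotomy} and the ergodicity criterion for nilpotent groups from Section~\ref{sec.actions nilpotent groups}. Since $G=\Z$, I would realise $\pi$ from a finite symmetric measure $\nu$ on $\T=\R/\Z$: take the orthogonal representation whose complexification acts on $L^2(\T,\nu)$ by multiplication by $e^{2\pi\ri n x}$. Under this dictionary, $\pi$ is weakly mixing exactly when $\nu$ has no atoms, and the requirement $\pi(a_n)\recht 1$ strongly translates into $\widehat\nu(a_n)\recht 1$, i.e.\ $\nu$ is a \emph{Dirichlet measure}. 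Thus the two spectral demands of Theorem~\ref{thm.examples Cantor measures} amount to producing a continuous Dirichlet measure, and the whole difficulty is pushed into choosing the cocycle so that $\alh^t$ is weakly mixing of stable type~$\III_1$ for every $t>0$.

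First I would fix a sequence of integers $Q_1\mid Q_2\mid\cdots$ with $Q_k\recht\infty$ fast enough that $\sum_{k>n}(Q_n/Q_k)^2\recht 0$, and let $\nu$ be the distribution of the random sum $\sum_{k\ge1}\eps_k/Q_k$ with $\eps_k$ independent and uniform in $\{-1,+1\}$. This is a classical Cantor-type measure: it is symmetric, its Fourier transform factorises as $\widehat\nu(m)=\prod_k\cos(2\pi m/Q_k)$, and by the standard continuity criterion for infinite convolutions it is continuous (each factor has maximal atom $1/2$, so the product of the atom sizes is $0$). Taking $a_n=Q_n$, every factor with $k\le n$ equals $\cos(2\pi Q_n/Q_k)=1$ because $Q_k\mid Q_n$, while the tail product tends to $1$ by the growth assumption; hence $\widehat\nu(Q_n)\recht1$ and $\nu$ is Dirichlet along $(Q_n)$. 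This already yields a weakly mixing, non mixing $\pi$ with $\pi(Q_n)\recht1$ strongly.

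Next comes the cocycle. A $1$-cocycle for $\pi$ over $\Z$ is determined by the single vector $v=c_1$, which I view as a function $f\in L^2(\T,\nu)$, with $c_n$ corresponding to $f(x)\,(e^{2\pi\ri nx}-1)/(e^{2\pi\ri x}-1)$, so that $\|c_n\|^2=\int|f(x)|^2\,|e^{2\pi\ri nx}-1|^2/|e^{2\pi\ri x}-1|^2\,d\nu(x)$. I would choose $f$ so that $\int|f|^2/|e^{2\pi\ri x}-1|^2\,d\nu=+\infty$; by the usual description of $B^1(\Z,\pi)$ this guarantees that $c$ is \emph{not} a coboundary. The crucial point is that the same resonance making $\nu$ Dirichlet also controls $c$ along $(Q_n)$: since $e^{2\pi\ri Q_n x}\recht1$ on $\supp\nu$, the numerator in $\|c_{Q_n}\|^2$ collapses, and if the mass of $|f|^2/|e^{2\pi\ri x}-1|^2$ is tuned to sit precisely where $|e^{2\pi\ri Q_n x}-1|$ is small, then $\sup_n\|c_{Q_n}\|<\infty$. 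Hence $\al$ is non-proper, so $t_\diss(\al)=+\infty$ by Theorem~\ref{thm.Krieger type dichotomy} and $\alh^t$ is conservative for all $t>0$. To land in the first alternative of that theorem I would argue that any nonempty closed $\al$-invariant affine subspace $K$ has weakly mixing linear part (a subrepresentation of $\pi$), whose strong closure is noncompact; hence the closure of $\al(\Z)|_K$ in $\Isom(K)$ cannot be a nondiscrete locally compact group acting properly unless $K$ is a fixed point, which is excluded. This forces $\alh^t$ to be of type~$\III$ for all $t>0$, and I would then invoke the ergodicity-and-type criterion of Section~\ref{sec.actions nilpotent groups} — the substitute for Theorem~\ref{thm.non proper mixing direction}, which is inapplicable by Remark~\ref{rem.not for nilpotent} — to upgrade this to weak mixing and stable type~$\III_1$.

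The main obstacle is the tension, all carried by the single pair $(\nu,f)$, between three competing demands: $\nu$ must be continuous and Dirichlet (forcing the strong resonances $\pi(Q_n)\recht1$); $|f|^2/|e^{2\pi\ri x}-1|^2$ must have infinite $\nu$-mass (so that $c$ is not a coboundary); yet $|f|^2\,|e^{2\pi\ri Q_n x}-1|^2/|e^{2\pi\ri x}-1|^2$ must stay uniformly $\nu$-integrable (so that $\al$ is non-proper and lands in alternative~(i)). Meeting all three at once forces one to control $\nu(\{x:\operatorname{dist}(x,\tfrac1{Q_n}\Z)\le\eps\})$ quantitatively as a function of $\eps$ and $n$, which is the real purpose of the explicit Cantor construction and the delicate choice of the $Q_k$. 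I expect this quantitative balancing, together with verifying that the weak-limit method of Section~\ref{sec.actions nilpotent groups} still delivers weak mixing and stable type~$\III_1$ despite $\pi(Q_n)\recht1$, to be the hardest part of the argument.
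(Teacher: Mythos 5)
Your overall skeleton matches the paper's: realize $\pi$ by a symmetric continuous measure $\nu$ on the circle of Cantor/infinite-convolution type, take the cocycle generated by a single vector so that $\|c_a\|^2=\int |c_a(t)|^2\,\rd\nu(t)$ with $c_a(t)=(e^{2\pi\ri at}-1)/(e^{2\pi\ri t}-1)$, obtain $\pi(a_n)\recht 1$ from a Dirichlet-type resonance, and invoke the nilpotent-group criterion of Section \ref{sec.actions nilpotent groups}. However, there is a genuine gap at exactly the step you flag as ``the hardest part'': Theorem \ref{thm.criterion-nilpotent} is not a soft statement one can ``invoke to upgrade'' type III to weak mixing and stable type III$_1$; its hypothesis \eqref{eq.weak-mixing-ass} demands a quantitative victory of the weak-mixing rate $|A_n\cap\Lambda(\xi_1,\xi_2,\eps)|/|A_n|$ over $\max_{a\in A_n}\exp(\kappa\|c_a\|^2)$, and verifying this for a concrete $(\nu,c)$ is the entire content of the paper's proof (Theorem \ref{thm.weak-mixing-examples}): one needs an exponential bound such as \eqref{eq.estimate-non-mixing} on the correlation set, proved there by a Hoeffding-inequality argument on the digits of $a$, together with the cocycle bound \eqref{eq.me estim ca}, and a choice of weights making the two compatible as in \eqref{eq.our sufficient cond}. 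None of this is carried out or even sketched in your proposal, so the decisive implication is missing.

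Two further structural remarks. First, the detour through Theorem \ref{thm.Krieger type dichotomy} buys you nothing: that theorem only controls the Krieger type (existence of invariant measures), not ergodicity, and Theorem \ref{thm.criterion-nilpotent} already delivers weak mixing \emph{and} stable type III$_1$ in one stroke, so conservativeness and ``landing in alternative (i)'' need not be established separately. Second, to run that detour you impose the extra constraint $\sup_n\|c_{Q_n}\|<\infty$ along the Dirichlet sequence. This is a fourth competing demand on the single pair $(\nu,f)$ that you never verify is compatible with the other three; in the paper's examples the available bound is $\|c_a\|^2\leq 4\delta^{-1}\sum_{m\leq n}r_m$ for $|a|\leq 2\cdot 3^n$, and this sum necessarily diverges when $c$ is not a coboundary, so boundedness of $\|c_{Q_n}\|$ is not a feature of the construction and should not be assumed without proof. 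Dropping it (and the Theorem \ref{thm.Krieger type dichotomy} step with it) and instead proving the quantitative estimate \eqref{eq.weak-mixing-ass} for an explicit choice of weights is what is actually required to complete the argument.
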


To prove that the Gaussian action $\alh$ in Theorem \ref{thm.examples Cantor measures} is ergodic, we will need an interplay between the ``speed of weak mixing'' of the orthogonal representation $\pi : \Z \recht \cO(H)$ and the growth of the cocycle $a \mapsto \|c_a\|$. We prove in Theorem \ref{thm.criterion-nilpotent} below that $\alh$ is weakly mixing and of stable type III$_1$ if $c$ is not a coboundary and if for every $\eps > 0$ and all $\xi$ in a total subset of $H$,
\begin{equation}\label{eq.new weak mixing assumption}
\liminf_{a \recht +\infty} \frac{\bigl|\bigl\{k \in [-a,a] \bigm| |\langle \pi(k) \xi,\xi \rangle| \geq \eps \bigr\}\bigr|}{2a+1} \, \max\bigl\{\exp(2 \, \|c_k\|^2) \bigm| k \in [-a,a] \bigr\} = 0 \; .
\end{equation}
Note that \eqref{eq.new weak mixing assumption} says that the speed of weak mixing (i.e.\ the first factor of the product) has to win against the growth of the cocycle (i.e.\ the second factor of the product) to ensure that the nonsingular Gaussian action is ergodic. We actually prove in Theorem \ref{thm.criterion-nilpotent} below that such a result holds for arbitrary nilpotent groups, replacing $[-a,a]$ by balls of radius $a$.

The affine isometric actions appearing in Theorem \ref{thm.examples Cantor measures} are concretely given by the following Cantor measure construction. First note that a cyclic orthogonal representation of $\Z$ is determined by its spectral measure, which is a probability measure on the circle $\T$ that is invariant under $z \mapsto \overline{z}$. Identifying the circle with the interval $[-1/2,1/2]$, one can thus realize every cyclic orthogonal representation of $\Z$ by a symmetric probability measure $\nu$ on $[-1/2,1/2]$ and
$$\pi_\nu : \Z \actson H = \bigl\{\xi \in L^2([-1/2,1/2],\nu) \bigm| \xi(-t) = \overline{\xi(t)}\bigr\} \quad\text{by}\quad (\pi_\nu(a) \xi)(t) = \exp(2 \pi \ri a t) \, \xi(t) \; .$$
Denoting by $c : \Z \recht H$ the unique $1$-cocycle such that $c_1(t) = 1$ for all $t \in [-1/2,1/2]$, we consider the affine isometric action $\al : \Z \actson H : \al_a(\xi) = \pi_\nu(a) \xi + c_a$.

Given a sequence $p_n \in [0,1]$, we define a Cantor measure $\nu$ on $[-1/2,1/2]$ by consecutively dividing intervals in three equally sized subintervals and, at the $n$'th step, giving relative weight $p_n$ to the middle interval and relative weight $(1-p_n)/2$ to both outer intervals. This probability measure can be described more concretely as an infinite convolution product, see \eqref{eq.nu-convolution}. In Theorem \ref{thm.weak-mixing-examples} and Example \ref{ex.concrete Z}, we provide sufficient conditions on the sequence $p_n$ that guarantee that $\alh$ is weakly mixing and of stable type III$_1$, and that $\pi$ admits a sequence $a_n \recht +\infty$ with $\pi(a_n) \recht 1$ strongly.

The main reason why we first restrict ourselves to nilpotent groups is because we need the ratio ergodic theorem of \cite{Hoc09,Hoc12}. For arbitrary countable groups, we proceed as in \cite{BKV19} and make use of ergodic averages related to the notion of strong conservativeness (see \cite[Section 4]{BKV19}). We again provide a sufficient condition for the ergodicity and type III$_1$, formulated as Theorem \ref{thm.general-type-criterion} below.

Here we only mention an application to nonsingular Gaussian actions associated with \emph{proper metric spaces of negative type}. We refer to Section \ref{sec.general groups} for precise definitions. Concrete examples are given by: locally finite trees (see \cite[Section 2.3]{BV08}); real hyperbolic spaces $\mathbb{H}^n$ (see \cite[Section 2.6]{BV08}); $\rm CAT(0)$ cubical complexes (see \cite[Section 3]{NR96}); and products of these examples.

For each of these metric spaces $(X,d)$, there is a canonical embedding $\iota : X \rightarrow H$ into a real Hilbert space $H$ such that $\| \iota(x)-\iota(y) \|^2=d(x,y)$ for all $x,y \in X$. Each isometry of $(X,d)$ then gives rise to an affine isometry of $H$. Therefore, any action $G \actson (X,d)$ by isometries gives rise to an affine isometric action $G \actson H$ and thus, to a nonsingular Gaussian action $G \actson \Hh$. The case of locally finite trees has been studied in detail in \cite[Section 9]{AIM19}. When $T$ is a locally finite tree and $G < \Aut(T)$ is a closed subgroup of general type, using very precise geometric and probabilistic methods that are available for trees, the critical value $t_\diss(\al)$ could be determined in \cite{AIM19} and it could be proven that for all $0 < t < t_\diss(\al)$, the action $\alh^t$ is weakly mixing and of stable type III$_1$. For arbitrary metric spaces of negative type, we use rougher methods to prove ergodicity and establish type III$_1$, leading to the following result.

\begin{letterthm}\label{thm.actions on proper metric spaces}
Let $(X,d)$ be a proper metric space of negative type and let $G < \mathrm{Isom}(X,d)$ be a noncompact closed subgroup. Let $\alpha : G \curvearrowright H$ be the associated affine isometric action. Let $\delta(G)$ be the Poincar\'e exponent of $G$. Then there exists $t_c \in [0,+\infty]$ with
$$\sqrt{2 \delta(G)} \leq t_c \leq  2 \sqrt{2 \delta(G)}$$
such that $\widehat{\alpha}^t$ is dissipative for all $t > t_c$ and weakly mixing of type $\III_1$ or $\III_0$ for all $0 < t < t_c$.

Moreover, $\widehat{\alpha}^t$ is of stable type $\III_1$ for all $t < \sqrt{\delta(G)}$.
\end{letterthm}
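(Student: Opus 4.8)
The plan is to reduce the statement to a proper affine isometric action of the locally compact group $G$, read off the Krieger type from the dichotomy of Theorem~\ref{thm.Krieger type dichotomy}, and obtain ergodicity from the general criterion of Theorem~\ref{thm.general-type-criterion}. First I would fix a base point $x_0 \in X$ and use the canonical embedding $\iota$ to realize $G \actson (X,d)$ as the affine isometric action $\al : G \actson H$, with linear part $\pi$ and $1$-cocycle $c_g = \iota(g\cdot x_0) - \iota(x_0)$, so that $\|c_g\|^2 = d(g\cdot x_0, x_0)$ and the Poincar\'e exponent $\delta(\al)$ of $c$ is exactly the geometric exponent $\delta(G)$. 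Since $(X,d)$ is proper, $\Isom(X,d)$ is a second countable locally compact group acting properly on $X$, hence so does its closed subgroup $G$; thus $\|c_g\| \recht +\infty$ as $g$ leaves compact subsets of $G$, so $\al$ is proper and, as $G$ is noncompact, has no fixed point. I would then record that the phase transition of \cite{AIM19} and the dichotomy of Theorem~\ref{thm.Krieger type dichotomy}, stated there for countable groups, carry over to second countable locally compact $G$ once counting measure is replaced by Haar measure. This gives a threshold $t_c := t_\diss(\al)$ with $\sqrt{2\delta(G)} \le t_c \le 2\sqrt{2\delta(G)}$, dissipativity of $\alh^t$ for $t > t_c$, and, through the proper branch of the dichotomy, both conservativeness and type $\III$ of $\alh^t$ for every $0 < t < t_c$.

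Next I would prove weak mixing for $0 < t < t_c$ and locate the type. Since $\pi$ is in general not mixing, I would not use \cite[Theorem C]{AIM19} but the general sufficient condition of Theorem~\ref{thm.general-type-criterion}, whose hypothesis is an ergodic-average estimate in the spirit of \eqref{eq.new weak mixing assumption}, now taken over the metric balls $B_s = \{ g \in G \mid d(g\cdot x_0, x_0) \le s \}$ and with the strong-conservativeness averages of \cite{BKV19} in place of the ratio ergodic theorem. Checking this hypothesis reduces to two geometric facts: the matrix coefficients $\langle \pi(g)\xi, \xi\rangle$ for $\xi$ in the total set $\{\iota(x)-\iota(y)\}$ are, by negative type, governed by Gromov products and hence large only on a coarsely thin subset of $g$; and $|B_s| \approx \exp(\delta(G)\, s)$ by the definition of $\delta(G)$. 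The first fact gives weak mixing of $\alh^t$ throughout $0 < t < t_c$; combined with the conservativeness and type $\III$ already established, and with the refined dichotomy of Theorem~\ref{aperiodic or reduction to proper} ruling out type $\III_\lambda$ with $0 < \lambda < 1$ once the action is ergodic, this leaves exactly type $\III_1$ or $\III_0$ on the whole range.

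To upgrade to stable type $\III_1$ for $t < \sqrt{\delta(G)}$, I would use the second fact quantitatively: the weighted average in Theorem~\ref{thm.general-type-criterion} carries the cocycle weight $\exp(t^2 \|c_g\|^2)$, which on $B_s$ is at most $\exp(t^2 s)$ and is strictly dominated by $|B_s| \approx \exp(\delta(G)\, s)$ exactly when $t^2 < \delta(G)$. In that regime the full hypothesis of the criterion holds and its complete conclusion, namely stable type $\III_1$, applies.

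The main obstacle is the verification of the averaging hypothesis of Theorem~\ref{thm.general-type-criterion} from coarse geometry alone. For locally finite trees and real hyperbolic spaces one has genuine decay of the matrix coefficients, but for general proper negative-type spaces~-- products and $\mathrm{CAT}(0)$ cube complexes~-- no such decay is available, and the required thinness of the set where $|\langle \pi(g)\xi,\xi\rangle| \ge \eps$ must be extracted purely from the negative-type identity $\|c_g\|^2 = d(g\cdot x_0, x_0)$ together with the counting function defining $\delta(G)$. Making the proportion beat the cocycle weight uniformly is the delicate point, and it is also where the gap between the full range (type $\III_1$ or $\III_0$) and the range $t < \sqrt{\delta(G)}$ (stable type $\III_1$) originates. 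A secondary, more routine difficulty is transporting the strong-conservativeness machinery of \cite{BKV19} and the type dichotomy from countable groups to the second countable locally compact group $G$.
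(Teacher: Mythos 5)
Your plan rests on a premise that is false for this class of actions: you write that ``$\pi$ is in general not mixing'' and therefore avoid \cite[Theorem C]{AIM19}. In fact, for a proper metric space of negative type and a closed subgroup $G < \Isom(X,d)$, the affine action $\al$ is proper \emph{and its linear part $\pi$ is mixing} (this is exactly why these spaces are singled out; see the discussion before the proof of Theorem \ref{thm.actions on proper metric spaces} in Section \ref{sec.general groups}). The paper's proof is therefore much shorter than what you outline: weak mixing of $\alh^t$ on the \emph{whole} range $0 < t < t_c$ comes directly from \cite[Theorems B and C]{AIM19} (which require mixing), the type $\III_1$-or-$\III_0$ statement comes from Theorem \ref{proper aperiodic}, and the stable type $\III_1$ statement for $t < \sqrt{\delta(G)}$ comes from Corollary \ref{cor.general-type-criterion-mixing} with $M=1$, the hypotheses of which are verified by two one-line consequences of the negative-type identity: $\|c_{gh}\|^2 \leq \|c_g\|^2 + \|c_h\|^2$ and $0 \leq \langle c_h, c_g\rangle \leq \|c_h\|^2$, the latter giving the boundedness condition \eqref{eq.boundedness-inner-product}.

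Beyond the misidentified premise, the route you propose has a genuine gap that you partly acknowledge but that is in fact fatal to one of the claimed conclusions. You assert that verifying the averaging hypothesis of Theorem \ref{thm.general-type-criterion} over the balls $B_s$ ``gives weak mixing of $\alh^t$ throughout $0 < t < t_c$.'' It cannot: the hypothesis \eqref{eq.general-weak-mixing-ass} contains the weight $\exp(\kappa\, t^2 s_n)$ against the proportion $\approx |B_s|^{-1} \approx \exp(-\delta(G) s)$, so even in the best case it is only satisfiable for $t^2 < \delta(G)$, far below $t_c \geq \sqrt{2\delta(G)}$. So your mechanism for ergodicity stops at $t < \sqrt{\delta(G)}$ and leaves the range $\sqrt{\delta(G)} \leq t < t_c$ unproven; the only way the paper covers that range is via the mixingness of $\pi$. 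Moreover, the ``delicate point'' you flag --- extracting thinness of $\{g : |\langle \pi(g)\xi,\xi\rangle| \geq \eps\}$ from coarse geometry alone --- is never carried out in your proposal, whereas mixingness makes these sets relatively compact and renders the issue vacuous. The part of your argument that does survive is the quantitative comparison $\exp(t^2 s)$ versus $\exp(\delta(G) s)$ for the stable type $\III_1$ statement, which is essentially the content of Corollary \ref{cor.general-type-criterion-mixing} (where the constant $M=1$, coming from subadditivity of $g \mapsto \|c_g\|^2$, is what yields the threshold $\sqrt{\delta(G)}$ rather than $\sqrt{\delta(G)/2}$).
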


We next turn to strong ergodicity and spectral gap. The Gaussian pmp action $\pih : G \actson \Hh$ associated with an orthogonal representation $\pi : G \recht \cO(H)$ is \emph{strongly ergodic} (meaning that there are no nontrivial sequences of almost $\pih(G)$-invariant subsets of $\Hh$) if and only if $\pi$ has \emph{stable spectral gap} (meaning that no tensor product representation $\pi \ot \gamma$ weakly contains the trivial representation). It is thus a natural question when the nonsingular Gaussian action $\alh : G \actson \Hh$ associated with an affine isometric action $\al : G \actson H$ is strongly ergodic. We prove the following result, removing the mixingness assumption from \cite[Corollary 7.20]{AIM19}. Note that, in comparison with \cite[Theorem 7.19]{AIM19}, the novelty is that we establish ergodicity of $\alh^t$ for $t > 0$ small enough without the mixingness assumption.

\begin{letterthm}\label{spectral gap strongly ergodic}
Let $\alpha : G \curvearrowright H : \alpha_g(\xi) = \pi(g)\xi + c_g$ be a continuous affine isometric action of a locally compact group $G$. Suppose that $\pi$ has stable spectral gap and that $\alpha$ has no fixed point. Then there exists $s > 0$ such that $\widehat{\alpha}^t$ is strongly ergodic of type $\III_1$ for all $0 < t < s$.
\end{letterthm}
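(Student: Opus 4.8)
The plan is to obtain Theorem \ref{spectral gap strongly ergodic} by combining two ingredients: a \emph{strong ergodicity / spectral gap} mechanism that is already available for the pmp Gaussian action $\pih$, and the \emph{small-$t$ ergodicity} that must be established without mixingness. First I would recall that, by the spectral gap characterization quoted before Theorem \ref{spectral gap strongly ergodic}, stable spectral gap of $\pi$ means that $\pi \ot \gamma$ does not weakly contain the trivial representation for any $\gamma$; equivalently, the Koopman representation of $\pih$ on $L^2(\Hh,\mu) \ominus \C$ has stable spectral gap. The key point is that strong ergodicity of the \emph{pmp} action $\pih$ is a property of the linear part only, and it persists under the nonsingular perturbation coming from the cocycle $c$, at least for $t$ small. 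So the first step is to show that, for $t$ small enough, the nonsingular Koopman representation of $\alh^t$ inherits a spectral gap from that of $\pih$, which forces strong ergodicity of $\alh^t$ and in particular its ergodicity.

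Concretely, I would realize $L^2(\Hh,\mu)$ via its Wiener--It\^o (Fock space) decomposition $\bigoplus_n H^{\ot_s n}$, on which $\pih$ acts diagonally by $\pi^{\ot_s n}$, so that stable spectral gap of $\pi$ gives a uniform spectral gap on the orthogonal complement of the constants. The nonsingular action $\alh^t$ differs from the pmp action $\pih$ by the unitary multiplication operators implementing translation by $t c_g$ together with the Radon--Nikodym cocycle $(\rd(g^{-1}\mu)/\rd\mu)^{1/2}$. The second step is to estimate, uniformly in $g$, how far the nonsingular Koopman operator $U^t_g$ is from the pmp Koopman operator $V_g$ of $\pih$, showing that the correction is controlled by a quantity tending to $0$ with $t$ (essentially governed by $\exp(\|tc_g\|^2)$-type factors from the Gaussian translation formula). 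If this perturbation is small relative to the spectral gap of $\pih$, a standard spectral-gap-stability argument yields that $\alh^t$ has no nontrivial almost invariant vectors in $L^2(\Hh,\mu)\ominus\C$, i.e.\ $\alh^t$ is strongly ergodic.

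Once ergodicity and strong ergodicity are in hand for small $t$, the type III$_1$ conclusion should follow from the \emph{Maharam extension} together with the same spectral gap input: I would argue that the Maharam extension $\alh^t \ltimes \R$ is the nonsingular Gaussian flow built from the same linear part tensored with the regular-representation direction in $\R$, so stable spectral gap again gives a spectral gap for the Maharam extension's Koopman representation, forcing its ergodicity; by definition this means $\alh^t$ is of type III$_1$. The hypothesis that $\al$ has no fixed point (i.e.\ $c$ is not a coboundary) is what excludes the existence of an equivalent invariant probability measure and rules out the type II or degenerate cases, so that the only remaining possibility consistent with strong ergodicity and an ergodic Maharam extension is type III$_1$. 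The "stable" adjective then comes for free: since stable spectral gap passes to $\pi\ot\gamma$ for every $\gamma$, the product of $\alh^t$ with any ergodic pmp action keeps a spectral gap and hence remains strongly ergodic and of type III$_1$ by the same argument.

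The main obstacle I anticipate is the second step: making the perturbation estimate between $U^t_g$ and $V_g$ \emph{uniform in $g$}. Unlike the mixing case treated in \cite{AIM19}, here I cannot use decay of matrix coefficients to control the translation part, so I must instead leverage only the spectral gap, which provides uniformity in $\gamma$ but not automatically a uniform bound on the Gaussian density factors over all $g$. The delicate point is to show that the spectral gap of $\pih$ dominates the cocycle-induced perturbation simultaneously for all group elements once $t < s$; this likely requires expressing the almost-invariance condition on the Fock space and checking that the lowest nonconstant Wiener chaos already carries enough of the spectral gap, so that the estimate need only be controlled on a single chaos level where the translation contributes a factor that is genuinely small for small $t$. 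Getting this uniformity right, and thereby producing an explicit threshold $s > 0$, is where the real work lies.
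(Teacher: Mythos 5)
Your first step (stable spectral gap of $\pi$ passes, via the chaos decomposition, to the reduced Koopman representation $\rho^0$ of $\pih$) is correct and is exactly the opening move of the paper's proof. But the core of your argument -- step two, a perturbation estimate showing that the nonsingular Koopman operators $U^t_g$ of $\alh^t$ are uniformly (in $g$) close to the pmp Koopman operators $V_g$ of $\pih$ for small $t$ -- cannot work. Since $\al$ has no fixed point, $c$ is not a coboundary and $\sup_g \|c_g\| = +\infty$; for any fixed $t>0$ one has $\langle U^t_g 1, 1\rangle = \exp(-t^2\|c_g\|^2/8)$, which tends to $0$ along a sequence with $\|c_g\|\to\infty$ while $\langle V_g 1,1\rangle = 1$, so $\|U^t_g - V_g\|$ stays bounded away from $0$ uniformly in $t$. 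No choice of chaos level fixes this: the obstruction already lives on the constants/first chaos. The paper circumvents this entirely with the rotation trick: $\Rh$ conjugates $\alh^t \times \alh^{\sqrt{1-t^2}}$ with $\alh \times \pih$, transporting an almost invariant sequence $f_n$ for $\alh^t$ to one for the \emph{product} with the pmp action $\pih$; the non-weak-containment $\rho_s \not\prec \rho_s \ot \rho^0$ (which follows from stable spectral gap of $\rho^0$ plus the fact that $\bigoplus_n\rho_{s_n}\succ 1$ when $s_n\to 0$) then forces $\Rh(f_n\ot 1)$ to concentrate on the first leg, and a separate compactness argument for the Mehler-type contraction $f\mapsto E(\Rh(f\ot 1))$ restricted to $L^2\ominus\C 1$ kills $f_n$. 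This is Theorem \ref{spectral criterion}, and none of it is a perturbation of operators.

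The type III$_1$ part is also not as you describe. Spectral gap considerations for the Maharam extension (an infinite measure preserving action) do not "force its ergodicity", and the absence of a fixed point does \emph{not} rule out type II$_\infty$: Theorem \ref{thm.Krieger type dichotomy} of this very paper exhibits fixed-point-free actions whose Gaussian actions are of type II$_\infty$ (dense subgroups of locally compact groups acting properly and dissipatively). Excluding that scenario here requires the dichotomy of Theorem \ref{aperiodic or reduction to proper} together with a Fell absorption argument: if an equivalent $\sigma$-finite invariant measure existed for some $t<s$, the Koopman representation would be a direct integral of quasi-regular representations $\lambda_{\cG_0}$ over compact subgroups $\cG_0$, forcing $\rho_s \prec \rho_s\ot\rho^0$ and contradicting the spectral gap hypothesis. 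Only after aperiodic type is established does strong ergodicity upgrade "III$_1$ or III$_0$" to "III$_1$". Both of these missing arguments are substantive, so the proposal as written has genuine gaps in its two main steps.
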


We prove Theorem \ref{spectral gap strongly ergodic} as a corollary to Theorem \ref{spectral criterion}, which provides sufficient conditions for the strong ergodicity of $\alh : G \actson \Hh$ in terms of weak containment of unitary representations naturally associated with $\al : G \actson H$.

If $\pi : G \recht \cO(H)$ is an orthogonal representation and $c \in Z^1(\pi,H)$ is a $1$-cocycle, the formula $\Om : G \times \Hh \recht \R : \Om(g,\om) = \langle \om, c(g^{-1})\rangle$ defines a $1$-cocycle for the Gaussian action $G \actson \Hh$ and thus gives rise to the \emph{skew product action}
\begin{equation}\label{eq.skew product action}
\be : G \actson \Hh \times \R : \be_g(\om,t) = (\pih(g) \om, t + \langle \om , c(g^{-1}) \rangle \; .
\end{equation}
Note that $\be$ preserves the infinite measure $\rd\mu(x) \, \rd t$. These infinite measure preserving skew product actions $\be$ are of independent interest, but are also closely related to the Maharam extension of the nonsingular Gaussian action $\alh$. This relation is made precise in Proposition \ref{rotation maharam} and is an important ingredient for our proof of Theorem \ref{thm.Krieger type dichotomy}. Whenever $a \neq 0$, we also consider the pmp action $\be' : G \actson \Hh \times \R/a\Z$ given by the same formula \eqref{eq.skew product action}.

For $G = \Z$ and $\pi$ weakly mixing, it was conjectured in \cite{LLS99} that $\be$ is ergodic if and only if $c$ is not a coboundary. In the same context, it was proven in \cite{LLS99} that $\be'$ is ergodic if and only if $c$ is not a coboundary.

The Gaussian action $\pih$ is a factor of both $\be'$ and $\be$, and it is ergodic if and only if $\pi$ is weakly mixing. It is thus tempting to conjecture that for arbitrary countable groups $G$ and weakly mixing representations $\pi$, the skew product $\be$ is ergodic if and only if $c$ is not a coboundary. As we prove in Proposition \ref{prop.counterexample locally finite}, this generalization is false whenever $G$ is an infinite, locally finite group.

In Section \ref{sec.skew-product-results}, we provide several positive results on the ergodicity and conservativeness of the skew product $\be$. In particular, we prove in Theorem \ref{thm.skew-product} that the generalized conjecture holds whenever $G$ is nonamenable and the representation $\pi$ has stable spectral gap, and also whenever $\pi$ is mixing and $G$ contains at least one element of infinite order. Shortly after the first version of this paper appeared on arXiv, an alternative proof of this last statement for the special case $G = \Z$ was provided in \cite{DL20}.

We further prove that $\be'$ is ergodic whenever $c$ is not a coboundary, see Proposition \ref{prop.multiplicative gaussian skew product}. For $1$-cocycles with values in the regular representation, we obtain the following definitive result.

\begin{letterthm}\label{thm.main-skew-product}
Let $G$ be a countable group. Then the following statements are equivalent.
\begin{enumlist}
\item For every weakly mixing representation $\pi$ and every $1$-cocycle $c$, the skew product action $\be$ is conservative.
\item For every representation $\pi$ that is contained in a multiple of the regular representation and for every $1$-cocycle $c$ that is not a coboundary, the skew product $\be$ is weakly mixing.
\item The group $G$ is not locally finite.
\end{enumlist}
\end{letterthm}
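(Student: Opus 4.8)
The plan is to prove Theorem \ref{thm.main-skew-product} by establishing the implications $(3) \Rightarrow (2) \Rightarrow (1) \Rightarrow (3)$, of which the first is the substantial one and the last is the counterexample already referenced as Proposition \ref{prop.counterexample locally finite}.

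\textbf{Setting up the cycle of implications.} The implication $(2) \Rightarrow (1)$ is the easy direction: weak mixing of $\be$ forces ergodicity, hence conservativeness, so it suffices to note that \emph{every} weakly mixing representation $\pi$ is weakly contained in a multiple of the regular representation when $G$ is, say, not having the problematic behaviour -- but in fact one must be careful here, since $(1)$ quantifies over all weakly mixing $\pi$ while $(2)$ only over those contained in a multiple of the regular representation. So the honest route is to prove $(3) \Rightarrow (2)$ and $(3) \Rightarrow (1)$ separately, together with $\neg(3) \Rightarrow \neg(1)$, the latter being exactly Proposition \ref{prop.counterexample locally finite} (for locally finite $G$ one produces a weakly mixing $\pi$ and a cocycle $c$ making $\be$ dissipative, hence not conservative). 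Since $(2)$ trivially implies $(1)$ on the common class of representations and $(1)$ refers to the broader class, I would actually organize the proof as: $(3) \Rightarrow (2)$ is the main theorem; $(2) \Rightarrow (1)$ follows because conservativeness is weaker than weak mixing and because any weakly mixing representation can be handled by the same averaging estimate; and $(1) \Rightarrow (3)$ is the contrapositive supplied by the locally finite counterexample.

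\textbf{The core analytic step.} For $(3) \Rightarrow (2)$, assume $G$ is not locally finite, so $G$ contains an element $g_0$ of infinite order, generating a copy of $\Z$. Let $\pi$ be contained in a multiple of the regular representation and let $c$ be a $1$-cocycle that is not a coboundary. I would prove weak mixing of the infinite-measure-preserving skew product $\be$ on $\Hh \times \R$ by the weak limit / ergodic averaging method developed for Theorems \ref{thm.non proper mixing direction} and \ref{thm.general-type-criterion}. The key is that a representation contained in a multiple of the regular representation is mixing along the infinite cyclic subgroup $\langle g_0\rangle$: the matrix coefficients $\langle \pi(g_0^n)\xi,\eta\rangle$ lie in $\ell^2(\Z)$ for $\xi,\eta$ in the dense subspace coming from finitely supported vectors, hence tend to $0$. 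This mixing behaviour along $\Z$ is precisely what lets one verify the type of condition appearing in \eqref{eq.new weak mixing assumption}, namely that the Gaussian coordinates decorrelate fast enough to beat the cocycle growth, and thereby invoke the general ergodicity criterion to conclude weak mixing of $\be$.

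\textbf{Main obstacle and its resolution.} The hard part will be controlling the interaction between the real-line direction $\R$ (with its infinite Lebesgue measure) and the Gaussian factor, since $\be$ is only infinite-measure-preserving and weak mixing for such systems is a delicate notion. I expect the crux to be showing that a bounded $\be$-invariant function on $\Hh \times \R$ must be constant, which one attacks by Fourier-decomposing along the $\R$ variable: an invariant function decomposes over characters $s \in \R$ of $\R$, and for each $s$ one obtains an eigenfunction problem for the nonsingular Gaussian action twisted by the Radon--Nikodym-type character $\om \mapsto \exp(\ri s\langle \om, c(g^{-1})\rangle)$. One then shows, using the mixing of $\pi$ along $g_0$ and the hypothesis that $c$ is not a coboundary, that no nonzero such eigenfunction exists for $s \neq 0$, while the $s=0$ part reduces to ergodicity of the underlying Gaussian action, which holds as $\pi$ is weakly mixing. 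The non-coboundary hypothesis is exactly what rules out the exceptional eigenvalue, paralleling the classical $\Z$-result of \cite{LLS99}; the novelty is pushing this through for all non-locally-finite $G$ by restricting attention to the infinite-order element and exploiting the regular-representation-containment to supply the required mixing along that direction.
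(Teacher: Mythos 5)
There are genuine gaps here, and the most basic one is the claim that a group which is not locally finite contains an element of infinite order. This is false: Grigorchuk's group and the free Burnside groups are finitely generated, infinite and torsion, hence not locally finite, yet contain no element of infinite order. Your whole analytic strategy for $(3)\Rightarrow(2)$ reduces to mixing along a copy of $\Z$, so it simply does not apply to such groups. This is precisely why the paper derives the theorem from Theorem \ref{thm.skew-product} instead: conservativeness is obtained from volume growth of balls (Propositions \ref{prop.dichotomy-dissipative-conservative-skew-product} and \ref{prop.structure group delta} show that a skew product over a finitely generated non-virtually-cyclic subgroup is automatically conservative, with Atkinson's theorem \cite{Atk75} handling the virtually cyclic pieces), and weak mixing is obtained from evanescence in the amenable case and stable spectral gap in the nonamenable case -- no infinite-order element is needed. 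Second, even for $G=\Z$ you invoke the wrong tools. Condition \eqref{eq.new weak mixing assumption} is a criterion for ergodicity of the \emph{nonsingular Gaussian action} $\alh$, not of the measure preserving skew product $\be$, and it fails for the regular representation with its standard cocycle: there $\|c_a\|^2=|a|$, so $\max_k\exp(2\|c_k\|^2)$ grows like $e^{2a}$ and overwhelms the factor $(2a+1)^{-1}$, yet $\be$ is weakly mixing by Theorem \ref{thm.skew-product}. The skew product requires no growth restriction on $c$ at all; its ergodicity rests on conservativeness plus a Schmidt--Walters argument \cite{SW81} plus evanescence. Likewise, the Fourier decomposition along the $\R$-fibre cannot close the argument: over the noncompact fibre the decomposition is a direct integral, and absence of eigenfunctions for every $s\neq 0$ does not imply ergodicity. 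Indeed, Proposition \ref{prop.multiplicative gaussian skew product} shows that the compact quotients $\be'$ are always weakly mixing when $c$ is not a coboundary (so there is never any nonzero point spectrum), while Proposition \ref{prop.counterexample locally finite} exhibits $\be$ dissipative in exactly that situation; ruling out eigenvalues therefore proves nothing about $\be$.

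Finally, the cycle of implications does not close. Your $(2)\Rightarrow(1)$ is invalid because statement 1 quantifies over \emph{all} weakly mixing $\pi$ and all $c$, a strictly larger class than the representations contained in a multiple of the regular representation, and for a general weakly mixing $\pi$ there is no decay of matrix coefficients to feed into "the same averaging estimate". The repair is cheap: the counterexample of Proposition \ref{prop.counterexample locally finite} uses the regular representation itself and a cocycle that is unbounded (hence not a coboundary), so for locally finite $G$ it refutes statement 2 as well as statement 1, giving $(2)\Rightarrow(3)$ directly. But as written, the three statements are not shown to be equivalent, and neither of the two substantial implications $(3)\Rightarrow(1)$ and $(3)\Rightarrow(2)$ is supported by an argument that works for all non-locally-finite groups.
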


We finally mention two other results that are proven in this paper. The \emph{Koopman representation} of a nonsingular action $\sigma : G \actson (X,\mu)$ is the unitary representation $\rho : G \recht \cU(L^2(X,\mu))$ given by $(\rho(g) \xi)(x) = \sqrt{(\rd(g\mu)/\rd\mu)(x)} \, \xi(g^{-1} \cdot x)$. Assume that $G$ is countable and that $\si$ is essentially free. If $\sigma$ is dissipative, $\sigma$ admits a fundamental domain and $\rho$ is a multiple of the regular representation of $G$. Similarly, if $\sigma$ is an amenable action in the sense of Zimmer, $\rho$ is weakly contained in the regular representation (see \cite{AD01}). In \cite{AD01} examples were given to show that the converse does not hold. We prove in Theorem \ref{amenability gaussian tree} that nonsingular Gaussian actions even provide examples $\si : G \actson (X,\mu)$ where the Koopman representation is \emph{contained} in a multiple of the regular representation and yet $\si$ is not amenable in the sense of Zimmer.

In the final and independent Section \ref{sec.poincare exponent}, we consider the question which groups $G$ admit an affine isometric action $\al$ on a real Hilbert space with Poincar\'{e} exponent $\delta(\al)$ equal to zero, or with finite Poincar\'{e} exponent. We relate this property to the Liouville property for finitely generated amenable groups, see Corollary \ref{cor.connection Liouville}.

\section{Notations and preliminaries}

In this article, all Hilbert spaces are implicitly assumed to be separable and all locally compact groups are implicitly assumed to be second countable.

\subsection{Measure spaces and nonsingular actions}

By \emph{measure space} we always mean a standard $\sigma$-finite measure space $(X,\mu)$. The $\sigma$-algebra of measurable subsets of $X$ is always omitted and $\mu$-null sets will often be neglected. When $\mu$ is a probability measure, we say that $(X,\mu)$ is a \emph{probability space}. We consider both the group of probability measure preserving (pmp) automorphisms of $(X,\mu)$ and the group of nonsingular automorphisms, i.e.\ measurable bijections preserving $\mu$-null sets. We always identify automorphisms when they are equal a.e. Every nonsingular action $\sigma : G \curvearrowright (X,\mu)$ gives rise to an orthogonal representation $\rho : G \curvearrowright L^2(X,\nu)$ called the \emph{Koopman representation} of $\sigma$ given by
$$(\rho_g \xi)(x) = \Bigl(\frac{\rd(g \mu)}{\rd\mu}(x)\Bigr)^{1/2} \, \xi(g^{-1} \cdot x) \; .$$
When $\eta$ is a probability measure on $X$ that is absolutely continuous w.r.t.\ $\mu$, we sometimes use the notation $\eta^{1/2}$ to denote the vector $(\rd \eta/\rd\mu)^{1/2}$ in $L^2(X,\mu)$. With this notation, $\rho_g(\eta^{1/2})=(g \eta)^{1/2}$ for all $g \in G$.

When $\mu$ is a probability measure and $\sigma$ preserves $\mu$, the vector $1 \in L^2(X,\mu)$ is invariant for the Koopman representation $\rho$. In that case, one often considers the \emph{reduced Koopman representation} $\rho^0$ of $G$ on the invariant subspace
$$L^2(X,\mu)^0=\{ \xi \in L^2(X,\nu) \mid \int_X \xi \, \rd \mu=0 \} \; .$$

\subsection{Affine isometric actions}

Let $H$ be a real Hilbert space. We denote by $\mathcal{O}(H)$ the orthogonal group of $H$ and by $\mathrm{Isom}(H)$ the isometry group of $H$. The group $\mathrm{Isom}(H)$ is equipped with the topology of pointwise norm convergence on $H$, which turns it into a Polish group.

For every $\xi \in H$, we denote by $j_\xi \in \mathrm{Isom}(H)$ the translation isometry given by $j_\xi(x)=x+\xi$ for all $x \in H$. Every element $\theta \in \mathrm{Isom}(H)$ can be written uniquely in the form $\theta=j_\xi \circ V$ with $\xi \in H$ and $V \in \mathcal{O}(H)$. Note that $\xi=\theta(0)$. The orthogonal operator $V$ is called the \emph{linear part} of $\theta$. It is also denoted $\theta^0$. For every $t \in \R \setminus \{0\}$, we denote by $\theta^t$ the isometry given by $\theta^t(tx)=t\theta(x)$ for all $x \in H$. It has the same linear part as $\theta$ but $\theta^t(0)=t\theta(0)$.

A (continuous) affine isometric action $\alpha : G \curvearrowright H$ of a (topological) group $G$ is a group morphism $\alpha : G \ni g \mapsto \alpha_g \in \mathrm{Isom}(H)$. For all $t \in \R$, we denote by $\alpha^t$ the affine isometric action given by $(\alpha^t)_g=(\alpha_g)^t$. Note that $\alpha^0$ is an orthogonal representation of $G$, called the \emph{linear part} of $G$. The function $c : g \mapsto c_g=\alpha_g(0)$ is a $1$-cocycle for the orthogonal representation $\alpha^0$.

\subsection{The Gaussian process}

Let $H$ be a real Hilbert space. There then exists a standard probability space $(X,\mu)$ and a family of random variables $(S_\xi)_{\xi \in H}$ on $X$ with the following properties.
\begin{enumlist}
\item $S_\xi$ is a centered Gaussian random variable of variance $\| \xi \|^2$ for all $\xi \in H$.
\item The map $H \ni \xi \mapsto S_\xi\in L^2(X,\mu)$ is linear.
\item The random variables $(S_\xi)_{ \xi \in H}$ generate the $\sigma$-algebra of measurable subsets of $(X,\mu)$.
\end{enumlist}
Moreover, this random process is essentially unique: if $(S'_\xi)_{\xi \in H}$ is another random process on some standard probability space $(X',\mu')$ satisfying the same assumptions, there exists an essentially unique measurable bijection $\pi : X \rightarrow X'$ such that $\pi_*\mu=\mu'$ and $S'_\xi \circ \pi =S_\xi$ a.e., for all $\xi \in H$.

This unique random process $(S_\xi)_{\xi \in H}$ is called the \emph{Gaussian process} indexed by $H$. The random variable $S_\xi$ is denoted by $\widehat{\xi}$ and the probability space $(X,\mu)$ is denoted by $(\widehat{H},\mu)$. For $\omega \in \widehat{H}$, we also use the notation $\widehat{\xi}(\omega)=\langle \omega, \xi \rangle$. Intuitively, as in the finite dimensional case, we think of $\omega$ as a ``random vector'' and $\langle \omega, \xi \rangle$ would be its inner product with $\xi \in H$.

It follows from the uniqueness of the Gaussian process that for every orthogonal transformation $V \in \cO(H)$, there exists an essentially unique measure preserving automorphism $\widehat{V}$ of $(\Hh,\mu)$ such that for all $\xi \in H$ and a.e.\ $\omega \in \widehat{H}$ we have $\langle \widehat{V} \omega, V\xi \rangle=\langle \om, \xi \rangle$.

We now define the action of translations on the Gaussian probability space. Fix $\eta \in H$. Define a new probability measure $\mu_\eta$ on $\widehat{H}$ by the formula $\rd \mu_\eta=\exp( -\frac{1}{2}\| \eta \|^2 + \widehat{\eta}) \, \rd\mu$. Then a computation shows that for all $\xi \in H$, the random variable $\widehat{\xi}-\langle \xi, \eta \rangle$ has a centered Gaussian distribution with variance $\| \xi \|^2$ with respect to the new probability measure $\mu_\eta$. Thus, by the uniqueness of the Gaussian process, there exists a unique nonsingular automorphism $\widehat{j}_\eta$ of $(\widehat{H}, \mu)$ such that $(\widehat{j}_\eta)_* \mu=\mu_\eta$ and $\widehat{\xi} \circ  \widehat{j}_\eta^{-1} = \widehat{\xi} - \langle \xi, \eta \rangle$ for all $\xi \in H$. For $\omega \in \widehat{H}$, we will use the notation $\widehat{j}_\eta(\omega)=\omega+\eta$ so that the intuitive formula $\langle \omega + \eta, \xi \rangle = \langle \omega , \xi \rangle + \langle \eta, \xi \rangle$ holds.

Finally, for all $V \in \cO(H)$ and all $\eta \in H$, one checks that
$$\widehat{V}(\omega + \eta)=\widehat{V}\omega + V\eta$$
for a.e.\ $\omega \in \widehat{H}$. This means that the two maps $V \mapsto \widehat{V}$ and $j_\eta \mapsto \widehat{j}_\eta$ defined above fit together and we thus naturally associate to any affine isometry of $H$ a nonsingular automorphism of $(\Hh,\mu)$. In particular, for every action $\alpha : G \curvearrowright H$ by affine isometries, we obtain a \emph{nonsingular Gaussian action} $\widehat{\alpha} : G \curvearrowright (\widehat{H},\mu)$.

\subsection{The Fourier transform} \label{fourier transform}

Let $H$ be a real Hilbert space. For every $z \in \C$ and $\xi \in H$, define
$$ \exp_z(\xi) = e^{-\frac{1}{2}z^2\| \xi \|^2} e^{z\widehat{\xi}} \in L^2(\widehat{H},\mu) \; .$$
For every fixed $z \in \C^\times$, the linear span of $(\exp_z(\xi))_{\xi \in H}$ is dense in $L^2(\widehat{H},\mu)$. Moreover, one can check the following formula
$$ \forall z,w \in \C, \; \forall \xi, \eta \in H, \quad \langle \exp_z(\xi),  \exp_w(\eta) \rangle = e^{z\bar{w}\langle \xi, \eta \rangle} \; .$$
Therefore, for every $\alpha \in \C$, with $|\alpha|=1$, we can define a unitary operator $U(\alpha) \in \mathcal{U}(L^2(\widehat{H},\mu))$ given by
$$ U(\alpha)( \exp_z(\xi))=\exp_{\alpha z}(\xi) \; .$$
This defines a unitary representation $U : \alpha \mapsto U(\alpha)$ of the torus $\T$ on $L^2(\widehat{H},\mu)$. This representations commutes with the Koopman representation of $\mathcal{O}(H)$ on $L^2(\widehat{H},\mu)$.

We let $\mathcal{F}=U(\ri )$ and we call it the \emph{Fourier transform}. One can check that $\mathcal{F}$ indeed coincides with the usual Fourier transform when $H$ is finite dimensional (the operators $U(\alpha)$ correspond to the \emph{fractional Fourier transforms}). We have $\mathcal{F}^4=\id$ and $\mathcal{F}^2=U(-1)=\widehat{V}$ where $V \in \mathcal{O}(H)$ is the reflection isometry: $V \xi=-\xi$ for all $\xi \in H$.

Let $j: H \curvearrowright H$ be the translation action and let $\widehat{j} : H \curvearrowright (\widehat{H}, \mu)$ be its associated nonsingular Gaussian action. Let $\rho : H \curvearrowright L^2(\widehat{H},\mu)$ be the Koopman representation of $\widehat{j}$.

Let $M : H \curvearrowright L^2(\widehat{H},\mu)$ be the multiplication unitary representation given by $M(\xi)(f)=e^{\ri \widehat{\xi}} f$ for all $f \in L^2(\widehat{H},\mu)$. A direct computation gives the following result.

\begin{proposition} \label{fourier intertwining}
For all $\xi \in H$, we have
$$ \mathcal{F}\circ  \rho(\xi) \circ \mathcal{F}^* = M(\xi/2) \; .$$
\end{proposition}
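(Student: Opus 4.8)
The plan is to verify the operator identity on the total set of functions $\exp_1(\eta)$, $\eta \in H$, whose linear span is dense in $L^2(\Hh,\mu)$ by the density statement of Section \ref{fourier transform}. Since $\cF$, the multiplication unitary $M(\xi/2)$ and the Koopman operator $\rho(\xi)$ are all bounded (indeed unitary, $\rho(\xi)$ by construction of the Koopman representation), agreement on this dense set forces the identity on all of $L^2(\Hh,\mu)$. Equivalently, I would check that $\cF \circ \rho(\xi)$ and $M(\xi/2) \circ \cF$ agree on each $\exp_1(\eta)$.

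First I would make the operator $\rho(\xi)$ explicit. The translation $\widehat{j}_\xi$ pushes $\mu$ to $\mu_\xi$ with $\rd\mu_\xi = \exp(-\tfrac12\|\xi\|^2 + \widehat{\xi})\,\rd\mu$, so the Radon--Nikodym factor entering the Koopman formula has square root $\exp(-\tfrac14\|\xi\|^2 + \tfrac12\widehat{\xi})$, and with $\widehat{\eta}\circ\widehat{j}_\xi^{-1} = \widehat{\eta} - \langle\eta,\xi\rangle$ this gives
$$(\rho(\xi)\exp_1(\eta))(\om) = \exp\bigl(-\tfrac14\|\xi\|^2 + \tfrac12\widehat{\xi}(\om)\bigr)\, e^{-\frac12\|\eta\|^2}\, e^{\widehat{\eta}(\om) - \langle\eta,\xi\rangle} \; .$$
Collecting the real-linear combination $\tfrac12\widehat{\xi} + \widehat{\eta} = \widehat{\eta + \frac12\xi}$ in the exponent and rewriting the result in terms of $\exp_1(\eta + \tfrac12\xi)$, a short computation of the Gaussian normalisation constants using $\|\eta + \tfrac12\xi\|^2 = \|\eta\|^2 + \langle\eta,\xi\rangle + \tfrac14\|\xi\|^2$ should collapse the scalar prefactor and yield
$$\rho(\xi)\exp_1(\eta) = \exp\bigl(-\tfrac18\|\xi\|^2 - \tfrac12\langle\eta,\xi\rangle\bigr)\,\exp_1\bigl(\eta + \tfrac12\xi\bigr) \; .$$

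Then I would apply $\cF = U(\ri)$, which by definition sends $\exp_1(\eta + \tfrac12\xi)$ to $\exp_\ri(\eta + \tfrac12\xi) = e^{\frac12\|\eta + \frac12\xi\|^2}\, e^{\ri\widehat{\eta + \frac12\xi}}$, and compare with the right-hand side. The latter is obtained from $\cF\exp_1(\eta) = \exp_\ri(\eta) = e^{\frac12\|\eta\|^2}e^{\ri\widehat{\eta}}$ followed by multiplication by $e^{\frac{\ri}{2}\widehat{\xi}}$, giving $M(\xi/2)\cF\exp_1(\eta) = e^{\frac12\|\eta\|^2}\, e^{\ri\widehat{\eta + \frac12\xi}}$. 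Both vectors are thus scalar multiples of $e^{\ri\widehat{\eta + \frac12\xi}}$, so it only remains to check that the two scalar prefactors coincide; substituting $\|\eta + \tfrac12\xi\|^2 = \|\eta\|^2 + \langle\eta,\xi\rangle + \tfrac14\|\xi\|^2$ one finds $-\tfrac18\|\xi\|^2 - \tfrac12\langle\eta,\xi\rangle + \tfrac12\|\eta + \tfrac12\xi\|^2 = \tfrac12\|\eta\|^2$, so the prefactors agree and the identity follows.

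As the paper indicates, there is no conceptual obstacle here; the proof is a direct computation. The only thing requiring care is the bookkeeping of the Gaussian normalisation constants and, in particular, the factor $\tfrac12$ coming from the square root of the Radon--Nikodym derivative in the Koopman representation, since a stray sign or factor there would spoil the matching of prefactors. One should also keep in mind that $\xi \mapsto \widehat{\xi}$ is only real-linear, so the complex scalars (here $\ri$) must be carried as multipliers of the real random variables $\widehat{\xi}$ rather than absorbed into the Hilbert space vectors.
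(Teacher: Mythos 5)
Your computation is correct and is precisely the "direct computation" that the paper leaves to the reader: the identities $\rho(\xi)\exp_1(\eta) = \exp(-\tfrac18\|\xi\|^2 - \tfrac12\langle\eta,\xi\rangle)\exp_1(\eta+\tfrac12\xi)$ and the matching of prefactors after applying $U(\ri)$ both check out, and density of the span of $\{\exp_1(\eta)\}_{\eta\in H}$ together with boundedness of the three operators legitimately upgrades the identity to all of $L^2(\Hh,\mu)$. Nothing to add.
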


\subsection{Skew product actions} \label{skew product actions}

Let $\pi : G \curvearrowright \mathcal{O}(H)$ be an orthogonal representation. Let $c \in Z^1(\pi,H)$ be a $1$-cocycle and let $\alpha : G \curvearrowright H$ be the associated affine isometric action. We introduce the skew product (infinite) measure preserving action $\beta :G \curvearrowright (\widehat{H} \times \R, \mu \otimes \mathrm{Leb})$ given by
$$\beta_g(\omega,t)=(\widehat{\pi}(g) \omega, t+\langle \om, c(g^{-1})\rangle)\; .$$

Fix $ a > 0$ and define the pmp action $\beta' :G \curvearrowright (\widehat{H} \times \R/a\Z, \mu \otimes \mathrm{Leb})$ given by the same formula
$$\beta'_g(\omega,t)=(\widehat{\pi}(g) \omega, t+\langle c(g^{-1}), \omega \rangle)$$
so that $\beta'$ is a factor of $\beta$. We also define the $1$-cocycle
$$\Om_a : G \times \Hh \recht \T : \Om_a(g,\om) = \exp(\ri a \langle \om, c(g^{-1})\rangle)$$
and the corresponding unitary representation
$$\theta_a : G \recht \cU(L^2(\Hh,\mu)) : (\theta_a(g^{-1})\xi)(\om) = \Om_a(g,\om) \, \xi(\pih(g) \om) \; .$$
We need the following lemma which describes the Koopman representation of $\beta'$.

\begin{lemma} \label{koopman extension}
The representation $\theta_a$ is unitarily conjugate to $\rho_{2a}$, where $\rho_{s}$ is the Koopman representation of $\widehat{\alpha}^s$ for all $s \in \R$.

The Koopman representation of $\beta'$ is unitarily conjugate to
$$\bigoplus_{n \in \Z} \rho_{\frac{4\pi n}{a}} \; .$$
\end{lemma}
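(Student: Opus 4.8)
The plan is to deduce both statements from the intertwining identity of Proposition~\ref{fourier intertwining} by writing $\theta_a$ and $\rho_{2a}$ as products of elementary representations. Let $\kappa$ denote the Koopman representation of the measure preserving Gaussian action $\pih : G \actson \Hh$, so that $(\kappa(g)f)(\om) = f(\pih(g^{-1})\om)$, and recall from Section~\ref{fourier transform} that $\rho(\xi)$ is the Koopman representation of the translation action $\widehat{j}$ and $M(\xi)$ the multiplication representation $M(\xi)f = e^{\ri\widehat{\xi}}f$. Unwinding the definitions of $\theta_a$ and $\Om_a$ gives, for every $g \in G$,
$$(\theta_a(g)\xi)(\om) = \exp(\ri a \langle \om, c_g\rangle)\, \xi(\pih(g^{-1})\om) = (M(a c_g)\,\kappa(g)\xi)(\om),$$
so that $\theta_a(g) = M(a c_g)\,\kappa(g)$. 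On the other hand, $\al^{2a}_g = j_{2a c_g}\circ \pi(g)$ as an affine isometry of $H$, hence $\widehat{\al}^{2a}_g = \widehat{j}_{2a c_g}\circ \pih(g)$; since the Koopman operator of a composition of nonsingular transformations is the product of the Koopman operators (by the chain rule for Radon--Nikodym derivatives), this yields $\rho_{2a}(g) = \rho(2a c_g)\,\kappa(g)$.

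I would then conjugate by the Fourier transform $\cF$. By Proposition~\ref{fourier intertwining}, $\cF\,\rho(2a c_g)\,\cF^* = M(a c_g)$, and since $\cF = U(\ri)$ commutes with the Koopman representation of $\cO(H)$ (noted in Section~\ref{fourier transform}) we have $\cF\,\kappa(g)\,\cF^* = \kappa(g)$. Combining these,
$$\cF\,\rho_{2a}(g)\,\cF^* = M(a c_g)\,\kappa(g) = \theta_a(g) \qquad (g \in G),$$
which is the first assertion. Note that this computation never used positivity of the parameter, so in fact $\theta_b$ is unitarily conjugate to $\rho_{2b}$ for every $b \in \R$.

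For the second assertion I would Fourier-decompose in the compact variable. Write $L^2(\Hh \times \R/a\Z) = \bigoplus_{n \in \Z} L^2(\Hh) \ot \C e_n$, where $e_n(t) = \exp(2\pi \ri n t / a)$ runs through the characters of $\R/a\Z$. Each summand is invariant under the Koopman representation $\lambda$ of the pmp action $\be'$: inserting $F = f \ot e_n$ into
$$(\lambda(g)F)(\om,t) = F(\be'_{g^{-1}}(\om,t)) = F\bigl(\pih(g^{-1})\om,\, t + \langle c_g, \om\rangle\bigr),$$
the character contributes a phase $\exp(\ri \tfrac{2\pi n}{a} \langle \om, c_g\rangle)$, so that on the $n$-th summand $\lambda$ acts exactly as $\theta_{2\pi n/a}$. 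Applying the first assertion with parameter $b = 2\pi n/a$ gives $\theta_{2\pi n/a} \cong \rho_{4\pi n/a}$, and summing over $n$ produces the claimed decomposition $\bigoplus_{n \in \Z} \rho_{4\pi n/a}$.

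The whole argument is a formal computation; the one genuine ingredient is Proposition~\ref{fourier intertwining}, which is precisely what converts the multiplication operator $M(a c_g)$ in $\theta_a$ into the translation operator $\rho(2a c_g)$ in $\rho_{2a}$. I expect the only delicate points to be bookkeeping: checking that the Koopman functor is multiplicative under composition (a Radon--Nikodym chain-rule computation) and keeping track of the factors of $2$, so that the scaling $s = 2a$ of the affine action matches the parameter of $\theta_a$ and, in the second part, $2\pi n/a$ correctly doubles to $4\pi n/a$.
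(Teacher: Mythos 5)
Your proposal is correct and follows essentially the same route as the paper: the first assertion is obtained by conjugating with the Fourier transform via Proposition~\ref{fourier intertwining}, and the second by decomposing $L^2(\Hh \times \R/a\Z)$ along the characters $e_n(t)=\exp(2\pi \ri n t/a)$ and identifying the action on each summand with $\theta_{2\pi n/a}$. You merely spell out the "direct computation" that the paper leaves implicit (the factorizations $\theta_a(g)=M(ac_g)\kappa(g)$ and $\rho_{2a}(g)=\rho(2ac_g)\kappa(g)$), and your bookkeeping of the factors of $2$ is accurate.
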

\begin{proof}
By Proposition \ref{fourier intertwining}, the Fourier transform provides a unitary conjugacy of $\theta_a$ and $\rho_{2a}$.

Let $\rho'$ be the Koopman representation of $\beta'$. For all $n \in \Z$, define $u_n \in L^\infty(\R/a\Z)$ by
$$u_n(t)=\exp\left( \frac{2 \ri \pi n t}{a} \right), \quad t \in \R/a\Z \; .$$ Decompose $L^2(\widehat{H} \times \R/a\Z, \mu \otimes \mathrm{Leb})$ as a direct sum of orthogonal subspaces $(K_n)_{n \in \N}$ where $K_n=L^2(\widehat{H}, \mu) u_n$. Then $K_n$ is invariant by $\rho'$ and the restriction of $\rho'$ to $K_n$ is unitarily conjugate to $\theta_{2 \pi n / a}$.
\end{proof}

\subsection{Contractions}

\begin{proposition}
Let $H$ be a real Hilbert space and let $T : H \rightarrow H$  be an affine contraction. Then there exists a unique positive linear map $\Psi_T : L^\infty(\widehat{H},\mu) \rightarrow L^\infty(\widehat{H},\mu)$ such that $\mu_{\eta} \circ \Psi_T=\mu_{T\eta}$ for every $\eta \in H$, where $\mu_\eta = (\widehat{j_\eta})_*\mu$.
\end{proposition}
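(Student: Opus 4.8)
The plan is to prove uniqueness first, then to construct $\Psi_T$ explicitly as a composition of manifestly positive maps, reading off positivity from the construction rather than from the (non‑obviously positive) formula on exponential vectors.

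\textbf{Uniqueness and the forced formula on exponentials.} The required identity $\mu_\eta \circ \Psi_T = \mu_{T\eta}$ unwinds to $\int_{\Hh} \Psi_T(f)\,d\mu_\eta = \int_{\Hh} f \, d\mu_{T\eta}$ for all $f$ and all $\eta \in H$. Since $d\mu_\eta = \exp_1(\eta)\,d\mu$ and, by the $z=1$ case of the density statement of Section \ref{fourier transform}, the vectors $\exp_1(\eta)$ have dense linear span in $L^2(\Hh,\mu)$, two positive maps satisfying the relation would give the same inner products $\langle \Psi_T(f),\exp_1(\eta)\rangle$ against a total family, hence agree in $L^2 \supseteq L^\infty$; this yields uniqueness. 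Writing $T\eta = A\eta + b$ with $A$ the linear part (a contraction, $\|A\|\le 1$) and $b = T(0)$, and using $\langle \exp_z(\xi),\exp_1(\zeta)\rangle = e^{z\langle \xi,\zeta\rangle}$, one gets $\int \exp_z(\xi)\,d\mu_{T\eta} = e^{z\langle \xi,T\eta\rangle} = e^{z\langle \xi,b\rangle}\,e^{z\langle A^*\xi,\eta\rangle}$. Reading this off, $\Psi_T$ is forced on exponential vectors to be
$$\Psi_T(\exp_z(\xi)) = e^{z\langle \xi,b\rangle}\,\exp_z(A^*\xi)\;.$$

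\textbf{Construction.} I handle the translation part and the linear part separately. For $j_b$ I take $f \mapsto f\circ \widehat{j}_b$, which is positive and unital on $L^\infty$ since $\widehat{j}_b$ is a nonsingular bijection, and which satisfies $(\widehat{j}_b)_*\mu_\eta = \mu_{\eta+b}$. For the contraction $A$, I dilate to the orthogonal group of $H\oplus H$: as $A A^* \le I$, the map $W\xi = (A^*\xi,\,(I-AA^*)^{1/2}\xi)$ is a linear isometry $H \recht H\oplus H$ (indeed $\|A^*\xi\|^2 + \|(I-AA^*)^{1/2}\xi\|^2 = \|\xi\|^2$), which I extend to some $\widetilde W \in \cO(H\oplus H)$. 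Identifying $\widehat{H\oplus H}$ with $(\Hh\times\Hh,\mu\ot\mu)$ and letting $E$ denote the conditional expectation onto the first coordinate (integration over the second variable), I define $\Psi_A(g) = E\bigl[(g\ot 1)\circ \widehat{\widetilde W}^{\,-1}\bigr]$ and finally $\Psi_T(f) = \Psi_A\bigl(f\circ \widehat{j}_b\bigr)$. Composition with the measure‑preserving $\widehat{\widetilde W}^{\,-1}$ and the conditional expectation $E$ are both positive unital $L^\infty \recht L^\infty$ maps, so $\Psi_T$ is a positive (unital) linear map on $L^\infty(\Hh,\mu)$.

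\textbf{Verification.} The cleanest route is a change of variables valid for all $f\in L^\infty$, avoiding any density argument. Pushing $\mu_\eta$ through $E$ gives $\int E[G]\,d\mu_\eta = \int G\,d(\mu_\eta\ot\mu)$, and under the identification $\mu_\eta\ot\mu = \mu_{(\eta,0)}$. Composing with $\widehat{\widetilde W}^{\,-1}$ and using $(\widehat V)_*\mu_\zeta = \mu_{V\zeta}$ for orthogonal $V$ (which follows from $V j_\zeta = j_{V\zeta} V$, hence $\widehat V \widehat{j}_\zeta = \widehat{j}_{V\zeta}\widehat V$, together with $(\widehat V)_*\mu = \mu$), this measure becomes $\mu_{\widetilde W^{-1}(\eta,0)}$. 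Its first coordinate is $A\eta$, since $\langle \widetilde W^{-1}(\eta,0),(\xi,0)\rangle = \langle (\eta,0),W\xi\rangle = \langle A\eta,\xi\rangle$. As $(f\circ\widehat{j}_b)\ot 1$ depends only on the first coordinate, integrating out the trivial second factor leaves $\int (f\circ \widehat{j}_b)\,d\mu_{A\eta}$, and $(\widehat{j}_b)_*\mu_{A\eta} = \mu_{A\eta+b} = \mu_{T\eta}$ gives $\int \Psi_T(f)\,d\mu_\eta = \int f\,d\mu_{T\eta}$. (As a consistency check one recovers the forced formula: $(\exp_z(\xi)\ot 1)\circ \widehat{\widetilde W}^{\,-1} = \exp_z(\widetilde W(\xi,0)) = \exp_z(A^*\xi)\ot \exp_z((I-AA^*)^{1/2}\xi)$, and $E$ together with $\int \exp_z(\cdot)\,d\mu = 1$ returns $\exp_z(A^*\xi)$.)

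\textbf{Main obstacle.} The genuinely nontrivial point is the positivity of the linear‑part map $\Psi_A$, the Gaussian second quantization of a contraction viewed as a Markov operator: positivity is invisible from the formula $\exp_z(\xi)\mapsto \exp_z(A^*\xi)$. The whole construction hinges on choosing the noise covariance $I-AA^*$ so that $W$ is isometric and extends to an orthogonal transformation, which rewrites $\Psi_A$ as a measure‑preserving substitution followed by a conditional expectation. The remaining care is bookkeeping: the identification $\widehat{H\oplus H}\cong \Hh\times\Hh$ with the product measure, the compatibility $\mu_\eta\ot\mu = \mu_{(\eta,0)}$ and $\mu_{(\zeta_1,\zeta_2)} = \mu_{\zeta_1}\ot\mu_{\zeta_2}$, and checking that the change‑of‑variables chain holds for arbitrary $f\in L^\infty$ rather than only on the dense span of exponential vectors.
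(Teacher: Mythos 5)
Your proof is correct and follows essentially the same route as the paper: uniqueness from the totality of the densities $\exp_1(\eta)$, and existence by dilating the linear part to an orthogonal transformation of $H \oplus H$ and composing with the conditional expectation onto the first leg (the paper writes a single $2\times 2$ dilation of the linear part rather than splitting off the translation first, but that difference is cosmetic). The one step you assert without justification is that the isometry $W : H \recht H \oplus H$, $W\xi = (A^*\xi, (1-AA^*)^{1/2}\xi)$, extends to an element of $\cO(H\oplus H)$. For a general isometry of $H$ into $H\oplus H$ this can fail when $\dim H = \infty$ (identify $H \oplus H \cong H$ and take a unilateral shift: the range has codimension $1$, so no orthogonal extension exists), so something must be said here. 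It does hold for this particular $W$: either observe that the projection $1 - WW^*$ onto $(\operatorname{ran} W)^\perp$ has $1-A^*A$ and $AA^*$ as its diagonal compressions, and these cannot both have rank $< \dim H$, so $(\operatorname{ran} W)^\perp \cong H$; or, more simply, write the second column explicitly and take $\widetilde W(\xi,\eta) = \bigl(A^*\xi + (1-A^*A)^{1/2}\eta,\ (1-AA^*)^{1/2}\xi - A\eta\bigr)$, whose orthogonality (note the necessary minus sign, as in Halmos's dilation) follows from the intertwining relation $A(1-A^*A)^{1/2} = (1-AA^*)^{1/2}A$. With that supplied, your change-of-variables verification, the recovery of the forced formula on exponential vectors, and the positivity argument are all complete and correct.
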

\begin{proof}
The uniqueness follows from \cite[Proposition 4.7.(i)]{AIM19}. For the existence, it suffices to deal with the case where $T$ is a linear contraction. Let $U$ be the orthogonal operator on $H \oplus H$ defined by
\begin{equation}\label{eq.dilation U}
U(\xi,\eta)=(T^*\xi + (1-T^*T)^{1/2}\eta, (1-TT^*)^{1/2}\xi+T\eta)
\end{equation}
Let $E : L^\infty(\widehat{H} \times \widehat{H}, \mu \otimes \mu) \rightarrow L^\infty(\widehat{H},\mu)$ be the conditional expectation onto the first leg. Put $\Psi_T(f)=E(\widehat{U}(f \otimes 1) )$ for all $f \in L^\infty(\widehat{H},\mu)$. Then for all $\xi \in H$ and all $f \in L^\infty(\widehat{H},\mu)$, we have
$$ \mu_\xi(\Psi_T(f))=\mu_{(\xi,0)}(\widehat{U}(f \otimes 1))=\mu_{U^*(\xi,0)}(f \otimes 1)=\mu_{T\xi}(f)$$
as we wanted.
\end{proof}
It follows from the uniqueness property that if $T,S$ are two affine contractions, then $\Psi_{T \circ S}=\Psi_S \circ \Psi_T$.

When $T$ is a linear contraction, $\Psi_T$ is $\mu$-preserving and it extends to a contractive operator on $L^2(\widehat{H},\mu)$. We have the following formula.
\begin{proposition}
Let $T$ be a linear contraction on a real Hilbert space $H$. Then for every $z \in \C$ and every $\xi \in H$, we have
$$\Psi_T(\exp_z(\xi))=\exp_z(T^*\xi) \; .$$
\end{proposition}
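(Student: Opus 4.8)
The plan is to compute $\Psi_T(\exp_z(\xi))$ directly from the explicit formula $\Psi_T(f) = E(\widehat{U}(f\ot 1))$ established in the previous proposition, where $U\in\cO(H\oplus H)$ is the dilation \eqref{eq.dilation U} and $E$ is the conditional expectation of $L^\infty(\Hh\times\Hh,\mu\ot\mu)$ onto its first leg. The whole argument rests on two structural facts about the Gaussian functor: that it turns direct sums into products, identifying $\widehat{H\oplus H}$ with $\Hh\times\Hh$ so that $\widehat{(\xi,0)} = \widehat{\xi}\ot 1$ and $\exp_z((a,b)) = \exp_z(a)\ot\exp_z(b)$; and that the Koopman operator of a Gaussian automorphism $\widehat{V}$ permutes the coherent vectors by the rule $\widehat{V}\exp_z(v) = \exp_z(Vv)$ for every $V\in\cO(H\oplus H)$.

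First I would record the coherent-vector transformation rule. From the defining identity $\langle\widehat{V}\om, V\xi\rangle = \langle\om,\xi\rangle$ one gets $\widehat{\xi}\circ\widehat{V}^{-1} = \widehat{V\xi}$, and since $\exp_z(\xi) = e^{-\frac12 z^2\|\xi\|^2}e^{z\widehat{\xi}}$ with $\|V\xi\| = \|\xi\|$, applying the Koopman operator gives $\widehat{V}\exp_z(\xi) = \exp_z(V\xi)$. Next, writing $\exp_z(\xi)\ot 1 = \exp_z((\xi,0))$ and using \eqref{eq.dilation U} to read off $U(\xi,0) = (T^*\xi,(1-TT^*)^{1/2}\xi)$, I obtain
$$\widehat{U}(\exp_z(\xi)\ot 1) = \exp_z(U(\xi,0)) = \exp_z(T^*\xi)\ot\exp_z\bigl((1-TT^*)^{1/2}\xi\bigr).$$
Finally, applying $E$ leaves the first tensor factor untouched and replaces the second by its integral; since $\int_{\Hh}\exp_z(\eta)\,\mathrm{d}\mu = 1$ for every $\eta$ and every $z\in\C$ (this is the Gaussian moment generating function, or equivalently the inner-product formula with $1 = \exp_0(0)$), I am left with $\Psi_T(\exp_z(\xi)) = \exp_z(T^*\xi)$, as claimed.

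The main point to handle with care is the covariance rule $\widehat{V}\exp_z(v) = \exp_z(Vv)$ together with selecting the correct leg of $U$: one must check that the Koopman convention used in the definition of $\Psi_T$ really produces $U(\xi,0)$ (whose first coordinate is $T^*\xi$) rather than $U^*(\xi,0)$, consistent with the computation $\mu_\xi(\Psi_T(f)) = \mu_{T\xi}(f)$ in the previous proof. The remaining ingredients --- the factorization of $\exp_z$ over $\widehat{H\oplus H}$ and the normalization $\int\exp_z(\eta)\,\mathrm{d}\mu = 1$ for complex $z$ (justified by analyticity in $z$) --- are routine, so once the transformation rule is pinned down the identity follows by direct substitution.
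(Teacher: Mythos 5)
Your argument is correct and is essentially the paper's own proof: both compute $\Psi_T(\exp_z(\xi)) = E(\widehat{U}(\exp_z(\xi)\otimes 1))$ using the dilation \eqref{eq.dilation U}, the covariance $\widehat{U}(e^{z\widehat{v}}) = e^{z\widehat{Uv}}$, and the Gaussian moment formula. The only cosmetic difference is that you keep the normalized coherent vectors throughout (so the constants cancel via $\mu(\exp_z(\eta))=1$), whereas the paper tracks the factors $e^{\frac12 z^2\|\cdot\|^2}$ explicitly and cancels them at the end using $\|T^*\xi\|^2+\|(1-TT^*)^{1/2}\xi\|^2=\|\xi\|^2$.
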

\begin{proof}
Using \eqref{eq.dilation U}, we have
\begin{align*}
\exp\bigl(\frac{1}{2}z^2 \|\xi\|^2\bigr) \, \Psi_T(\exp_z(\xi)) &= E\bigl( \exp(z \widehat{U(\xi,0)})\bigr) = \exp(z \widehat{T^* \xi}) \, \mu\bigl(\exp(z ((1-TT^*)^{1/2}\xi)\widehat{\text{ }})\bigr) \\
& = \exp\bigl(\frac{1}{2} z^2 \|T^* \xi\|^2\bigr) \, \exp\bigl(\frac{1}{2} z^2 \|(1-TT^*)^{1/2}\xi\|^2\bigr) \, \exp_z(T^* \xi) \; .
\end{align*}
So the proposition is proven.
\end{proof}

\begin{proposition} \label{weak convergence contraction}
Let $(T_n)_{n \in \N}$ be a sequence of affine contractions on a real Hilbert space $H$ that converges strongly (resp.\ weakly) to some affine contraction $T$. Then $\Psi_{T_n}$ converges to $\Psi_T$ in the pointwise strong (resp.\ weak$^*$) topology.
\end{proposition}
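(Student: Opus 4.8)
The plan is to reduce everything to the explicit action of $\Psi_T$ on a total family of vectors, and to separate the linear part from the translation part. Write $T_n=j_{\zeta_n}\circ L_n$ and $T=j_\zeta\circ L$, where $L_n,L$ are the linear parts and $\zeta_n=T_n(0),\ \zeta=T(0)$; the contraction hypothesis forces $L_n,L$ to be linear contractions. Using the defining relation $\mu_\eta\circ\Psi_T=\mu_{T\eta}$, the composition rule $\Psi_{T\circ S}=\Psi_S\circ\Psi_T$, and the formula $\Psi_L(\exp_z(\xi))=\exp_z(L^*\xi)$, I would first record two computations: that $\Psi_{j_\zeta}(f)=f\circ\widehat{j}_\zeta$ is the composition operator with the nonsingular translation, and hence that on the family $\exp_z(\xi)$ one has $\Psi_{T_n}(\exp_z(\xi))=e^{z\langle\xi,\zeta_n\rangle}\exp_z(L_n^*\xi)$, with the analogous formula for $T$. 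I would then exploit the identity $\langle\exp_z(\xi),\exp_w(\eta)\rangle=e^{z\bar w\langle\xi,\eta\rangle}$, which shows that $\exp_z(\alpha_n)\to\exp_z(\alpha)$ in $L^2$ exactly when $\alpha_n\to\alpha$ in norm, whereas only weak convergence of the arguments is needed to get weak convergence of $\exp_z(\alpha_n)$ on norm-bounded sets. The vectors $\exp_\ri(\xi)$ are especially useful: they lie in $L^\infty$ (constant modulus $e^{\|\xi\|^2/2}$) while their span is still total in $L^2$.

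For the weak case I would argue through the functional characterization, so as never to apply $\Psi$ to unbounded vectors. Since each $\Psi_{T_n}$ is positive and unital, $\|\Psi_{T_n}(f)\|_\infty\le\|f\|_\infty$, and so to prove $\Psi_{T_n}(f)\to\Psi_T(f)$ weak$^*$ for fixed $f\in L^\infty$ it suffices to test against $\{\exp_1(\eta):\eta\in H\}$, whose span is dense in $L^2$ and hence total in $L^1$. Here $\langle\Psi_{T_n}(f),\exp_1(\eta)\rangle=\mu_\eta(\Psi_{T_n}(f))=\mu_{T_n\eta}(f)=\langle f,\exp_1(T_n\eta)\rangle$; since $T_n\eta\to T\eta$ weakly and $\sup_n\|T_n\eta\|<\infty$ (as $T_n$ is a contraction and $\zeta_n$ is weakly convergent, hence bounded), the vectors $\exp_1(T_n\eta)$ converge weakly to $\exp_1(T\eta)$, and pairing with $f\in L^\infty\subset L^2$ gives the claim.

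For the strong case I would use the decomposition $\Psi_{T_n}=\Psi_{L_n}\circ\Psi_{j_{\zeta_n}}$ and treat the two factors separately. First, since $\zeta_n\to\zeta$ in norm I would prove $\Psi_{j_{\zeta_n}}(f)=f\circ\widehat{j}_{\zeta_n}\to f\circ\widehat{j}_\zeta$ in $L^2$: writing $f\circ\widehat{j}_{\zeta_n}=(f\circ\widehat{j}_\zeta)\circ\widehat{j}_{\zeta_n-\zeta}$ reduces this to continuity of the translation action at $0$, which I would get by combining the norm statement $\|g\circ\widehat{j}_{\delta_n}\|_2^2=\mu_{\delta_n}(|g|^2)\to\|g\|_2^2$ with the weak statement $\langle g\circ\widehat{j}_{\delta_n},\exp_1(\eta)\rangle=\langle g,\exp_1(\delta_n+\eta)\rangle\to\langle g,\exp_1(\eta)\rangle$, using $\exp_1(\delta_n+\eta)\to\exp_1(\eta)$ in $L^2$ (weak plus norm convergence gives strong). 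Second, since $\Psi_{L_n}$ is an $L^2$-contraction and $L_n^*\to L^*$ strongly, the explicit values $\Psi_{L_n}(\exp_\ri(\xi))=\exp_\ri(L_n^*\xi)\to\exp_\ri(L^*\xi)$ converge on the total family $\{\exp_\ri(\xi)\}$, so $\Psi_{L_n}\to\Psi_L$ in the strong operator topology of $L^2$. Combining these, with $g_n=\Psi_{j_{\zeta_n}}(f)\to g=\Psi_{j_\zeta}(f)$ and uniform $L^\infty$-bound, I would conclude $\Psi_{T_n}(f)-\Psi_T(f)=\Psi_{L_n}(g_n-g)+(\Psi_{L_n}(g)-\Psi_L(g))\to 0$ in $L^2$.

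The main obstacle is the strong case, and concretely the passage through the adjoints. Strong convergence of $\Psi_{T_n}$ forces $\exp_\ri(L_n^*\xi)\to\exp_\ri(L^*\xi)$ in norm, which by the inner product identity is \emph{equivalent} to $L_n^*\xi\to L^*\xi$ in norm, i.e. to strong convergence of the adjoints. For contractions this genuinely does not follow from $L_n\to L$ strongly alone (take $L_n=(S^*)^n$ for the unilateral shift $S$: then $L_n\to 0$ strongly but $\exp_\ri(L_n^*\xi)=\exp_\ri(S^n\xi)$ stays at fixed $L^2$-distance from $1=\Psi_0(\exp_\ri(\xi))$). Hence the hypothesis of strong convergence of $T_n$ must be read so as to provide $L_n^*\to L^*$ strongly as well, which is exactly the strong-$*$ topology on the linear parts together with $\zeta_n\to\zeta$ in norm; this is precisely the input the conclusion requires. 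A secondary, already anticipated difficulty is that for genuinely affine $T$ the map $\Psi_T$ is not $L^2$-bounded (the composition operator $\Psi_{j_\zeta}$ distorts the measure), so one cannot argue purely in $L^2$; the decomposition $\Psi_{L_n}\circ\Psi_{j_{\zeta_n}}$ isolates the unbounded translation factor, handled on $L^\infty$, from the $L^2$-contractive linear factor.
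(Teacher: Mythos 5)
Your treatment of the weak$^*$ case is correct and is essentially the paper's own argument made explicit: the paper likewise reduces everything to the identity $\mu_\eta \circ \Psi_{T_n} = \mu_{T_n\eta}$, the weak continuity of $\eta \mapsto \mu_\eta$ on bounded sets, the totality of $\{\mu_\eta \mid \eta \in H\}$ in $L^1$ (this is the content of the cited result from \cite{AIM19}) and the uniform bound $\|\Psi_{T_n}(f)\|_\infty \leq \|f\|_\infty$; your pairing against $\exp_1(\eta) = \rd\mu_\eta/\rd\mu$ is the same computation written out. This is also the only half of the proposition that the paper ever uses (in the proof of Lemma \ref{sequence not midly mixing}, where the convergence of the isometries is weak).

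Your objection to the strong case is substantive and, as far as I can check, correct rather than a defect of your proof. Taking $T_n = (S^*)^n$ for the unilateral shift $S$, one has $T_n \recht 0$ strongly, yet $\Psi_{T_n}(\exp_\ri(\xi)) = \exp_\ri(S^n\xi)$ has constant modulus $e^{\|\xi\|^2/2}$ for every $n$, hence cannot converge in measure (equivalently, strongly, being a bounded sequence in $L^\infty$) to $\Psi_0(\exp_\ri(\xi)) = 1$, although it does converge weak$^*$. The paper's two-line proof passes from ``$\|\mu_{T_n\xi} - \mu_{T\xi}\|_1 \recht 0$ for every fixed $\xi$'' to pointwise strong convergence of $\Psi_{T_n}$; but what that hypothesis actually delivers is $\|\mu_\xi \circ \Psi_{T_n} - \mu_\xi \circ \Psi_T\|_1 \recht 0$, i.e.\ pointwise norm convergence of the \emph{predual} maps, and your example shows this is strictly weaker than strong convergence of $\Psi_{T_n}(f)$ for $f \in L^\infty$. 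So the strong half of the statement must either be read in that predual sense, or the hypothesis must be strengthened to strong-$*$ convergence of the linear parts together with norm convergence of $T_n(0)$ --- exactly the input under which your decomposition $\Psi_{T_n} = \Psi_{L_n} \circ \Psi_{j_{\zeta_n}}$ gives a complete and correct proof. Since only the weak$^*$ case is invoked later in the paper, none of its results are affected.
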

\begin{proof}
It follows from \cite[Proposition 4.7]{AIM19} that $\mu_\xi \circ \Psi_{T_n}=\mu_{T_n \xi}$ converges in norm (resp.\ weakly) to $\mu_{T\xi}=\mu_\xi \circ \Psi_T$ for all $\xi \in H$. By \cite[Proposition 4.7.(i)]{AIM19}, this implies that $\Psi_{T_n}$ converges to $\Psi_T$ in the pointwise strong (resp.\ weak$^*$) topology.
\end{proof}

\subsection{\boldmath Closed affine subspaces and evanescent $1$-cocycles}\label{sec.subspaces evanescence}

Let $H$ be a real Hilbert space and $K \subset H$ a closed subspace. We denote by $\widehat{\pi_K} : \Hh \recht \Kh$ the natural measure preserving factor map. For every $\xi \in H$, we denote by $\cM(\xi + K) \subset L^\infty(\Hh \times \R)$ the von Neumann subalgebra of functions of the form
$$(\om,t) \mapsto F(\widehat{\pi_K}(\om), t + \langle \om,\xi \rangle) \quad\text{with $F \in L^\infty(\Kh \times \R)$.}$$
Note that $\cM(\xi + K)$ indeed only depends on the closed affine subspace $\xi + K \subset H$.

\begin{remark}\label{rem.Maharam translation}
For every $\xi \in H$, let $J_\xi$ denote the Maharam extension of the translation $\widehat{j}_\xi$ on $(\widehat{H},\mu)$. It is given by
$$J_\xi : \widehat{H} \times \R \ni (\omega , t) \mapsto (\omega+\xi,t-\frac{1}{2}\| \xi \|^2-\langle \omega, \xi \rangle ) \in \widehat{H} \times \R \; .$$
Note that
\begin{equation}\label{eq.here is Jxi}
\cM(\xi + K) = J_\xi(L^\infty(\Kh \times \R)) \quad\text{for all $\xi \in H$ and closed subspaces $K \subset H$.}
\end{equation}
So, the subalgebra $\cM(\xi + K) \subset L^\infty(\Hh \times \R)$ coincides with the subalgebra that was denoted as $L^\infty(\operatorname{Mod}(\xi+K))$ in \cite{AIM19}.
\end{remark}

Let $\pi : G \recht \cO(H)$ be an orthogonal representation of a countable group $G$ on a real Hilbert space $H$. Recall from \cite[Definition 2.6 and Proposition 2.10]{AIM19} that a $1$-cocycle $c \in Z^1(\pi,H)$ is called \emph{evanescent} if there exists a decreasing sequence of closed $\pi(G)$-invariant subspaces $H_n$ such that $\bigcap_n H_n = \{0\}$ and such that for every fixed $n \in \N$, the $1$-cocycle $c$ is cohomologous to a $1$-cocycle taking values in $H_n$.

We recall from \cite[Proposition 4.14]{AIM19} the following result, for which we provide an elementary proof. For evanescent $1$-cocycles, we often use this result to conclude that the associated Gaussian action is ergodic and of type III$_1$, and that the associated skew product action is ergodic.

\begin{proposition}[{\cite[Proposition 4.14]{AIM19}}]\label{prop.intersection spaces}
Let $(\xi_i + H_i)_{i \in I}$ be a family of closed affine subspaces of a real Hilbert space $H$. Write $K = \bigcap_{i \in I} H_i$, $M = \bigcap_{i \in I} (\xi_i + H_i)$ and $A = \bigcap_{i \in I} \cM(\xi_i + H_i)$.
\begin{enumlist}
\item If $M = \emptyset$, then $A = L^\infty(\Kh) \ot 1$.
\item If $\zeta \in M$, then $M = \zeta + K$ and $A = \cM(\zeta + K)$.
\end{enumlist}
\end{proposition}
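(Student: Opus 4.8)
The plan is to reduce the whole statement to a single fact about \emph{linear} Gaussian subspaces and then feed in the affine data by a linearization trick. The linear fact I would isolate is the following \emph{core lemma}: for closed subspaces $(V_i)_{i\in I}$ of a real Hilbert space $\cH$ with $V=\bigcap_i V_i$, one has $\bigcap_i L^\infty(\widehat{V_i})=L^\infty(\widehat V)$ inside $L^\infty(\widehat\cH,\mu)$. Since $\mu$ is a probability measure, it suffices to prove the $L^2$-version $\bigcap_i L^2(\widehat{V_i})=L^2(\widehat V)$: a bounded $\widehat{V_i}$-measurable function lies in $L^2(\widehat{V_i})$, so any element of the left-hand intersection lands in $L^2(\widehat V)$ and, being bounded, in $L^\infty(\widehat V)$; the reverse inclusion is trivial. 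The mechanism is that, writing $Q_i\in\cO(\cH)$ for the orthogonal projection onto $V_i$, the operator $\Psi_{Q_i}$ on $L^2(\widehat\cH,\mu)$ is \emph{exactly} the orthogonal projection $e_i$ onto $L^2(\widehat{V_i})$. Indeed it is $\mu$-preserving, self-adjoint (as $\Psi_{Q_i}^\ast=\Psi_{Q_i^\ast}=\Psi_{Q_i}$), idempotent (as $\Psi_{Q_i}\circ\Psi_{Q_i}=\Psi_{Q_i\circ Q_i}=\Psi_{Q_i}$), and its range is the closed span of the vectors $\exp_z(Q_i\xi)=\Psi_{Q_i}(\exp_z(\xi))$, namely $L^2(\widehat{V_i})$.

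The heart of the argument is then the limit computation for two subspaces. On the one hand, von Neumann's alternating projection theorem gives $(e_1e_2)^m\to e_1\wedge e_2$ strongly, where $e_1\wedge e_2$ is the projection onto $L^2(\widehat{V_1})\cap L^2(\widehat{V_2})$. On the other hand, the functoriality $\Psi_{T\circ S}=\Psi_S\circ\Psi_T$ gives $e_1e_2=\Psi_{Q_1}\circ\Psi_{Q_2}=\Psi_{Q_2Q_1}$ and hence $(e_1e_2)^m=\Psi_{(Q_2Q_1)^m}$. Applying von Neumann's theorem inside $\cH$ itself, $(Q_2Q_1)^m\to Q_{V_1\cap V_2}$ strongly, so Proposition \ref{weak convergence contraction} yields $\Psi_{(Q_2Q_1)^m}\to\Psi_{Q_{V_1\cap V_2}}=e_{V_1\cap V_2}$ strongly. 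Comparing the two limits, $L^2(\widehat{V_1})\cap L^2(\widehat{V_2})=L^2(\widehat{V_1\cap V_2})$. Finitely many subspaces follow by induction, and an arbitrary family by passing to the decreasing net of finite sub-intersections $V_F=\bigcap_{i\in F}V_i$: here $Q_{V_F}\to Q_V$ strongly, so by Proposition \ref{weak convergence contraction} the decreasing net of projections $e_{V_F}=\Psi_{Q_{V_F}}$ converges to $e_V$, and therefore $\bigcap_F L^2(\widehat{V_F})=L^2(\widehat V)$. This completes the core lemma.

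To deduce the Proposition I would linearize. Set $\widetilde H=H\oplus\R$ and identify $(\widehat{\widetilde H},\widetilde\mu)$ with $(\Hh\times\R,\mu\otimes\gamma)$, where $\gamma$ is the standard Gaussian on $\R=\widehat\R$ and $\widehat{(v,r)}(\om,s)=\langle\om,v\rangle+rs$. Since $\gamma$ and Lebesgue measure are mutually absolutely continuous, $L^\infty(\Hh\times\R,\mu\otimes\gamma)$ and $L^\infty(\Hh\times\R,\mu\otimes\mathrm{Leb})$ are literally the same von Neumann algebra with the same measurable structure. For each $i$ put $V_i=\{(v,r)\in\widetilde H\mid v-r\xi_i\in H_i\}$, a closed subspace of $\widetilde H$ that is the closed span of $H_i\oplus 0$ and $(\xi_i,1)$. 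Reading off these generators, $L^\infty(\widehat{V_i})$ consists exactly of the functions $F(\widehat{\pi_{H_i}}(\om),\,s+\langle\om,\xi_i\rangle)$ with $F\in L^\infty(\widehat{H_i}\times\R)$, which under the identification $s=t$ is precisely $\cM(\xi_i+H_i)$. Thus the core lemma, applied in $\widetilde H$, identifies $A=\bigcap_i\cM(\xi_i+H_i)$ with $L^\infty(\widehat{\,\bigcap_i V_i\,})$.

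It remains to compute $\bigcap_i V_i=\{(v,r)\mid v-r\xi_i\in H_i\ \forall i\}$. For $r\neq 0$ the condition reads $v/r\in\bigcap_i(\xi_i+H_i)=M$. Hence in case (1), where $M=\emptyset$, only $r=0$ survives and $\bigcap_i V_i=K\oplus 0$; its algebra is the set of functions of $\widehat{\pi_K}(\om)$ alone, i.e. $L^\infty(\Kh)\ot 1$. In case (2), if $\zeta\in M$ then $\xi_i+H_i=\zeta+H_i$, so $M=\zeta+K$; using $\zeta-\xi_i\in H_i$ the condition $v-r\xi_i\in H_i$ becomes $v-r\zeta\in H_i$, whence $\bigcap_i V_i=\{(v,r)\mid v-r\zeta\in K\}$, the closed span of $K\oplus 0$ and $(\zeta,1)$, whose algebra is $\{F(\widehat{\pi_K}(\om),\,s+\langle\om,\zeta\rangle)\}=\cM(\zeta+K)$. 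The only genuinely analytic step — and hence the main obstacle — is the core lemma, specifically pushing the infimum of the projections $Q_i$ through the second quantization $\Psi$; this is exactly where von Neumann's alternating projection theorem together with Proposition \ref{weak convergence contraction} do the real work, while the linearization and the two intersection computations are elementary bookkeeping.
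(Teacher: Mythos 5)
Your proof is correct. It shares the paper's linearization skeleton --- both identify $\Hh\times\R$ with $\widehat{H\oplus\R}$ (using that the Gaussian and Lebesgue measures on the $\R$-factor are equivalent) and both reduce the statement to elementary linear algebra about the subspaces $V_i=\overline{(H_i\oplus 0)+\R(\xi_i,1)}$ --- but the analytic core is genuinely different. The paper describes $\cM(\xi_i+H_i)$ as the fixed-point algebra of the twisted translations by $T_i=V_i^\perp$, so that $A$ becomes the fixed-point algebra of translations by the closed linear span $T=(\bigcap_i V_i)^\perp$, and then uses that the functions invariant under translation by a closed subspace $W$ form exactly $L^\infty(\widehat{W^\perp})$ (ergodicity of Gaussian translation actions). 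You instead identify $\cM(\xi_i+H_i)$ directly with $L^\infty(\widehat{V_i})$ and prove the intersection formula $\bigcap_i L^\infty(\widehat{V_i})=L^\infty(\widehat{\bigcap_i V_i})$ by recognizing $\Psi_{Q_i}$ as the orthogonal projection of $L^2(\widehat{H\oplus\R})$ onto $L^2(\widehat{V_i})$ and combining von Neumann's alternating projection theorem with the functoriality $\Psi_{T\circ S}=\Psi_S\circ\Psi_T$ and the continuity statement of Proposition \ref{weak convergence contraction}; the concluding case analysis on $\bigcap_i V_i$ is the mirror image of the paper's analysis of $T^\perp$. The two core facts are essentially dual to one another. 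Your route is self-contained modulo the contraction calculus of the preliminaries and avoids invoking the ergodicity of the translation action, at the cost of the extra limiting arguments (alternating projections for two subspaces, induction for finitely many, and the decreasing net of finite sub-intersections for an arbitrary family), each of which you handle correctly.
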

\begin{proof}
We may assume that $\xi_i \in H_i^\perp$ for all $i \in I$. For every closed subspace $L \subset H$ and every $\xi \in L^\perp$, we have that $\cM(\xi + L)$ equals the algebra of functions $F \in L^\infty(\Hh \times \R)$ with the property that for all $\eta \in L^\perp$, we have
$$F(\om + \eta, t - \langle \eta,\xi \rangle) = F(\om,t) \quad\text{for a.e.\ $(\om,t) \in \Hh \times \R$.}$$
Equip $\R$ with the standard Gaussian probability measure and view $\Hh \times \R = \widehat{H \oplus \R}$. It follows that $\cM(\xi + L)$ consists of all $F \in L^\infty(\widehat{H \oplus \R})$ that are invariant under translation by all vectors in the subspace
$$\{(\eta,-\langle \eta,\xi\rangle) \mid \eta \in L^\perp\} \subset H \oplus \R \; .$$
Define the subspaces $T_i \subset H \oplus \R$ given by
$$T_i = \{(\eta,-\langle \eta,\xi_i\rangle) \mid \eta \in H_i^\perp\} \; .$$
It follows that $A$ equals the algebra of functions $F \in L^\infty(\widehat{H \oplus \R})$ that are invariant under translation by all vectors in the closed subspace of $H \oplus \R$ generated by all $(T_i)_{i \in I}$.

Note that $\zeta \in M$ if and only if $(\zeta,1) \in T^\perp$. Also note that $(\gamma,0) \in T^\perp$ if and only if $\gamma \in K$. So if $M = \emptyset$, we find that $T = K^\perp \oplus \R$ and $A = L^\infty(\Kh) \ot 1$. If $\zeta \in M$, we find that $M = \zeta + K$ and $T = \{(\eta,-\langle \eta,\zeta\rangle) \mid \eta \in K^\perp\}$. In that case, $A = \cM(\zeta + K)$.
\end{proof}

\subsection{The rotation trick} \label{Rotation trick}

We recall the rotation trick of \cite[Remark 4.12]{AIM19}. Let $\pi : G \rightarrow \cO(H)$ be an orthogonal representation of a group $G$ on a real Hilbert space. Let $c$ be a $1$-cocycle for $\pi$ and let $\alpha : G \rightarrow \mathrm{Isom}(H)$ be the associated affine isometric action. Let $(\widehat{H},\mu)$ be the Gaussian probability space associated to $H$. Take $p,q \in \R \setminus \{0\}$ with $p^2+q^2=1$. Let $R : H \oplus H \rightarrow H \oplus H$ be the rotation isometry given by
$$ R(x,y)=(px-qy,qx+py), \quad x,y \in H \; .$$
Let $\widehat{R} \in \Aut(\widehat{H} \times \widehat{H},\mu \otimes \mu)$ be the associated Gaussian transformation. Then we have $$(\widehat{\alpha}^p \times \widehat{\alpha}^q)(g) \circ \widehat{R} =\widehat{R} \circ (\widehat{\alpha} \times \widehat{\pi})(g) $$
for all $g \in G$.

We now introduce a version of the rotation trick for Maharam extensions. Define the map $S : \R \times \R \rightarrow \R \times \R$ by $S(x,y)=(p^2x+pqy, q^2x-pqy)$. Define a flip map
$$\theta : \widehat{H} \times \widehat{H} \times \R \times \R \rightarrow \widehat{H} \times \R \times \widehat{H} \times \R : \theta(\omega,\omega',x,x')=(\omega,x,\omega',x')$$
and let $\Xi = \theta \circ (\widehat{R} \times S) \circ \theta^{-1}$.

Let $\sigma,\sigma^p, \sigma^q : G \curvearrowright \widehat{H} \times \R$ be the Maharam extensions of $\widehat{\alpha},\widehat{\alpha}^p,\widehat{\alpha}^q$ respectively. Let $\beta : G \curvearrowright \widehat{H} \times \R$ be the skew product action defined in Section \ref{skew product actions} i.e.\
$$ \beta_g(\omega,t)=(\widehat{\pi}(g) \omega, t+\langle c(g^{-1}), \omega \rangle)$$

A direct computation gives the following result.

\begin{proposition} \label{rotation maharam}
For every $g \in G$, we have
$$ (\sigma_g^p \times \sigma_g^q) \circ  \Xi = \Xi \circ (\sigma_g \times \beta_g)$$
In particular, the actions $\sigma^p \times \sigma^q$ and $\sigma \times \beta$ are conjugate.
\end{proposition}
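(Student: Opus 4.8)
The plan is to check the pointwise identity $(\sigma^p_g\times\sigma^q_g)\circ\Xi = \Xi\circ(\sigma_g\times\beta_g)$ for each $g\in G$ and then read off the conjugacy. First I would record explicit formulas for the four maps on $\widehat H\times\R$. Writing $\widehat\alpha_g = \widehat j_{c_g}\circ\widehat\pi(g)$ and using that $\widehat\pi(g)$ is measure preserving, the Maharam extension $\sigma_g$ is $J_{c_g}\circ(\widehat\pi(g)\times\id)$, with $J_{c_g}$ the translation Maharam extension of Remark \ref{rem.Maharam translation}. Together with $\langle\widehat\pi(g)\om,\zeta\rangle = \langle\om,\pi(g^{-1})\zeta\rangle$ and the cocycle identity $\pi(g^{-1})c_g = -c_{g^{-1}}$, this gives $\sigma_g$ a fiber shift of the form $-\tfrac12\|c_g\|^2 + \langle\om, c_{g^{-1}}\rangle$; scaling $c_g$ by $s$ yields $\sigma^s_g$ with fiber shift $-\tfrac12 s^2\|c_g\|^2 + s\langle\om, c_{g^{-1}}\rangle$. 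The skew product $\beta_g$ carries, by definition, the purely linear fiber shift $\langle\om', c_{g^{-1}}\rangle$ and no quadratic term.

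Next I would split the identity into its $\widehat H$-valued and its $\R$-valued components. On the $\widehat H$-components, after conjugating by the flip $\theta$, the identity is exactly the rotation trick recalled at the start of this subsection, $(\widehat\alpha^p\times\widehat\alpha^q)(g)\circ\widehat R = \widehat R\circ(\widehat\alpha\times\widehat\pi)(g)$; concretely, the pairing formula gives $\langle\widehat R(\om,\om'),(\xi,\xi')\rangle = \langle(\om,\om'), R^{-1}(\xi,\xi')\rangle$, so $\widehat R(\om,\om')$ behaves like the rotated pair $(p\om - q\om', q\om + p\om')$. Nothing beyond the recalled statement is needed here.

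The only genuine content is the comparison of the $\R$-components, i.e.\ the Maharam direction. Here I would substitute the rotated vectors $p\om-q\om'$ and $q\om+p\om'$ into the fiber shifts of $\sigma^p_g$ and $\sigma^q_g$, and compare with the effect of $S$ applied after the fiber shifts of $\sigma_g$ and $\beta_g$. Using $p^2+q^2 = 1$, the quadratic terms $-\tfrac12 p^2\|c_g\|^2$ and $-\tfrac12 q^2\|c_g\|^2$ recombine into the single term $-\tfrac12\|c_g\|^2$ of $\sigma_g$; this is what the first column $(p^2,q^2)$ of $S$ is for, as it also distributes the $\sigma$-fiber variable. The linear terms in $\langle\om, c_{g^{-1}}\rangle$ match automatically (both sides producing $p^2$, resp.\ $q^2$, times it), while the linear terms in $\langle\om', c_{g^{-1}}\rangle$ produced on the two sides -- on the left from rotating $\om'$ into the $\sigma^p,\sigma^q$ cocycles, on the right from the skew product -- are matched exactly by the off-diagonal entries $\pm pq$ of $S$. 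The map $S$ is designed precisely to produce this identity.

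Finally, the conjugacy follows at once: $\Xi$ is a nonsingular isomorphism of $\widehat H\times\R\times\widehat H\times\R$, being assembled from the measure preserving maps $\widehat R$ and $\theta$ and the linear map $S$ of $\R^2$, which scales Lebesgue measure by the constant $|\det S| = |pq|$; hence $\Xi$ conjugates the infinite measure preserving action $\sigma\times\beta$ onto $\sigma^p\times\sigma^q$. I expect the main obstacle to be purely a matter of sign bookkeeping in the third step: reconciling the minus sign in the translation Maharam cocycle of Remark \ref{rem.Maharam translation}, the flip $\pi(g^{-1})c_g = -c_{g^{-1}}$, and the off-diagonal signs of $S$, so that the $\langle\om', c_{g^{-1}}\rangle$-terms cancel correctly.
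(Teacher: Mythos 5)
Your proposal is correct and is exactly the direct computation that the paper invokes without writing out (its proof is literally ``A direct computation gives the following result''). Your instinct to single out the sign bookkeeping as the delicate point is well placed: carrying the computation through with the conventions of Remark \ref{rem.Maharam translation} (so that $\sigma^s_g$ has fiber shift $-\tfrac{s^2}{2}\|c_g\|^2 + s\langle\om,c_{g^{-1}}\rangle$) and with $\langle\widehat R(\om,\om'),(\xi,\xi')\rangle = \langle(\om,\om'),R^{-1}(\xi,\xi')\rangle$, the quadratic terms recombine as you describe, but the $\langle\om',c_{g^{-1}}\rangle$-terms on the two sides agree only if the off-diagonal entries of $S$ are taken to be $-pq$ and $+pq$ rather than $+pq$ and $-pq$ as printed --- an immaterial change of convention (equivalently, conjugate $\beta$ by $t\mapsto -t$, or flip the sign of $q$ in $R$ only) that does not affect the statement or its use elsewhere in the paper.
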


\section{A weak convergence technique}\label{sec.weak convergence techniques}

The main goal of this section is to prove Theorem \ref{thm.non proper mixing direction}. We prove a series of weak limit lemmas that will be used throughout the paper. This will also allow us to prove the following sharp result on the ergodicity and Krieger type of translation actions on a Gaussian probability space. Note that for finite dimensional Hilbert spaces $H$, one may identify $H = \widehat{H}$, so that such translation actions preserve the Lebesgue measure on $H$.

\begin{theorem}\label{thm.translation action}
Let $H$ be a real Hilbert space of infinite dimension. Let $G \subset H$ be an additive subgroup of $H$ and let $\alpha : G \curvearrowright H$ be the translation action. Suppose that $G$ is sequentially weakly dense in $H$. Then $\widehat{\alpha} : G \curvearrowright (\widehat{H},\mu)$ is ergodic of type $\III_1$.
\end{theorem}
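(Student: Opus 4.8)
The plan is to work inside the Maharam extension and use the structure $\widehat{H}\times\R=\widehat{H\oplus\R}$, exploiting the fact that the Koopman/Gaussian machinery turns translation invariance into invariance under a genuine linear subspace of $H\oplus\R$. Concretely, since $\alpha$ is the translation action of $G\subset H$, for each $\xi\in G$ the transformation $\widehat{\alpha}_\xi=\widehat{j}_\xi$ has Maharam extension $J_\xi$ given by the explicit formula in Remark~\ref{rem.Maharam translation}. A function $F\in L^\infty(\widehat{H\oplus\R})$ is invariant under the whole Maharam extension precisely when it is invariant under all the $J_\xi$, $\xi\in G$. First I would compute, exactly as in the proof of Proposition~\ref{prop.intersection spaces}, that $J_\xi$-invariance of $F$ means $F$ is invariant under translation by the vector $(\xi,-\langle\,\cdot\,,\xi\rangle)$-type data; more precisely, the fixed-point algebra of the Maharam extension equals the algebra of $F\in L^\infty(\widehat{H\oplus\R})$ that are invariant under translation by every vector in the closed subspace
$$T=\overline{\lspan}\bigl\{(\xi,-\langle\eta,\xi\rangle)\ \text{data for}\ \xi\in G\bigr\}\subset H\oplus\R,$$
which I will identify cleanly below.

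The key computation is to determine this subspace $T$. Here I would use that $G$ is sequentially weakly dense in $H$: given any $\eta\in H$, choose $\xi_n\in G$ with $\xi_n\to\eta$ weakly. Because $G$ acts by translations, the relevant vectors sit as $(\xi_n,0)\in H\oplus\R$ together with the logarithmic-derivative contribution recorded in the $\R$-coordinate of $J_{\xi_n}$. The point of infinite-dimensionality is that weak density of $G$ forces the closed subspace generated by the translation vectors to be all of $H$ in the first leg, so that $T\supseteq H\oplus\{0\}$, and then the cocycle/derivative coordinate forces $T$ to contain a complementary direction so that in fact $T=H\oplus\R$. By the invariance description, $T=H\oplus\R$ means the Maharam-invariant functions are constant, i.e.\ the Maharam extension is ergodic, which is exactly type $\III_1$. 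Establishing ergodicity of $\widehat{\alpha}$ itself is the easier half: it follows either from the same subspace computation applied without the $\R$-leg (the fixed-point algebra of $\widehat{\alpha}$ is $L^\infty(\widehat{K})$ with $K=\bigcap_\xi\{\eta:\langle\eta,\xi\rangle=0\}^{\text{closure data}}$, which is $\{0\}$ by weak density), or by deducing it a posteriori from type $\III_1$ ergodicity of the Maharam extension.

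The main obstacle I anticipate is passing from \emph{weak} density to the conclusion that $T$ is the full space $H\oplus\R$, since weak convergence does not interact straightforwardly with closed linear spans and norm-based orthogonality arguments. The honest way to handle this is the weak convergence technique that this section is built to supply: given a putative invariant function $F$ and a target vector $\eta\in H$, pick $\xi_n\in G$ with $\xi_n\to\eta$ weakly and with $\|\xi_n\|$ controlled, then apply Proposition~\ref{weak convergence contraction} (or the weak-limit lemmas of this section) to the associated operators $\Psi_{T_n}$ so that $J_{\xi_n}$-invariance of $F$ passes, in the weak$^*$ limit, to invariance of $F$ under the transformation associated with $\eta$. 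Iterating this over a weakly dense $G$ yields invariance under translation by every $\eta\in H$, and in infinite dimensions this rigidifies $F$ to a constant. The finite-dimensional contrast noted in the introduction is precisely that there weak density forces $G$ to be norm-dense (indeed all of $H$), so translation by $H$ admits the invariant Lebesgue measure and ergodicity of the Maharam extension fails; it is the genuinely infinite-dimensional weak-limit phenomenon that makes the argument go through here.
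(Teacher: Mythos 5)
Your central computation does not work as stated. The Maharam extension $J_\xi$ of Remark \ref{rem.Maharam translation} sends $(\omega,t)$ to $(\omega+\xi,\;t-\tfrac12\|\xi\|^2-\langle\omega,\xi\rangle)$; the shift in the $\R$-coordinate depends on $\omega$, so $J_\xi$ is \emph{not} a translation of $\widehat{H\oplus\R}$, and the fixed-point algebra of the Maharam extension is not the algebra of functions invariant under translation by some closed subspace $T\subset H\oplus\R$. What the proof of Proposition \ref{prop.intersection spaces} characterizes by translation invariance are the subalgebras $\cM(\xi+K)$, i.e.\ the \emph{ranges} $J_\xi(L^\infty(\Kh\times\R))$, not fixed-point algebras of Maharam extensions; so the reduction to ``compute $T$ and show $T=H\oplus\R$'' is unjustified. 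Your fallback argument has the same problem in another guise: if $\xi_n\in G$ converges weakly but not in norm to $\eta$, then by Lemma \ref{weak convergence maharam} the pointwise weak$^*$ limit of $J_{\xi_n}$ is not $J_\eta$ but the composition of $J_\eta$ with the Gaussian convolution $\id\otimes\Phi_r$ in the $\R$-variable, where $r=\lim_n\|\xi_n-\eta\|>0$. Hence $J_{\xi_n}$-invariance does not pass in the limit to $J_\eta$-invariance, and ``iterating over a weakly dense $G$'' does not yield invariance under all $J_\eta$, $\eta\in H$.

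The missing idea is that this convolution defect is precisely the engine of the proof, and it must be used in a specific order. One first produces a sequence $\xi_n\in G$ with $\xi_n\to0$ weakly and $\|\xi_n\|\to r>0$; when $G$ is not norm dense this requires the additive group structure (take differences $\eta_n-\eta_m$ of a weakly but not norm convergent sequence in $G$), a step your plan does not address and which is exactly where the dichotomy with the finite-dimensional case lives. For an invariant $a\in L^\infty(\Hh\times\R)$ this gives $(\id\otimes\Phi_r)(a)=a$, and Lemma \ref{convolution constant} then forces $a$ to be independent of the $\R$-coordinate. Only \emph{after} the $t$-dependence has been killed does one invoke weak sequential density: for $\eta\in H$ a bounded sequence $\eta_n\in G$ with $\eta_n\to\eta$ weakly gives $\widehat{j}_{\eta_n}\to\widehat{j}_\eta$ pointwise weak$^*$ (the surviving $\Phi_r$ factor is now harmless), so $a$ is invariant under all translations of $\Hh$ and hence constant. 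Without separating these two steps, and without the construction of the weakly null sequence with norms bounded away from zero, your argument cannot close.
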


It was shown in \cite{AIM19} that if a sequence $\xi_n \in H$ converges to $0$ in the weak topology, then $\widehat{j}_{\xi_n}$ converges to $\id$ in the pointwise weak$^*$-topology on $L^\infty(\widehat{H},\mu)$. The situation is more subtle for the Maharam extensions $J_{\xi_n}$ and is clarified in Lemma \ref{weak convergence maharam} below.

\begin{lemma} \label{exponential weak convergence}
Let $H$ be a real Hilbert space and $(\widehat{H},\mu)$ the associated Gaussian probability space. Let $(\xi_n)_{n \in \N}$ a sequence of vectors in $H$ such that $\lim_n \| \xi_n\|=r$ and $\xi_n \to 0$ weakly. Then $e^{\ri \langle \cdot, \xi_n \rangle } \in L^\infty(\widehat{H},\mu)$ converges to $e^{-\frac{1}{2} r^2}$ in the weak$^*$-topology.
\end{lemma}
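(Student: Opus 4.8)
The plan is to verify the defining condition of weak$^*$ convergence in $L^\infty(\Hh,\mu) = (L^1(\Hh,\mu))^*$ directly: I must show that $\int_{\Hh} e^{\ri\langle\cdot,\xi_n\rangle}\, f\,\rd\mu \to e^{-\frac12 r^2}\int_{\Hh} f\,\rd\mu$ for every $f\in L^1(\Hh,\mu)$. Since $|e^{\ri\langle\cdot,\xi_n\rangle}| = 1$ pointwise, the sequence is bounded by $1$ in $L^\infty$, so by a standard $3\eps$-argument it suffices to establish this convergence for $f$ ranging over a family whose linear span is dense in $L^1$. For this I would fix the parameter $w = 1$ and use the functions $\exp_1(\eta) = e^{-\frac12\|\eta\|^2}e^{\widehat{\eta}}$, $\eta\in H$: by the Fourier-transform subsection their span is dense in $L^2(\Hh,\mu)$, and since $\mu$ is a probability measure the inclusion $L^2\subset L^1$ is contractive ($\|\cdot\|_1\le\|\cdot\|_2$), so this span is also dense in $L^1$.

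The core is then a short computation. Writing $e^{\ri\langle\cdot,\xi_n\rangle} = e^{\ri\widehat{\xi_n}} = e^{-\frac12\|\xi_n\|^2}\,\exp_{\ri}(\xi_n)$ and applying the inner-product formula $\langle\exp_z(\xi),\exp_w(\eta)\rangle = e^{z\bar w\langle\xi,\eta\rangle}$ (with $w=1$, so no conjugation subtlety arises), I obtain
\[
\int_{\Hh} e^{\ri\widehat{\xi_n}}\,\exp_1(\eta)\,\rd\mu = e^{-\frac12\|\xi_n\|^2}\,\langle\exp_{\ri}(\xi_n),\exp_1(\eta)\rangle = e^{-\frac12\|\xi_n\|^2}\,e^{\ri\langle\xi_n,\eta\rangle}.
\]
I would also record that $\int_{\Hh}\exp_1(\eta)\,\rd\mu = 1$, which is immediate from the Gaussian moment generating function $\int_{\Hh} e^{\widehat{\eta}}\,\rd\mu = e^{\frac12\|\eta\|^2}$.

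Finally I pass to the limit: by hypothesis $\|\xi_n\|^2 \to r^2$, while $\xi_n\to 0$ weakly forces $\langle\xi_n,\eta\rangle\to 0$, hence $e^{-\frac12\|\xi_n\|^2}e^{\ri\langle\xi_n,\eta\rangle}\to e^{-\frac12 r^2}$. This is exactly $e^{-\frac12 r^2}\int_{\Hh}\exp_1(\eta)\,\rd\mu$, so the required pairing limit holds on the total family $\{\exp_1(\eta)\}_{\eta\in H}$ and therefore, by the uniform bound, on all of $L^1$. I do not anticipate a genuine obstacle: the only two points needing care are (i) the reduction of weak$^*$ convergence to a dense subset of the predual, which is legitimate precisely because of the uniform $L^\infty$-bound, and (ii) keeping the conjugations in the inner-product formula straight, which is avoided cleanly by working with the real value $w=1$.
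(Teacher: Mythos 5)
Your proof is correct and follows essentially the same route as the paper: test the weak$^*$ convergence against a total family of exponential vectors (the paper uses $e^{\ri\widehat{\eta}}$ and the identity $\mu(e^{\ri\widehat{\xi_n}}e^{\ri\widehat{\eta}})=e^{-\frac12\|\xi_n+\eta\|^2}$, you use $\exp_1(\eta)$ and the formula $\langle\exp_z(\xi),\exp_w(\eta)\rangle=e^{z\bar w\langle\xi,\eta\rangle}$), then conclude by the uniform $L^\infty$-bound. The two computations are equivalent, and your explicit attention to the reduction to a dense subset of the predual is the same density argument the paper leaves implicit.
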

\begin{proof}
This follows from the computation
$$ \mu( e^{\ri \langle \cdot, \xi_n \rangle } e^{\ri \langle \cdot, \eta \rangle } )= e^{-\frac{1}{2} \|\xi_n + \eta \|^2} \to e^{-\frac{1}{2} r^2} e^{-\frac{1}{2} \| \eta \|^2}=e^{-\frac{1}{2} r^2} \mu( e^{\ri \langle \cdot, \eta \rangle } )$$
for all $\eta \in H$ and the fact that the linear span of $\{ e^{\ri \langle \cdot, \eta \rangle } \mid \eta \in H \}$ is a dense $*$-subalgebra of $L^\infty(\widehat{H},\mu)$.
\end{proof}

\begin{lemma} \label{Lp integrable}
Let $H$ be a real Hilbert space and $(\widehat{H},\mu)$ the associated Gaussian probability space. Let $\nu$ be the standard centered Gaussian probability measure on $\R$. Fix $R > 0$. Then there exists $p > 1$ and a constant $C > 0$ such that
$$ \int_{\widehat{H} \times \R} \left( \frac{\rd ((J_\xi)_* (\mu \otimes \nu)) }{\rd(\mu \otimes \nu)} \right)^p \, \rd (\mu \otimes \nu) \leq C$$
for all $\xi \in H$ with $\| \xi \| \leq R$.
\end{lemma}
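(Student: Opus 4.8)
The plan is to make the Radon--Nikodym derivative
\[
\phi_\xi := \frac{\rd\bigl((J_\xi)_*(\mu\ot\nu)\bigr)}{\rd(\mu\ot\nu)}
\]
completely explicit and then to reduce the estimate to an elementary Gaussian integral. The key observation is that, by construction of the Maharam extension, $J_\xi$ preserves the infinite measure $m := \mu\ot(e^{-t}\,\rd t)$ (cf.\ Remark \ref{rem.Maharam translation}, where $J_\xi$ is given explicitly). Writing $\mu\ot\nu = h\cdot m$ with $h=\frac{\rd(\mu\ot\nu)}{\rd m}$ a function of the $\R$-coordinate $t$ alone, the chain rule for push-forwards under an $m$-preserving map gives $\phi_\xi = (h\circ J_\xi^{-1})/h$. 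Since $J_\xi^{-1}=J_{-\xi}$ is again explicit, a short computation yields
\[
\phi_\xi(\om,t) = \exp\Bigl(v - vt - \tfrac12 v^2\Bigr), \qquad v := \langle\om,\xi\rangle - \tfrac12\|\xi\|^2 .
\]

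Under $\mu\ot\nu$ the two random variables $t$ and $\langle\,\cdot\,,\xi\rangle$ are independent centered Gaussians, of variance $1$ and $\|\xi\|^2$ respectively, so $v$ is Gaussian with mean $-\tfrac12\|\xi\|^2$ and variance $\|\xi\|^2$, independent of $t$. Raising to the $p$-th power gives $\phi_\xi^p = \exp\bigl(pv - pvt - \tfrac{p}{2}v^2\bigr)$, whose exponent is \emph{linear} in $t$. I would integrate out $t$ first, using $\int e^{\lambda t}\,\rd\nu(t)=e^{\lambda^2/2}$, which produces
\[
\int_\R \phi_\xi^p \,\rd\nu(t) = \exp\Bigl(pv + \tfrac{p(p-1)}{2}\,v^2\Bigr).
\]
The crucial feature is that the residual quadratic coefficient $\tfrac{p(p-1)}{2}$ is now \emph{positive} for $p>1$. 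Integrating this against the Gaussian law of $v$ (whose density carries the weight $e^{-v^2/(2\|\xi\|^2)}$) converges precisely when $p(p-1)\|\xi\|^2<1$, and then evaluates to an explicit closed form of the shape $\bigl(1-p(p-1)\|\xi\|^2\bigr)^{-1/2}\exp(\cdots)$.

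To finish, I would choose $p$ depending only on $R$: since $p(p-1)\to 0$ as $p\to 1^+$, fix $p>1$ with $p(p-1)<1/R^2$. Then $1-p(p-1)\|\xi\|^2 \ge 1-p(p-1)R^2>0$ for all $\|\xi\|\le R$, so the integral is finite and, viewed as an explicit continuous function of $\|\xi\|^2$ on the compact interval $[0,R^2]$, is bounded by a constant $C=C(p,R)$. This is exactly the asserted uniform bound.

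The main obstacle is conceptual rather than computational: $J_\xi$ is only nonsingular (not measure preserving) for $\mu\ot\nu$, and passing to the $p$-th power turns the harmless term $-\tfrac{p}{2}v^2$ into a genuine difficulty, since integrating out $t$ contributes a competing $+\tfrac{p^2}{2}v^2$. One must check that the resulting net positive Gaussian weight $e^{p(p-1)v^2/2}$ is still integrable against the law of $v$; this forces the constraint $p(p-1)\|\xi\|^2<1$, explains why $p$ has to be taken close to $1$ (closer as $R$ grows), and is the reason the bound is uniform on every ball $\{\|\xi\|\le R\}$ but degenerates as $R\to\infty$.
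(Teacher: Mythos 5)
Your proposal is correct and follows essentially the same route as the paper: make the Radon--Nikodym derivative explicit, integrate out the $\R$-variable first to obtain $\exp\bigl(pw+\tfrac12 p(p-1)w^2\bigr)$ for a Gaussian variable $w$ of variance $\|\xi\|^2$, and observe that this is uniformly integrable once $p(p-1)<1/R^2$. The only (harmless) difference is presentational: you derive the explicit density from the $J_\xi$-invariance of $\mu\otimes e^{-t}\,\rd t$, whereas the paper writes it down directly.
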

\begin{proof}
We have
$$\frac{\rd((J_\xi)_* (\mu \otimes \nu))}{\rd(\mu \otimes \nu)}(\omega,t) = e^{h(\omega)} \exp\left( -\frac{1}{2}( t-h(\omega))^2 +\frac{1}{2}t^2 \right) $$
where $h(\omega)=-\frac{1}{2}\| \xi \|^2 - \langle \omega, \xi \rangle$. Thus by integrating first with respect to the variable $t$, we can compute
$$\int_\R \left( \frac{\rd((J_\xi)_* (\mu \otimes \nu))}{\rd(\mu \otimes \nu)}(\omega,t) \right)^p \, \rd \nu(t) = \exp\left( \frac{1}{2} p(p-1)h(\omega)^2 + ph(\omega) \right)$$
Since $h(\omega)=-\frac{1}{2}\| \xi \|^2 - \langle \omega, \xi \rangle$ is a Gaussian random variable of variance $\| \xi \|^2 \leq R^2$, one sees easily that $\exp\left( \frac{1}{2} p(p-1)h^2 + ph \right) \in L^1(\widehat{H},\mu)$ as soon as $p(p-1) < 1/R^2$ with a uniform bound on the $L^1$-norm. This proves our claim.
\end{proof}

For every $r \geq 0$, denote by $\Phi_r$ the convolution map on $L^\infty(\R)$ given by $\Phi_r(f)=f \ast \varphi$ where  $\varphi$ is the Gaussian distribution of mean $\frac{1}{2}r^2$ and variance $r^2$. If $r=0$, we let $\Phi_r=\id$.

\begin{lemma} \label{weak convergence maharam}
Let $\xi_n \in H$ be a sequence such that $\lim_n \| \xi_n \|=r \geq 0$ and $\lim_n \xi_n =  0$ in the weak topology. Then $J_{\xi_n} : L^\infty(\widehat{H} \times \R) \rightarrow L^\infty(\widehat{H} \times \R)$ converges in the pointwise weak$^*$-topology to $\id \otimes \Phi_r$.
\end{lemma}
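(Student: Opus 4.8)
The plan is to interpret $J_{\xi_n}$ as the Koopman $*$-automorphism $F \mapsto F \circ J_{\xi_n}$ of $L^\infty(\Hh \times \R)$, and to exploit the identification $\Hh \times \R = \widehat{H \oplus \R}$ obtained by equipping $\R$ with the standard Gaussian probability measure $\nu$ (as in Lemma \ref{Lp integrable}). Since the weak$^*$ topology on $L^\infty$ depends only on the measure class and $\nu$ is equivalent to Lebesgue measure, this is the same statement, but working on the probability space $m = \mu \ot \nu$ lets me invoke the uniform $L^p$-bound of Lemma \ref{Lp integrable}. I denote by $\cA$ the unital $*$-algebra spanned by the functions $e^{\ri \langle \cdot, \eta \rangle} \ot e^{\ri \lambda t}$ with $\eta \in H$, $\lambda \in \R$; it is dense in $L^2(m)$, hence norm dense in $L^1(m)$, and weak$^*$ dense in $L^\infty(m)$.

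First I would carry out the explicit computation on $\cA$. For $F = e^{\ri \langle \cdot, \eta \rangle} \ot h$ and $g = e^{\ri \langle \cdot, \zeta \rangle} \ot k$ with $h,k$ exponentials in $t$, the pairing $\int (F \circ J_{\xi_n})\, g \, \rd m$ is a genuine finite-dimensional Gaussian integral: after substituting the formula for $J_{\xi_n}$ it factors through the joint law of the two Gaussian variables $\langle \cdot, \eta + \zeta \rangle$ and $\langle \cdot, \xi_n \rangle$, whose covariance is $\langle \eta + \zeta, \xi_n \rangle \recht 0$ (by weak convergence $\xi_n \recht 0$) while the variance of the second is $\|\xi_n\|^2 \recht r^2$. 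Thus the two variables become asymptotically independent and $\tfrac12\|\xi_n\|^2 + \langle \cdot, \xi_n\rangle$ converges in law to the Gaussian of mean $\tfrac12 r^2$ and variance $r^2$, which is exactly the measure $\varphi$ defining $\Phi_r$. A direct evaluation then gives $\lim_n \int (F \circ J_{\xi_n})\,g\,\rd m = e^{-\frac12 \|\eta + \zeta\|^2} \int_\R \Phi_r(h)(t)\, k(t)\, \rd\nu(t) = \int (\id \ot \Phi_r)F \cdot g\,\rd m$. With exponential factors everything is explicit, so this step is routine.

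The main obstacle is upgrading this from $F \in \cA$ to arbitrary $F \in L^\infty$: the naive reduction fails because $\cA$ is \emph{not} norm dense in $L^\infty$. Here I would pass to the preadjoint. Fix $g \in \cA$ and consider $J_{\xi_n}^\ast g = (g \circ J_{\xi_n}^{-1})\, D_n \in L^1(m)$, where $D_n = \rd((J_{\xi_n})_\ast m)/\rd m$; the desired statement for this $g$ and all $F \in L^\infty$ is precisely that $J_{\xi_n}^\ast g \recht (\id \ot \Phi_r)_\ast g$ weakly in $L^1(m)$. By Lemma \ref{Lp integrable} there is $p > 1$ with $\sup_n \|J_{\xi_n}^\ast g\|_p \le \|g\|_\infty \sup_n \|D_n\|_p < \infty$, so by the Dunford--Pettis theorem the sequence $(J_{\xi_n}^\ast g)_n$ is relatively weakly compact in $L^1(m)$. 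It therefore suffices to show that every weak cluster point equals $(\id \ot \Phi_r)_\ast g$; and since the weak$^*$ dense subalgebra $\cA$ of $L^\infty$ separates the points of $L^1$, testing against $F \in \cA$ reduces this exactly to the computation of the previous paragraph.

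Finally I would remove the restriction $g \in \cA$. For fixed $F \in L^\infty$ the set of $g \in L^1(m)$ for which $\int (F \circ J_{\xi_n})\,g\,\rd m \recht \int (\id \ot \Phi_r)F \cdot g\,\rd m$ is norm closed, because $|\int (F \circ J_{\xi_n})\,g\,\rd m| \le \|F\|_\infty \|g\|_1$ uniformly in $n$; as it contains the norm dense algebra $\cA$, it is all of $L^1(m)$. This yields pointwise weak$^*$ convergence $J_{\xi_n} \recht \id \ot \Phi_r$ on all of $L^\infty(\Hh \times \R)$. The one place that genuinely requires the $L^p$-bound, and hence is the crux of the argument, is the weak-compactness step identifying the $L^1$-weak limit of $J_{\xi_n}^\ast g$; everything else is either the Gaussian computation or a soft density argument.
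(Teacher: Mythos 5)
Your proof is correct and follows essentially the same route as the paper's: the explicit Gaussian computation on the algebra spanned by the functions $e^{\ri \langle \cdot, \eta \rangle} \otimes e^{\ri \lambda t}$, combined with the uniform $L^p$-bound of Lemma \ref{Lp integrable} to pass to arbitrary bounded functions. The only difference is packaging: where you run the extension step in the predual, using Dunford--Pettis weak compactness of $(J_{\xi_n}^\ast g)_n$ in $L^1$ and separation by the weak$^*$-dense algebra, the paper works directly in $L^\infty$, approximating $f$ by $f_0$ in the algebra in $\|\cdot\|_q$-norm and estimating $\|J_{\xi_n}(f-f_0)\|_1 \leq C^{1/p}\|f-f_0\|_q$ by H\"older --- a dual formulation of the same estimate.
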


\begin{proof}
On $\widehat{H} \times \R$, we use the probability measure $\mu \otimes \nu$ as in Lemma \ref{Lp integrable} and we define the $L^p$-norms with respect to this measure.

First, take $f=e^{\ri \widehat{\eta} }  \otimes  u_s$ for $\eta \in H$ and $s \in \R$, where $u_s(t)=e^{\ri st}$ for all $t \in \R$. Observe that $$\Phi_r(u_s)=e^{ -\frac{r^2}{2}(s^2+\ri s) }u_s \; .$$
On the other hand, we have
$$ J_{\xi_n}(f)=\exp\left(   -\ri \langle\xi_n, \eta \rangle -\ri s  \frac{1}{2} \| \xi_n\|^2 + \ri s \widehat{\xi_n} \right) f \; .$$
It follows from Lemma \ref{exponential weak convergence} that $J_{\xi_n}(f)$ converges in the weak$^*$-topology to
$$e^{ -\frac{r^2}{2}(s^2+\ri s) }  f =(\id \otimes \Phi_r)(f) \; .$$
Now, observe that the linear span of all functions $f$ of the form $f=e^{\ri \widehat{\eta} }  \otimes  u_s$ for $\eta \in H$ and $s \in \R$ is a weak$^*$-dense subalgebra $A \subset L^\infty(\widehat{H} \times \R,\mu \otimes \nu)$. The weak$^*$ convergence $J_{\xi_n}(f) \to (\id \otimes \Phi_r)(f)$ holds for all $f \in A$. Now take any $f \in L^\infty(\widehat{H} \times \R,\mu \otimes \nu)$. Take $R \geq \sup_n \| \xi_n\|$ and $p > 1$ as in Lemma \ref{Lp integrable}. Take $q > 1$ such that $\frac{1}{p}+\frac{1}{q}=1$. Take $\varepsilon > 0$ and $f_0 \in A$ such that $\| f-f_0\|_q \leq \varepsilon$ and $\|(\id \otimes \Phi_r)(f-f_0)\|_1 \leq \varepsilon$. By H\"{o}lder's inequality, we have
$$ \| J_{\xi_n}(f-f_0) \|_1 \leq \left \|  \frac{\rd((J_\xi)_* (\mu \otimes \nu))}{\rd(\mu \otimes \nu)} \right \|_p \cdot \|f-f_0\|_q \leq C^{1/p} \varepsilon $$
where $C$ is the constant of Lemma \ref{Lp integrable}. This shows in particular, that if $\varepsilon > 0$ is small enough, then both $J_{\xi_n}(f)-J_{\xi_n}(f_0)$ and $(\id \otimes \Phi_r)(f)-(\id \otimes \Phi_r)(f_0)$ will be close to $0$ in the weak$^*$-topology. Since $J_{\xi_n}(f_0) \to (\id \otimes \Phi_r)(f_0)$ in the weak$^*$-topology, it follows that $J_{\xi_n}(f) \to (\id \otimes \Phi_r)(f)$ in the weak$^*$-topology as we wanted.
\end{proof}

\begin{lemma} \label{convolution constant}
Take $r > 0$ and let $f \in L^\infty(\R)$ be a function such that $\Phi_r(f)$ is constant or $f=\Phi_r(f)$. Then $f$ is constant.
\end{lemma}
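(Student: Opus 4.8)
The plan is to pass to Fourier transforms at the level of tempered distributions and exploit that the Fourier multiplier of $\Phi_r$ is a smooth function that vanishes nowhere and has modulus strictly below $1$ away from the origin.

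First I would record the multiplier. Regarding $f \in L^\infty(\R)$ as a tempered distribution and writing $\widehat{\phantom{f}}$ for the Fourier transform on $\mathcal{S}'(\R)$, convolution with the Schwartz (Gaussian) function $\varphi$ corresponds to multiplication by the smooth bounded function $\hat\varphi$. The computation in the proof of Lemma \ref{weak convergence maharam}, carried out on the characters $u_s(t)=e^{\ri st}$, already identifies this multiplier as
\[
\hat\varphi(s) = \exp\bigl(-\tfrac{r^2}{2}(s^2 + \ri s)\bigr), \qquad\text{so}\qquad |\hat\varphi(s)| = \exp\bigl(-\tfrac{r^2}{2}s^2\bigr).
\]
Since $r > 0$, the function $\hat\varphi$ is smooth, vanishes nowhere, and satisfies $|\hat\varphi(s)| < 1$ for every $s \neq 0$; in particular $\hat\varphi(s) \neq 1$ for all $s \neq 0$.

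Next I would treat the two hypotheses simultaneously. If $\Phi_r(f)$ is constant, then $\hat f\,\hat\varphi = \widehat{\Phi_r(f)}$ is a scalar multiple of $\delta_0$, hence vanishes on $\R\setminus\{0\}$. If instead $f = \Phi_r(f)$, then $\hat f = \hat f\,\hat\varphi$, i.e.\ $\hat f\,(1-\hat\varphi) = 0$ everywhere. In either case I claim $\supp \hat f \subseteq \{0\}$: on the open set $\R\setminus\{0\}$ the distribution $\hat f$ is multiplied by a smooth, nowhere-vanishing function there (namely $\hat\varphi$ in the first case, and $1-\hat\varphi$ in the second, using the remarks above), and multiplying the resulting identity by the smooth reciprocal forces $\hat f = 0$ on $\R\setminus\{0\}$.

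Finally I would invoke the structure theorem for distributions supported at a single point: $\hat f = \sum_{k=0}^N a_k\,\delta_0^{(k)}$ is a finite combination of derivatives of $\delta_0$, so $f$ agrees almost everywhere with a polynomial. As $f \in L^\infty(\R)$, this polynomial is bounded and therefore constant, which is exactly the assertion. The point requiring the most care—and what I regard as the main obstacle—is the Fourier-analytic core: justifying that the product of a tempered distribution with a smooth nowhere-vanishing function on $\R\setminus\{0\}$ can only be supported at the origin, and then applying the classification of point-supported distributions. Both steps are routine once the multiplier $\hat\varphi$ has been computed and its modulus examined.
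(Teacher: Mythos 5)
Your proof is correct, but it follows a genuinely different route from the paper's. You run a single unified Fourier-distributional argument: the multiplier $\hat\varphi(s)=\exp\bigl(-\tfrac{r^2}{2}(s^2+\ri s)\bigr)$ satisfies $|\hat\varphi(s)|<1$ for $s\neq 0$, so in either case the identity ($\hat f\,\hat\varphi\in\C\delta_0$, resp.\ $\hat f\,(1-\hat\varphi)=0$) can be divided on the open set $\R\setminus\{0\}$ by a smooth nonvanishing function, giving $\supp\hat f\subseteq\{0\}$; the classification of point-supported distributions then makes $f$ a bounded polynomial, hence constant. All the steps you flag as needing care (multiplying a tempered distribution by the Schwartz function $\hat\varphi$, restricting to an open set and dividing by a smooth nonvanishing function there, the structure theorem) are indeed routine and correctly deployed. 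The paper instead treats the two hypotheses by separate real-variable arguments: for ``$\Phi_r(f)$ constant'' it reduces to $\Phi_r(f)=0$ and invokes the density in $L^1(\R)$ of the span of translated Gaussians (which is the same nonvanishing-Fourier-transform fact in disguise, quoted from \cite[Proposition 4.7]{AIM19}); for ``$f=\Phi_r(f)$'' it iterates to $f=\Phi_{nr}(f)$ and uses $\|j_\xi(\varphi_{nr})-\varphi_{nr}\|_1\to 0$ as the variance blows up to conclude that $f$ is translation invariant. Your version is more uniform and handles both cases with one mechanism, at the cost of importing the tempered-distribution formalism; the paper's version stays within the elementary $L^1$/$L^\infty$ duality framework it uses elsewhere (e.g.\ in the proof of Theorem \ref{thm.non proper mixing direction}, where the iteration trick for the second case is reused verbatim).
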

\begin{proof}
Let $f \in L^\infty(\R)$ be a function such that $\Phi_r(f)$ is constant. Up to replacing $f$ by $f-\Phi_r(f)$, we may assume that $\Phi_r(f)=0$. This implies that $\int_\R f \, \rd\mu_x = 0$ for all $x \in \R$, where $\mu_x$ is the Gaussian probability distribution of variance $r^2$ centered at $x$. Since the linear span of $\{ \mu_x \mid x \in \R \}$ is dense in $L^1(\R)$ (see \cite[Proposition 4.7]{AIM19} for example), it follows that $f=0$ as we wanted.

Let $f \in L^\infty(\R)$ be a function such that $f=\Phi_r(f)$. Then $f=\Phi_{nr}(f)$ for all $n \in \N$. Take $\xi \in \R$ and let $j_\xi$ be the translation map on $L^\infty(\R)$. Then
$$\| j_\xi(\Phi_{nr}(f))-\Phi_{nr}(f) \|_\infty \leq \| f \|_\infty \cdot \| j_\xi(\varphi_{nr})-\varphi_{nr}\|_1$$
where $\varphi_{nr}$ is a Gaussian distribution of variance $nr$. Since $\| j_\xi(\varphi_{nr})-\varphi_{nr}\|_1 \to 0$ when $n \to \infty$, we conclude that $j_\xi(f)=f$ for all $\xi \in \R$, which means that $f$ is constant.
\end{proof}

We are now ready to prove Theorems \ref{thm.translation action} and \ref{thm.non proper mixing direction}.

\begin{proof}[{Proof of Theorem \ref{thm.translation action}}]
We first show that there exists a sequence $\xi_n \in G$ that converges weakly to $0$ and such that $\lim_n \| \xi_n \|=r > 0$. This is obvious if $G$ is norm dense in $H$. Now, suppose that $G$ is not norm dense. Take $\eta \in H$ that is not in the norm closure of $G$. By assumption, there exists a sequence $\eta_n \in H$ such that $\lim_n \eta_n = \eta$ weakly. By the uniform boundedness principle, we know that this sequence is bounded in norm. We have $\lim_{n,m \to \infty} (\eta_n-\eta_m) = 0$ in the weak topology. However, since $\eta$ is not in the norm closure of $G$, the sequence $(\eta_n)_{n \in \N}$ is not a Cauchy sequence with respect to the norm, which means that $\limsup_{n,m \to \infty} \| \eta_n-\eta_m \| =r > 0$. We can thus extract from it a sequence $\xi_n \in G$ such that $\xi_n \to 0$ weakly and $\lim_n \| \xi_n\|=r > 0$ as we wanted.

Now, take $a \in L^\infty(\widehat{H} \times \R)$ that is fixed by the Maharam extension of $\widehat{\alpha}$. Then $J_{\xi_n}(a)=a$ for all $n$. Therefore by Lemma \ref{weak convergence maharam}, we have $(\id \otimes \Phi_r)(a)=a$. By Lemma \ref{convolution constant}, we conclude that $a \in L^\infty(\widehat{H})$. Now, take $\eta \in H$. There exists a sequence $\eta_n \in G$ such that $\eta_n \to \eta$ weakly. This sequence is bounded by the uniform boundedness principle. Thus $\widehat{j}_{\eta_n} \to \widehat{j}_\eta$ in the pointwise weak$^*$-topology. We conclude that $\widehat{j}_\eta(a)=a$ for all $\eta \in H$, hence that $a$ is constant \cite[Proposition 4.13]{AIM19}. This shows that the Maharam extension of $\widehat{\alpha}$ is ergodic as we wanted.
\end{proof}

\begin{proof}[{Proof of Theorem \ref{thm.non proper mixing direction}}]
Up to extracting a subsequence, we may assume that $c(g_n)$ converges weakly to some $\xi \in H$. Up to replacing the cocycle $c$ by the cocycle $g \mapsto c(g)+\pi(g) \xi-\xi$, we may assume that $\xi=0$, i.e.\ $c(g_n)$ converges weakly to $0$. Take $g \in G$ such that $c(g) \neq 0$, which is possible because $c$ is not a coboundary. Note that $c(g_ng)=c(g_n)+\pi(g_n)c(g)$ also converges weakly to $0$. Moreover $\| c(g_ng)-c(g_n)\|=\| c(g)\| > 0$ for all $n$. In particular, the sequences $\|c(g_n)\|$ and $\|c(g_ng)\|$ cannot both converge to $0$. Therefore, up to replacing $g_n$ by $g_ng$ and extracting a subsequence, we may assume that $c(g_n)$ converges weakly to $0$ and $\lim_n \| c(g_n)\|=r > 0$.

Let $\be : G \actson (Z,\zeta)$ be any ergodic pmp action. Let $a \in L^\infty(Z \times \Hh \times \R)$ be invariant under the product of the action $\be$ and the Maharam extension of $\alh$. We have to prove that $a$ is constant. Denote by $\rho : G \actson L^2(Z,\zeta)$ the Koopman representation of $\be$. After a further passage to a subsequence, we may assume that $\rho_{g_n^{-1}} \recht T \in B(L^2(Z,\zeta))$ weakly. Note that both $T$ and $T^*$ restrict to contractive, weak$^*$-continuous linear maps from $L^\infty(Z)$ to $L^\infty(Z)$.

Since
$$(\id \ot \pih(g_n) \ot \id)(a) = (\be_{g_n^{-1}} \ot J_{-c(g_n)})(a) \quad\text{for all $n \in \N$,}$$
it follows from Lemma \ref{weak convergence maharam} that $(\id \ot \mu \ot \id)(a) = (T \ot \id \ot \Phi_r)(a)$. Define $b \in L^\infty(Z \times \R)$ by $b = (\id \ot \mu \ot \id)(a)$. Then, $b = (T \ot \Phi_r)(b)$. So, $b = (T^n \ot \Phi_{nr})(b)$ for all $n \in \N$. As in the proof of Lemma \ref{convolution constant}, we conclude that $b$ is translation invariant in the second variable, i.e.\ $b = d \ot 1$ with $d \in L^\infty(Z)$ satisfying $T(d) = d$.

So, $(\id \ot \mu \ot \id)(a) = b = d \ot 1$. We also have that
$$(\be_{g_n} \ot \pih(g_n) \ot \id)(a) = (\id \ot J_{-c(g_n)})(a) \quad\text{for all $n \in \N$.}$$
Since $\rho_{g_n} \recht T^*$ weakly, it follows from Lemma \ref{weak convergence maharam} that $(T^* \ot \mu \ot \id)(a) = (\id \ot \id \ot \Phi_r)(a)$. Since $(\id \ot \mu \ot \id)(a) = d \ot 1$, we get that
$$T^*(d) \ot 1 \ot 1 = (T^* \ot \mu \ot \id)(a) = (\id \ot \id \ot \Phi_r)(a) \; .$$
Applying Lemma \ref{convolution constant}, we first conclude that $a \in L^\infty(Z \times \Hh) \ot 1$ and then that $a = T^*(d) \ot 1 \ot 1$. Thus, $a \in L^\infty(Z) \ot 1 \ot 1$. Since $a$ is $G$-invariant and $G \actson (Z,\zeta)$ is ergodic, it follows that $a$ is constant.
\end{proof}

\begin{remark}\label{rem.not for nilpotent}
Note that Theorem \ref{thm.non proper mixing direction} is not applicable to nilpotent groups. Combining \cite[Proposition 2.8]{AIM19} and \cite[Proof of Proposition 2.12]{AIM19}, it follows that for nilpotent groups $G$, if $\pi(g_n) \recht 0$ weakly and if $c$ is not a coboundary, then $\|c(g_n)\| \recht +\infty$. Hence, Theorem \ref{thm.non proper mixing direction} is an empty result when $G$ is nilpotent. In Section \ref{sec.actions nilpotent groups}, we obtain sufficient conditions for the ergodicity and type III$_1$ of nonsingular Gaussian actions of nilpotent groups.
\end{remark}

\section{Krieger type of Gaussian actions}

In this section, we prove a more precise version of Theorem \ref{thm.Krieger type dichotomy}.

To formulate our results, dealing with not necessarily ergodic actions, we introduce the following terminology. We say that a nonsingular action $\R \actson (Z,\zeta)$ is \emph{aperiodic} if $0$ is the only eigenvalue: if $F \in L^\infty(Z,\zeta)$ and $a \in \R \setminus \{0\}$ such that $F(t \cdot z) = \exp(\ri a t) F(z)$ for all $t \in \R$ and a.e.\ $z \in Z$, then $F(z) = 0$ for a.e.\ $z \in Z$. We say that a nonsingular group action $G \actson (X,\mu)$ is of \emph{aperiodic type} if the associated flow $\R \actson L^\infty(X \times \R)^G$ is aperiodic.

Observe that if $G \actson (X,\mu)$ is of aperiodic type, then it is purely of type~III. This means that there is no nonzero $\sigma$-finite measure that is absolutely continuous with respect to $\mu$ and is invariant under $\sigma$. An ergodic nonsingular action of aperiodic type must be of type III$_1$ or type III$_0$.

The first part of Theorem \ref{thm.Krieger type dichotomy} is covered by the following result on \emph{proper} affine isometric actions of \emph{locally compact} groups.

\begin{theorem} \label{proper aperiodic}
Let $\alpha : G \curvearrowright H$ be a continuous affine isometric action of a locally compact group $G$ on a real Hilbert space $H$. Suppose that $\alpha$ is proper. Then $\widehat{\alpha}^t$ is of aperiodic type for all $0 < t < t_{\rm diss}(\alpha)$.
\end{theorem}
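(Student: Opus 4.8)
The plan is to show that the flow $\R \actson L^\infty(\Hh \times \R)^G$ associated with $\widehat{\alpha}^t$ has no nonzero eigenvalue for $0 < t < t_{\rm diss}(\alpha)$. Fix such a $t$ and suppose, for contradiction, that there is a nonzero $F \in L^\infty(\Hh \times \R)$ that is invariant under the Maharam extension $\sigma^t$ of $\widehat{\alpha}^t$ and satisfies $F(\omega, s+u) = \exp(\ri a u) F(\omega,s)$ for some $a \neq 0$, where the second coordinate of $\Hh \times \R$ carries the translation flow. The strategy is to exploit properness of $\alpha$ together with conservativeness (which holds precisely because $t < t_{\rm diss}(\alpha)$) to produce, along a suitable sequence $g_n \in G$, a weak limit that forces $F$ to be both invariant under a convolution $\Phi_r$ with $r > 0$ in the $\R$-variable and an eigenfunction with nonzero eigenvalue — a contradiction with Lemma \ref{convolution constant}.

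Concretely, I would first use that $\widehat{\alpha}^t$ is conservative for $t < t_{\rm diss}(\alpha)$, so by a recurrence/Maharam argument one can find a sequence $g_n$ with $g_n \to \infty$ in $G$ (using properness, $\|c^t_{g_n}\| = t\|c_{g_n}\| \to \infty$ is automatic off compact sets, so instead I want a sequence where recurrence produces bounded return). The cleaner route is to invoke the rotation trick: by Proposition \ref{rotation maharam}, with $p^2 + q^2 = 1$, the action $\sigma^p \times \sigma^q$ is conjugate to $\sigma \times \beta$, where $\beta$ is the skew product. Applying this with the scaling by $t$, I would relate an eigenfunction of the $\widehat{\alpha}^t$-flow to invariant structure for a product involving the pmp Gaussian action $\widehat{\pi}$ and the skew product, which lets me separate the ``measure-preserving linear part'' from the genuinely nonsingular translation part.

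The core analytic input is the weak convergence machinery of Section \ref{sec.weak convergence techniques}. Since $\alpha$ is proper, any sequence $g_n \to \infty$ has $\|c_{g_n}\| \to \infty$; this is the wrong direction for Lemma \ref{weak convergence maharam}, which needs $\|c_{g_n}\|$ bounded. Hence the properness must be used differently: I expect to integrate out the Gaussian variable to pass to the flow $L^\infty(\Hh \times \R)^G$, and then use that for a proper action the only way an eigenvalue can survive is if it comes from a genuine $\sigma$-finite invariant measure on a dissipative piece, which is excluded by conservativeness. Thus the aperiodicity reduces to: a conservative nonsingular action whose Maharam flow has a nonzero eigenvalue $a$ must admit an $\R/(2\pi/a)\Z$-valued cocycle that is a coboundary in a way incompatible with the Gaussian cocycle's unbounded growth.

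The main obstacle, as I see it, is reconciling the two regimes: Lemma \ref{weak convergence maharam} gives the convolution-invariance conclusion (and hence constancy via Lemma \ref{convolution constant}) only for \emph{bounded} cocycle sequences, whereas properness forces $\|c_{g_n}\| \to \infty$. The resolution I would pursue is to not take $g_n \to \infty$ but instead use conservativeness to build, via the Maharam extension's invariant measure $\rd\mu \, \exp(-s)\rd s$ and Poincaré recurrence on the $\R$-factor, a sequence of group elements whose action returns a fixed-measure set to itself with bounded displacement in the $\R$-coordinate; the eigenfunction relation then pins the eigenvalue $a$ to the discrete set of return times, and letting the recurrence window grow shows $a$ must be $0$. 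Making this recurrence argument quantitative against the phase transition threshold $t_{\rm diss}(\alpha)$ — i.e.\ genuinely using $t < t_{\rm diss}(\alpha)$ rather than just conservativeness abstractly — is where the real work lies, and I would expect to lean on the Poincaré-exponent estimates for $t_{\rm diss}$ recalled in the introduction.
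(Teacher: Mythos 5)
You have correctly located the two relevant tools (the rotation trick of Proposition \ref{rotation maharam} and the weak-limit machinery of Section \ref{sec.weak convergence techniques}) and you have correctly diagnosed that Lemma \ref{weak convergence maharam} cannot be applied, since properness forces $\|c_{g_n}\| \recht \infty$ along any sequence leaving compact sets. But the resolution you then propose does not close the argument, and it misses the actual mechanism. The point is that the unboundedness of the cocycle is not an obstacle to be worked around: it is precisely the input. By Lemma \ref{sequence unbounded mixing} (equivalently \cite[Theorem 6.3(ii)]{AIM19}), properness implies that the Koopman representation $\rho_s$ of $\widehat{\alpha}^s$ is \emph{mixing} for every $s>0$. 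The rotation trick, in the form of Lemma \ref{no equivariant type III}, converts a periodic eigenfunction for the flow of $\widehat{\alpha}^p$ into a nonzero bounded $G$-equivariant map from $\Hh\times\R$ (carrying the Maharam extension of $\widehat{\alpha}$) into $L^2(\Hh\times\R/a\Z)$, whose Koopman representation is $\bigoplus_{n}\rho_{4\pi n/a}$ by Lemma \ref{koopman extension}. Since $\widehat{\alpha}$ is conservative for the given range of $t$ (after rescaling so that the conservative action sits at parameter $1$ and the target at $p<1$), so is its Maharam extension, and the Schmidt--Walters mild-mixing theorem \cite[Theorem 2.3]{SW81} forces any equivariant map from a conservative action into a mixing representation to vanish. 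That is the whole proof; no eigenfunction is ever confronted with Lemma \ref{convolution constant}.

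Your proposed substitute --- Poincar\'e recurrence producing returns with ``bounded displacement in the $\R$-coordinate'' so that the eigenvalue $a$ is ``pinned to the discrete set of return times'' --- is not a valid argument as stated: return times of a conservative action do not form a discrete set, and there is no mechanism by which letting the recurrence window grow forces $a=0$. (What you are groping toward is exactly the Schmidt--Walters argument, which is the rigorous version of ``recurrence of the source plus mixing of the target kills equivariant maps''.) Your final concern, that one must use $t<t_{\rm diss}(\alpha)$ quantitatively via the Poincar\'e-exponent bounds, is also misplaced: the hypothesis is used only to guarantee conservativeness of $\widehat{\alpha}^{t'}$ for some $t<t'<t_{\rm diss}(\alpha)$, after which one applies the scaling reduction; no quantitative estimate on $t_{\rm diss}$ enters.
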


The second part of Theorem \ref{thm.Krieger type dichotomy} is covered by the following result, where we consider affine isometric actions $\al : G \actson H$ of arbitrary groups. We prove that generically, $\alh^t$ is of aperiodic type for all $t > 0$. The only way in which this can fail is when $\al$ leaves invariant a closed affine subspace $K \subset H$ such that the closure of $\al(G)|_K$ inside $\Isom(K)$ is a \emph{locally compact} group $\cG$ whose associated nonsingular Gaussian action $\cG \actson \Hh$ is dissipative, so that its restriction to the dense subgroup $G$ admits a $\sigma$-finite invariant measure.

In the following theorem, $G$ is an arbitrary group, which is not necessarily countable and which is not necessarily a topological group.

\begin{theorem} \label{aperiodic or reduction to proper}
Let $G$ be any group and $\alpha : G \curvearrowright H$ an affine isometric action on a real Hilbert space $H$. Assume that $\al$ has no fixed point. Then precisely one of the following properties holds.
\begin{enumlist}
\item The Gaussian action $\widehat{\alpha}^t$ is of aperiodic type for all $t > 0$.
\item There exists a nonempty $\alpha$-invariant closed affine subspace $K \subset H$ such that the closure $\cG$ of $\al(G)|_K$ inside $\Isom(K)$ is locally compact, acts properly on $K$ and satisfies $t_\diss(\cG) < +\infty$. In that case, $\widehat{\alpha}^t$ is of aperiodic type for all $0 < t < t_\diss(\cG)$, while $\widehat{\alpha}^t$ admits an equivalent $\sigma$-finite invariant measure if $t > t_\diss(\cG)$.
\end{enumlist}
\end{theorem}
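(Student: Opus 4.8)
The plan is to reformulate ``aperiodic type'' as the absence of nonzero eigenvalues of the associated flow and to analyze such eigenfunctions through the weak-limit machinery of Section~\ref{sec.weak convergence techniques}. First I would record the Radon--Nikodym cocycle of $\alh^t$: writing $\alh^t_g = \widehat{j}_{t c_g}\circ\widehat{\pi(g)}$ and using Remark~\ref{rem.Maharam translation}, one computes $\log\frac{\rd((\alh^t_g)^{-1}\mu)}{\rd\mu}(\om) = -\tfrac{t^2}{2}\|c_g\|^2 + t\langle\om, c_{g^{-1}}\rangle$. Hence a nonzero eigenvalue $a$ of the flow $\R\actson L^\infty(\Hh\times\R)^G$ at parameter $t$ amounts to a measurable $f:\Hh\recht\T$ with
\[
\exp\bigl(\ri a t\langle\om, c_{g^{-1}}\rangle\bigr)\,\exp\bigl(-\tfrac{\ri a t^2}{2}\|c_g\|^2\bigr)\, f(\widehat{\pi(g)}\om + t c_g) = f(\om)\quad\text{for all }g\in G,
\]
which is exactly the $G$-invariance (under the Maharam extension) of $F(\om,s)=e^{\ri a s}f(\om)$. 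With this reformulation the exclusivity of (1) and (2) is immediate: in case (2) the action $\alh^t$ for $t>t_\diss(\cG)$ carries an equivalent invariant $\sigma$-finite measure, hence is not purely of type~III, so (1) fails.

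The analytic heart is to show that if such an eigenfunction exists for \emph{some} $t>0$, then the subspace $K$ of (2) must exist. Here I would feed the eigenfunction identity into the weak-limit lemmas exactly as in the proof of Theorem~\ref{thm.non proper mixing direction}: along any sequence $g_n\in G$ with $\|c_{g_n}\|$ bounded, $c_{g_n}\recht\xi$ weakly and $\pi(g_n)\recht P$ weakly to a contraction $P$, Proposition~\ref{weak convergence contraction} and Lemma~\ref{weak convergence maharam} show that the operators implementing $g_n$ on $L^\infty(\Hh\times\R)$ converge weak$^*$ to a limit built from $\Psi_P$ and a convolution $\Phi_r$ in the $\R$-variable. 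Invariance $F=\Psi(F)$ under all such limits forces $F$ to be trivial along every direction that some weak limit $P$ strictly contracts (the mechanism is the averaging of $\Psi_{P^n}$ via Lemma~\ref{convolution constant}, generalizing the $P=0$ case used for Theorem~\ref{thm.non proper mixing direction}). Let $L$ be the closed $\pi(G)$-invariant subspace of the surviving directions, on which every relevant weak limit acts isometrically, and take a compatible $\zeta\in L^\perp$ so that $K=\zeta+L$ is $\al$-invariant.

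The main obstacle is then the structure-theoretic step: proving that $\cG=\overline{\al(G)|_K}$, identified inside $\Isom(L)=\cO(L)\ltimes L$, is locally compact and acts properly on $K$ with $t_\diss(\cG)<+\infty$. The key point is that a weak limit of orthogonal operators that is again orthogonal is automatically a strong limit (equal norms upgrade weak to strong convergence); since on $L$ all the relevant weak limits are isometric, $\overline{\pi(G)|_L}$ is a compact subgroup of $\cO(L)$. Combined with properness of the translation part on $L$ -- which is what survives once the $L^\perp$-directions have been contracted away -- this makes $\cG$ a closed subgroup of $\cO(L)\ltimes L$ with relatively compact linear parts and locally finite translation orbits, hence locally compact and acting properly. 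Finally $t_\diss(\cG)<+\infty$, since otherwise Theorem~\ref{proper aperiodic} would give aperiodic type at every parameter, contradicting the eigenfunction we started from. I expect verifying local compactness and properness, i.e.\ that the non-proper behaviour is genuinely confined to $L$, to be the most delicate part.

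It remains to derive the fine type statements of (2) from the existence of $K$. Decomposing $H=L\oplus L^\perp$, the $\al$-invariance of $K=\zeta+L$ with $\zeta\in L^\perp$ forces $c_g^\perp=\zeta-\pi(g)\zeta$, so $\al|_{L^\perp}$ has the fixed point $\zeta$ and $\alh^t$ splits as $\widehat{\al^L}^t$ times the measure-preserving Gaussian action of $\pi|_{L^\perp}$. Since tensoring with a pmp action leaves the associated flow unchanged, it suffices to treat $\widehat{\al^L}^t$, whose invariant-flow data agrees with that of the Gaussian action of $\cG\actson L$ because $G$ is dense in $\cG$ and weak limits of $\alh_{g_n}$ are controlled by Proposition~\ref{weak convergence contraction}. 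For $0<t<t_\diss(\cG)$, Theorem~\ref{proper aperiodic} applied to the proper action $\cG\actson L$ gives aperiodic type; for $t>t_\diss(\cG)$, dissipativity of the Gaussian action $\cG\actson\widehat{L}$ yields a $\sigma$-finite invariant measure on $\widehat{L}$, which restricts to $G$ and, multiplied by the probability measure on the factor $\widehat{L^\perp}$, produces the equivalent invariant $\sigma$-finite measure on $\Hh$.
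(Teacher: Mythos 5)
Your structural claim in the third paragraph is false, and it is the load-bearing step. You assert that ``$\overline{\pi(G)|_L}$ is a compact subgroup of $\cO(L)$'' because the relevant weak limits on $L$ are isometric, and you then describe $\cG$ as having ``relatively compact linear parts.'' The weak limits you control are only those taken along sequences $g_n$ with $\|c_{g_n}\|$ bounded; along sequences with $\|c_{g_n}\|\recht+\infty$ the linear parts can converge weakly to $0$ (this happens for every proper action with mixing linear part). The motivating examples for alternative~(2) --- e.g.\ $\F_n$ acting on its Cayley tree, where $K=H$, $\cG$ is discrete and acts properly, and $\pi$ is mixing --- have linear part that is nowhere near precompact in $\cO(H)$. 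So compactness of the linear part is not what characterizes case~(2); what one needs is only properness, i.e.\ compactness of $\{g\in\cG\mid \|g(0)\|\leq R\}$ for each $R$. The paper sidesteps any such structural claim by a trichotomy (Lemma \ref{three cases}): after passing to a minimal ``reducing'' affine subspace, either $\cG$ is already locally compact and proper, or there is a sequence converging weakly but not strongly to $\id$, or there is a bounded-translation sequence whose difference linear parts converge weakly to a positive contraction $P\neq\id$; the last two cases are shown to force aperiodic type (the third by contradicting minimality), so properness is never \emph{derived} from a compactness statement about $\pi(G)$.

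There is a second, independent gap in your ``analytic heart.'' To conclude that a Maharam-invariant function lies in $\cM(x+\ker(P-\id))$, the paper's Lemma \ref{reducing subspace} needs a point $x$ with $\|\alpha_{h_n}(x)-x\|\recht 0$ \emph{in norm}; mere weak convergence of $c(h_n)$ together with $\pi(h_n)\recht P$ weakly only yields an identity involving a convolution $\Phi_r$ in the $\R$-variable, which does not localize $F$ to an affine subspace. Producing such an $x$ is exactly where the paper invests its effort: it passes through the rotation trick and Lemma \ref{no equivariant type III} to turn a periodic eigenfunction into a $G$-equivariant map $\Hh\times\R\recht L^2(\Hh,\mu)$, uses the recurrence Lemma \ref{recurence bounded} to find $\xi\neq 0$ with $\|\rho(h_n)\xi-\xi\|\recht 0$, and then Lemma \ref{sequence not midly mixing} (via Lemma \ref{fixed point contraction}) to convert this into a genuine norm-fixed point of the limiting affine contraction. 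Your ``averaging of $\Psi_{P^n}$'' mechanism skips this upgrade from weak to norm convergence entirely, and without it the definition of the surviving subspace $L$ and its $\al$-invariance do not come out of the argument. The final paragraph of your proposal (splitting off $L^\perp$ as a pmp factor and invoking Theorem \ref{proper aperiodic} and dissipativity on the two sides of $t_\diss(\cG)$) does match the paper's endgame, but it cannot be reached along the route you describe.
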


In the context of Theorem \ref{aperiodic or reduction to proper}, we put $t_c = +\infty$ if the first property holds and we put $t_c = t_\diss(\cG)$ if the second property holds. We thus obtain the following new phase transition result.

\begin{corollary} \label{transition invariant measure}
Let $\alpha : G \curvearrowright H$ be any affine isometric action on a real Hilbert space $H$. Then there exists $t_c \in [0,+\infty]$ such that
\begin{enumlist}
\item for all $0 < t < t_c$, the action $\alh^t$ is of aperiodic type;
\item for all $t > t_c$, the action $\alh^t$ admits an equivalent $\sigma$-finite invariant measure.
\end{enumlist}
\end{corollary}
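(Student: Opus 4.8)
The plan is to deduce the corollary from Theorem \ref{aperiodic or reduction to proper} by a short case analysis, the only genuinely new input being the treatment of affine actions with a fixed point, which are excluded from that theorem's hypotheses.

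First I would dispose of the case where $\al$ admits a fixed point $\xi_0 \in H$. Then $c_g = \xi_0 - \pi(g)\xi_0$ for all $g$, and conjugating by the translation $j_{\xi_0}$ turns $\al$ into its linear part, so that $\widehat{j}_{-\xi_0} \circ \alh_g \circ \widehat{j}_{\xi_0} = \pih(g)$ for all $g$. Since $\pih$ preserves $\mu$, the action $\alh$ preserves the equivalent probability measure $(\widehat{j}_{\xi_0})_* \mu$. The same computation, together with the scaling identity $(\al_g)^t(t\xi') = t\,\al_g(\xi')$, shows that $t\xi_0$ is a fixed point of $\al^t$ for every $t > 0$, whence $\alh^t$ admits an equivalent invariant probability measure for all $t > 0$. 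Here I set $t_c = 0$, so that property~(1) of the corollary is vacuous and property~(2) holds for every $t > 0$.

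In the remaining case $\al$ has no fixed point, and Theorem \ref{aperiodic or reduction to proper} supplies a dichotomy. If its first alternative holds, I set $t_c = +\infty$: then property~(1) holds for all $t > 0$ and property~(2) is vacuous. If its second alternative holds, I set $t_c = t_\diss(\cG) < +\infty$; property~(1) on the interval $(0, t_\diss(\cG))$ and property~(2) on $(t_\diss(\cG), +\infty)$ are then precisely the two assertions of that alternative. In all cases a value $t_c \in [0,+\infty]$ with the required properties has been produced, which is the entire content of the corollary.

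Since every analytic ingredient is already contained in Theorem \ref{aperiodic or reduction to proper}, there is no substantial obstacle here; the corollary is essentially a repackaging of that theorem into a single phase-transition statement. The only point demanding genuine verification is that a fixed point of $\al$ rescales to a fixed point of $\al^t$, which is what guarantees that the fixed-point case yields an invariant measure for the whole one-parameter family rather than merely at a single value of $t$, and thereby lets us take $t_c = 0$ uniformly in that case.
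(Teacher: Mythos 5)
Your proof is correct and follows essentially the same route as the paper, which simply defines $t_c = +\infty$ or $t_c = t_\diss(\cG)$ according to the two alternatives of Theorem \ref{aperiodic or reduction to proper}. Your explicit treatment of the fixed-point case (observing that $t\xi_0$ is a fixed point of $\al^t$, so one may take $t_c = 0$) is a point the paper leaves implicit, and you handle it correctly.
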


Before proving Theorems \ref{proper aperiodic} and \ref{transition invariant measure}, we need several lemmas.

\begin{lemma} \label{no equivariant type III}
Let $\alpha : G \curvearrowright H$ be an affine isometric action. Suppose that for all $t > 0$, there is no nonzero bounded $G$-equivariant measurable map $f : \widehat{H} \times \R \rightarrow L^2(\widehat{H},\mu)$ where $G$ acts on $\widehat{H} \times \R$ by the Maharam extension of $\widehat{\alpha}$ and on $L^2(\widehat{H},\mu)$ via the Koopman representation of $\widehat{\alpha}^t$. Then $\widehat{\alpha}^p$ is of aperiodic type for all $0 < p < 1$.
\end{lemma}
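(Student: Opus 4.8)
I will argue by contraposition. Suppose that $\alh^p$ fails to be of aperiodic type for some $0<p<1$; I must then produce, for some $t>0$, a nonzero bounded $G$-equivariant map $f:\Hh\times\R\recht L^2(\Hh,\mu)$ from the Maharam extension $\sigma$ of $\alh$ to the Koopman representation $\rho_t$ of $\alh^t$. By the definition of aperiodicity, there is $a\in\R\setminus\{0\}$ and a nonzero $F\in L^\infty(\Hh\times\R)^G$, invariant under the Maharam extension $\sigma^p$ of $\alh^p$, with $F(\om,t+s)=\exp(\ri as)F(\om,t)$ for all $s$. Evaluating at $t=0$ shows $F(\om,t)=\exp(\ri at)\,g(\om)$ for a nonzero $g\in L^\infty(\Hh)$; replacing $F$ by $\overline{F}$ if necessary, I may assume $a>0$.

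The plan is to feed $F$ into the rotation trick for Maharam extensions (Proposition \ref{rotation maharam}). Put $q=\sqrt{1-p^2}\in(0,1)$, so that $p^2+q^2=1$ with $p,q\neq 0$. Since $F$ is $\sigma^p$-invariant, the function $F\ot 1$ is invariant under $\sigma^p\times\sigma^q$, and therefore $\Ftil:=(F\ot 1)\circ\Xi$ is invariant under $\sigma\times\be$, where $\be$ is the skew product action of Section \ref{skew product actions}. Writing out $\Xi=\theta\circ(\Rh\times S)\circ\theta^{-1}$ with $S(x,x')=(p^2x+pqx',\,q^2x-pqx')$, a direct computation gives
$$\Ftil(\om,x,\om',x')=\exp\bigl(\ri a(p^2x+pqx')\bigr)\,g(\om_R),$$
where $\om_R$ denotes the first component of $\Rh(\om,\om')$ and depends only on $(\om,\om')$. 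The crucial point is that the dependence of $\Ftil$ on the $\be$-variable $x'$ is through the single exponential $\exp(\ri a pq\,x')$; this is exactly where the hypothesis $a\neq 0$ (together with $p,q\neq 0$) is used.

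Set $s=apq>0$ and define $f:\Hh\times\R\recht L^2(\Hh,\mu)$ by $f(\om,x)=G(\om,x,\cdot)$, where $G(\om,x,\om')=\exp(\ri ap^2x)\,g(\om_R)$ is viewed as a bounded, hence square-integrable, function of $\om'$. Cancelling the common factor $\exp(\ri apq\,x')$ in the $(\sigma\times\be)$-invariance of $\Ftil$, and using $\be_g(\om',x')=(\pih(g)\om',\,x'+\langle c(g^{-1}),\om'\rangle)$ together with $\exp(\ri apq\langle c(g^{-1}),\om'\rangle)=\Om_s(g,\om')$, I obtain $\Om_s(g,\om')\,G(\sigma_g(\om,x),\pih(g)\om')=G(\om,x,\om')$, which is precisely the statement that $f(\sigma_g(\om,x))=\theta_s(g)\,f(\om,x)$. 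Thus $f$ intertwines $\sigma$ with $\theta_s$; composing with the unitary conjugacy $\theta_s\cong\rho_{2s}$ of Lemma \ref{koopman extension} yields a nonzero bounded $G$-equivariant map into the Koopman representation of $\alh^{2s}$ with $2s>0$, contradicting the hypothesis. This forces $\alh^p$ to be of aperiodic type for every $0<p<1$.

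The main point to be handled carefully — less a deep obstacle than a verification — is that $f$ really is a bounded measurable map into $L^2(\Hh,\mu)$ and that it is nonzero. Boundedness is immediate from $\|G\|_\infty\le\|g\|_\infty$, measurability follows by separability from weak measurability, and nonvanishing holds because $\Rh$ preserves $\mu\ot\mu$, so that $\|g\circ\om_R\|_{L^2(\mu\ot\mu)}=\|g\|_{L^2(\mu)}\neq 0$. One should also confirm that the single-exponential shape of $\Ftil$ in $x'$ genuinely reduces what is a priori a direct-integral Fourier analysis in the $\be$-variable to the algebraic cancellation used above; this is the step where the explicit form of $\Xi$ and the condition $a\neq 0$ do all the work, and it is what pins down the value $t=2apq$ at which the forbidden equivariant map appears.
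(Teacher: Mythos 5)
Your proof is correct and follows essentially the same route as the paper: both pass through the rotation trick for Maharam extensions (Proposition \ref{rotation maharam}) and the identification of $\theta_a$ with $\rho_{2a}$ from Lemma \ref{koopman extension} to turn a nonzero eigenfunction of the flow into a forbidden equivariant map into some $\rho_t$ with $t>0$. The only difference is cosmetic: you exploit the single-mode form $F(\om,t)=e^{\ri at}g(\om)$ to do the Fourier extraction by an explicit cancellation (pinning down $t=2apq$), whereas the paper works with a general periodic invariant function and invokes the full decomposition $\bigoplus_{n}\rho_{4\pi n/a}$ of the Koopman representation of $\beta'$.
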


\begin{proof}
We use the notations of Section \ref{Rotation trick}. In particular, we consider the automorphism $\Xi$ of $\widehat{H} \times \R \times \widehat{H} \times \R$ providing the conjugacy
$$ (\sigma^p \times \sigma^q) \circ \Xi = \Xi \circ (\sigma \times \beta) \; .$$
Take a function $f \in L^\infty(\widehat{H} \times \R)$ that is invariant under $\sigma^p$ and periodic with respect to the $\R$-coordinate. Then $F=(f \otimes 1) \circ \Xi$ is invariant under $\sigma \times \beta$ and is periodic, say with period $a > 0$, with respect to the second $\R$-coordinate. This means that $F \in L^\infty(\widehat{H} \times \R \times \widehat{H} \times \R/a\Z)$ and $F$ is invariant under $\sigma \times \beta'$ where $\beta'$ is as in Lemma \ref{koopman extension}. Therefore, we can view $F$ as a bounded $G$-equivariant function from $\widehat{H} \times \R$ to $L^2(\widehat{H} \times \R/a\Z, \mu \otimes \mathrm{Leb})$ where $G$ acts on $\widehat{H} \times \R$ by $\sigma$ and on $L^2(\widehat{H} \times \R/a\Z, \mu \otimes \mathrm{Leb})$ by the Koopman representation of $\beta'$. The description of this Koopman representation given in Lemma \ref{koopman extension} combined with our assumption tells us that the image of $F$ must be contained in the subspace $L^2(\widehat{H}, \mu) \subset L^2(\widehat{H} \times \R/a\Z, \mu \otimes \mathrm{Leb})$. This means that $F$ does not depend on the second $\R$-coordinate. Since $F=(f \otimes 1) \circ \Xi$, we conclude that $f$ itself does not depend on the $\R$-coordinate, i.e.\ $f \in L^\infty(\widehat{H},\mu)$. This means that $\widehat{\alpha}^p$ is of aperiodic type.
\end{proof}

\begin{proof}[Proof of Theorem \ref{proper aperiodic}]
%It suffices to show that if $\widehat{\alpha}$ is conservative, then $\widehat{\alpha}^p$ is of aperiodic type for all $0 < p < 1$. Since $\alpha$ is proper, the Koopman representation of $\widehat{\alpha}^s$ is mixing for all $s > 0$ by \cite[Theorem 6.3(ii)]{AIM19}. Since $\widehat{\alpha}$ is conservative, its Maharam extension is also conservative. Therefore, the conclusion follows from by combining Lemma \ref{no equivariant type III} and \cite[Theorem 2.3]{SW81}.
%
It suffices to show that if $\widehat{\alpha}$ is conservative, then $\widehat{\alpha}^p$ is of aperiodic type for all $0 < p < 1$. Since $\alpha$ is proper, the Koopman representation $\rho_s$ of $\widehat{\alpha}^s$ is mixing for all $s > 0$ by \cite[Theorem 6.3(ii)]{AIM19}. Since $\widehat{\alpha}$ is conservative, its Maharam extension $\sigma$ is also conservative. Assume that $f : \widehat{H} \times \R \to L^2(\widehat{H},\mu)$ is a bounded $G$-equivariant measurable map, where $G$ acts on $\widehat{H} \times \R$ by $\sigma$ and on $L^2(\widehat{H},\mu)$ via $\rho_s$. By Lemma 4.4, it suffices to prove that $f$ is zero a.e. As in the proof of \cite[Theorem 2.3]{SW81}, let $\xi \in L^2(\widehat{H},\mu)$ be any essential value of $f$. Since $\sigma$ is conservative, we find a sequence $g_n \in G$ such that $g_n \to \infty$ and $\rho_s(g_n) \xi \to \xi$. Since $\rho_s$ is mixing, it follows that $\xi = 0$. So $f=0$ a.e.
\end{proof}

For the proof of Theorem \ref{aperiodic or reduction to proper}, we will need a few other lemmas. The first one is extracted from the proof of \cite[Theorem 6.3(i)]{AIM19}.

\begin{lemma} \label{sequence unbounded mixing}
Let $H$ be a real Hilbert space and let $\rho : \mathrm{Isom}(H) \curvearrowright L^2(\widehat{H},\mu)$ be the Koopman representation of the Gaussian action $\mathrm{Isom}(H) \curvearrowright (\widehat{H},\mu)$. For any sequence of isometries $g_n \in \mathrm{Isom}(H)$, the following are equivalent:
\begin{enumlist}
\item $\lim_n \| g_n(0)\|=+\infty$.
\item $\rho_{g_n}$ converges weakly to $0$.
\end{enumlist}
\end{lemma}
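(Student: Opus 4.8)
The plan is to compute the matrix coefficients of $\rho_{g_n}$ against an explicit dense family of unit vectors and to read off both implications from a single closed formula. For $\xi \in H$ the function $e^{\ri\widehat\xi} \in L^2(\Hh,\mu)$ is a unit vector, and the linear span of such functions is dense (this is the case $z = \ri$ of the exponential vectors of Section \ref{fourier transform}, up to normalization). I will therefore test weak convergence on vectors of this form.

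Write each isometry as $g = j_\eta \circ V$ with $\eta = g(0)$ and $V \in \cO(H)$ its linear part. The orthogonal part, being measure preserving, sends $e^{\ri\widehat\xi}$ to $e^{\ri\widehat{V\xi}}$, using $\langle \widehat V^{-1}\omega, \xi \rangle = \langle \omega, V\xi \rangle$. The translation part is the only source of damping: since $(\widehat j_\eta)_*\mu = \mu_\eta$ with $\rd\mu_\eta/\rd\mu = \exp(-\tfrac12\|\eta\|^2 + \widehat\eta)$, the Koopman operator $\rho_{j_\eta}$ multiplies by the square root of this Radon--Nikodym derivative and shifts the argument by $-\eta$. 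Carrying this out and evaluating the resulting Gaussian integral (via $\mu(e^{\widehat w}) = \exp(\tfrac12\langle w,w\rangle)$, extended bilinearly to complex combinations of the $\widehat{\cdot}$), I expect to obtain the clean formula
$$\bigl|\langle \rho_g\, e^{\ri\widehat\xi}, e^{\ri\widehat{\xi'}}\rangle\bigr| = \exp\Bigl(-\tfrac18\|g(0)\|^2 - \tfrac12\|V\xi - \xi'\|^2\Bigr) \quad\text{for all } \xi,\xi' \in H \; .$$

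Both implications are then immediate. For (1) $\Rightarrow$ (2): if $\|g_n(0)\| \recht +\infty$, every matrix coefficient above tends to $0$, because the first factor does and the second is bounded by $1$; since the vectors $e^{\ri\widehat\xi}$ span a dense subspace and the $\rho_{g_n}$ are unitaries, a standard uniform-boundedness argument upgrades convergence of matrix coefficients on a dense set to weak convergence $\rho_{g_n} \recht 0$. For (2) $\Rightarrow$ (1) I argue by contraposition: if $\|g_n(0)\|$ does not tend to $+\infty$, pass to a subsequence with $\|g_{n_k}(0)\| \leq M$; taking $\xi = \xi' = 0$ gives the constant test vector $1$, and $\langle \rho_{g_{n_k}} 1, 1 \rangle = \exp(-\tfrac18\|g_{n_k}(0)\|^2) \geq \exp(-\tfrac18 M^2) > 0$, so $\rho_{g_n}$ cannot converge weakly to $0$.

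The only genuine work is the matrix-coefficient computation of the second paragraph; the point to get right is that the damping factor $\exp(-\tfrac18\|g(0)\|^2)$ comes entirely from the square root of the translation Radon--Nikodym derivative, whereas the orthogonal part $V$, being measure preserving, merely sends $e^{\ri\widehat\xi}$ to $e^{\ri\widehat{V\xi}}$ and so contributes only the harmless factor depending on $\|V\xi - \xi'\|$. Everything else is bookkeeping.
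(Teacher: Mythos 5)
Your proof is correct. The paper itself gives no argument for this lemma --- it is simply quoted as being ``extracted from the proof of [AIM19, Theorem 6.3(i)]'' --- so there is nothing internal to compare against, but your computation is the standard one and it checks out: writing $g = j_\eta \circ V$ with $\eta = g(0)$, one has $\rho_{j_\eta}(e^{\ri\widehat{\zeta}}) = e^{-\frac14\|\eta\|^2 - \ri\langle \zeta,\eta\rangle}\, e^{\frac12\widehat{\eta} + \ri\widehat{\zeta}}$, and the analytic extension of $\mu(e^{\widehat{v}}) = e^{\frac12\|v\|^2}$ applied to $w = \tfrac12\eta + \ri(V\xi - \xi')$ yields exactly $\bigl|\langle \rho_g e^{\ri\widehat{\xi}}, e^{\ri\widehat{\xi'}}\rangle\bigr| = \exp\bigl(-\tfrac18\|g(0)\|^2 - \tfrac12\|V\xi-\xi'\|^2\bigr)$, as you claim. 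The two implications then follow as you describe (density of the span of the $e^{\ri\widehat{\xi}}$ plus uniform boundedness for one direction, the test vector $1$ for the other), and the damping factor $e^{-\|g(0)\|^2/8}$ is consistent with the formula $\varphi(g) = e^{-\frac18 t^2\|c_g\|^2}$ the paper uses in Proposition \ref{gaussian contained in regular}.
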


\begin{lemma} \label{sequence not midly mixing}
Let $H$ be a Hilbert space and let $\rho : \mathrm{Isom}(H) \rightarrow L^2(\widehat{H},\mu)$ be the Koopman representation of the Gaussian action. Suppose that $g_n \in \mathrm{Isom}(H)$ is a sequence of isometries such that $g_n(0)$ is weakly convergent and such that the linear part of $g_n$ converges weakly to some positive contraction $P$. Then the following are equivalent:
\begin{enumlist}
\item There exists $\xi \in L^2(\widehat{H},\mu)$ with $\xi \neq 0$ such that $\lim_n \| \rho(g_n)\xi-\xi \|=0$.
\item There exists $x \in H$ such that $\lim_n \| g_n x-x\|=0$.
\end{enumlist}
\end{lemma}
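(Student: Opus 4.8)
The plan is to route everything through the total family of \emph{coherent vectors} $\mu_a^{1/2} := (\rd\mu_a/\rd\mu)^{1/2} = \rho(j_a)\mathbf 1$ for $a \in H$, where $\mathbf 1$ is the vacuum and $\mu_a = (\widehat{j}_a)_*\mu$. Two elementary identities drive the argument. First, since $g j_a = j_{g(a)}\circ(\text{linear part of }g)$ and the linear part preserves $\mu$, the paper's relation $\rho_g(\eta^{1/2}) = (g\eta)^{1/2}$ gives $\rho(g)\mu_a^{1/2} = \mu_{g(a)}^{1/2}$. Second, a direct Gaussian computation yields the overlap formula $\langle \mu_a^{1/2},\mu_b^{1/2}\rangle = \exp(-\tfrac18\|a-b\|^2)$. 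In particular each $\mu_a^{1/2}$ is a unit vector, the map $a\mapsto\mu_a^{1/2}$ intertwines the affine action with $\rho$, the linear span of the $\mu_a^{1/2}$ is dense (it equals that of the $\exp_{1/2}(a)$), and $\mu_a^{1/2}\to 0$ weakly as $\|a\|\to\infty$.

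For $(2)\Rightarrow(1)$, which needs neither hypothesis, I would take $\psi = \mu_x^{1/2}$: the two identities give $\langle\rho(g_n)\psi,\psi\rangle = \exp(-\tfrac18\|g_n x - x\|^2)\to 1 = \|\psi\|^2$, and since $\rho(g_n)$ is unitary this forces $\|\rho(g_n)\psi-\psi\|\to 0$. The substance is $(1)\Rightarrow(2)$, where the hypotheses enter. Write $g_n = j_{c_n}\circ V_n$ with $c_n = g_n(0)\to c$ weakly and $V_n\to P$ weakly, and let $\xi$ be an almost-invariant unit vector. The main obstacle is that $\langle V_n a,c_n\rangle$ is a product of two sequences that converge only weakly, so it need not converge and $\rho(g_n)$ need not converge in the weak operator topology along the full sequence. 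I would therefore pass to a subsequence $n_k$ with $\rho(g_{n_k})\to W$ (WOT), $\|c_{n_k}\|^2\to\kappa$, and $V_{n_k}^*c_{n_k}\to v$ weakly. Feeding these limits into the overlap formula determines $W$ on the total set: $\langle W\mu_a^{1/2},\mu_b^{1/2}\rangle = \exp(-\tfrac18 L(a,b))$ for an explicit quadratic $L$, whence $W\mu_a^{1/2} = \lambda(a)\,\mu_{Pa+c}^{1/2}$ with $\lambda(a) = \exp(-\tfrac18 Q(a))$ and $Q\ge 0$ a nonnegative quadratic (nonnegativity being forced by $\|W\|\le 1$). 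Since $\rho(g_{n_k})\xi\to\xi$ in norm we get $W\xi = \xi$, hence $W^*\xi = \xi$ as $\|W\|\le 1$; testing the latter against coherent vectors yields the functional equation $F(a) = \lambda(a)\,F(Pa+c)$ for the coherent transform $F(a) = \langle\xi,\mu_a^{1/2}\rangle$.

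The fixed point is then extracted as follows. Choosing $a$ with $F(a)\neq 0$ and iterating along the orbit $a_{j+1} = Pa_j + c$, the functional equation gives $|F(a)| = \prod_{i<j}|\lambda(a_i)|\cdot|F(a_j)|$, so $\prod_i|\lambda(a_i)|\ge|F(a)|>0$, which forces $Q(a_j)\to 0$; and since $|F(a_j)|\ge|F(a)|>0$ while $F$ vanishes at infinity, the orbit stays bounded. A bounded orbit of the affine map $y\mapsto Py+c$ has Cesàro averages $b_m$ with $(1-P)b_m\to c$, and by convexity of $Q$ (here positivity of $P$ guarantees the relevant Hessian is $\ge 0$) together with weak lower semicontinuity one gets $Q(b_m)\to 0$; a weak limit point $x_\infty$ of $(b_m)$ then satisfies $(1-P)x_\infty = c$ and $Q(x_\infty) = 0$. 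A short computation shows that on the fixed affine subspace $S = \{x : (1-P)x = c\}$ the two quadratics agree, so $Q(x_\infty) = L(x_\infty,x_\infty) = \lim_k\|g_{n_k}x_\infty - x_\infty\|^2$, giving $\|g_{n_k}x_\infty - x_\infty\|\to 0$.

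Finally I would upgrade from the subsequence to the full sequence. The key observation is that for every $x\in S$ one has $\|g_n x - x\|^2 = \|g_n x\|^2 - \|x\|^2 + o(1)$, and that $\|g_n x\|^2 - \|x\|^2$ equals, up to $o(1)$, a single bounded sequence $\delta_n$ that does \emph{not} depend on which $x\in S$ is chosen — because two elements of $S$ differ by a vector of $\ker(1-P)$, on which $V_n\to 1$ strongly. The subsequence argument shows that every subsequence of $(\delta_n)$ admits a further subsequence tending to $0$, whence $\delta_n\to 0$, and therefore $\|g_n x - x\|\to 0$ for any $x\in S$, which is exactly $(2)$. Thus the two genuine difficulties are the failure of WOT-convergence of $\rho(g_n)$ (circumvented by subsequences plus the overlap formula) and the passage from $Q(a_j)\to 0$ to an honest fixed point via the bounded-orbit and Cesàro argument; everything else is routine Gaussian computation.
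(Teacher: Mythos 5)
Your proof is correct, but it takes a genuinely different route from the paper's. The paper first replaces $\xi$ by $|\xi|$ and normalizes it to $\nu^{1/2}$ for a probability measure $\nu \ll \mu$; the weak convergence of $\alpha_{g_n}$ to the affine contraction $T:\eta\mapsto P\eta+\zeta$ together with Proposition \ref{weak convergence contraction} then gives $\nu\circ\Psi_T=\nu$ directly, with no passage to subsequences. Iterating, writing $\Psi_{T^n}=\Psi_{P^n}\circ j_{T^n(0)}$ and using that $P^n$ converges strongly to the projection onto $\ker(1-P)$, the paper deduces from Lemma \ref{sequence unbounded mixing} that $(T^n(0))_n$ is bounded, obtains a fixed point from Lemma \ref{fixed point contraction}, and finally upgrades the weak convergence $\alpha_{g_n}(x)\to x$ to norm convergence by a matrix-coefficient computation that exploits $\langle\xi,\mu_x^{1/2}\rangle>0$, hence the positivity of $\xi$. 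You avoid both the $\Psi_T$-machinery and the positivity trick: you work with an arbitrary almost-invariant $\xi$, extract WOT limits along subsequences, derive the functional equation $F(a)=\lambda(a)F(Pa+c)$ for the coherent transform, and produce the fixed point from the summability of $Q(a_j)$ plus a Ces\`aro/convexity argument (which, incidentally, needs $P$ only contractive at that stage, not positive), at the price of the final subsequence-to-full-sequence bookkeeping. Both arguments confront the same two difficulties --- the failure of WOT convergence of $\rho(g_n)$ along the full sequence, and the passage from a weak to a norm fixed point --- and yours stays entirely at the level of elementary coherent-vector computations.

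One step deserves an explicit line. Your claim that $\delta_n(x)=\|g_nx\|^2-\|x\|^2$ is, up to $o(1)$, independent of the choice of $x\in S$ is justified only by ``$V_n\to 1$ strongly on $\ker(1-P)$''. That gives $\delta_n(x)-\delta_n(x')=2\langle V_ny,c_n\rangle\to 2\langle y,c\rangle$ for $y=x-x'\in\ker(1-P)$, so you still need $\langle y,c\rangle=0$. This does hold, because $P$ is self-adjoint and $c=(1-P)x_\infty$ lies in $\operatorname{ran}(1-P)\perp\ker(1-P)$, but it is the one place in your concluding step where self-adjointness of $P$ enters and it should be stated.
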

\begin{proof}
The implication $2 \Rightarrow 1$ is obvious by taking $\xi=\mu_x^{1/2}$.

$1 \Rightarrow 2$. Denote by $\zeta \in H$ the weak limit of $c(g_n)$. Thus, $\alpha_{g_n}$ converges weakly to the affine contraction $T : \eta \mapsto P \eta + \zeta$ with $P$ positive. We will show that $T$ has a fixed point.

Up to replacing $\xi$ by $|\xi|$ and rescaling, we may assume that $\xi=\nu^{1/2}$ where $\nu$ is a probability measure that is absolutely continuous w.r.t.\ $\mu$. Then $\lim_n \| (\widehat{\alpha}_{g_n})_*\nu-\nu \|_1=0$ and in particular, $(\widehat{\alpha}_{g_n})_*\nu$ converges weakly to $\nu$. By Proposition \ref{weak convergence contraction}, we thus get $\nu \circ \Psi_T=\nu$, hence $\nu \circ \Psi_{T^n}=\nu$ for all $n \in \N$. Since $\Psi_{T^n}=\Psi_{P^n} \circ j_{T^n(0)}$, we get
$$(\widehat{j}_{T^n(0)})_*\nu=\nu \circ \Psi_{P^n} \; .$$
Since $P$ is a positive contraction, we know that $P^n$ converges strongly to the orthogonal projection on $\ker(1-P)$. In particular $(\widehat{j}_{T^n(0)})_*\nu=\nu \circ \Psi_{P^n}$ converges in norm to the probability measure $\omega=\nu \circ \Psi_Q$ where $Q$ is the projection on $\ker(1-P)$. This implies that $\rho(j_{T^n(0)})(\xi)$ converges weakly to $\omega^{1/2}$ in $L^2(\widehat{H},\mu)$. Thanks to Lemma \ref{sequence unbounded mixing}, we conclude that the sequence $T^n(0)$ is bounded. By Lemma \ref{fixed point contraction} below, $T$ has a fixed point.

Let $x$ be a fixed point for $T$. This means that $\alpha_{g_n}(x)$ converges weakly to $x$. We prove that $\| \alpha_{g_n}(x)-x\| \recht 0$. If this does not hold, we can pass to a subsequence such that $\| \alpha_{g_n}(x)-x\| \recht r > 0$. Observe that $\rho_{g_n}(\mu_x^{1/2})=\mu_{\alpha_{g_n}(x)}^{1/2}$ converges weakly to $e^{-r^2/8} \mu_x^{1/2}$. But we also know that $\lim_n \|\rho_{g_n}(\xi)-\xi \|=0$. Therefore, we have
$$ \langle \xi, \mu_x^{1/2} \rangle = \lim_n \langle \rho_{g_n}^* (\xi), \mu_x^{1/2} \rangle=\lim_n \langle \xi, \rho_{g_n} (\mu_x^{1/2}) \rangle=e^{-r^2/8} \langle \xi, \mu_x^{1/2} \rangle \; .$$
Since $\langle \xi, \mu_x^{1/2} \rangle > 0$, this forces $r=0$. This is absurd, so that the original sequence $g_n$ satisfies $\lim_n \| \alpha_{g_n}(x)-x\|=0$.
\end{proof}

\begin{lemma} \label{fixed point contraction}
Let $T: H \rightarrow H$ be an affine contraction and suppose that the linear part of $T$ is a positive operator. Then $T$ has a fixed point if and only if the sequence $\| T^n(0) \|$ is bounded.
\end{lemma}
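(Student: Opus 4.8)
The plan is to reduce the statement to a spectral computation for the linear part. Write $T\eta = P\eta + \zeta$, where $P$ is the linear part and $\zeta = T(0)$. Since $T$ is an affine contraction, $P$ is a contraction, and by hypothesis $P$ is positive, so $0 \leq P \leq 1$ and the spectrum of $P$ lies in $[0,1]$. A vector $x$ is a fixed point of $T$ precisely when $(1-P)x = \zeta$, so $T$ has a fixed point if and only if $\zeta \in \operatorname{ran}(1-P)$. A straightforward induction gives $T^n(0) = \sum_{k=0}^{n-1} P^k \zeta$. Thus the lemma becomes the equivalence: $\zeta \in \operatorname{ran}(1-P)$ if and only if the vectors $\sum_{k=0}^{n-1} P^k \zeta$ are bounded. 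The easy implication is immediate: if $Tx = x$, then $\zeta = (1-P)x$, so $\sum_{k=0}^{n-1} P^k\zeta = (1-P^n)x$, which has norm at most $2\|x\|$ because $\|P\| \leq 1$.

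For the converse, and to get a clean ``if and only if'', I would diagonalize $P$. Let $E$ be its spectral measure on $[0,1]$ and let $\nu = \langle E(\cdot)\zeta,\zeta\rangle$ be the scalar spectral measure of $\zeta$, a finite positive measure of total mass $\|\zeta\|^2$. Then
$$\|T^n(0)\|^2 = \int_{[0,1]} \Bigl(\sum_{k=0}^{n-1} \lambda^k\Bigr)^2 \, \rd\nu(\lambda) \; .$$
Because $P \geq 0$, the integrand involves only nonnegative terms, so for each fixed $\lambda$ the partial sums $\sum_{k=0}^{n-1}\lambda^k$ increase monotonically to $(1-\lambda)^{-1}$ on $[0,1)$ and to $+\infty$ at $\lambda = 1$. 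Monotone convergence then yields
$$\lim_n \|T^n(0)\|^2 = \int_{[0,1]} (1-\lambda)^{-2} \, \rd\nu(\lambda) \in [0,+\infty] \; ,$$
and since $\|T^n(0)\|^2$ is nondecreasing in $n$, the sequence $\|T^n(0)\|$ is bounded exactly when this integral is finite.

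It then remains to identify finiteness of $\int (1-\lambda)^{-2}\,\rd\nu$ with the membership $\zeta \in \operatorname{ran}(1-P)$. This is the standard functional-calculus description of the range of the positive operator $A = 1-P$: the vector $x = \int (1-\lambda)^{-1}\,\rd E(\lambda)\,\zeta$ exists in $H$ precisely when $\int (1-\lambda)^{-2}\,\rd\nu < \infty$, and in that case the very same finiteness forces $\nu(\{1\}) = 0$, i.e.\ $E(\{1\})\zeta = 0$, so that $(1-P)x = \zeta$; conversely, $\zeta = (1-P)x$ gives $\rd\nu = (1-\lambda)^2\,\rd\mu_x$ (with $\mu_x$ the spectral measure of $x$), whence $\int (1-\lambda)^{-2}\,\rd\nu = \|(1-E(\{1\}))x\|^2 < \infty$. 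Chaining the three steps closes the equivalence.

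The main obstacle is the behaviour at the eigenvalue $\lambda = 1$ of $P$, that is, at the fixed vectors of the linear part: one must ensure that boundedness of $\|T^n(0)\|$ not only kills any spectral mass of $\zeta$ at $1$ (via the divergent factor $n^2\nu(\{1\})$) but in fact produces a genuine solution of $(1-P)x = \zeta$, rather than merely $\zeta \in \overline{\operatorname{ran}(1-P)}$. Positivity of $P$ is what makes this work: it confines the spectrum to $[0,1]$, so the geometric partial sums are monotone and monotone convergence converts the qualitative boundedness hypothesis into the quantitative condition $\int (1-\lambda)^{-2}\,\rd\nu < \infty$. For a general contraction, spectral mass of $\zeta$ near $\lambda = -1$ would make $\sum_{k=0}^{n-1}\lambda^k$ oscillate, and boundedness of $\|T^n(0)\|$ would no longer characterize the range.
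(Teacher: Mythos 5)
Your argument is correct, and it proves the lemma by a genuinely different route than the paper. You linearize the problem completely: $T^n(0)=\sum_{k=0}^{n-1}P^k\zeta$, the fixed-point equation is $(1-P)x=\zeta$, and the spectral theorem plus monotone convergence (valid because positivity confines the spectrum to $[0,1]$, so the geometric partial sums are monotone in $n$) converts boundedness of the orbit into the quantitative condition $\int_{[0,1]}(1-\lambda)^{-2}\,\rd\nu(\lambda)<\infty$, which you correctly identify with $\zeta\in\operatorname{ran}(1-P)$ through the functional-calculus description of the range of the positive operator $1-P$; the delicate point at $\lambda=1$ is handled properly. The paper argues more softly: writing $Q$ for the projection onto $\ker(P-1)$, it observes $QT^n(0)=nQ\zeta$, so boundedness forces $Q\zeta=0$, hence $T^{n+1}(0)-T^n(0)=P^n\zeta\to 0$ in norm (since $P^n\to Q$ strongly for a positive contraction), and then any weak accumulation point of the bounded sequence $T^n(0)$ is a fixed point by weak continuity of $T$. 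The paper's proof is shorter and avoids unbounded functional calculus; yours buys more, namely the exact value $\lim_n\|T^n(0)\|^2=\int_{[0,1]}(1-\lambda)^{-2}\,\rd\nu(\lambda)$ and an explicit formula for the fixed point. One minor caveat about your closing remark: the equivalence itself does not fail for general affine contractions --- a weak accumulation point of the Ces\`{a}ro means $\frac{1}{n}\sum_{k=0}^{n-1}T^k(0)$ is always a fixed point when the orbit is bounded --- it is only your monotone-convergence identification that needs positivity.
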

\begin{proof}
Write $Tx = Px + \xi$. Let $Q$ be the projection on $\ker(P-1)$. Then $QT^n(0)=nQ\xi$. Suppose that $\|T^n(0)\|$ is bounded. Then, $Q\xi=0$. Since $P$ is a positive contraction, this implies that $\lim_n P^n\xi =0$. Thus we get
$$ \lim_n \bigl(T(T^n(0))-T^n(0)\bigr)= \lim_n \bigl(T^{n+1}(0)-T^n(0)\bigr)=\lim_n P^n\xi = 0 \; .$$
We conclude that $Tx=x$ whenever $x \in H$ is an accumulation point of the sequence $T^n(0)$ in the weak topology.
\end{proof}

\begin{lemma} \label{three cases}
Let $H$ be a Hilbert space. Let $G \subset \mathrm{Isom}(H)$ be a closed subgroup. Then one of the following properties holds:
\begin{enumlist}
\item The group $G$ is locally compact and it acts properly on $H$.
\item There exists a sequence $g_n \in G$ that converges weakly but not strongly to $\id$.
\item There exists a sequence $g_n \in G$ such that $g_n(0)$ is bounded and the linear part of $g_n^{-1}g_m$ converges weakly to some positive contraction $P \neq \id$ when $n,m \to \infty, \; n \neq m$.
\end{enumlist}
\end{lemma}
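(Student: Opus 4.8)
The plan is to prove the trichotomy by a single dichotomy on the ``size'' of $G$: either all of the sets
$$G_R = \{g \in G \mid \|g(0)\| \le R\}, \quad R > 0,$$
are compact in the strong topology, or at least one of them is not. If every $G_R$ is compact, I will derive property~1. Each $G_R$ is strongly closed (the evaluation $g \mapsto g(0)$ is strongly continuous into $H$), and the open set $\{g : \|g(0)\| < R\}$ is a neighbourhood of $\id$ whose strong closure lies in the compact set $G_R$; hence $\id$ has a relatively compact neighbourhood and $G$ is locally compact. Properness is then immediate, since for the ball $B_R$ of radius $R$ one has $\{g \in G : gB_R \cap B_R \ne \emptyset\} \subseteq G_{2R}$, using $\|g(0)\| \le \|gx\| + \|x\|$.

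So assume some $G_R$ is not compact. Since $G$ is Polish and $G_R$ is closed, non-compactness yields a sequence $g_n \in G_R$ with no strongly convergent subsequence. Orthogonal operators are contractions, and the set of contractions on the separable space $H$ is WOT-compact and metrizable; together with weak compactness of the bounded set $\{g_n(0)\}$ this lets me pass to a subsequence along which the linear parts $V_n$ converge to a contraction $P$ in the weak operator topology and $g_n(0) \to \zeta$ weakly. The whole analysis now turns on the positive contraction $P^*P$ (and its mirror $PP^*$), and splits according to whether $P$ is orthogonal.

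Suppose first that $P$ is not orthogonal. Then $P^*P \ne \id$ or $PP^* \ne \id$; replacing $g_n$ by $g_n^{-1}$ if necessary (which preserves both the boundedness of the orbit and the absence of strongly convergent subsequences, and swaps $P^*P$ with $PP^*$), I may assume $P^*P \ne \id$. I then want the linear parts $V_n^* V_m$ of $g_n^{-1} g_m$ to converge weakly to the positive contraction $P^*P$, together with $\|(g_n^{-1}g_m)(0)\| = \|g_m(0) - g_n(0)\| \le 2R$ bounded; this is exactly property~3. Here lies the main obstacle: multiplication is not jointly WOT-continuous, so $V_n^* V_m \to P^*P$ fails as a genuine double limit in general --- for the bilateral shift $U$ with $U^n \to 0$ weakly one has $V_n^* V_m = U^{m-n}$, which does not tend to $0$ when $m-n$ stays bounded. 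The fix is a diagonal extraction: writing $w_n = V_n - P \to 0$ weakly, I build a sparse subsequence $(n_k)$ recursively so that each newly added index makes all pairings $\langle w_{n_a}\xi, w_{n_b}\eta\rangle$ (for $\xi,\eta$ in a fixed countable dense set and $a \ne b$) small; by density and uniform boundedness this yields $\langle V_{n_a}\xi, V_{n_b}\eta\rangle \to \langle P\xi, P\eta\rangle$, that is, $V_{n_a}^* V_{n_b} \to P^*P$ in WOT as $a \ne b \to \infty$.

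Finally, suppose $P$ is orthogonal, i.e.\ $P^*P = PP^* = \id$. Then $V_n \to P$ \emph{strongly}, because weak convergence of isometries to an isometry is automatically strong ($\|V_n x - Px\|^2 = 2\|x\|^2 - 2\langle V_n x, Px\rangle \to 0$). Since $g_n$ has no strongly convergent subsequence while its linear parts do converge, the orbit points $g_n(0)$ can have no norm-convergent subsequence; being bounded, they are not totally bounded, so after extraction $\|g_m(0) - g_n(0)\| \ge \delta > 0$ for $m \ne n$. For such indices the linear part $V_n^* V_m \to \id$ strongly (the double limit is now harmless, as $\|V_n^* V_m x - x\| \le \|V_m x - Px\| + \|V_n x - Px\|$), while the translation part $V_n^*(g_m(0) - g_n(0))$ tends to $0$ weakly (pairing a weakly null difference with the strongly convergent $V_n\eta$) yet has norm $\ge \delta$. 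Choosing a single sequence of pairs $(n_k,m_k)$ with $n_k, m_k \to \infty$ and $n_k \ne m_k$ thus produces $h_k = g_{n_k}^{-1} g_{m_k}$ converging weakly but not strongly to $\id$, which is property~2.
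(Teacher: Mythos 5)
Your proof is correct and follows essentially the same route as the paper: the same dichotomy on compactness of the sets $\{g \in G \mid \|g(0)\| \leq R\}$, the same passage to a weak limit $P$ of the linear parts, and the same case split according to whether $P$ is orthogonal (yielding property~2 via non-Cauchyness of the orbit) or not (yielding property~3 via a sparse subsequence making the double limit of $V_n^*V_m$ converge to $P^*P$). The only cosmetic difference is your diagonal extraction controlling the pairings $\langle w_{n_a}\xi, w_{n_b}\eta\rangle$, where the paper recursively controls $d(\pi(g_{k_n}^{-1}g_{k_m}),T^*T)$ with a metric for the weak topology on the unit ball; these are equivalent devices.
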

\begin{proof}
Suppose that condition $1$ is not satisfied, i.e.\ $G$ is not a locally compact group acting properly on $H$. Then there exists some $R > 0$ such that the subset $\{ g \in G \mid \| g(0)\| \leq R\} \subset G$ is not compact. Therefore, there exists a sequence $g_n \in G$ with $\|g_n(0)\| \leq R$ that has no accumulation point in $G$. Let $\pi : \mathrm{Isom}(H) \rightarrow \mathcal{O}(H)$ be the canonical projection. Up to extracting a subsequence, we may assume that $\pi(g_n)$ converges weakly to some contraction $T$. We distinguish two cases.

Suppose first that $T$ is an orthogonal operator. Then $\pi(g_n)$ converges strongly to $T$ when $n \to \infty$. Up to extracting a subsequence, we may assume that $g_n(0)$ converges weakly to some $\xi \in H$. Since $g_n$ has no accumulation point in the strong topology, we know that $g_n(0)$ does not converge to $\xi$ in norm. In particular, $g_n(0)$ is not a Cauchy sequence and we can find two sequences $i_n, j_n \in \N$ with $i_n,j_n \to \infty$ such that $\| g_{i_n}(0)-g_{j_n}(0)\| \geq \varepsilon > 0$ for all $n$. Let $g'_n=g_{j_n}^{-1}g_{i_n}$. Then $\pi(g'_n) \to \id$ strongly to $\id$. Moreover, for every $\eta \in H$, we have
 $$ \langle g'_n(0), \eta \rangle =  \langle  g_{j_n}^{-1}g_{i_n}(0), \eta \rangle  = \langle g_{i_n}(0)-g_{j_n}(0), \pi(g_{j_n})\eta \rangle \to 0$$
 because $\pi(g_{j_n})\eta$ converges to $T \eta$ in norm and $g_{i_n}(0)-g_{j_n}(0)$ converges weakly to $\xi-\xi=0$. All this shows that $g'_n$ converges to $\id$ weakly and
 $$\| g'_{n}(0)\| = \| g_{i_n}(0)-g_{j_n}(0)\| \geq \varepsilon $$
 so that $g'_n$ does not converge strongly to $\id$.

Next, suppose that $T$ is not an orthogonal operator. Up to replacing $T$ by $T^*$ and $g_n$ by $g_n^{-1}$, we may assume that $P=T^*T \neq \id$. Let $d$ be a distance that metrizes the weak topology on the unit ball of $B(H)$. Choose inductively $k_n, \: n \in \N$ such that $d(\pi(g_{k_n}),T) \leq 2^{-n}$ and $d(\pi(g_{k_n}^{-1}g_{k_m}), T^*T) \leq 2^{-m+1}$ for all $m < n$. Now replace $g_n$ by $g_{k_n}$. Then we have $d(\pi(g_n^{-1}g_m)),T^*T) \to 0$ when $n,m \to \infty$, $n \neq m$ as we wanted.
\end{proof}

\begin{lemma} \label{recurence bounded}
Let $H$ be a Hilbert space and let $\sigma : \mathrm{Isom}(H) \curvearrowright \widehat{H}\times \R$ be the Maharam extension of the Gaussian action $\mathrm{Isom}(H) \curvearrowright (\widehat{H},\mu)$. Let $f: \widehat{H} \times \R \rightarrow X$ be a bounded measurable function into some metric space $(X,d)$. Suppose that we have a sequence $g_n \in \mathrm{Isom}(H)$ such that $\| g_n(0)\|$ is bounded. Then for almost every $\omega \in \widehat{H} \times \R$, we have
$$ \liminf_{n,m \to \infty, \: n \neq m} d(f(\sigma_{g_m^{-1}g_n}( \omega)),f(\omega)) = 0 \; .$$
\end{lemma}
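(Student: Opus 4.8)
The plan is to argue by contradiction, converting the failure of the $\liminf$ statement into a wandering set that the boundedness hypothesis forces into a region of finite measure, in contradiction with measure-preservation of the Maharam extension. Throughout I write $\nu=\mu\otimes e^{-t}\,\rd t$ for the $\sigma_g$-invariant (infinite) measure on $\widehat H\times\R$, and $R=\sup_n\|g_n(0)\|<\infty$. The first thing to record is the consequence of the hypothesis for composed isometries: since $(g_m^{-1}g_n)(0)=g_m^{-1}(g_n(0))$ and $g_m^{-1}$ is an affine isometry, $\|(g_m^{-1}g_n)(0)\|=\|g_n(0)-g_m(0)\|\le 2R$. Thus the whole family $\{g_m^{-1}g_n\}_{n\ne m}$ has translation parts uniformly bounded by $2R$. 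Writing $g_m^{-1}g_n=j_{v_{n,m}}\circ V_{n,m}$ with $v_{n,m}=(g_m^{-1}g_n)(0)$, Remark \ref{rem.Maharam translation} gives that $\sigma_{g_m^{-1}g_n}$ has the form $(\omega,t)\mapsto(\phi_{n,m}\omega,\,t+\kappa_{n,m}(\omega))$, where $\phi_{n,m}$ is a $\mu$-preserving transformation of $\widehat H$ and the $\R$-shift $\kappa_{n,m}(\omega)=-\tfrac12\|v_{n,m}\|^2-\langle \widehat{V_{n,m}}\omega,v_{n,m}\rangle$ depends only on the $\widehat H$-coordinate; as a random variable on $(\widehat H,\mu)$ it is a centered Gaussian of variance $\|v_{n,m}\|^2\le 4R^2$ shifted by a constant of modulus $\le 2R^2$. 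In particular the family $\{\kappa_{n,m}\}$ is uniformly tight.

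Next I would unwind the negation. If the conclusion fails at $\omega$, then $\liminf>0$ there, which means there are $N$ and $\eps>0$ with $d(f(\sigma_{g_m^{-1}g_n}\omega),f(\omega))\ge\eps$ for all $n\ne m$ both $\ge N$. Taking the countable union over rational $\eps$ and integer $N$, if the bad set has positive measure then some $B_{N,\eps}$ does. Fixing such $(N,\eps)$, I intersect $B_{N,\eps}$ with a bounded strip $\widehat H\times[a,b]$ and with one piece $f^{-1}(X_i)$ of a countable Borel partition of the (essentially separable) range of $f$ into sets of diameter $<\eps$, obtaining a set $B$ with $0<\nu(B)<\infty$, $B\subset\widehat H\times[a,b]$ and $f(B)\subset X_i$. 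For $\omega\in B$ and $n\ne m\ge N$ one has $d(f(\sigma_{g_m^{-1}g_n}\omega),f(\omega))\ge\eps>\operatorname{diam}X_i$, so $f(\sigma_{g_m^{-1}g_n}\omega)\notin X_i$ and hence $\sigma_{g_m^{-1}g_n}\omega\notin B$: the set $B$ is wandering for the entire family. The cocycle identity $\sigma_{g_{n_1}^{-1}g_N}\circ\sigma_{g_N^{-1}g_{n_2}}=\sigma_{g_{n_1}^{-1}g_{n_2}}$ then upgrades this to pairwise disjointness of the images $\sigma_{g_N^{-1}g_n}(B)$ for $n>N$: an equality $\sigma_{g_N^{-1}g_{n_1}}\omega_1=\sigma_{g_N^{-1}g_{n_2}}\omega_2$ with $\omega_1,\omega_2\in B$ would give $\omega_1=\sigma_{g_{n_1}^{-1}g_{n_2}}\omega_2\notin B$ by the wandering property, a contradiction.

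The decisive and hardest step is to confine these disjoint images to a set of finite $\nu$-measure, and this is exactly where the boundedness of $\|g_n(0)\|$ is indispensable: in the infinite-measure space $(\widehat H\times\R,\nu)$ there is no a priori recurrence, and a priori the $t$-coordinate of $\sigma_{g_N^{-1}g_n}$ may wander off. For each $n>N$ set $E_n=\{\omega\in B:\ |\kappa_{n,N}(\omega)|\le M\}$. Since $\kappa_{n,N}$ depends only on the $\widehat H$-coordinate and is Gaussian with variance $\le 4R^2$, uniform tightness lets me choose $M$ \emph{independently of $n$} so large that $\nu(B\setminus E_n)\le\tfrac12\nu(B)$ for all $n$; concretely $\nu(B\setminus E_n)\le\mu(|\kappa_{n,N}|>M)\int_a^b e^{-t}\,\rd t$, and $\mu(|\kappa_{n,N}|>M)\to0$ uniformly as $M\to\infty$. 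On $E_n$ the $t$-coordinate is displaced by at most $M$, so $\sigma_{g_N^{-1}g_n}(E_n)\subset\widehat H\times[a-M,b+M]$, a strip of finite measure $C:=\int_{a-M}^{b+M}e^{-t}\,\rd t$. The sets $\sigma_{g_N^{-1}g_n}(E_n)$, $n>N$, are pairwise disjoint (as subsets of the pairwise disjoint $\sigma_{g_N^{-1}g_n}(B)$) and, by measure-preservation, each has measure $\nu(E_n)\ge\tfrac12\nu(B)>0$. Summing over the infinitely many $n>N$ yields $+\infty\le C<\infty$, which is absurd. Hence the bad set is $\nu$-null, proving the lemma.

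I expect the combinatorial wandering/disjointness bookkeeping to be routine once the cocycle identity is in hand; the genuine obstacle is the infinite-measure confinement in the last paragraph, where the bounded-translation hypothesis must be converted, via the Gaussian tail of the Maharam $\R$-cocycle, into uniform tightness strong enough to trap all the images in one finite strip.
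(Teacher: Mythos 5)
Your argument is correct and is essentially a self-contained unpacking of the paper's proof, which delegates the two key steps --- recurrence of the family $\{g_m^{-1}g_n\}$ obtained from the uniform tightness of the Maharam $\R$-cocycle (their Lemma 7.15 of [AIM19]), and the partition-of-the-range/wandering-set argument of Schmidt--Walters --- to citations. One harmless slip: the first coordinate $\phi_{n,m}=\widehat{g_m^{-1}g_n}$ of the Maharam extension is only nonsingular, not $\mu$-preserving (it is the full extension that preserves $\mu\otimes e^{-t}\,\rd t$), but nothing in your argument actually uses that misstatement.
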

\begin{proof}
For every $N \in \N$, the set $\{ g_m^{-1}g_n \mid n,m \geq N\}$ is recurrent in the sense of \cite[Definition 7.12]{AIM19} thanks to \cite[Lemma 7.15]{AIM19}. In particular, this implies that for every subset $A \subset \widehat{H} \times \R$ of positive measure and for every $N \in \N$, we can find $n,m \geq N$ with $n \neq m$ such that $\sigma_{g_m^{-1}g_n}(A) \cap A$ has positive measure. Then we can conclude by using the same argument as in \cite[Theorem 7.13]{AIM19} (which follows the proof of \cite[Theorem 2.3]{SW81}).
\end{proof}

\begin{lemma} \label{reducing subspace}
Let $\alpha : G \curvearrowright H$ be an affine isometric action. Suppose that there exists a sequence $g_n \in G$ such that $\alpha_{g_n}^0$ converges weakly to some positive contraction $P$ and $\lim_n \| \alpha_{g_n}(x)-x\| = 0$ for some $x \in H$. Then every function $f \in L^\infty(\widehat{H}\times \R)$ that is invariant under the Maharam extension of $\widehat{\alpha}$ is contained in $\cM(x+\ker(P-\id))$.
\end{lemma}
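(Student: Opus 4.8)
The plan is to reduce to the case $x=0$ by a conjugation, extract from the invariance of $f$ a single functional equation $f = (\Psi_P \ot \id)(f)$, and then identify the solution space with $\cM(\ker(P-\id))$ via a Wiener chaos computation.

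First I would conjugate by the Maharam translation $J_x$. Since the Maharam extension is functorial and $J_x$ is the Maharam extension of $\widehat{j}_x$ (Remark \ref{rem.Maharam translation}), the element $f' = f \circ J_x$ is invariant under the Maharam extension of $\widehat{\alpha'}$, where $\alpha'_g = j_{-x}\circ\alpha_g\circ j_x$ has the same linear part $\pi(g)$ but cocycle $c'_g = \alpha_g(x)-x$. By hypothesis $\|c'_{g_n}\|\recht 0$, while $\pi(g_n)\recht P$ weakly. Moreover, by \eqref{eq.here is Jxi} one has $\cM(x+K) = J_x(\cM(K))$ for $K=\ker(P-\id)$, so it suffices to prove that $f'\in\cM(K) = L^\infty(\Kh\times\R)$. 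Thus I may assume $x=0$ and $\|c_{g_n}\|\recht 0$, and must show that an $f$ invariant under all $\sigma_{g_n}$ lies in $L^\infty(\Kh\times\R)$.

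Writing $\sigma_{g_n} = J_{c_{g_n}}\circ(\pih(g_n)\ot\id)$, I would take the weak$^*$ limit of the two factors separately. The measure-preserving Gaussian maps $\pih(g_n)\ot\id$ converge to $\Psi_P\ot\id$: they send $\exp_z(\xi)\ot 1$ to $\exp_z(\pi(g_n)\xi)\ot 1 \recht \exp_z(P\xi)\ot 1$, and $\Psi_P(\exp_z(\xi))=\exp_z(P^*\xi)=\exp_z(P\xi)$ since $P=P^*$, which is exactly the content of Proposition \ref{weak convergence contraction} applied to $\pi(g_n)\recht P$. Meanwhile, by Lemma \ref{weak convergence maharam} with $r=0$, the translations $J_{c_{g_n}}$ converge to $\id$. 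To pass to the limit in the \emph{composition}, I would pair the identity $f = J_{c_{g_n}}\bigl((\pih(g_n)\ot\id)f\bigr)$ with a test function $k$ from the weak$^*$-dense subalgebra spanned by the $e^{\ri\widehat{\eta}}\ot u_s$, move $J_{c_{g_n}}$ onto $k$ as an adjoint, and control the resulting error via H\"older's inequality together with the uniform $L^p$-bound of Lemma \ref{Lp integrable}, exactly as in the proof of Lemma \ref{weak convergence maharam}. This yields $f = (\Psi_P\ot\id)(f)$.

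It then remains to identify the fixed-point space. Decomposing $L^2(\Hh) = \bigoplus_{n\ge 0}H^{\odot n}$ into Wiener--It\^o chaos, $\Psi_P$ acts as $P^{\odot n}$ on the $n$-th chaos; since $P$ is a positive contraction whose $1$-eigenspace is $K=\ker(P-\id)$, a symmetric tensor is fixed by $P^{\odot n}$ precisely when it lies in $K^{\odot n}$ (the spectral measure must concentrate at $1$ in each factor, which forces it into the eigenspace). Hence the fixed-point space of $\Psi_P\ot\id$ on $L^2(\Hh\times\R)$ is $L^2(\Kh\times\R)$, so the bounded fixed vector $f$ lies in $L^\infty(\Kh\times\R)=\cM(K)$. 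Undoing the conjugation by $J_x$ gives $f\in\cM(x+K)$, as required. I expect the main obstacle to be the justification of the weak$^*$ limit of the composition $J_{c_{g_n}}\circ(\pih(g_n)\ot\id)$: the two factors converge only weak$^*$, so the limit of the product is not automatic and must be obtained by the H\"older/dense-subalgebra estimate, where the integrability input of Lemma \ref{Lp integrable} is essential. By contrast, the chaos identification of the fixed-point space of $\Psi_P$ is routine once the reduction to the single equation $f=(\Psi_P\ot\id)f$ is in place; note that, unlike in Theorem \ref{thm.non proper mixing direction}, no convolution term $\Phi_r$ survives here because $\|c_{g_n}\|\recht 0$, so a single sequence (rather than both $g_n$ and $g_n^{-1}$) already suffices.
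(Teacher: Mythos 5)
Your argument is correct, but it diverges from the paper's proof at two points, and at both the paper's route is shorter. First, the ``main obstacle'' you identify -- passing to the limit in the composition $J_{c_{g_n}}\circ(\pih(g_n)\otimes\id)$ -- does not actually arise: writing the invariance in the separated form $(\pih(g_n)\otimes\id)(f)=J_{-c_{g_n}}(f)$, the left-hand side converges weak$^*$ to $(\Psi_P\otimes\id)(f)$ by Proposition \ref{weak convergence contraction} and the right-hand side to $f$ by Lemma \ref{weak convergence maharam} with $r=0$, so $f=(\Psi_P\otimes\id)(f)$ comes for free; your H\"older/dense-subalgebra workaround is sound but re-proves what is already packaged in those two statements. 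Second, the paper does not stop at $\Psi_P$: using that $\pi(g_n)\to P$ weakly and $\|\alpha_{g_n}(0)\|\to 0$, it selects products $h_n=g_{i_1}\cdots g_{i_n}$ with $\pi(h_n)$ weakly close to $P^n$ and $\|\alpha_{h_n}(0)\|\leq 2^{-n}$; since $P^n\to Q$, the projection onto $K=\ker(P-\id)$, the same limit argument yields directly $(\Psi_Q\otimes\id)(f)=f$ with $\Psi_Q$ the conditional expectation onto $L^\infty(\Kh)$, and the conclusion is immediate. Your alternative -- identifying $\ker(\Psi_P-1)$ as $L^2(\Kh)$ via the chaos decomposition and positivity of $P^{\otimes n}$ -- is a valid substitute (you should work with the probability measure $\mu\otimes\nu$, $\nu$ Gaussian on $\R$, to make sense of the $L^2$ argument on $\Hh\times\R$), and it can even be shortcut without chaos by iterating: $f=(\Psi_P\otimes\id)^n(f)=(\Psi_{P^n}\otimes\id)(f)\to(\Psi_Q\otimes\id)(f)$, using $\Psi_{P^n}=\Psi_P^n$ and the pointwise strong convergence $\Psi_{P^n}\to\Psi_Q$ from Proposition \ref{weak convergence contraction}. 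Your initial conjugation by $J_x$ together with \eqref{eq.here is Jxi} is a correct implementation of the paper's ``without loss of generality $x=0$''.
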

\begin{proof}
Without loss of generality, we may assume that $x=0$. Let $\pi$ be the linear part of $\alpha$. Let $d$ be a distance metrizing the weak topology on the unit ball of $B(H)$. Then for every $n \in \N$, we can pick an element $h_n$ among the elements of the form $g_{i_1}\cdots g_{i_n}$ such that $d(\pi(h_n),P^n) \leq 2^{-n}$ and $\| \alpha_{h_n}(0)\| \leq 2^{-n}$. Since $d(P^n,Q) \to 0$ where $Q$ is the projection onto $\ker(P-\id)$, this implies that $\pi(h_n)$ converges weakly to $Q$.

Now, let $f \in L^\infty(\widehat{H}\times \R)$ that is invariant under the Maharam extension of $\widehat{\alpha}$. Then we have
$$ (\widehat{\pi}(h_n) \otimes \id)(f)=J_{-\alpha_{h_n}(0)}(f)$$
for all $n \in \N$. By taking weak limits, we obtain
$$ (\Psi_Q \otimes \id)(f)=f$$
and $\Psi_Q$ is the conditional expectation $E_K$ of $L^\infty(\widehat{H},\mu)$ onto $L^\infty(\widehat{K}, \mu)$.
\end{proof}

We are finally ready to prove Theorem \ref{aperiodic or reduction to proper}.

\begin{proof}[Proof of Theorem \ref{aperiodic or reduction to proper}]
We say that a nonempty closed affine subspace $K \subset H$ is reducing if for every $t > 0$ and every $f \in L^\infty(\widehat{H} \times \R)$ that is invariant under the Maharam extension of $\widehat{\alpha}^t$, we have that $f \in \cM(tK)$. If the intersection of all reducing subspaces is empty, it follows from Proposition \ref{prop.intersection spaces} that for every $t > 0$, every function in $L^\infty(\widehat{H} \times \R)$ that is invariant under the Maharam extension of $\widehat{\alpha}^t$ is contained in $L^\infty(\widehat{H})$ and is thus invariant under the translation action of $\R$. This means that the associated flow of $\widehat{\alpha}^t$ acts as the identity for all $t > 0$, so that the first property in Theorem \ref{aperiodic or reduction to proper} holds.
%This means that  $\widehat{\alpha}^t$ is of type $\III_1$ for all $t > 0$, so that the first property in Theorem \ref{aperiodic or reduction to proper} holds.

Now, suppose that the intersection of all reducing subspaces is nonempty. Then it is the smallest reducing subspace. Call it $K$. Up to conjugating $\alpha$ by a translation, we may assume that $K$ is a linear subspace. Denote by $\be = \al|_K$ the restriction of $\al$ to the invariant subspace $K$. Since $K$ is reducing, the associated flow of $\alh^t : G \actson \Hh$ is equal to the associated flow of $\beh^t : G \actson \Kh$. So whenever $\beh^t$ is of aperiodic type, also $\alh^t$ is of aperiodic type. We can view $\alh^t$ as the product of $\beh^t$ and the pmp Gaussian action associated with $G \actson K^\perp$. So whenever $\beh^t$ admits a $\sigma$-finite invariant measure, the same holds for $\alh$. Thus, we may replace $H$ by $K$ and assume that $H$ has no proper reducing subspace.

Let $\pi$ be the linear part of $\alpha$ and let $c(g)=\alpha_g(0)$ be the associated cocycle. Let $\mathcal{G}$ be the closure of $\alpha(G)$ in $\mathrm{Isom}(H)$. Suppose first that $\mathcal{G}$ is not a locally compact group acting properly on $H$. We prove that $\alh^t$ is of aperiodic type for all $t > 0$. By Lemma \ref{three cases}, we can distinguish two cases.

In the first case, there exists a sequence $g_n \in G$ such that $\alpha_{g_n}$ converges weakly to $\id$ but not strongly. Up to extracting a subsequence, we may assume that $\lim_n \|c(g_n)\|=r$ for some $r > 0$. Let $f \in L^\infty(\widehat{H} \times \R)$ be a function that is invariant under the Maharam extension of $\widehat{\alpha}^t$. Then we have
$$ (\widehat{\pi}(g_n) \otimes \id)(f)=J_{-tc(g_n)}(f)$$
for all $n$. By taking weak$^*$ limits, thanks to Lemma \ref{weak convergence maharam}, we get
$$ f=(\id \otimes \Phi_{tr})(f) \; .$$
Then, Lemma \ref{convolution constant} shows that $f$ does not depend on the $\R$-coordinate. Again, this means that the associated flow of $\widehat{\alpha}^t$ acts as the identity for all $t > 0$, so that the first property in Theorem \ref{aperiodic or reduction to proper} holds.
%and we conclude that $\widehat{\alpha}^t$ is of type $\III_1$ for all $t > 0$.

In the second case, there exists a sequence $g_n \in G$ such that $c(g_n)=g_n(0)$ is bounded and $\pi(g_n^{-1}g_m)$ converges weakly to some positive contraction $P \neq \id$ when $n,m \to \infty, \; n \neq m$. Suppose that for some $s,t > 0$, there exists a nonzero bounded $G$-equivariant measurable function $f : \widehat{H} \times \R \rightarrow L^2(\widehat{H},\mu)$ where $G$ acts on $\widehat{H} \times \R$ by the Maharam extension of $\widehat{\alpha}^t$ and on $L^2(\widehat{H},\mu)$ by the Koopman representation $\rho$ of $\widehat{\alpha}^s$. Then by Lemma \ref{recurence bounded}, for almost every $\omega \in \widehat{H} \times \R$, we have
\begin{equation}\label{eq.with liminf}
\liminf_{n,m \to \infty, n \neq m} \| \rho(g_m^{-1}g_n) f(\omega)-f(\omega)\|=0 \; .
\end{equation}
Since $\|c(g_m^{-1}g_n)\|$ is bounded, we can choose among the elements $g_m^{-1} g_n$ a sequence $h_n \in G$ such that $\pi(h_n) \recht P$ weakly, such that $c(h_n)$ is weakly convergent and such that $\|\rho(h_n) \xi - \xi\| \recht 0$ for some nonzero vector $\xi \in L^2(\Hh,\mu)$. By Lemma \ref{sequence not midly mixing}, we find $x \in H$ such that
$$\lim_n \| \alpha_{h_n}(x)-x\|=0 \; .$$
Define $K = x + \Ker(P-\id)$. Since $\|\alpha^r_{h_n}(rx) - rx\| \recht 0$ for every $r > 0$, it follows from Lemma \ref{reducing subspace} that $K$ is a reducing subspace. Since $P \neq \id$, this contradicts the minimality of $H$. We conclude that the function $f$ must vanish almost everywhere. Lemma \ref{no equivariant type III} implies that $\alh^t$ is of aperiodic type for all $t > 0$.

Finally, if $\cG$ is a locally compact group acting properly on $H$, we apply Theorem \ref{proper aperiodic}. Note that for every $t > 0$, the Gaussian actions of $G$ and $\cG$ on $\Hh$ have the same associated flow. So if $t_\diss(\cG) = +\infty$, it follows from Theorem \ref{proper aperiodic} that $\alh^t$ is of aperiodic type for all $t > 0$. If $t_\diss(\cG) < +\infty$, we get the same conclusion for all $0 < t < t_\diss(\cG)$. When $t > t_\diss(\cG)$, the Gaussian action of $\cG$ is dissipative and thus admits a $\sigma$-finite invariant measure that is equivalent to $\mu$. This measure is a fortiori invariant under the Gaussian action of $G$.
\end{proof}

\section{A spectral approach to ergodicity}

The main goal of this section is to prove Theorem \ref{spectral gap strongly ergodic}. We will deduce this from the following more general result.

\begin{theorem} \label{spectral criterion}
Let $\alpha : G \curvearrowright H$ be a continuous affine isometric action of a locally compact group $G$ without fixed point. For all $t > 0$, let $\rho_t$ be the Koopman representation of $\widehat{\alpha}^t$. Let $\rho^0$ be the reduced Koopman representation of $\widehat{\alpha}^0$.
\begin{enumlist}
\item If $\rho_s$ is not contained in $\rho_s \otimes \rho^0$ for some $s > 0$, then $\widehat{\alpha}^t$ is ergodic of type $\III_1$ or $\III_0$ for all $t \in (0,s)$.
\item If $\rho_s$ is not weakly contained in $\rho_s \otimes \rho^0$ for some $s > 0$, then $\widehat{\alpha}^t$ is strongly ergodic of type $\III_1$ for all $t \in (0,s)$.
\end{enumlist}
\end{theorem}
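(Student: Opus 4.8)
The plan is to translate the two dynamical conclusions into (weak) containment statements about the Koopman representations $\rho_t$, and then to feed those into the rotation trick. The central identity is obtained by applying the rotation trick of Section \ref{Rotation trick} to the rescaled action $\al^s$: for $p^2+q^2=1$ one has $\alh^{sp}\times\alh^{sq}\cong\alh^s\times\pih$, and since the Koopman representation of the pmp Gaussian action $\pih$ is $1\oplus\rho^0$, for every $0<t<s$, putting $b=\sqrt{s^2-t^2}$, I get
\begin{equation}\label{eq.rotation-identity}
\rho_t\ot\rho_b\cong\rho_s\ot(1\oplus\rho^0)=\rho_s\oplus(\rho_s\ot\rho^0).
\end{equation}
Thus $\rho_s$ sits inside $\rho_t\ot\rho_b$ precisely ``on top of'' a copy of $\rho_s\ot\rho^0$, and this is the mechanism by which the hypothesis at the single scale $s$ is propagated to every smaller scale $t$. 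Note that $t<s$ is used crucially to keep $b$ real.

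For the first statement I would use Lemma \ref{no equivariant type III}: applying it to the action $\alh^t$ in place of $\alh$, it suffices to exclude, for every $\tau>0$, a nonzero bounded $G$-equivariant map $f:\Hh\times\R\recht L^2(\Hh,\mu)$ intertwining the Maharam extension of $\alh^t$ with $\rho_\tau$, and then to let $t\uparrow s$ to cover all scales below $s$; ergodicity of $\alh^t$ is the analogous statement for invariant functions, i.e. an equivariant map into a trivial subrepresentation, and is excluded by the same argument. The point is to rewrite such a containment of $\rho_\tau$ using the representation theory already available: Proposition \ref{rotation maharam} identifies the relevant product of Maharam extensions with $\sigma\times\beta$, while Lemma \ref{koopman extension} (through the identification $\theta_a\cong\rho_{2a}$ and the decomposition into flow modes) expresses the Koopman representation of the skew product via the family $(\rho_r)_r$. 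Combining these, a nonzero equivariant map produces, after tensoring with $\rho_b$ and invoking \eqref{eq.rotation-identity}, an honest subrepresentation $\rho_s\subset\rho_s\ot\rho^0$, contradicting the hypothesis. Lemma \ref{no equivariant type III} then yields that $\alh^{t'}$ is of aperiodic type for all $t'<s$, which together with ergodicity is exactly type $\III_1$ or $\III_0$.

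For the second statement I would run the same scheme with ``contained'' replaced by ``weakly contained''. Here the failure of strong ergodicity (or of type $\III_1$) does not produce a single equivariant map but a sequence of almost invariant unit vectors for the Koopman representation of the Maharam extension of $\alh^t$; this is where the spectral-gap reformulation of strong ergodicity, together with \cite[Theorem 2.3]{SW81} to rule out type $\III_0$, enters. Passing to weak limits in the same computation converts such an almost invariant sequence into a weak containment $\rho_s\prec\rho_s\ot\rho^0$, contradicting the stronger hypothesis. Strong ergodicity of the Maharam extension then upgrades the ``aperiodic type'' conclusion from $\III_1$ or $\III_0$ to pure $\III_1$.

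The main obstacle is the precise dictionary between the dynamical statements (absence of invariant, respectively almost invariant, sets for the nonsingular action and its Maharam extension) and the representation-theoretic (weak) containment of $\rho_\tau$. Two points require genuine care. First, the Maharam measure is infinite, so the correct notion is a \emph{bounded} equivariant map rather than an $L^2$-invariant vector, and the decomposition of the Maharam Koopman representation in the flow direction (via Lemma \ref{koopman extension}) must be organized so that the single scale $s$ appearing in \eqref{eq.rotation-identity} actually controls \emph{all} the scales $\tau$ arising in that decomposition; matching these scales is the technical heart of the proof and is the reason the argument is run in the unified ``tensor up to scale $s$'' form rather than by a naive monotonicity of the condition $\rho_t\not\subset\rho_t\ot\rho^0$ in $t$, which need not hold at the level of genuine containment. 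Second, the asymmetry between the two parts is essential and must be respected throughout: the ``contained'' hypothesis only forbids genuine subrepresentations and hence only yields aperiodic type, leaving $\III_0$ possible, whereas the ``weakly contained'' hypothesis forbids almost invariant sequences and is exactly what simultaneously gives strong ergodicity and excludes $\III_0$.
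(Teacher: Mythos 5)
Your proposal correctly isolates the rotation identity $\rho_t\otimes\rho_b\cong\rho_s\oplus(\rho_s\otimes\rho^0)$ (with $b=\sqrt{s^2-t^2}$) and correctly flags that the whole difficulty is the dictionary between dynamical statements about $\widehat{\alpha}^t$ and (weak) containment statements about $\rho_\tau$. But you do not supply that dictionary, and the mechanisms you gesture at are the ones that fail. For a \emph{nonsingular} action, an invariant (or almost invariant) set does \emph{not} produce an invariant (or almost invariant) vector of the Koopman representation, so ergodicity of $\widehat{\alpha}^t$ is not ``the analogous statement for an equivariant map into a trivial subrepresentation''. The paper proves ergodicity by a genuinely measure-theoretic route: Lemma \ref{lemma spectral gap} applied to $\widehat{\alpha}\times\widehat{\pi}$ produces an invariant set $A$ over which invariant functions of the product do not depend on the second leg; then, given a $\widehat{\alpha}^t$-invariant set $B$, the rotated set $C=\widehat{R}(B\times\widehat{H})$ is invariant under translations by the rotated copy $K=R(\{0\}\oplus H)$, which forces the slice $D$ over $A$ to be invariant under \emph{all} translations of $\widehat{H}$, hence null or conull by \cite[Proposition 4.13]{AIM19}. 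Nothing of this sort appears in your argument, and the identity \eqref{eq.rotation-identity} alone cannot produce it.

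The second gap is the type computation. You claim that a nonzero bounded equivariant map out of the Maharam extension, ``after tensoring with $\rho_b$'', yields an honest containment $\rho_s\subset\rho_s\otimes\rho^0$; no such implication is justified, and I do not see how to extract a subrepresentation of a tensor product from a bounded map defined on an infinite-measure-preserving extension. The paper does not argue through Lemma \ref{no equivariant type III} here at all: it invokes the structural dichotomy of Theorem \ref{aperiodic or reduction to proper} (via Corollary \ref{transition invariant measure}), so that failure of aperiodic type below $s$ forces a reducing affine subspace $K$ on which the closure $\mathcal{G}$ of $\alpha(G)|_K$ acts properly with \emph{dissipative} Gaussian action; then $\rho_s$ decomposes over quasi-regular representations $\lambda_{\mathcal{G}_0}$ of compact stabilizers, and Fell's absorption principle gives $\lambda_{\mathcal{G}_0}\subset\lambda_{\mathcal{G}_0}\otimes\pi^0$, whence $\rho_s\subset\rho_s\otimes\rho^0$, the desired contradiction. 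This dissipativity-plus-Fell-absorption step is the actual content of part~1 and is absent from your proposal. Finally, for part~2 the passage from an almost invariant sequence $f_n\in L^\infty(\widehat{H})\ominus\C1$ to the conclusion $\|f_n\|_2\to0$ requires, beyond Lemma \ref{lemma spectral gap}, the estimate that the contraction $T(f)=E(\widehat{R}(f\otimes1))$ has norm strictly less than $1$ on $L^2(\widehat{H},\mu)\ominus\C1$ (proved by compactness in dimension one and tensoring); without it you only get $\|F_n-E(F_n)\|_2\to0$, which does not yet contradict non-triviality of the sequence. The exclusion of type $\III_0$ then follows from strong ergodicity directly, not from \cite[Theorem 2.3]{SW81}.
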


We start by proving a lemma, generalizing \cite[Lemma 7.18]{AIM19} with a slightly more conceptual approach using the notion of containment and weak containment of representations.

\begin{lemma} \label{lemma spectral gap}
Let $\sigma : G \curvearrowright (X,\mu)$ be a nonsingular action of a locally compact group $G$ and $\eta : G \curvearrowright (Y,\nu)$ a pmp action. Let $\rho_\sigma : G \curvearrowright L^2(X,\mu)$ be the Koopman representation of $\sigma$ and let $\rho^0 : G \curvearrowright L^2(Y,\nu) \ominus \C1$ be the reduced Koopman representation of $\eta$. Suppose that $\rho_\sigma$ is not contained in $\rho_\sigma \otimes \rho^0$. Then there exists a $G$-invariant subset $A \subset X$ with $\mu(A) > 0$ such that $L^\infty(A \times Y, \mu \otimes \nu)^{\sigma \times \eta} = L^\infty(A, \mu)^{\sigma}$.

If moreover, $\sigma$ is ergodic and $\rho_\sigma$ is not weakly contained in $\rho_\sigma \otimes \rho^0$, then for any bounded almost invariant sequence $(f_n)_n$ in $L^\infty(X \otimes Y, \mu \otimes \nu)$, we have $\lim_n \|f_n-E(f_n)\|_2=0$, where $E : L^\infty(X \otimes Y) \rightarrow L^\infty(X)$ is the unique conditional expectation such that $\mu \circ E=\mu \otimes \nu$.
\end{lemma}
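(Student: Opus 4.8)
The plan is to read both statements through the $L^\infty(X)$-module decomposition
$L^2(X\times Y) = L^2(X)\oplus\bigl(L^2(X)\otimes(L^2(Y)\ominus\C)\bigr)$,
under which the Koopman representation of $\sigma\times\eta$ becomes $\rho_\sigma\oplus(\rho_\sigma\otimes\rho^0)$ and the orthogonal projection onto the first summand is exactly $E$. Since $\eta$ is pmp, $\int_Y\cdot\,d\nu$ is $\eta$-invariant, so $E$ is $G$-equivariant and contractive; hence for any $\sigma\times\eta$-invariant $f$ both $E(f)$ and $g:=f-E(f)$ are invariant, with $E(f)\in L^\infty(X)$ and $E(g)=0$, and likewise $\|h\cdot(f_n-E(f_n))-(f_n-E(f_n))\|_2\le 2\|h\cdot f_n-f_n\|_2$. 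Thus in both parts it suffices to analyse invariant (resp.\ almost invariant) elements $g$ of the vertical space $\cK:=L^2(X)\otimes(L^2(Y)\ominus\C)$, on which $G$ acts by $\Pi:=\rho_\sigma\otimes\rho^0$.

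For part 1, write the measurable field $\gamma_x:=g(x,\cdot)\in L^2(Y)\ominus\C$. Invariance of $g$ together with unitarity of $\rho^0$ (as $\eta$ is pmp) reads $\rho^0_g\gamma_{\sigma_g^{-1}x}=\gamma_x$, so $h(x):=\|\gamma_x\|^2$ is \emph{genuinely} $\sigma$-invariant and $\{h>0\}$ is a $\sigma$-invariant set. On $\{h>0\}$ the normalized field $e_x:=\gamma_x/\|\gamma_x\|$ is exactly $G$-equivariant, so $V\colon L^2(\{h>0\})\to\cK$, $(V\zeta)(x,\cdot)=\zeta(x)e_x$, is an isometry intertwining $\rho_\sigma|_{\{h>0\}}$ with $\Pi$; that is, $\rho_\sigma|_{L^2(\{h>0\})}$ is contained in $\Pi$. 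Now take a countable family $(f_i)$ generating the von Neumann algebra $L^\infty(X\times Y)^{\sigma\times\eta}$, put $g_i=f_i-E(f_i)$ with associated invariant functions $h_i$, and set $A_1=\bigcup_i\{h_i>0\}$, a $\sigma$-invariant set. If $\mu(X\setminus A_1)>0$ I take $A=X\setminus A_1$: every invariant $f$ has vanishing vertical part on $A\times Y$, whence $L^\infty(A\times Y)^{\sigma\times\eta}=L^\infty(A)^\sigma$. If instead $A_1$ is conull, the invariant sets $\{h_i>0\}$ cover $X$; partitioning $X$ into invariant pieces $B_i=\{h_i>0\}\setminus\bigcup_{j<i}\{h_j>0\}$ and summing the corresponding isometries (whose ranges are orthogonal because their $x$-supports are disjoint) yields an isometry $L^2(X)\to\cK$ intertwining $\rho_\sigma$ and $\Pi$, i.e.\ $\rho_\sigma$ is contained in $\Pi$, contradicting the hypothesis. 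So the first alternative holds.

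For part 2 I run the same scheme in the almost invariant setting, arguing by contradiction from $\liminf_n\|g_n\|_2>0$. Put $h_n(x)=\|g_n(x,\cdot)\|^2$ and $\phi_n=\sqrt{h_n}\ge0$. The Powers--St\o rmer inequality $\|\sqrt a-\sqrt b\|_2^2\le\|a-b\|_1$, applied to the $L^1$-action of $\sigma$ on $X$, converts almost invariance of $g_n$ under $\Pi$ into almost invariance of $\phi_n$ under $\rho_\sigma$; meanwhile the transport $U_n\colon a\phi_n\mapsto a g_n$ $(a\in L^\infty(X))$ is an exact $L^\infty(X)$-modular isometry, and using $\Pi_g g_n\approx g_n$ and $\rho_{\sigma,g}\phi_n\approx\phi_n$ one checks that it intertwines asymptotically: for fixed $a,b\in L^\infty(X)$, $\langle\rho_{\sigma,g}(a\phi_n),b\phi_n\rangle-\langle\Pi_g(ag_n),bg_n\rangle\to0$ uniformly on compacta. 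This realizes $\rho_\sigma$, on the $L^\infty(X)$-submodule generated by $\phi_n$, as weakly contained in $\Pi$; ergodicity of $\sigma$ is what lets a single nonnegative almost invariant vector $\phi_n$ detect all of $\rho_\sigma$, since the relative commutant is trivial and hence $\{L^\infty(X),\rho_\sigma(G)\}''=B(L^2(X))$, making every nonzero nonnegative vector cyclic. Combining these pieces would give $\|\rho_\sigma(S)\|\le\|\Pi(S)\|$ for all $S\in C_c(G)$, that is, $\rho_\sigma$ weakly contained in $\Pi$, contradicting the hypothesis and forcing $\|g_n\|_2\to0$; the asserted conclusion is exactly this.

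The hard part is this last globalization. The coefficient matching and the isometry $U_n$ are controlled only after weighting by $\|a\|_\infty$, and they degenerate on the locus $\{h_n\approx0\}$, where the fibre directions $e_n(x)=g_n(x,\cdot)/\|g_n(x,\cdot)\|$ are undefined; a fixed near-maximizer of $\rho_\sigma(S)$ need not lie in the part of $L^2(X)$ that $U_n$ transports faithfully, and reconciling the cyclicity of $\phi_n$ (which via $\{L^\infty(X),\rho_\sigma(G)\}''=B(L^2(X))$ requires $n$-dependent, a priori unbounded group data) with the fixed rate of almost invariance on a given compact set is the crux. I expect to resolve it by a careful diagonal argument in $n$, or more cleanly by passing to an ultrapower, where the almost invariant sequence becomes an honest $G$-invariant vector and the exact computation of part 1 applies, ergodicity of $\sigma$ entering precisely through the triviality of the relevant relative commutant.
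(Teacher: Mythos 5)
Your first part is correct, and it is essentially a fiberwise version of the paper's argument. The paper avoids the normalization $e_x=\gamma_x/\|\gamma_x\|$ and the covering/partition argument altogether: assuming no such $A$ exists, one produces a single $\sigma\times\eta$-invariant $f$ with $E(f)=0$ and $f\neq 0$ a.e., and observes that multiplication $\xi\mapsto f\xi$ is a bounded, injective, $G$-equivariant operator from $L^2(X,\mu)$ into $L^2(X\times Y)\ominus L^2(X)$; polar decomposition then gives the containment $\rho_\sigma\subset\rho_\sigma\otimes\rho^0$ directly. Your route reaches the same conclusion with more bookkeeping, but it works.

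The second part, however, has a genuine gap, and you have located it yourself: the ``globalization'' from the fiberwise-normalized data $(\phi_n,e_n)$ to weak containment of all of $\rho_\sigma$ is not carried out, and the machinery you propose (asymptotic modular intertwiners, cyclicity of $\phi_n$ via $\{L^\infty(X),\rho_\sigma(G)\}''=B(L^2(X))$, a diagonal or ultrapower argument) does not obviously close it, precisely because the isometry $a\phi_n\mapsto ag_n$ only controls the subspace $L^2(\{h_n>0\})$ and the intertwining is only asymptotic at a rate you cannot couple to an $n$-independent compact set in $G$. The fix is much simpler than what you envision, and it is where ergodicity actually enters: do not normalize fiberwise at all. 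Replace $f_n$ by $g_n=f_n-E(f_n)$ (still bounded, still almost invariant, with $\|g_n\|_2$ bounded away from $0$ by assumption), pass to a subsequence so that $E(|g_n|^2)$ converges weak$^*$; by ergodicity of $\sigma$ the limit is a constant $c>0$. Then for \emph{every} $\xi\in L^2(X,\mu)$ the vectors $\xi_n=g_n\,\xi$ lie in $L^2(X\times Y)\ominus L^2(X)$ (because $E(g_n)=0$), and a direct computation gives
$$\langle(\rho_\sigma\otimes\rho^0)(g)\xi_n,\xi_n\rangle=\int_X E(|g_n|^2)\,(\rho_\sigma(g)\xi)\,\overline{\xi}\,\rd\mu+o(1)\longrightarrow c\,\langle\rho_\sigma(g)\xi,\xi\rangle\;,$$
uniformly on compacta. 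This exhibits every diagonal matrix coefficient of $\rho_\sigma$ as a limit of diagonal matrix coefficients of $\rho_\sigma\otimes\rho^0$, i.e.\ the desired weak containment, with no cyclicity argument and no difficulty on $\{h_n\approx 0\}$. As written, your part 2 is an announced strategy rather than a proof.
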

\begin{proof}
For the first part, suppose that there is no $G$-invariant subset $A \subset X$ with $\mu(A) > 0$ such that $L^\infty(A \times Y, \mu \otimes \nu)^{\sigma \times \eta} = L^\infty(A, \mu)^{\sigma}$. Then we can find $f \in L^\infty(X \times Y, \mu \otimes \nu)^{\sigma \times \eta}$ with $f \neq 0$ a.e.\ such that $E(f)=0$ where $E$ is the $G$-equivariant conditional expectation onto $L^\infty(X, \mu)^{\sigma}$ associated to the $G$-invariant measure $\nu$. Therefore, we can define a bounded linear operator $\xi \mapsto f \xi$ from $L^2(X,\mu)$ to $L^2(X \times Y, \mu \otimes \nu) \ominus L^2(X,\mu)$ which is injective and $G$-equivariant with respect to the Koopman representations. This contradicts the assumption.

The second part is similar by using a bounded almost $G$-invariant sequence $(f_n)_{n \in \N}$ in $ L^\infty(X \times Y, \mu \otimes \nu)$ such that $\|f_n\|_2=1$ and $E(f_n)=0$ for all $n$. Up to extracting a subsequence, we may assume that $E(|f_n|^2)$ converges in the weak$^*$ topology to some function, which must be constant a.e. because $\sigma$ is ergodic. Then for every $\xi \in L^2(X,\mu)$, the sequence of vectors $\xi_n=f_n \xi \in L^2(X \times Y, \mu \otimes \nu) \ominus L^2(X,\mu)$ satisfies
$$ \lim_n \langle (\rho_\sigma \otimes \rho^0)(g) \xi_n, \xi_n \rangle = \langle \rho_\sigma(g) \xi, \xi \rangle \; .$$
This shows the desired weak containment.
\end{proof}

\begin{proof}[{Proof of Theorem \ref{spectral criterion}}]
We start with point~1. Without loss of generality, we may assume that $s=1$ and that $\rho_1$ is not contained in $\rho_1 \otimes \rho^0$. In particular, this implies that $\rho^0$ has no invariant vectors, hence that $\pi$ is weakly mixing, where $\pi$ is the linear part of $\alpha$. Fix $0 < t < 1$. We have to show that $\widehat{\alpha}^t$ is ergodic. By Lemma \ref{lemma spectral gap}, there exists $A \subset \widehat{H}$ with $\mu(A) > 0$ that is invariant under $\widehat{\alpha}$ and such that $L^\infty(A \times \widehat{H},\mu \otimes \mu)^{\widehat{\alpha} \times \widehat{\pi}}=L^\infty(A,\mu|_A)^{\widehat{\alpha}}$. Take $B \subset \widehat{H}$ a subset that is invariant under $\widehat{\alpha}^t$. We have to show that $\mu(B)=0$ or $\mu(B)=1$.

Let $R$ be the rotation of $H \oplus H$ corresponding to the parameters $(p,q)=(t,\sqrt{1-t^2})$ as in Section \ref{Rotation trick}. Then $\widehat{R}$ intertwines $\widehat{\alpha}^t \times \widehat{\alpha}^{\sqrt{1-t^2}}$ with $\widehat{\alpha} \times \widehat{\pi}$. Let $C=\widehat{R}(B \times \widehat{H})$. Then $C$ is invariant under $\widehat{\alpha} \times \widehat{\pi}$. Therefore, we can write $C \cap (A \times \widehat{H})=D \times \widehat{H}$ for some $D \subset A$. In particular, $D \times \widehat{H} \subset C$.

Let $K \subset H \oplus H$ be the image of $\{ 0 \} \oplus H$ by the rotation $R$. Observe that $C$ is invariant under the translation action $\widehat{j}_\xi$ for every $\xi \in K$. Therefore
$$\widehat{j}_{\xi'}(D) \times \widehat{H} =\widehat{j}_\xi(D \times \widehat{H}) \subset C$$
where $(\xi',0)$ is the projection of $\xi$ on $H \times \{0\}$. Since every vector $\xi' \in H$ can attained in this way, we conclude that $D$ is invariant by all translations. But the action of translations on the Gaussian probability space is ergodic (see \cite[Proposition 4.13]{AIM19}). Therefore, $\mu(D)=0$ or $\mu(D)=1$. If $\mu(D)=1$, then $\mu(C)=1$ and we are done. If $\mu(D)=0$, then, up to measure zero, we have $A \times \widehat{H} \subset C^c$ where $C^c=\widehat{H} \times \widehat{H} \setminus C$.  But, once again $C^c$ is invariant by all translations by vectors in $K$. Therefore, the same argument shows that $\mu(C^c)=1$, i.e.\ $\mu(C)=0$ as we wanted. We conclude that $\widehat{\alpha}^t$ is ergodic for all $t < 1$.

We now claim that $t_c \geq 1$ (recall that we assume that $s=1$) where $t_c$ is the critical constant of Corollary \ref{transition invariant measure}. Indeed, suppose that $1 > t_c$. Then by combining Theorem \ref{proper aperiodic} and Theorem \ref{aperiodic or reduction to proper}, there must exist an $\alpha$-invariant closed affine subspace $K \subset H$, which we may assume to be a linear subspace without loss of generality, such that the closure $\mathcal{G}=\overline{\alpha|_K(G)}$ in $\mathrm{Isom}(K)$ is a locally compact group acting properly on $K$ and such that the Gaussian action $\widehat{\beta}$, associated to the isometric action $\beta : \mathcal{G} \curvearrowright K$, is dissipative. Let $\pi_1$ be the Koopman representation of $\widehat{\beta}$. Let $\pi^0$ be the reduced Koopman representation of $\widehat{\beta}^0$ where $\beta^0$ is the linear part of $\beta$. Define the inclusion map $i=\alpha|_K : G \rightarrow \mathcal{G}$. Let $\eta^0$ be the reduced Koopman representation of $\alpha|_{K^\perp} : G \rightarrow \mathcal{O}(K^\perp)$. Since $L^2(\widehat{H},\mu)=L^2(\widehat{K},\mu) \otimes L^2(\widehat{K^\perp},\mu)$, we have
$$ \rho_1 = (\pi_1 \circ i) \otimes (1_G \oplus \eta^0) \; .$$
We claim that $\pi_1$ is contained in $\pi_1 \otimes \pi^0$. Since $\widehat{\beta}$ is dissipative, $\pi_1$ is a direct integral of quasiregular representations $\lambda_{\cG_0} : \cG \recht \cU(L^2(\cG/\cG_0))$ where $\cG_0$ runs through compact subgroups of $\cG$. For every compact subgroup $\cG_0 < \cG$, the representation $\pi^0$ admits nonzero $\pi^0(\cG_0)$-invariant vectors. By Fell's absorption principle, it follows that $\lambda_{\cG_0}$ is contained in $\lambda_{\cG_0} \ot \pi^0$ for every compact subgroup $\cG_0 < \cG$. Taking a direct integral, the claim follows.

By the claim, $\rho_1$ is contained in
$$ (( \pi_1 \otimes \pi^0) \circ i) \otimes (1_G \oplus \eta^0)=\rho_1 \otimes (\pi^0 \circ i) \; .$$
But since $\pi^0 \circ i$ is contained in $\rho^0$, we conclude that $\rho_1$ is contained in $\rho_1 \otimes \rho^0$ contradicting our assumption. This shows that $t_c \geq 1$. So, $\widehat{\alpha}^t$ is of aperiodic type for all $0 < t < 1$. We already proved that $\alh^t$ is ergodic. Hence, $\alh^t$ is of type $\III_1$ or $\III_0$.

We finally prove point~2 of the theorem. Assume that $\rho_1$ is not weakly contained in $\rho_1 \otimes \rho^0$. Let $f_n \in L^\infty(\widehat{H},\mu) \ominus \C1$ be a bounded almost invariant sequence for $\widehat{\alpha}^t$. Let $F_n = \widehat{R}(f_n \otimes 1) \in L^\infty( \widehat{H}\times \widehat{H})$. Then by Lemma \ref{lemma spectral gap}, we know that $\| F_n-E(F_n)\| _2\to 0$ where $E : L^\infty(\widehat{H} \times \widehat{H}, \mu \otimes \mu) \rightarrow L^\infty(\widehat{H}, \mu)$ is the conditional expectation onto the first leg. We claim that this implies that $\|F_n\|_2 \to 0$.

Define the contraction $T : L^2(\widehat{H},\mu) \to L^2(\widehat{H},\mu) : T(f) = E(\widehat{R}(f \otimes 1))$. Write $\cK = L^2(\widehat{H},\mu) \ominus \C 1$. Note that $T(\cK) \subset \cK$. To prove the claim, it suffices to show that the restriction of $T$ to $\cK$ has norm strictly smaller than $1$. We first consider the one-dimensional case $H = \R$. Denote by $\mu_1$ the standard Gaussian probability measure on $\R$, with the corresponding contraction $T_1 : L^2(\R,\mu_1) \to L^2(\R,\mu_1)$. Then $T_1$ is an integral operator with square integrable kernel and hence, a compact operator. If $f \in L^2(\R,\mu_1)$ satisfies $|T_1|(f) = f$, we have $\|T_1(f)\|_2 = \|f\|_2$. Since $T_1(f) = E(\widehat{R}(f \ot 1))$, it follows that $\widehat{R}(f \ot 1) \in L^\infty(\R,\mu_1) \otimes 1$, so that $f \in \C 1$. Since $T_1$ is compact, it follows that the restriction of $T_1$ to $L^2(\R,\mu_1) \ominus \C 1$ has norm $a < 1$. Choosing an orthonormal basis $(e_i)_{i \in I}$ for the Hilbert space $H$, we may view $T$ as a tensor product of copies of $T_1$. Hence, also the restriction of $T$ to $\cK$ has norm $a < 1$.

So we have proven that $\| F_n\|_2 \to 0$. This shows that $\widehat{\alpha}^t$ is strongly ergodic. In particular, $\alh^t$ cannot be of type $\III_0$. So, $\alh^t$ is of type $\III_1$.
\end{proof}

\begin{proof}[{Proof of Theorem \ref{spectral gap strongly ergodic}}]
Since $\pi$ has stable spectral gap, also $\rho^0$ has stable spectral gap. Whenever $s_n > 0$ and $s_n \recht 0$, the representation $\bigoplus_n \rho_{s_n}$ weakly contains the trivial representation. There thus exists an $s > 0$ such that $\rho_s$ is not weakly contained in $\rho_s \ot \rho^0$. By Theorem \ref{spectral criterion}, we get that $\alh^t$ is strongly ergodic of type III$_1$ for all $0 < t < s$.
\end{proof}

\begin{remark}
For concrete examples, Theorem \ref{spectral criterion} also allows us to give a good estimate of the critical $s$ of Corollary \ref{spectral gap strongly ergodic}. For example, if $\alpha : \F_n \curvearrowright H$ is the affine isometric action associated to the action of the free group $\F_n$, $n \geq 2$, on its Cayley tree as in \cite[Section 9]{AIM19}, then one knows that $\rho^0$ is a multiple of the left regular representation. Thus $\rho_s$ is not contained in $\rho_s \otimes \rho^0$ if $\rho_s$ is not contained in a multiple of the left regular representation. By looking at the spectral radius, one sees that this condition is satisfied if $s < 2 \sqrt{\log(2n-1)}$ (see the proof of \cite[Theorem 9.9]{AIM19}). So, for this example, we conclude that $\widehat{\alpha}^t$ is strongly ergodic for all $t < 2 \sqrt{\log(2n-1)}$. We do not know if $\widehat{\alpha}^t$ is strongly ergodic for $2 \sqrt{\log(2n-1)} \leq t < 2 \sqrt{2\log(2n-1)}$.
\end{remark}

\section{Amenability, dissipativity and the Koopman representation}\label{sec.amenable action}

In this section, we prove that the nonsingular Gaussian actions $\alh^t$ associated with nonamenable groups acting properly on trees are nonamenable in the sense of Zimmer whenever $t < t_\diss(\al)$. As an application, we provide examples of nonsingular group actions whose Koopman representation is contained in a multiple of the regular representation, but that are nevertheless nonamenable.

Recall that a nonsingular action $\sigma : \Gamma \curvearrowright (X,\mu)$ of a countable group $\Gamma$ is amenable if and only if there exists a $\Gamma$-equivariant conditional expectation $E: L^\infty(X \times \Gamma) \rightarrow L^\infty(X)$, where $\Gamma$ acts on $X \times \Gamma$ diagonally. The action $\sigma$ is dissipative if and only if there exists a $\Gamma$-equivariant \emph{normal} conditional expectation $E: L^\infty(X \times \Gamma) \rightarrow L^\infty(X)$. In this sense amenability is a weak form of dissipativity.

\begin{theorem} \label{amenability gaussian tree}
Let $\mathbb{T}$ be a locally finite tree and $\Gamma < \Aut(\mathbb{T})$ a nonelementary discrete subgroup. Let $\alpha : \Gamma \curvearrowright H$ be the associated affine isometric action. Denote by $\delta(\Gamma)$ the Poincar\'{e} exponent.

If $t < 2 \sqrt{2\delta(\Gamma)}=t_{\rm diss}(\alpha)$, the Gaussian action $\alh^t$ is nonamenable, ergodic and of type III$_1$. If $t > 2 \sqrt{\delta(\Gamma)}$, the Koopman representation of $\alh^t$ is contained in a multiple of the regular representation.

In particular, when $\Gamma = \F_n$ is the free group on $n \geq 2$ generators acting on its Cayley tree and if $2 \sqrt{\log(2n-1)} < t < 2\sqrt{2\log(2n-1)}$, the action $\alh^t$ is nonamenable, ergodic, of type III$_1$ and has a Koopman representation that is contained in a multiple of the regular representation.
\end{theorem}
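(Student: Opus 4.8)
The statement splits into three assertions: (A) for $0<t<t_\diss(\al)$ the action $\alh^t$ is ergodic of type III$_1$ with $t_\diss(\al)=2\sqrt{2\delta(\Gamma)}$; (B) for $t>2\sqrt{\delta(\Gamma)}$ the Koopman representation $\rho_t$ is contained in a multiple of the left regular representation $\lambda_\Gamma$; and (C) for $0<t<t_\diss(\al)$ the action $\alh^t$ is nonamenable. Since the canonical embedding of the tree gives $\|c_g\|^2=d(o,g\cdot o)$, the Poincar\'e exponent of $\al$ equals $\delta(\Gamma)$, and I would take (A) together with the exact value $t_\diss(\al)=2\sqrt{2\delta(\Gamma)}$ directly from the precise tree analysis of \cite[Section 9]{AIM19}, a nonelementary discrete subgroup being of general type. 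The ``in particular'' sentence is then the specialization to $\Gamma=\F_n$, where $\delta(\F_n)=\log(2n-1)$, so that the interval $2\sqrt{\log(2n-1)}<t<2\sqrt{2\log(2n-1)}$ is exactly $2\sqrt{\delta(\Gamma)}<t<t_\diss(\al)$ and all three conclusions hold at once.

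For (B) I would argue through matrix coefficients of the vectors $\mu_\eta^{1/2}$. Using $\rho_t(g)\mu_\eta^{1/2}=\mu_{\al^t_g\eta}^{1/2}$ and the Gaussian identity $\langle\mu_a^{1/2},\mu_b^{1/2}\rangle=\exp(-\tfrac18\|a-b\|^2)$, one obtains $\langle\rho_t(g)\mu_\eta^{1/2},\mu_\zeta^{1/2}\rangle=\exp(-\tfrac18\|\al^t_g\eta-\zeta\|^2)$. Taking $\eta=\zeta$ and using $\|\al^t_g\eta-\eta\|\geq t\|c_g\|-2\|\eta\|$ with $\|c_g\|^2=d(o,g\cdot o)$, the diagonal coefficient satisfies $\sum_g\exp(-\tfrac14\|\al^t_g\eta-\eta\|^2)<\infty$ as soon as $\sum_g\exp(-\tfrac{t^2}4 d(o,g\cdot o))<\infty$, i.e.\ precisely when $\tfrac{t^2}4>\delta(\Gamma)$, that is $t>2\sqrt{\delta(\Gamma)}$ (the subexponential factor coming from the $2\|\eta\|$ correction does not move the threshold). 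Hence for such $t$ the function $g\mapsto\langle\rho_t(g)\mu_\eta^{1/2},\mu_\eta^{1/2}\rangle$ is a positive-definite element of $\ell^2(\Gamma)$ for every $\eta$; such a function is the $L^2$-vector of a positive element affiliated with the group von Neumann algebra, so its GNS representation embeds in a multiple of the standard representation and the cyclic subrepresentation generated by $\mu_\eta^{1/2}$ is contained in a multiple of $\lambda_\Gamma$. As the $\mu_\eta^{1/2}$ are scalar multiples of the exponential vectors $\exp_{1/2}(\eta)$, hence total in $L^2(\Hh,\mu)$, the whole of $\rho_t$ is contained in a multiple of $\lambda_\Gamma$.

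Claim (C) is the heart of the matter and is exactly the part that the Koopman picture cannot see: by (B), $\rho_t$ is contained in a multiple of $\lambda_\Gamma$ throughout the window, so amenability cannot be refuted at the spectral level. I would therefore argue purely dynamically. Assuming $\alh^t$ amenable and using that it is ergodic and conservative for $t<t_\diss(\al)$, Zimmer's fixed-point characterization of amenability applied to the compact $\Gamma$-space $\partial\mathbb{T}$ yields a $\Gamma$-equivariant measurable map $\psi:\Hh\to\Prob(\partial\mathbb{T})$. Conservativity (recurrence) of $\alh^t$, proper discontinuity of $\Gamma\actson\mathbb{T}$ and the north–south convergence dynamics on $\partial\mathbb{T}$ then force $\psi$ into Dirac masses (after possibly passing to the space of unordered pairs of ends), producing a genuine $\Gamma$-equivariant measurable map $\phi:\Hh\to\partial\mathbb{T}$, that is, a \emph{boundary factor} of the Gaussian action. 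What must finally be shown is that $\alh^t$ admits no such boundary factor when $t<t_\diss(\al)$; this is the main obstacle.

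The exclusion of $\phi$ is precisely where the Gaussian structure must enter, because a genuine boundary action $\Gamma\actson(\partial\mathbb{T},\nu)$ is itself amenable, ergodic, of type III$_1$, with Koopman representation weakly contained in $\lambda_\Gamma$, so no soft invariant separates it from $\alh^t$. I expect to use that the affine action is \emph{proper}, so that every $\rho_s$ is mixing by \cite[Theorem 6.3(ii)]{AIM19}, together with the rigid relation between the mean $-\tfrac{t^2}2\|c_g\|^2$ and the variance $t^2\|c_g\|^2$ of the Gaussian Radon–Nikodym cocycle $\log\tfrac{\rd(g\mu)}{\rd\mu}=-\tfrac{t^2}2\|c_g\|^2+t\langle\cdot\,,c_g\rangle$, which $\phi$ would have to realize through the boundary cocycle of $\phi_*\mu$; matching this against the growth governed by $\delta(\Gamma)$ should be impossible for $t<t_\diss(\al)$. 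For the small-$t$ part of the range there is a cleaner complementary route: when $\pi$ has stable spectral gap, Theorem \ref{spectral gap strongly ergodic} (and, sharply, Theorem \ref{spectral criterion} as in the remark for $\F_n$) shows $\alh^t$ is strongly ergodic, and a strongly ergodic action is never amenable, since an amenable ergodic relation is hyperfinite and hence carries nontrivial almost-invariant sequences. The decisive difficulty is to make the boundary-exclusion work in the window $2\sqrt{\delta(\Gamma)}<t<2\sqrt{2\delta(\Gamma)}$, where strong ergodicity is not available, and to arrange that the two mechanisms together cover the whole interval $(0,t_\diss(\al))$.
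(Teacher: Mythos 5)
Your parts (A) and (B) are fine. (A) is exactly what the paper does: quote \cite[Theorem A]{AIM19} for the value of $t_\diss(\al)$ and for ergodicity and type III$_1$. For (B) your matrix-coefficient computation with the vectors $\mu_\eta^{1/2}$ is a correct variant of the paper's route (Lemma \ref{criterion koopman regular} and Proposition \ref{gaussian contained in regular}); the paper avoids the Godement-type step by observing directly that all coefficients $g\mapsto\langle\rho_t(g)h,k\rangle$ with $h,k\in L^\infty$ are dominated by $\|h\|_\infty\|k\|_\infty e^{-t^2\|c_g\|^2/8}\in\ell^2(\Gamma)$, which immediately embeds $\rho_t$ into a multiple of $\lambda_\Gamma$. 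Your bound $\|\al^t_g\eta-\eta\|\ge t\|c_g\|-2\|\eta\|$ and the observation that the correction is subexponential in $d(o,g\cdot o)=\|c_g\|^2$ are correct and do give the same threshold $t>2\sqrt{\delta(\Gamma)}$.

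The genuine gap is (C), and you have correctly identified it as the heart of the matter but not closed it. Your boundary-theory plan stalls exactly where you say it does: you never prove that $\alh^t$ admits no $\Gamma$-equivariant map to $\Prob(\partial\mathbb{T})$ in the window $2\sqrt{\delta(\Gamma)}<t<2\sqrt{2\delta(\Gamma)}$, and the strong ergodicity route (Theorem \ref{spectral gap strongly ergodic}, or sharply Theorem \ref{spectral criterion}) only reaches $t<2\sqrt{\delta(\Gamma)}$ for $\F_n$, i.e.\ stops precisely at the lower edge of the interesting interval. The paper's mechanism is entirely different and purely group-theoretic: if $\sigma:\Gamma\actson(X,\mu)$ is amenable and its restriction to a normal subgroup $N\lhd\Gamma$ is ergodic, then $\Gamma/N$ must be amenable (Lemma \ref{property amenability}, obtained by restricting an equivariant conditional expectation $L^\infty(\Gamma\times X)\to L^\infty(X)$ to $\ell^\infty(\Gamma/N)$). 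One then builds, inside $\Gamma$, a finitely generated free subgroup $\Gamma_1$ with $t<2\sqrt{2\delta(\Gamma_1)}$ (using \cite[Theorem 8.15]{AIM19}), passes to a normal subgroup $\Gamma_2\lhd\Gamma_1$ that is free on infinitely many generators with $\Gamma_1/\Gamma_2$ amenable so that $\delta(\Gamma_2)=\delta(\Gamma_1)$ by \cite[Th\'eor\`eme 0.1]{RT13}, writes $\Gamma_2=\Gamma_3\ast H$ with $\Gamma_3$ finitely generated and $t<2\sqrt{2\delta(\Gamma_3)}$, and takes $N$ to be the normal closure of $\Gamma_3$ in $\Gamma_2$. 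Since $\delta(N)\ge\delta(\Gamma_3)$, the restriction $\alh^t|_N$ is ergodic by \cite[Theorem A]{AIM19}, so amenability of $\alh^t$ would force $\Gamma_2/N$ to be amenable; but $\Gamma_2/N$ surjects onto the nonamenable free group $H$. This argument covers the whole interval $t<2\sqrt{2\delta(\Gamma)}$ at once and is the ingredient your proposal is missing; without it (or a completed boundary-exclusion argument), the nonamenability claim, and hence the main point of the theorem, is unproven.
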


To prove Theorem \ref{amenability gaussian tree}, we first provide a sufficient condition for the Koopman representation of a nonsingular Gaussian action to be contained in a multiple of the regular representation. We start with the following elementary lemma, which is probably well known.

\begin{lemma} \label{criterion koopman regular}
Let $(X,\mu)$ be a standard probability space and $\sigma : \Gamma \curvearrowright (X,\mu)$ a nonsingular action of a countable group $\Gamma$. Let $\pi : \Gamma \curvearrowright L^2(X,\mu)$ be its Koopman representation. Define
$$\varphi(g)=\langle \pi(g)1,1 \rangle=\int_X \left( \frac{\rd g_*\mu}{\rd \mu} \right)^{1/2} \, \rd \mu \quad \text{ for all } g \in \Gamma \; .$$
If $\varphi \in \ell^2(\Gamma)$, then $\pi$ is contained in a multiple of the left regular representation.
\end{lemma}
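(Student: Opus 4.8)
The plan is to deduce the statement from a standard representation-theoretic criterion: a unitary representation $\pi$ of a countable group $\Gamma$ is contained in a multiple of the left regular representation $\lambda_\Gamma$ as soon as there is a dense subspace $\mathcal D \subset L^2(X,\mu)$ all of whose diagonal matrix coefficients $g \mapsto \langle \pi(g)\xi,\xi\rangle$ lie in $\ell^2(\Gamma)$. For the Koopman representation the natural choice is $\mathcal D = L^\infty(X)$, sitting inside $L^2(X,\mu)$ via $f \mapsto f\cdot 1$, which is dense because $\mu$ is finite. First I would record the pointwise bound on matrix coefficients. Writing $(\pi(g)f)(x) = (\rd(g\mu)/\rd\mu)^{1/2}(x)\, f(g^{-1}\cdot x)$ and using that the Radon--Nikodym derivative is nonnegative and that $|f(g^{-1}\cdot x)\,\overline{f(x)}| \leq \|f\|_\infty^2$, one gets
$$ |\langle \pi(g)f, f\rangle| \leq \|f\|_\infty^2 \int_X \Bigl(\frac{\rd(g\mu)}{\rd\mu}\Bigr)^{1/2}\rd\mu = \|f\|_\infty^2\, \varphi(g) \quad\text{for all } g \in \Gamma. $$
Since $\varphi \in \ell^2(\Gamma)$ by hypothesis, this immediately yields $\sum_g |\langle \pi(g)f,f\rangle|^2 \leq \|f\|_\infty^4 \|\varphi\|_2^2 < \infty$, i.e.\ $g \mapsto \langle \pi(g)f,f\rangle \in \ell^2(\Gamma)$ for every $f \in L^\infty(X)$. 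This elementary estimate is the only place where the hypothesis $\varphi \in \ell^2(\Gamma)$ is used.

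It then remains to prove the abstract criterion. Given $\xi \in \mathcal D$, set $\psi_\xi = \langle \pi(\cdot)\xi,\xi\rangle \in \ell^2(\Gamma)$, a positive-definite function, and consider the cyclic subrepresentation $\pi_\xi$ on $H_\xi = \overline{\lspan}\{\pi(g)\xi : g \in \Gamma\}$, which is the GNS representation of $\psi_\xi$. The key point is that a positive-definite $\ell^2$-function is a convolution square: viewing $\psi_\xi$ as a positive operator affiliated with the group von Neumann algebra $L\Gamma$ and identifying $\ell^2(\Gamma)$ with the standard Hilbert space $L^2(L\Gamma,\tau)$, the vector $v_\xi = \psi_\xi^{1/2}$ again lies in $\ell^2(\Gamma)$ (as $\|v_\xi\|_2^2 = \tau(\psi_\xi) = \psi_\xi(e)$) and satisfies $\langle \lambda_\Gamma(g) v_\xi, v_\xi\rangle = \psi_\xi(g^{-1})$ for all $g$. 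Hence the map $\pi(g)\xi \mapsto \lambda_\Gamma(g) v_\xi$ extends, after passing to the conjugate representation if necessary, to a $\Gamma$-equivariant isometry of $H_\xi$ into $\ell^2(\Gamma)$, so that $\pi_\xi$ is contained in $\lambda_\Gamma$. Finally I would assemble the pieces: since $\mathcal D$ is dense, the invariant subspaces $(H_\xi)_{\xi\in\mathcal D}$ have dense span in $L^2(X,\mu)$, so the kernel of $\pi$ in $W^*(\Gamma)$ equals the intersection of the kernels of the $\pi_\xi$ and therefore contains the kernel of $\lambda_\Gamma$. This shows that $\pi$ is quasi-contained in $\lambda_\Gamma$, that is, contained in a multiple of the regular representation.

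The heart of the matter is the abstract criterion rather than the estimate, which is completely elementary. Within it, the one genuinely nontrivial step is the convolution-square realization of the positive-definite function $\psi_\xi$ inside $\lambda_\Gamma$: because $\psi_\xi$ is only in $\ell^2(\Gamma)$ and not in $\ell^1(\Gamma)$, the associated operator affiliated with $L\Gamma$ need not be bounded, so the square root $\psi_\xi^{1/2}$ must be handled through the theory of closed operators affiliated with a finite von Neumann algebra (equivalently, through noncommutative $L^2$-spaces). This is classical, so in the write-up I would either carry out this standard argument or simply cite it; phrasing the final assembly through quasi-containment, as above, then avoids having to choose an orthogonal decomposition of $L^2(X,\mu)$ into cyclic pieces and makes that step routine.
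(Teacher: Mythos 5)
Your proposal is correct and takes essentially the same route as the paper: the same pointwise estimate $|\langle \pi(g)h,k\rangle| \leq \|h\|_\infty \|k\|_\infty \, \varphi(g)$ reduces the lemma to the classical fact that square-summability of the matrix coefficients on a dense subspace forces containment in a multiple of $\lambda$. The paper compresses that classical step into the one-line identification $\langle \pi(g)h,k\rangle = \langle \lambda(g)\delta_e,\xi\rangle$ with $\xi = \langle \pi(\cdot)h,k\rangle \in \ell^2(\Gamma)$, whereas you spell out the Godement convolution-square argument and the quasi-containment assembly; both are fine.
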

\begin{proof}
Let $h,k \in L^\infty(X,\mu)$. Then $| \langle \pi(g) h,k \rangle | \leq \| h \|_\infty \| k \|_\infty \varphi(g)$. Therefore, the function $\xi : g \mapsto \langle \pi(g) h,k \rangle$ belongs to $\ell^2(\Gamma)$. Thus, we have
$$  \langle \pi(g) h,k \rangle = \langle \delta_g, \xi \rangle= \langle \lambda(g) \delta_e, \xi \rangle $$
where $\lambda$ is the left regular representation of $\Gamma$. Since $L^\infty(X,\mu)$ is dense in $L^2(X,\mu)$, we conclude that $\pi$ is contained in a multiple of $\lambda$.
\end{proof}

\begin{proposition} \label{gaussian contained in regular}
Let $\alpha : \Gamma \curvearrowright H$ be an affine isometric action of a countable group $\Gamma$. Suppose that $\delta(\alpha) < +\infty$. Then for all $t > 2 \sqrt{\delta(\alpha)}$, the Koopman representation of $\widehat{\alpha}^t$ is contained in a multiple of the left regular representation of $\Gamma$.
\end{proposition}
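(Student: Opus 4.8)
The plan is to apply Lemma \ref{criterion koopman regular} to the nonsingular Gaussian action $\widehat{\alpha}^t : \Gamma \actson (\widehat{H},\mu)$. Writing $\rho_t$ for its Koopman representation, I first compute the function $\varphi(g) = \langle \rho_t(g)1, 1\rangle$ explicitly, and then show that $\varphi \in \ell^2(\Gamma)$ precisely under the hypothesis $t > 2\sqrt{\delta(\alpha)}$. Once $\varphi \in \ell^2(\Gamma)$ is established, Lemma \ref{criterion koopman regular} immediately gives the conclusion.

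For the computation, write $\alpha_g = j_{c_g} \circ \pi(g)$, so that $\widehat{\alpha}^t_g = \widehat{j}_{tc_g} \circ \widehat{\pi(g)}$. Since $\widehat{\pi(g)}$ is measure preserving, the pushforward $(\widehat{\alpha}^t_g)_*\mu$ equals $\mu_{tc_g}$, whose Radon--Nikodym derivative with respect to $\mu$ is $\exp(-\tfrac12\|tc_g\|^2 + \widehat{tc_g})$ by the definition of $\mu_\eta$. Taking the square root and integrating, and using that $\widehat{c_g}$ is a centered Gaussian of variance $\|c_g\|^2$ so that $\int_{\widehat{H}} \exp(\tfrac{t}{2}\widehat{c_g})\,\rd\mu = \exp(\tfrac{t^2}{8}\|c_g\|^2)$, I obtain
$$\varphi(g) = \int_{\widehat{H}} \Bigl(\frac{\rd (\widehat{\alpha}^t_g)_*\mu}{\rd\mu}\Bigr)^{1/2}\,\rd\mu = \exp\Bigl(-\tfrac{t^2}{4}\|c_g\|^2\Bigr)\,\exp\Bigl(\tfrac{t^2}{8}\|c_g\|^2\Bigr) = \exp\Bigl(-\tfrac{t^2}{8}\|c_g\|^2\Bigr) \; .$$

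It remains to show that $\sum_g \varphi(g)^2 = \sum_g \exp(-\tfrac{t^2}{4}\|c_g\|^2)$ is finite, and I expect this exponential-sum estimate to be the only delicate point. Set $\beta = t^2/4$, so that the hypothesis reads $\beta > \delta(\alpha)$, and write $N(s) = |\{g \in \Gamma \mid \|c_g\|^2 \le s\}|$. The definition of $\delta(\alpha)$ as a $\limsup$ lets me fix $\varepsilon > 0$ with $\beta > \delta(\alpha) + \varepsilon$ and then (after enlarging the constant to absorb the bounded range of small $s$) a constant $C > 0$ with $N(s) \le C\exp((\delta(\alpha)+\varepsilon)s)$ for all $s \ge 0$. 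Here care is needed precisely because $\delta(\alpha)$ is a $\limsup$ rather than a genuine growth exponent, but the strict inequality $t > 2\sqrt{\delta(\alpha)}$ is exactly what guarantees room to choose such an $\varepsilon$. Grouping the group elements into shells $n \le \|c_g\|^2 < n+1$, the number of elements in the $n$-th shell is at most $N(n+1)$ and each contributes at most $\exp(-\beta n)$, so
$$\sum_g \exp\bigl(-\beta\|c_g\|^2\bigr) \le \sum_{n=0}^\infty N(n+1)\,\exp(-\beta n) \le C\,e^{\delta(\alpha)+\varepsilon}\sum_{n=0}^\infty \exp\bigl((\delta(\alpha)+\varepsilon-\beta)n\bigr) < \infty$$
since $\delta(\alpha)+\varepsilon-\beta < 0$. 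This shows $\varphi \in \ell^2(\Gamma)$, and Lemma \ref{criterion koopman regular} then yields that the Koopman representation of $\widehat{\alpha}^t$ is contained in a multiple of the left regular representation of $\Gamma$.
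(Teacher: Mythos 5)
Your proposal is correct and follows exactly the paper's argument: apply Lemma \ref{criterion koopman regular} after computing $\varphi(g)=e^{-t^2\|c_g\|^2/8}$, then check $\varphi\in\ell^2(\Gamma)$ when $t^2/4>\delta(\alpha)$. The only difference is that you spell out the shell-counting estimate for the exponential sum, which the paper leaves implicit; your computation of $\varphi$ and the convergence argument are both accurate.
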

\begin{proof}
Let $\varphi$ be the function of Lemma \ref{criterion koopman regular} associated to the action $\widehat{\alpha}^t$. Then $\varphi(g)=e^{-\frac{1}{8}t^2 \| c_g\|^2}$ for all $g \in \Gamma$. Thus $\varphi \in \ell^2(\Gamma)$ whenever $\frac{t^2}{4} > \delta(\alpha)$ as we wanted.
\end{proof}

We finally need a criterion that will be used to prove that certain nonsingular Gaussian actions are nonamenable. This criterion is a special case of \cite[Theorem 3.1]{Zim76}, but for completeness we provide a short proof.

\begin{lemma} \label{property amenability}
Let $\sigma : \Gamma \curvearrowright (X,\mu)$ be a nonsingular action of a countable group $\Gamma$. Let $N$ be a normal subgroup of $\Gamma$. Suppose that $\sigma$ is amenable and $\sigma|_N$ is ergodic. Then $\Gamma/N$ is amenable.
\end{lemma}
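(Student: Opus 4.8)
The plan is to translate the amenability of $\sigma$ into an invariant mean on $\ell^\infty(\Gamma/N)$, using the ergodicity of $\sigma|_N$ to descend from $X \times \Gamma$ to the quotient $\Gamma/N$. First I would invoke the hypothesis to obtain a $\Gamma$-equivariant conditional expectation $E : L^\infty(X \times \Gamma) \recht L^\infty(X)$, where $\Gamma$ acts diagonally on $X \times \Gamma$ (diagonal on $X$ by $\sigma$ and by left translation on $\Gamma$). Since $N \triangleleft \Gamma$, the quotient map $\Gamma \recht \Gamma/N$ is $\Gamma$-equivariant for the left-translation actions, so the inclusion $\ell^\infty(\Gamma/N) \hookrightarrow \ell^\infty(\Gamma)$ followed by $1 \ot (\cdot) : \ell^\infty(\Gamma) \recht L^\infty(X \times \Gamma)$ and then $E$ produces a $\Gamma$-equivariant positive unital map $\Phi : \ell^\infty(\Gamma/N) \recht L^\infty(X)$.

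The key step is to show that the image of $\Phi$ actually lands in the $N$-invariant functions $L^\infty(X)^{\sigma|_N}$, which by ergodicity of $\sigma|_N$ equals $\C 1$. For $F \in \ell^\infty(\Gamma/N)$ viewed inside $\ell^\infty(\Gamma)$, the function $1 \ot F$ is invariant under the diagonal $N$-action on $X \times \Gamma$: indeed $N$ acts trivially on the $\Gamma/N$-coordinate after passing to the quotient (left translation by $N$ fixes every coset only up to the $\Gamma/N$-action, so one must be slightly careful here). More precisely, I would use that $\Gamma$ acts on $\ell^\infty(\Gamma/N)$ through the quotient $\Gamma/N$, so that the restricted $N$-action on $\ell^\infty(\Gamma/N)$ is \emph{trivial}; hence $1 \ot F$ is $N$-invariant in $L^\infty(X \times \Gamma)$, and by $N$-equivariance of $E$ its image $\Phi(F) = E(1 \ot F)$ is $N$-invariant in $L^\infty(X)$. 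Ergodicity of $\sigma|_N$ then forces $\Phi(F) \in \C 1$, so $\Phi$ defines a state $m : \ell^\infty(\Gamma/N) \recht \C$.

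Finally I would check that $m$ is left-invariant. Since $\Phi$ is $\Gamma$-equivariant and the $\Gamma$-action on the target $\C 1 \subset L^\infty(X)$ is trivial (constants are preserved by any nonsingular automorphism acting on $L^\infty$, as $\sigma_g(1) = 1$), the state $m$ satisfies $m(g \cdot F) = m(F)$ for all $g \in \Gamma$ and $F \in \ell^\infty(\Gamma/N)$; as $\Gamma$ acts on $\ell^\infty(\Gamma/N)$ through $\Gamma/N$ by left translation, this says exactly that $m$ is a left-invariant mean on $\Gamma/N$. Therefore $\Gamma/N$ is amenable. The main obstacle is the bookkeeping in the second step: one must correctly identify the $N$-action on $\ell^\infty(\Gamma/N)$ as trivial (because $N$ acts through $\Gamma/N$, where it becomes the identity) so that the $N$-invariant vectors are reached and ergodicity of $\sigma|_N$ can be applied; getting the equivariance conventions consistent between the diagonal action on $X \times \Gamma$ and the quotient action is where the argument could go wrong.
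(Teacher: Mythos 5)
Your proposal is correct and follows essentially the same route as the paper: restrict the $\Gamma$-equivariant conditional expectation $E : L^\infty(X \times \Gamma) \recht L^\infty(X)$ to $\ell^\infty(\Gamma/N) \ot 1$, use normality of $N$ to see that this subalgebra is fixed by the $N$-translation so that the image lands in $L^\infty(X)^{\sigma|_N} = \C 1$, and read off a $\Gamma/N$-invariant mean. The paper's proof is just a terser version of exactly this argument.
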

\begin{proof}
Since $\sigma$ is amenable, we can take a conditional expectation $E : L^\infty(\Gamma \times X) \rightarrow L^\infty(X)$ that is $\Gamma$-equivariant, where $\Gamma$ acts diagonally on $\Gamma \times X$. The restriction of $E$ to $\ell^\infty(\Gamma/N) \otimes \C$ takes values in $L^\infty(X)^N=\C$ by ergodicity. We conclude that $E|_{\ell^\infty(\Gamma/N)}$ is a $\Gamma/N$-invariant state on $\ell^\infty(\Gamma/N)$.
\end{proof}

\begin{proof}[{Proof of Theorem \ref{amenability gaussian tree}}]
By \cite[Theorem A]{AIM19}, we have $t_\diss(\al) = 2 \sqrt{2\delta(\Gamma)}$ and we have that $\alh^t$ is ergodic and of type III$_1$ for all $t < t_\diss(\al)$.

Take $t < 2 \sqrt{2\delta(\Gamma)}$. By \cite[Theorem 8.15]{AIM19}, we can find a finitely generated subgroup $\Gamma_1 < \Gamma$ such that $t < 2 \sqrt{2\delta(\Gamma_1)}$. Up to replacing $\Gamma_1$ by a subgroup with finite index, we may assume that $\Gamma_1$ is a free group. Then we can find a normal subgroup $\Gamma_2 < \Gamma_1$ such that $\Gamma_2$ is a free group on infinitely many generators and $\Gamma_1/\Gamma_2$ is amenable (for example, take $\Gamma_2$ to be the kernel of a surjective morphism $\Gamma_1 \rightarrow \Z$). This implies that $\delta(\Gamma_2)=\delta(\Gamma_1)$ by \cite[Th\'eor\`eme 0.1]{RT13}. Now, once again, we can find a finitely generated subgroup $\Gamma_3 < \Gamma_2$ such that $t < 2 \sqrt{2\delta(\Gamma_3)}$ and we can assume that $\Gamma_2=\Gamma_3 \ast H$ for some subgroup $H < \Gamma_2$ (which must be free on infinitely many generators). Let $N < \Gamma_2$ be the smallest normal subgroup of $\Gamma_2$ that contains $\Gamma_3$. Since $t < 2 \sqrt{2\delta(\Gamma_3)} \leq  2 \sqrt{2\delta(N)}$, we know that $\widehat{\alpha}^t|_N$ is ergodic by \cite[Theorem A]{AIM19}. Therefore, if $\widehat{\alpha}^t$ is amenable, the quotient $\Gamma_2/N$ must be amenable by Lemma \ref{property amenability}. But $\Gamma_2/N$ surjects onto $H$. From this contradiction, we conclude that $\widehat{\alpha}^t$ is not amenable.

When $t > 2 \sqrt{\delta(\Gamma)}$, it follows from Proposition \ref{gaussian contained in regular} that the Koopman representation of $\alh^t$ is contained in a multiple of the regular representation. When $\Gamma = \F_n$ acting on its Cayley tree, we have $\delta(\F_n) = \log(2n-1)$.
\end{proof}

\section{Isometric actions of nilpotent groups}\label{sec.actions nilpotent groups}

The main goal of this section is to prove the following result providing a sufficient condition for the ergodicity and stable type III$_1$ for nonsingular Gaussian actions of nilpotent groups, even when the underlying orthogonal representation is only weakly mixing, but with a control on the lack of mixing.

\begin{theorem}\label{thm.criterion-nilpotent}
Let $G$ be a finitely generated infinite nilpotent group and denote by $A_n \subset G$ the ball of radius $n$ with respect to a given finite set of generators. Let $\al : G \actson H : \al_g(\xi) = \pi(g)\xi + c_g$ be an isometric action of $G$ on a real Hilbert space $H$. Assume that $\al$ has no fixed point. Denote by $G \actson (\Hh,\mu)$ the associated nonsingular Gaussian action.

For $\xi_1,\xi_2 \in H$ and $\eps > 0$, write $\Lambda(\xi_1,\xi_2,\eps) = \{g \in G \mid |\langle \pi(g) \xi_1,\xi_2\rangle| \geq \eps \}$. Let $\kappa > 1$. If for all $\xi_1,\xi_2$ in a total subset of $H$ and all $\eps > 0$,
\begin{equation}\label{eq.weak-mixing-ass}
\frac{|A_n \cap \Lambda(\xi_1,\xi_2,\eps)|}{|A_n|} \; \max \{ \exp(\kappa \|c_g\|^2) \mid g \in A_n\} \recht 0 \quad\text{when $n \recht +\infty$,}
\end{equation}
then $G \actson (\Hh,\mu)$ is weakly mixing and of stable type III$_1$.
\end{theorem}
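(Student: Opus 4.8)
The plan is to deduce both conclusions from a single ergodicity statement. Since the Maharam extension of a product $\widehat{\alpha}\times\sigma$ with a pmp action $\sigma:G\actson(Z,\zeta)$ is the product of the Maharam extension of $\widehat{\alpha}$ with $\sigma$, and since a factor of an ergodic action is ergodic, it suffices to prove: \emph{for every ergodic pmp action $\sigma:G\actson(Z,\zeta)$, the product of $\sigma$ with the Maharam extension of $\widehat{\alpha}$ on $Z\times\Hh\times\R$ is ergodic.} Indeed, taking $Z$ trivial gives that the Maharam extension of $\widehat{\alpha}$ is ergodic, i.e.\ $\widehat{\alpha}$ is of type $\III_1$; projecting away the $\R$-coordinate gives ergodicity of $\widehat{\alpha}\times\sigma$; and having both facts for every $\sigma$ is exactly weak mixing and stable type $\III_1$. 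I would first record that \eqref{eq.weak-mixing-ass} forces $\pi$ to be weakly mixing: a nonzero finite dimensional subrepresentation would produce a vector $\xi$ for which $|\langle\pi(g)\xi,\xi\rangle|\geq\eps$ on a set of positive lower density in the balls $A_n$, which (as the second factor in \eqref{eq.weak-mixing-ass} is $\geq 1$) contradicts \eqref{eq.weak-mixing-ass}. In particular the pmp Gaussian action $\widehat{\pi}$ is weakly mixing.

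\textbf{Setup.} Fix $\sigma$ and let $a\in L^\infty(Z\times\Hh\times\R)$ be invariant under the product action; we must show $a$ is constant. Exactly as in the proof of Theorem \ref{thm.non proper mixing direction}, invariance rewrites as the identity
\[(\id\ot\widehat{\pi}(g)\ot\id)(a)=(\sigma_{g^{-1}}\ot J_{-c_g})(a)\qquad(g\in G),\]
where $\sigma_{g^{-1}}$ is the Koopman operator on $L^\infty(Z)$ and $J_{-c_g}$ is the Maharam translation on $L^\infty(\Hh\times\R)$. Since the balls $A_n$ form a F\o lner sequence in the group $G$ of polynomial growth, and since the product action preserves the infinite measure $\zeta\ot\mu\ot e^{-s}\,\rd s$ and is conservative (were it dissipative, the Radon--Nikodym weights would be summable, incompatible with the slow cocycle growth forced by \eqref{eq.weak-mixing-ass}, which bounds $\max_{g\in A_n}\|c_g\|^2$ and hence makes $\delta(\al)$, and so $t_\diss(\al)$, large), the ratio ergodic theorem of \cite{Hoc09,Hoc12} applies along $(A_n)$.

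\textbf{Averaging.} The idea is to average the displayed identity over $g\in A_n$. On the left, the mean ergodic theorem for the amenable group $G$ together with the weak mixing of $\widehat{\pi}$ gives that $|A_n|^{-1}\sum_{g\in A_n}\widehat{\pi}(g)$ converges to the projection onto constants, so the left-hand side tends to $b:=(\id\ot\mu\ot\id)(a)\in L^\infty(Z\times\R)$. The right-hand side is the heart of the matter. Writing $a=(1\ot b)+a^0$ with $a^0$ of zero average in the $\Hh$-variable, the contribution of $1\ot b$ produces, after integrating out $\Hh$, a Gaussian convolution in the $\R$-variable, namely the operators $\Phi_{\|c_g\|}$ of Lemma \ref{weak convergence maharam} and Lemma \ref{convolution constant}, while the contribution of $a^0$ is an error governed by the correlations $\langle\pi(g)\xi_1,\xi_2\rangle$: a direct Gaussian computation shows that the relevant matrix coefficients between mean-zero exponential vectors $e^{\ri\widehat{\xi_1}},e^{\ri\widehat{\xi_2}}$ are, up to a constant, of size $\exp(-\tfrac18\|c_g\|^2+\langle\pi(g)\xi_1,\xi_2\rangle)$, while the ratio-ergodic normalization costs Radon--Nikodym weights bounded by $\exp(\kappa\|c_g\|^2)$. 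Splitting $A_n$ according to membership in $\Lambda(\xi_1,\xi_2,\eps)$, the terms with $g\notin\Lambda$ contribute at most $O(\eps)$, while the terms with $g\in\Lambda$ contribute at most $\tfrac{|A_n\cap\Lambda(\xi_1,\xi_2,\eps)|}{|A_n|}\,\max_{g\in A_n}\exp(\kappa\|c_g\|^2)$, which tends to $0$ by \eqref{eq.weak-mixing-ass}. Letting $\eps\to 0$, the error disappears.

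\textbf{Conclusion and main obstacle.} Equating the two limits produces, for the averaged identity, relations of the form $b=\sigma_{g^{-1}}\Phi_{\|c_g\|}(b)$, which via Lemma \ref{convolution constant} force $b$ to be independent of the $\R$-coordinate and then force $a$ itself to be independent of $\Hh$; one then concludes, exactly as at the end of the proof of Theorem \ref{thm.non proper mixing direction}, that $a\in L^\infty(Z)$, whence $a$ is constant by ergodicity of $\sigma$. I expect the right-hand side estimate to be the main obstacle: because $G$ is nilpotent the cocycle is proper along every weakly convergent direction (Remark \ref{rem.not for nilpotent}), so one cannot extract a single weak limit as in Theorem \ref{thm.non proper mixing direction} and must instead average; the plain Ces\`aro average is the wrong normalization once the Radon--Nikodym weights are present, and it is exactly the ratio ergodic theorem that supplies the correct weighted averages, with \eqref{eq.weak-mixing-ass} calibrated so that the speed of weak mixing of $\pi$ (first factor) defeats the growth of the cocycle (second factor).
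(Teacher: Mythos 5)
Your reduction (prove that the product of any ergodic pmp action with the Maharam extension of $\widehat{\alpha}$ is ergodic) and your diagnosis of \eqref{eq.weak-mixing-ass} as a race between the speed of weak mixing and the growth of the cocycle are both correct, and you are right that a ratio ergodic theorem must replace the single weak limits of Theorem \ref{thm.non proper mixing direction}. But the averaging step as written has a genuine gap, in two places. First, you average the invariance identity $(\id\ot\widehat{\pi}(g)\ot\id)(a)=(\sigma_{g^{-1}}\ot J_{-c_g})(a)$ with plain Ces\`aro weights on the left (mean ergodic theorem) while appealing to the ratio ergodic theorem, i.e.\ Radon--Nikodym--weighted averages, on the right; you cannot use two different normalizations on the two sides of one identity. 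Second, even granting a limit for the averaged identity, the conclusion ``relations of the form $b=\sigma_{g^{-1}}\Phi_{\|c_g\|}(b)$'' for individual $g$ does not follow from an averaged statement, and the subsequent claim that independence of $b=(\id\ot\mu\ot\id)(a)$ from the $\R$-coordinate ``forces $a$ itself to be independent of $\Hh$'' is exactly the hard step: in Theorem \ref{thm.non proper mixing direction} it required a second identity and a genuine weak limit operator $T$, which by Remark \ref{rem.not for nilpotent} is unavailable here.

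The ingredient your proposal is missing is the \emph{evanescence} of the cocycle, which is automatic for nilpotent groups with weakly mixing linear part (\cite[Propositions 2.8 and 2.10]{AIM19}). The paper's proof chooses a decreasing sequence of invariant subspaces $H_k$ with $\bigcap_k H_k=\{0\}$ and vectors $\xi_k\in H_k^\perp$ with $c_g+\pi(g)\xi_k-\xi_k\in H_k$, splits $\Hh=\widehat{K_k}\times\widehat{H_k}$, and applies the weighted-average ergodicity criterion (Proposition \ref{prop.product-action}, condition 3, via Lemma \ref{lem.product-action-weaker} and the moment bounds of Lemmas \ref{lem.estim-fraction} and \ref{lem.est-maharam}) to the \emph{pair} consisting of the pmp Gaussian action on $\widehat{K_k}$ and the nonsingular Maharam extension over $\widehat{H_k}\times Z$; this is where \eqref{eq.weak-mixing-ass} enters, the moment estimate $\int\om_k(g,x)^{-\beta}\,\rd\mu_k\leq\exp(\tfrac12\beta(\beta+1)\|c_g\|^2)$ producing precisely the factor $\exp(\kappa\|c_g\|^2)$. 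The conclusion for fixed $k$ is only that the invariant functions lie in $\cM(\xi_k+H_k)\ovt L^\infty(Z)$; it is then the absence of a fixed point ($\|\xi_k\|\recht+\infty$, so $\bigcap_k(\xi_k+H_k)=\emptyset$) together with Proposition \ref{prop.intersection spaces} that finally collapses the invariant algebra to $\C1$. Without this layered structure your argument cannot pass from information about the $\Hh$-average of $a$ to information about $a$ itself.
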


As an application of Theorem \ref{thm.criterion-nilpotent}, we prove Theorem \ref{thm.examples Cantor measures}, formulated more precisely as Theorem \ref{thm.weak-mixing-examples} below and providing numerous examples of type III$_1$ nonsingular Gaussian actions of the group of integers.

The main difficulty in proving Theorem \ref{thm.criterion-nilpotent} is the following problem: given a conservative nonsingular action $G \actson (X,\mu)$ and an ergodic pmp action $G \actson (Y,\eta)$, when is $L^\infty(X \times Y)^G = L^\infty(X)^G \ot 1$, where $G$ acts diagonally on $X \times Y$~? By
\cite[Theorem 2.3]{SW81}, this holds if $G \actson (Y,\eta)$ is mixing, which in the notation of Theorem \ref{thm.criterion-nilpotent} means that the sets $\Lambda(\xi_1,\xi_2,\eps)$ are finite. When $G \actson (Y,\eta)$ is only weakly mixing, one needs an interplay between the size of the set $\Lambda(\xi_1,\xi_2,\eps)$ and the recurrence properties of $G \actson (X,\mu)$ to prove the equality $L^\infty(X \times Y)^G = L^\infty(X)^G \ot 1$. We first obtain a sharp result of this nature for groups like $G = \Z^d$ in Proposition \ref{prop.product-action}, and then prove a sufficient criterion for nilpotent groups in Lemma \ref{lem.product-action-weaker}, which will allow us to prove Theorem \ref{thm.criterion-nilpotent}.

Note that any condition involving $\Lambda(\xi_1,\xi_2,\eps)$ only has to be checked for $\xi_1,\xi_2$ belonging to a total subset $T \subset H$, because for every $\xi_1,\xi_2 \in H$ and $\eps > 0$, there then exists a finite subset $\cF \subset T$ and a $\delta > 0$ such that
\begin{equation}\label{eq.sufficient check total}
\Lambda(\xi_1,\xi_2,\eps) \subset \bigcup_{\eta_1,\eta_2 \in \cF} \Lambda(\eta_1,\eta_2,\delta) \; .
\end{equation}

We make use of different versions of the ratio ergodic theorem for nonsingular actions proven in \cite{Hoc09,Hoc12}. We say that a group $G$ and a sequence of finite subsets $A_n \subset G$ satisfy the \emph{ratio ergodic theorem} if for any nonsingular action $G \actson (X,\mu)$ on a standard probability space with Radon-Nikodym cocycle
$$\om(g,x) = \frac{\rd(g^{-1} \cdot \mu)}{\rd\mu}(x)$$
and for every $F \in L^\infty(X)$, we have that
$$\lim_n \frac{\sum_{g \in A_n} \om(g,x) \, F(g \cdot x)}{\sum_{g \in A_n} \om(g,x)} = E(F)(x) \quad\text{for a.e.\ $x \in X$,}$$
where $E : L^\infty(X) \recht L^\infty(X)^G$ denotes the unique measure preserving conditional expectation.

Hurewicz proved that the sets $[0,n] \subset \Z$ satisfy the ratio ergodic theorem, while \cite[Theorem 1]{Hoc09} says that the group $\Z^d$ satisfies the ratio ergodic theorem w.r.t.\ balls $\{ g \in \Z^d \mid \|g\| \leq n\}$ of radius $n$, where $\|\,\cdot\,\|$ is any norm on $\R^d$. There are only very few other known instances of the ratio ergodic theorem (see e.g.\ \cite{Jar17}).

\begin{proposition}\label{prop.product-action}
Assume that $G$ is a countable group satisfying the ratio ergodic theorem w.r.t.\ the finite subsets $A_n \subset G$. Let $G \actson (X,\mu)$ be a nonsingular action and write
$$\om(g,x) = \frac{\rd(g^{-1}\cdot \mu)}{\rd\mu}(x) \; .$$
Let $G \actson (Y,\eta)$ be a pmp action with associated Koopman representation $\pi : G \recht \cU(L^2(Y,\eta)) : (\pi(g)\xi)(y) = \xi(g^{-1}\cdot y)$. Write
$$\Lambda(\xi_1,\xi_2,\eps) = \{g \in G \mid |\langle \pi(g) \xi_1,\xi_2\rangle| \geq \eps \} \; .$$
Then, the following statements are equivalent.
\begin{enumlist}
\item $L^\infty(X \times Y \times Y)^G = L^\infty(X)^G \ot 1 \ot 1$, where $G \actson X \times Y \times Y$ is acting diagonally.
\item For all $\xi_1,\xi_2$ in a total subset of $L^2(Y,\eta) \ominus \C 1$ and for all $\eps > 0$, we have that
\begin{equation}\label{eq.est}
\frac{\sum_{g \in A_n \cap \Lambda(\xi_1,\xi_2,\eps)} \om(g,x)}{\sum_{g\in A_n} \om(g,x)} \recht 0 \quad\text{for a.e.\ $x \in X$.}
\end{equation}
\item For all $\xi_1,\xi_2$ in a total subset of $L^2(Y,\eta) \ominus \C 1$ and for all $\eps > 0$, we have that
\begin{equation}\label{eq.est2}
\int_X \frac{\sum_{g \in A_n \cap \Lambda(\xi_1,\xi_2,\eps)} \om(g,x)}{\sum_{g\in A_n} \om(g,x)} \, \rd\mu(x) \recht 0 \; .
\end{equation}
\end{enumlist}
In particular, if one of these conditions holds, we have that $L^\infty(X \times Y)^G = L^\infty(X)^G \ot 1$.
\end{proposition}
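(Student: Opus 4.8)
The plan is to prove the cycle of implications $1 \Rightarrow 2 \Rightarrow 3 \Rightarrow 1$ and to read off the final ``in particular'' directly from~(1): if $f \in L^\infty(X \times Y)^G$, then $\tilde f(x,y,y') = f(x,y)$ lies in $L^\infty(X \times Y \times Y)^G$, so by~(1) it depends only on $x$, forcing $f \in L^\infty(X)^G \ot 1$. The two routine steps are $1 \Rightarrow 2$ and $2 \Rightarrow 3$; the substance is $3 \Rightarrow 1$.

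For $1 \Rightarrow 2$, I would fix bounded $\xi_1,\xi_2 \in L^2(Y,\eta)\ominus\C1$ and apply the ratio ergodic theorem for $G \actson X \times Y \times Y$ (whose Radon--Nikodym cocycle is still $\om(g,x)$, since $\eta$ is invariant) to the bounded function $F(x,y,y') = \xi_1(y)\overline{\xi_1(y')}$. As $\int_{Y\times Y} F \,\rd\eta\,\rd\eta = |\int_Y \xi_1\,\rd\eta|^2 = 0$, assumption~(1) forces $E(F)$ to be a $G$-invariant function of $x$ that is orthogonal to every invariant function of $x$, hence $E(F) = 0$; so the ratio averages of $F$ tend to $0$ a.e. Pairing these averages (uniformly bounded by $\|F\|_\infty$) against $G(y,y') = \xi_2(y)\overline{\xi_2(y')}$ and using dominated convergence in the $Y \times Y$ variables gives
$$\frac{\sum_{g \in A_n} \om(g,x)\,|\langle \pi(g)\xi_1,\xi_2\rangle|^2}{\sum_{g \in A_n}\om(g,x)} \recht 0 \quad\text{for a.e.\ } x,$$
and since $|\langle \pi(g)\xi_1,\xi_2\rangle|^2 \geq \eps^2$ on $\Lambda(\xi_1,\xi_2,\eps)$, this yields~\eqref{eq.est}. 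The step $2 \Rightarrow 3$ is then immediate, as the ratios in~\eqref{eq.est} lie in $[0,1]$ and one invokes dominated convergence.

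For $3 \Rightarrow 1$ I would argue by contradiction, recasting an invariant function as an equivariant field. Given $F \in L^\infty(X \times Y \times Y)^G$, the field $W(x) = F(x,\cdot,\cdot) \in L^2(Y) \ot L^2(Y)$ satisfies $W(gx) = (\pi\ot\pi)(g)W(x)$, so $\|W(x)\|$ is $G$-invariant, and the splitting $L^2(Y) = \C1 \oplus H_0$ with $H_0 = L^2(Y,\eta)\ominus\C1$ decomposes $F$ into invariant $\C1\ot\C1$, $H_0\ot\C1$, $\C1\ot H_0$ and $H_0\ot H_0$ parts. The $\C1\ot\C1$ part is a function of $x$ (the desired conclusion); the other three must vanish. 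The first two reduce to the claim on $X \times Y$: a bounded equivariant field $W : X \recht H_0$ with $W(gx) = \pi(g)W(x)$ is zero. If not, $\{\|W\|^2 \geq c_0\}$ is a $G$-invariant set of positive measure for some $c_0 > 0$, and by measurable selection there are a positive-measure $X_1$ and a fixed $\xi_1 \in H_0$ with $\|W(x)-\xi_1\| < \delta$ on $X_1$ and $\|\xi_1\|^2 \geq c_0 - O(\delta)$. The crucial observation is that any return $g$ (with $x, gx \in X_1$) satisfies $\langle \pi(g)\xi_1,\xi_1\rangle \approx \langle W(gx),W(x)\rangle \approx \|\xi_1\|^2$, so $g \in \Lambda(\xi_1,\xi_1,\eps)$ for $\eps = c_0/2$ and $\delta$ small. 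Hence for $x \in X_1$,
$$\frac{\sum_{g \in A_n,\, gx \in X_1}\om(g,x)}{\sum_{g \in A_n}\om(g,x)} \leq \frac{\sum_{g \in A_n \cap \Lambda(\xi_1,\xi_1,\eps)}\om(g,x)}{\sum_{g \in A_n}\om(g,x)}.$$
Integrating over $X_1$, the right-hand side tends to $0$ by~\eqref{eq.est2}, while the ratio ergodic theorem applied to $\mathbf 1_{X_1}$, with dominated convergence, sends the left-hand side to $\int_{X_1} E(\mathbf 1_{X_1})\,\rd\mu = \|E(\mathbf 1_{X_1})\|_2^2 \geq \mu(X_1)^2 > 0$, a contradiction.

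The remaining $H_0 \ot H_0$ part is where I expect the main obstacle. Here $W(x) \in H_0 \ot H_0$ is viewed as a Hilbert--Schmidt operator, and a return $g$ now only yields $\Tr\bigl(\pi(g)\,\xi_1\,\pi(g)^*\,\xi_1^*\bigr) \approx \|\xi_1\|^2$; truncating $\xi_1$ to a finite-rank approximant $\sum_{k \leq N} s_k\,|e_k\rangle\langle f_k|$ with $e_k,f_k \in H_0$ orthonormal forces $\max_{k,l \leq N}|\langle \pi(g)e_k,e_l\rangle|$ to be bounded below, so the returns now lie in a \emph{finite union} $\bigcup_{k,l\leq N}\Lambda(e_k,e_l,\eps_0)$. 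Because~\eqref{eq.sufficient check total} lets me pass from the total subset in the hypothesis to arbitrary vectors of $H_0$, condition~(3) still drives the average over this finite union to $0$, and the same ratio-ergodic contradiction applies. The delicate points are thus the Hilbert--Schmidt bookkeeping for the $H_0 \ot H_0$ component and the uniform-in-$g$ control of the approximation errors guaranteeing that returns land in $\Lambda$-sets; the ratio ergodic theorem of \cite{Hoc09,Hoc12} is exactly what converts these combinatorial inclusions into the quantitative averages~\eqref{eq.est} and~\eqref{eq.est2}.
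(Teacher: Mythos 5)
Your proposal is correct, and the cycle $1\Rightarrow 2\Rightarrow 3\Rightarrow 1$ together with the implications $1\Rightarrow 2$ and $2\Rightarrow 3$ matches the paper; the genuine difference is in $3\Rightarrow 1$. The paper first observes that the estimate on matrix coefficients passes from $\pi$ to $\pi\otimes\pi$ (via $|\langle(\pi(g)\otimes\pi(g))(\xi_1\otimes\xi_2),\xi_3\otimes\xi_4\rangle|\leq\|\xi_2\|\,\|\xi_4\|\,|\langle\pi(g)\xi_1,\xi_3\rangle|$), reducing $3\Rightarrow 1$ to the two-factor statement $L^\infty(X\times Y)^G=L^\infty(X)^G\otimes 1$; it then proves this directly by applying the ratio ergodic theorem to $F\otimes\xi_1$ and showing, by pairing with an arbitrary $\xi_2$ and estimating with \eqref{eq.est2}, that the limit $E(F\otimes\xi_1)$ pairs to zero against everything, hence vanishes. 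You instead decompose the invariant function of $(x,y,y')$ along $L^2(Y)\otimes L^2(Y)=(\C 1\oplus H_0)^{\otimes 2}$ and run a Schmidt--Walters-style return-time argument on each nontrivial component: measurable selection produces a positive-measure set $X_1$ on which the equivariant field is $\delta$-close to a fixed vector, returns to $X_1$ force the corresponding matrix coefficient to be large (after a finite-rank truncation for the $H_0\otimes H_0$ piece, landing returns in a finite union of $\Lambda$-sets, which \eqref{eq.est2} still controls via \eqref{eq.sufficient check total}), and the ratio ergodic theorem applied to $\mathbf 1_{X_1}$ gives $\liminf_n\int_{X_1}(\text{return ratio})\geq\|E(\mathbf 1_{X_1})\|_2^2\geq\mu(X_1)^2>0$, contradicting \eqref{eq.est2}. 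All the steps you flag as delicate do go through (the truncation error is uniform in $g$ because $\pi\otimes\pi$ is unitary, and $\sum_{k,l\leq N}s_ks_l\leq N$ gives the lower bound $\max_{k,l}|\langle\pi(g)e_k,e_l\rangle|\geq\eps_0/(2N)$). The paper's route is shorter, avoids measurable selection, and isolates the two-factor statement that is reused verbatim in Lemma \ref{lem.product-action-weaker} under the weaker (w$^*$ET) hypothesis; your route makes the mild-mixing mechanism behind the criterion explicit and proves statement 1 in one pass rather than through the tensor-square reduction. One cosmetic point: your $1\Rightarrow 2$ computation actually controls $|\langle\pi(g^{-1})\xi_1,\xi_2\rangle|$, i.e.\ the set $\Lambda(\xi_2,\xi_1,\eps)$; since the quantifier in condition 2 ranges over all pairs from a total subset, swapping the roles of $\xi_1$ and $\xi_2$ fixes this, exactly as in the paper's own computation with $\pi(g^{-1})\otimes\pi(g^{-1})$.
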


\begin{proof}
2 $\Rightarrow$ 3 follows from dominated convergence since the integrated functions are bounded by $1$ for all $n$.

3 $\Rightarrow$ 1. Assume that \eqref{eq.est2} holds. Since
$$|\langle (\pi(g) \ot \pi(g)) (\xi_1 \ot \xi_2) , (\xi_3 \ot \xi_4)\rangle| \leq \min\{ \|\xi_2\|_2 \, \|\xi_4\| \, |\langle \pi(g) \xi_1,\xi_3 \rangle| \; , \|\xi_1\|_2 \, \|\xi_3\| \, |\langle \pi(g) \xi_2,\xi_4 \rangle| \} \; ,$$
also $G \actson Y \times Y$ satisfies the assumption \eqref{eq.est2}. It is thus sufficient to prove that $L^\infty(X \times Y)^G = L^\infty(X)^G \ot 1$. Write $B = L^\infty(X \times Y)^G$ and denote by $E : L^\infty(X \times Y) \recht B$ the unique measure preserving conditional expectation.

Let $F \in L^\infty(X)$ and $\xi_1 \in L^\infty(Y) \ominus \C 1$ with $\|F\|_\infty \leq 1$ and $\|\xi_1\|_\infty \leq 1$. We prove that $E(F \ot \xi_1) = 0$. Since we assume that $G$ satisfies the ratio ergodic theorem w.r.t.\ $A_n \subset G$, we get that the sequence of functions
\begin{equation}\label{eq.my-Fn}
F_n(x,y) = \frac{\sum_{g \in A_n} F(g \cdot x) \; \xi_1(g \cdot y) \; \om(g,x)}{\sum_{g \in A_n} \om(g,x)}
\end{equation}
converges to $E(F \ot \xi_1)(x,y)$ a.e. Let $\xi_2 \in L^\infty(Y)$. Since $\|F_n\|_\infty \leq 1$ for all $n$, dominated convergence implies that for a.e.\ $x \in X$,
\begin{equation}\label{eq.interm}
H_n(x) := \int_Y F_n(x,y) \, \overline{\xi_2(y)} \, \rd\eta(y) \recht H(x) := \int_Y E(F \ot \xi_1)(x,y) \, \overline{\xi_2(y)} \, \rd\eta(y) \; .
\end{equation}
Define $\xi_3 \in L^\infty(Y) \ominus \C1$ by $\xi_3(y) = \xi_2(y) - \int_Y \xi_2(z)\, \rd\eta(z)$. Then
$$|H_n(x)| \leq \frac{\sum_{g\in A_n} |\langle \pi(g)\xi_3, \xi_1\rangle| \; \om(g,x)}{\sum_{g \in A_n} \om(g,x)}\; .$$
Given $\eps > 0$, this expression is bounded above by
$$\eps + \frac{\sum_{g \in A_n \cap \Lambda(\xi_3, \xi_1,\eps)} \om(g,x)}{\sum_{g \in A_n} \om(g,x)} \; .$$
So it follows from \eqref{eq.est2} that $\|H_n\|_1 \recht 0$ so that $H(x) = 0$ for a.e.\ $x \in X$. Since $\xi_2$ was chosen arbitrarily, it follows that $E(F \ot \xi_1) = 0$. By continuity of $E$, we get that $E(L^\infty(X \times Y)) = E(L^\infty(X) \ot 1)$. Since $G \actson Y$ preserves the probability measure $\eta$, we have $E(F \ot 1) = E(F) \ot 1$ for every $F \in L^\infty(X)$. So, we conclude that $L^\infty(X \times Y)^G = L^\infty(X)^G \ot 1$.

1 $\Rightarrow$ 2. Assume that $L^\infty(X \times Y \times Y)^G = L^\infty(X)^G \ot 1 \ot 1$. It suffices to prove that
\begin{equation}\label{eq.previous}
\frac{\sum_{g\in A_n} |\langle \pi(g) \xi_1,\xi_2 \rangle| \; \om(g,x)}{\sum_{g \in A_n} \om(g,x)} \recht 0 \quad\text{for a.e.\ $x \in X$,}
\end{equation}
whenever $\xi_1,\xi_2 \in L^\infty(Y) \ominus \C 1$ are real valued functions. Since $t \mapsto t^2$ is convex, the expression at the left of \eqref{eq.previous} is bounded above by the square root of
\begin{equation}\label{eq.upp-bound}
\frac{\sum_{g \in A_n} \langle (\pi(g^{-1}) \ot \pi(g^{-1}))(\xi_2 \ot \xi_2),\xi_1 \ot \xi_1\rangle \; \om(g,x)}{\sum_{g \in A_n} \om(g,x)} \; .
\end{equation}
To conclude the proof of the proposition, we show that \eqref{eq.upp-bound} converges to zero almost everywhere. Denote by $E : L^\infty(X \times Y \times Y) \recht L^\infty(X \times Y \times Y)^G$ the unique measure preserving conditional expectation. Since $G$ satisfies the ratio ergodic theorem w.r.t.\ $A_n \subset G$, the sequence of functions
$$F_n(x,y,z) = \frac{\sum_{g \in A_n} \xi_2(g\cdot y) \; \xi_2(g \cdot z) \; \om(g,x)}{\sum_{g \in A_n} \om(g,x)}$$
converges to $E(1 \ot \xi_2 \ot \xi_2)(x,y,z)$ a.e. Since $E(1 \ot \xi_2 \ot \xi_2)$ is orthogonal to $L^\infty(X)^G \ot 1 \ot 1$, our assumption implies that $E(1 \ot \xi_2 \ot \xi_2) = 0$. So $F_n(x,y,z) \recht 0$ a.e. Multiplying with $\xi_1(y)\,\xi_1(z)$ and integrating, it follows by dominated convergence that \eqref{eq.upp-bound} tends to zero for a.e.\ $x \in X$. So also \eqref{eq.previous} holds and the proposition is proven.
\end{proof}

For our purposes -- proving ergodicity and determining the type of nonsingular Gaussian actions -- a weaker form of the ratio ergodic theorem than the one used in the proof of Proposition \ref{prop.product-action} is sufficient. We introduce the following ad hoc terminology. We say that a group $G$ and a sequence of finite subsets $A_n \subset G$ satisfy (w$^*$ET) if the following property holds: for any nonsingular group action $G \actson (X,\mu)$ on a standard probability space admitting an equivalent $\sigma$-finite $G$-invariant measure $\nu \sim \mu$ and for every $F \in L^\infty(X)$ and $n_0 \in \N$, the measure preserving conditional expectation $E(F) \in L^\infty(X)^G$ belongs to the weak$^*$ closed convex hull of the sequence of functions
$$x \mapsto \frac{\sum_{g \in A_n} \om(g,x) \, F(g \cdot x)}{\sum_{g \in A_n} \om(g,x)} \quad\text{with $n \geq n_0$ and}\quad \om(g,x) = \frac{\rd(g^{-1} \cdot \mu)}{\rd\mu}(x) \; .$$
By \cite[Theorem 1.4]{Hoc12}, every finitely generated group $G$ of polynomial growth with $A_n \subset G$ defined as the ball of radius $n$ w.r.t.\ a chosen set of generators satisfies (w$^*$ET).

\begin{lemma}\label{lem.product-action-weaker}
Let $G$ be a group with a sequence of finite subsets $A_n$ satisfying (w$^*$ET). Whenever $G \actson (X,\mu)$ is a nonsingular action on a standard probability admitting an equivalent $\sigma$-finite $G$-invariant measure, the implication 3 $\Rightarrow$ 1 in Proposition \ref{prop.product-action} still holds.
\end{lemma}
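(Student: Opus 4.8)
The plan is to revisit the proof of the implication 3 $\Rightarrow$ 1 in Proposition \ref{prop.product-action} and to isolate the single point where the full ratio ergodic theorem was used: it entered only through the \emph{identification} of the almost everywhere limit of the averages $F_n$ of \eqref{eq.my-Fn} with the conditional expectation $E(F \ot \xi_1)$. Everything else in that argument relied only on \eqref{eq.est2} together with elementary estimates. Since (w$^*$ET) is tailored to replace exactly this identification by a weaker conclusion, I would recast the argument so that the ratio theorem is invoked solely through (w$^*$ET), and show that this weaker input still suffices.

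First I would note that the two reduction steps in the proof of Proposition \ref{prop.product-action} carry over unchanged, as neither uses the ratio theorem: the bound on $|\langle(\pi(g)\ot\pi(g))(\xi_1\ot\xi_2),(\xi_3\ot\xi_4)\rangle|$ shows that $G \actson Y \times Y$ again satisfies \eqref{eq.est2}, and by \eqref{eq.sufficient check total} we may take all test vectors in $L^\infty$. It thus suffices to fix a pmp action $G \actson (Z,\zeta)$ satisfying \eqref{eq.est2}, to take $F \in L^\infty(X)$ and $\xi_1 \in L^\infty(Z) \ominus \C 1$, and to prove that the conditional expectation $E(F \ot \xi_1)$ onto $L^\infty(X \times Z)^G$ vanishes; continuity and density of $E$ then yield $L^\infty(X \times Z)^G = L^\infty(X)^G \ot 1$ exactly as before, hence statement 1.

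Here is where the argument diverges. Since $G \actson (X,\mu)$ admits an equivalent $\sigma$-finite invariant measure $\nu$, the product action $G \actson (X \times Z, \mu \ot \zeta)$ admits the equivalent $\sigma$-finite invariant measure $\nu \ot \zeta$, and its Radon--Nikodym cocycle is still $\om(g,x)$. I would apply (w$^*$ET) to this product action and to the function $F \ot \xi_1$, obtaining, for every $n_0$, that $E(F \ot \xi_1)$ lies in the weak$^*$ closed convex hull of the tail $\{F_n \mid n \geq n_0\}$, where $F_n$ is the average \eqref{eq.my-Fn} (with $Z$ in place of $Y$). Independently --- and using nothing beyond \eqref{eq.est2} --- I would prove that $F_n \recht 0$ in the weak$^*$ topology of $L^\infty(X \times Z)$. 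Indeed, testing $F_n$ against a product function $h \ot \xi_2$ with $h \in L^\infty(X)$ and $\xi_2 \in L^\infty(Z)$ reduces, after subtracting the mean of $\xi_2$, to the quantity $H_n$ whose $L^1$-norm was already shown in the proof of Proposition \ref{prop.product-action} to satisfy $\|H_n\|_1 \recht 0$ by \eqref{eq.est2}. Since $\|F_n\|_\infty \leq \|F\|_\infty \, \|\xi_1\|_\infty$ and the functions $h \ot \xi_2$ span a dense subspace of $L^1(X \times Z, \mu \ot \zeta)$, uniform boundedness upgrades this to full weak$^*$ convergence $F_n \recht 0$.

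Combining the two facts concludes the proof: for any $\phi$ in the predual $L^1(X \times Z)$ and any $\delta > 0$, weak$^*$ nullity of $(F_n)$ provides some $n_0$ with $|\langle F_n, \phi\rangle| \leq \delta$ for all $n \geq n_0$, and since $\phi$ is weak$^*$ continuous while $E(F \ot \xi_1)$ belongs to the weak$^*$ closed convex hull of $\{F_n \mid n \geq n_0\}$, we obtain $|\langle E(F \ot \xi_1), \phi\rangle| \leq \delta$; letting $\delta \recht 0$ and then varying $\phi$ gives $E(F \ot \xi_1) = 0$. The genuinely delicate point, and the only real obstacle, is the bookkeeping that (w$^*$ET) legitimately applies to the product action: one must check that the invariant-measure hypothesis on $X$ is precisely what is needed, and that the weak$^*$ topology on $L^\infty(X \times Z)$ is insensitive to the choice of equivalent reference measure so that the convex-hull conclusion is unambiguous. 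The analytic heart, namely $F_n \recht 0$ weak$^*$, is immediate from the estimate already contained in the proof of Proposition \ref{prop.product-action}, so the essential insight is that \eqref{eq.est2} forces the averages to be weak$^*$ null, leaving (w$^*$ET) only to \emph{locate} $E(F \ot \xi_1)$ inside the collapsing convex hulls.
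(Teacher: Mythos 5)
Your proposal is correct and follows essentially the same route as the paper: both invoke (w$^*$ET) for the product action to place $E(F \ot \xi_1)$ in the weak$^*$ closed convex hulls of the tails of $(F_n)$, and both kill it using the estimate $\|H_n\|_1 \recht 0$ already established in the proof of Proposition \ref{prop.product-action}. The only (harmless) difference is organizational: you first upgrade to full weak$^*$ convergence $F_n \recht 0$ via uniform boundedness and density of product functions in $L^1$, whereas the paper tests against each fixed $\xi_2$ separately and concludes $H = 0$ before varying $\xi_2$.
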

\begin{proof}
Let $G \actson (X,\mu)$ be a nonsingular action on a standard probability admitting an equivalent $\sigma$-finite $G$-invariant measure. Let $G \actson (Y,\eta)$ be a pmp action. Assume that \eqref{eq.est2} holds. Denote by $E : L^\infty(X \times Y) \recht L^\infty(X \times Y)^G$ the unique measure preserving conditional expectation. As in the proof of Proposition \ref{prop.product-action}, it suffices to prove that $E(F \ot \xi_1) = 0$ for all $F \in L^\infty(X)$ and $\xi_1 \in L^\infty(Y) \ominus \C 1$ with $\|F\|_\infty \leq 1$ and $\|\xi_1\|_\infty \leq 1$.

Define $F_n$ as in \eqref{eq.my-Fn}. Choose $\xi_2 \in L^\infty(Y)$ and define $H_n$, $H$ as in \eqref{eq.interm}. By our assumption, $E(F \ot \xi_1)$ belongs, for every $n_0 \in \N$, to the weak$^*$-closed convex hull of $\{F_n \mid n \geq n_0\}$, so that $H$ belongs to the weak$^*$-closed convex hull of $\{H_n \mid n \geq n_0\}$. The argument in the proof of Proposition \ref{prop.product-action} shows that $\|H_n\|_1 \recht 0$. Therefore, $H = 0$. Since this holds for all choices of $\xi_2$, it follows that $E(F \ot \xi_1) = 0$.
\end{proof}

\begin{lemma}\label{lem.estim-fraction}
Let $G \actson (X,\mu)$ be a nonsingular action on a standard probability space with Radon-Nikodym cocycle $\om : G \times X \recht (0,+\infty)$ and let $B \subset A \subset G$ be finite subsets. Then,
$$\int_X \frac{\sum_{g \in B} \om(g,x)}{\sum_{g \in A} \om(g,x)} \, \rd\mu(x) \leq \Bigl( \frac{|B|}{|A|^2} \, \sum_{g \in A} \int_X \om(g,x)^{-1} \, \rd\mu(x) \Bigr)^{1/2} \; .$$
\end{lemma}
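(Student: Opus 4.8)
The plan is to deduce the bound from the normalization $\int_X \om(g,x)\,\rd\mu(x) = 1$, a single application of the Cauchy--Schwarz inequality for the integral over $X$, and a pointwise lower bound on the denominator coming from the arithmetic--harmonic mean (equivalently Cauchy--Schwarz) inequality. Throughout I would abbreviate $S_B(x) = \sum_{g\in B}\om(g,x)$ and $S_A(x) = \sum_{g\in A}\om(g,x)$, so that the quantity to estimate is $\int_X S_B/S_A\,\rd\mu$. The basic normalization is the computation $\int_X \om(g,x)\,\rd\mu(x) = \int_X \frac{\rd(g^{-1}\cdot\mu)}{\rd\mu}(x)\,\rd\mu(x) = (g^{-1}\cdot\mu)(X) = 1$, which upon summing over $g \in B$ gives $\int_X S_B\,\rd\mu = |B|$.

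Next I would factor the integrand as $S_B/S_A = S_B^{1/2}\cdot\bigl(S_B^{1/2}/S_A\bigr)$ and apply the Cauchy--Schwarz inequality on $(X,\mu)$ to obtain
$$\int_X \frac{S_B}{S_A}\,\rd\mu \le \Bigl(\int_X S_B\,\rd\mu\Bigr)^{1/2}\Bigl(\int_X \frac{S_B}{S_A^2}\,\rd\mu\Bigr)^{1/2} = |B|^{1/2}\,\Bigl(\int_X \frac{S_B}{S_A^2}\,\rd\mu\Bigr)^{1/2}\; .$$
It then remains to control the second factor. Since $B \subset A$ forces $S_B \le S_A$ pointwise, we have $S_B/S_A^2 \le 1/S_A$, and applying Cauchy--Schwarz to the families $(\om(g,x)^{1/2})_{g\in A}$ and $(\om(g,x)^{-1/2})_{g\in A}$ yields $|A|^2 \le S_A(x)\sum_{g\in A}\om(g,x)^{-1}$, i.e.\ $1/S_A(x) \le |A|^{-2}\sum_{g\in A}\om(g,x)^{-1}$. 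Integrating this pointwise bound over $X$ and substituting it into the display above produces exactly the claimed inequality.

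There is no serious obstacle here; the only subtlety is choosing the right split in the Cauchy--Schwarz step, so that one factor collapses to the clean normalization $|B|$ while the other retains the unsquared factor $S_B \le S_A$ and is thereby amenable to the arithmetic--harmonic mean bound on $1/S_A$. A more naive route---squaring $S_B/S_A$ directly and using $S_B^2 \le |B|\sum_{g\in A}\om(g,x)^2$---does not close, since the resulting pointwise inequality $\sum_{g}\om(g,x)^2/S_A^2 \le |A|^{-2}\sum_g \om(g,x)^{-1}$ fails once some weights $\om(g,x)$ exceed the others. Keeping $S_B$ to the first power is what makes the argument go through.
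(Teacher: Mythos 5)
Your proof is correct and follows essentially the same route as the paper: both arguments reduce the problem to the intermediate bound $\bigl(\int_X S_B/S_A\,\rd\mu\bigr)^2 \leq |B| \int_X S_A^{-1}\,\rd\mu$ via Cauchy--Schwarz together with the pointwise inequality $S_B \leq S_A$ (the paper gets there by first using $t \leq \sqrt{t}$ on $S_B/S_A \in [0,1]$, you by splitting the integrand differently before applying Cauchy--Schwarz, which is a cosmetic difference), and then both conclude with the arithmetic--harmonic mean bound $S_A(x)^{-1} \leq |A|^{-2}\sum_{g \in A}\om(g,x)^{-1}$, which the paper phrases as convexity of $t \mapsto t^{-1}$.
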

\begin{proof}
Since $t \leq \sqrt{t}$ for all $t \in [0,1]$, we get that
$$\int_X \frac{\sum_{g \in B} \om(g,x)}{\sum_{g \in A} \om(g,x)} \, \rd\mu(x) \leq \int_X \frac{\sqrt{\sum_{g \in B} \om(g,x)}}{\sqrt{\sum_{g \in A} \om(g,x)}} \, \rd\mu(x) \; .$$
By the Cauchy-Schwarz inequality and the convexity of $t \mapsto t^{-1}$, the square of the right hand side is smaller or equal than
\begin{align*}
\Bigl(\int_X \sum_{g \in B} \om(g,x) \, d\mu(x) \Bigr) \cdot \Bigl( \int_X \frac{1}{\sum_{g \in A} \om(g,x)} \, \rd\mu(x)\Bigr) & = |B| \; \int_X \frac{1}{\sum_{g \in A} \om(g,x)} \, \rd\mu(x) \\
&= \frac{|B|}{|A|} \; \int_X \frac{1}{|A|^{-1} \, \sum_{g \in A} \om(g,x)} \, \rd\mu(x) \\
&\leq \frac{|B|}{|A|^2} \; \sum_{g \in A} \int_X \om(g,x)^{-1} \, \rd\mu(x) \; .
\end{align*}
This proves the lemma.
\end{proof}

\begin{lemma}\label{lem.est-maharam}
Let $G \actson (X,\mu)$ be a nonsingular action on a standard probability space with Radon-Nikodym cocycle $\om : G \times X \recht (0,+\infty)$. Consider its Maharam extension
$$G \actson X \times \R : g \cdot (x,t) = (g \cdot x, t + \log(\om(g,x))) \; .$$
Let $\rho > 0$ and equip $X \times \R$ with the probability measure $\mu_\rho$ given by the product of $\mu$ and $(\rho/2) \exp(-\rho|t|) \rd t$. Denote by $\om_\rho : G \times X \times \R \recht (0,+\infty)$ the Radon-Nikodym cocycle of this Maharam extension. Then,
$$\om_\rho(g,x,t)^{-1} \leq \om(g,x)^{-1+\rho} + \om(g,x)^{-1-\rho} \quad\text{for all $g \in G$, $x \in X$, $t \in \R$.}$$
\end{lemma}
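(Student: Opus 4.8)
The plan is to compute the Radon--Nikodym cocycle $\om_\rho$ explicitly and then estimate it by an elementary inequality. The starting point is the invariance property recalled in the introduction: the Maharam extension $G \actson X \times \R$ preserves the infinite measure $m = \mu \ot \exp(-t)\,\rd t$. (If one wishes to verify this directly, it follows from the change of variables $s = t + \log\om(g,x)$ in the second coordinate together with the defining identity $\int_X h(g\cdot x)\,\om(g,x)\,\rd\mu(x) = \int_X h\,\rd\mu$.) The probability measure $\mu_\rho$ is equivalent to $m$, with density
$$\phi(x,t) = \frac{\rd\mu_\rho}{\rd m}(x,t) = \frac{\rho}{2}\,\exp(t - \rho|t|)$$
depending only on the $\R$-coordinate, so the cocycle $\om_\rho$ can be read off from $\phi$ alone.

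Concretely, since $m$ is $G$-invariant, the chain rule for Radon--Nikodym derivatives yields
$$\om_\rho(g,x,t) = \frac{\phi\bigl(g\cdot(x,t)\bigr)}{\phi(x,t)} = \frac{\phi\bigl(g\cdot x,\, t + \log\om(g,x)\bigr)}{\phi(x,t)} \; .$$
Writing $s = \log\om(g,x)$ and using that $\phi$ depends only on its second coordinate, I would substitute the formula for $\phi$ to obtain
$$\om_\rho(g,x,t) = \exp(s)\,\exp\bigl(\rho(|t| - |t+s|)\bigr) \; ,$$
so that $\om_\rho(g,x,t)^{-1} = \exp(-s)\,\exp\bigl(\rho(|t+s| - |t|)\bigr)$. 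The one point that requires care is the direction of the cocycle, i.e.\ tracking the roles of $g$ and $g^{-1}$ so that the leading factor is $\exp(+s) = \om(g,x)$ and not $\exp(-s)$; this is exactly what produces the two exponents $-1+\rho$ and $-1-\rho$ in the statement rather than $1\pm\rho$, so it is the only genuinely delicate bookkeeping step.

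Once the explicit formula is in hand, the estimate is immediate. By the triangle inequality $|t+s| - |t| \le |s|$, whence $\om_\rho(g,x,t)^{-1} \le \exp(-s)\exp(\rho|s|)$. Since $\exp(\rho|s|) = \max\{\exp(\rho s),\exp(-\rho s)\} \le \exp(\rho s) + \exp(-\rho s)$, I conclude
$$\om_\rho(g,x,t)^{-1} \le \exp\bigl((\rho-1)s\bigr) + \exp\bigl((-\rho-1)s\bigr) = \om(g,x)^{-1+\rho} + \om(g,x)^{-1-\rho} \; ,$$
which is the claimed bound, valid for every $\rho > 0$ without restriction. Thus the main obstacle is not analytic but notational: after correctly identifying $\om_\rho = \exp(s)\exp(\rho(|t|-|t+s|))$, the conclusion follows from the triangle inequality and the trivial bound $\exp(\rho|s|) \le \exp(\rho s)+\exp(-\rho s)$.
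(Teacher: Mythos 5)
Your proof is correct and follows essentially the same route as the paper: the paper's argument is just the terse observation that $\exp(-\rho|t|)/\exp(-\rho|t+s|) \leq \exp(\rho|s|) \leq \exp(\rho s) + \exp(-\rho s)$, which is exactly your chain of inequalities after the explicit computation $\om_\rho(g,x,t) = \om(g,x)\exp(\rho(|t|-|t+s|))$ via the invariant measure $\mu \ot e^{-t}\rd t$. Your version merely makes explicit the cocycle computation that the paper leaves to the reader, and the bookkeeping of $g$ versus $g^{-1}$ is handled correctly.
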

\begin{proof}
This follows immediately from the inequality
$$\frac{\exp(-\rho |t|)}{\exp(-\rho |t+s|)} \leq \exp(\rho |s|) \leq \exp(\rho s) + \exp(-\rho s)$$
for all $s,t \in \R$.
\end{proof}

\begin{proof}[{Proof of Theorem \ref{thm.criterion-nilpotent}}]
Let $G \actson (Z,\zeta)$ be any ergodic pmp action. We have to prove that the diagonal action $G \actson \Hh \times Z$ is ergodic and of type III$_1$.

Note that \eqref{eq.weak-mixing-ass} implies that $|A_n \cap \Lambda(\xi_1,\xi_2,\eps)|/|A_n| \recht 0$, so that $\pi$ is a weakly mixing representation. In particular, $\pi$ has no nonzero invariant vectors. Since $G$ is nilpotent, it then follows from \cite[Propositions 2.8 and 2.10]{AIM19} that there exists a decreasing sequence of closed subspaces $H_k \subset H$ with $\bigcap_k H_k = \{0\}$ and $\pi(g) H_k = H_k$ for all $g \in G$, $k \in \N$, such that the $1$-cocycle $c$ is cohomologous to a $1$-cocycle with values in $H_k$. This means that there exist vectors $\xi_k \in K_k := H_k^\perp$ such that $c_{k,g} := c_g + \pi(g) \xi_k - \xi_k$ belongs to $H_k$.

Identifying $\Hh = \widehat{K_k} \times \widehat{H_k}$, the nonsingular automorphism $\theta_k$ of $\Hh \times Z$ given by $(\om,z) \mapsto (\om - \xi_k,z)$ conjugates the action $G \actson \Hh \times Z$ with the diagonal action $G \actson \widehat{K_k} \times \widehat{H_k} \times Z$, where $G \actson \widehat{K_k}$ is the pmp Gaussian action associated with $\pi|_{K_k}$ and $G \actson \widehat{H_k}$ is the nonsingular Gaussian action associated with $\pi|_{H_k}$ and $(c_{k,g})_{g \in G}$.

By \cite[Theorem 1.4]{Hoc12}, the group $G$ and the subsets $A_n$ satisfy (w$^*$ET). We will apply Lemma \ref{lem.product-action-weaker} and claim that condition 3 in Proposition \ref{prop.product-action} holds for the pmp action $G \actson \widehat{K_k}$ and the nonsingular action $G \actson \widehat{H_k} \times Z \times \R$ given by the Maharam extension of the diagonal action $G \actson \widehat{H_k} \times Z$.

Denote by $\mu_k$ the Gaussian probability measure on $\widehat{H_k}$ and denote by $\om_k : G \times \widehat{H_k} \recht (0,+\infty)$ the Radon-Nikodym cocycle. Then,
$$\om_k(g,x) = \exp\bigl(-\frac{1}{2} \|c_{k,g}\|^2 + \langle x,c_{k,g^{-1}}\rangle\bigr) \; ,$$
so that for every $\beta \in \R$ and $g \in G$,
$$\int_{\widehat{H_k}} \om_k(g,x)^{-\beta} \, \rd\mu_k(x) = \exp\bigl(\frac{1}{2} \beta (\beta + 1) \|c_{k,g}\|^2\bigr) \leq \exp\bigl(\frac{1}{2} \beta (\beta + 1) \|c_g\|^2\bigr) \; .$$
Note that the Koopman representation of $G$ on $L^2(\widehat{K_k}) \ominus \C1$ is given by the direct sum of the symmetric tensor powers of $\pi|_{K_k}$. Taking $\rho > 0$ small enough and considering on $\widehat{H_k} \times Z \times \R$ the product of the probability measures $\mu_k$, $\zeta$ and $(\rho/2)\exp(-\rho|t|) \rd t$, it follows from Lemmas \ref{lem.estim-fraction} and \ref{lem.est-maharam} that condition 3 in Proposition \ref{prop.product-action} indeed holds.

So by Lemma \ref{lem.product-action-weaker}, the $G$-invariant elements of $L^\infty(\widehat{K_k} \times \widehat{H_k} \times Z \times \R)$ are contained in $1 \ot L^\infty(\widehat{H_k} \times Z \times \R)$. This holds for all $k$. We use $\theta_k$ to move back to the Maharam extension of $G \actson \Hh \times Z$. Since $\al$ has no fixed point, we get that $\|\xi_k\| \recht +\infty$. This means that the intersection of the decreasing sequence of closed affine subspaces $H_k + \xi_k \subset H$ is empty. By Proposition \ref{prop.intersection spaces}, the algebra of $G$-invariant functions for the Maharam extension of $G \actson \Hh \times Z$ is given by $L^\infty(Z)^G = \C 1$.
\end{proof}

\begin{remark} \label{rem.slightly-more-general}
Note that Theorem \ref{thm.criterion-nilpotent} remains valid for any evanescent isometric action (in the sense of \cite[Definition 2.6]{AIM19}) of a countable group $G$ and any sequence $A_n \subset G$ satisfying (w$^*$ET). If moreover $A_n \subset G$ satisfies the ratio ergodic theorem, this is still the case for any subsequence of $A_n$ and \eqref{eq.weak-mixing-ass} may be weakened by only requiring that the $\liminf$ of the left hand side is zero.
\end{remark}

Every affine isometric action of the group of integers is, up to orthogonal conjugacy, given by the direct sum of an orthogonal representation and the following affine isometric action associated with a symmetric probability measure on the circle, which we identify with the interval $(-1/2,1/2]$ through the map $t \mapsto \exp(2\pi \ri t)$.

To any nonatomic probability measure $\nu$ on the interval $[-1/2,1/2]$ that is symmetric in the sense that $\nu(\cU) = \nu(-\cU)$ for all Borel sets $\cU \subset [-1/2,1/2]$, we associate the orthogonal representation
$$\pi_\nu : \Z \actson H = \bigl\{\xi \in L^2([-1/2,1/2],\nu) \bigm| \forall t, \xi(-t) = \overline{\xi(t)}\bigr\} \quad\text{by}\quad (\pi_\nu(a) \xi)(t) = \exp(2 \pi \ri a t) \, \xi(t) \; .$$
and the $1$-cocycle $c : \Z \recht H$ given by $c_1(t) = 1$ for all $t \in [-1/2,1/2]$. For every $\lambda > 0$, we consider the affine isometric action $\al^\lambda : \Z \actson H : \al^\lambda_a(\xi) = \pi_\nu(a) \xi + \lambda c_a$.

A very natural class of probability measures $\nu$ to consider is given by the following Cantor measures: given a sequence $p_n \in [0,1]$, define the probability measure $\nu$ on $[-1/2,1/2]$ by consecutively dividing intervals in three equal size subintervals where at the $n$'th step, the relative weight of the middle interval is $p_n$ and the relative weight of the two outer intervals is $(1-p_n)/2$. More concretely, $\nu$ is given by the infinite convolution product
\begin{equation}\label{eq.nu-convolution}
\nu = \bigast_{n=1}^\infty \bigl( p_n \delta_0 + \frac{1-p_n}{2} (\delta_{3^{-n}} + \delta_{-3^{-n}})\bigr) \; .
\end{equation}

\begin{theorem}\label{thm.weak-mixing-examples}
With the notation introduced above, write $r_m = 3^{2m} p_1 \cdots p_{m-1} \cdot (1-p_m)$ for all $m \geq 1$. Assume that $\sum_{n=1}^\infty (1-p_n) =+\infty$ so that $\nu$ is nonatomic.
\begin{enumlist}
\item The $1$-cocycle $c$ is a coboundary if and only if $\sum_{m=1}^\infty r_m < + \infty$.
\item Assume that $c$ is not a coboundary. If there exists a $\delta > 0$ such that
\begin{equation}\label{eq.our sufficient cond}
\liminf_{n \recht +\infty} \frac{1}{N_n} \sum_{m=1}^n r_m = 0 \quad\text{where}\quad N_n = \bigl|\bigl\{ k \bigm| 1 \leq k \leq n \; , \; p_k \leq 1-\delta\bigr\}\bigr| \; ,
\end{equation}
then the Gaussian action $\alh^\lambda$ is weakly mixing and of stable type III$_1$ for all $\lambda > 0$.
\item If for every $\delta > 0$ and $k \in \N$, there exists $n \in \N$ such that $p_n,p_{n+1},\ldots,p_{n+k} \in [1-\delta,1]$, then there exists a sequence $a_n \in \Z$ such that $a_n \recht +\infty$ and $\pi_\nu(a_n) \recht 1$ strongly.
\end{enumlist}
\end{theorem}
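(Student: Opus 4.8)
The plan is to prove the three parts in turn, with almost all of the work concentrated in part~2, which I would deduce from Theorem~\ref{thm.criterion-nilpotent} for $G=\Z$, $A_n=[-n,n]$. For part~1, I would use the standard coboundary criterion: since $c_1\equiv 1$, the cocycle $c$ is a coboundary exactly when $\xi(t)=(1-\exp(2\pi\ri t))^{-1}$ lies in $H$ (one checks $\xi(-t)=\overline{\xi(t)}$, so the symmetry constraint is automatic), i.e.\ when $\int \sin^{-2}(\pi t)\,\rd\nu(t)<+\infty$ because $|\xi(t)|^2=\tfrac14\sin^{-2}(\pi t)$. The remaining point is a two-sided scale estimate $\int \sin^{-2}(\pi t)\,\rd\nu(t)\asymp\sum_m r_m$: near $0$ the integrand is $\asymp 3^{2m}$ on the two ``middle-gap'' intervals produced at the $m$-th subdivision of the central column, which by \eqref{eq.nu-convolution} carry $\nu$-mass $p_1\cdots p_{m-1}(1-p_m)$ and hence contribute $\asymp r_m$, while the rest of the support stays a bounded distance from $0$ and contributes $O(1)$. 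Thus finiteness is equivalent to $\sum_m r_m<+\infty$.

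For part~2 I would verify the hypothesis \eqref{eq.weak-mixing-ass} of Theorem~\ref{thm.criterion-nilpotent} in the weakened ``$\liminf$ along a subsequence'' form permitted by Remark~\ref{rem.slightly-more-general}, which is legitimate since $[-n,n]$ satisfies the ratio ergodic theorem. By \eqref{eq.sufficient check total} it suffices to test on the total family of exponentials $e_b(t)=\exp(2\pi\ri bt)\in H$, for which $\langle\pi_\nu(a)e_{b_1},e_{b_2}\rangle=\widehat{\nu}(a+b_1-b_2)$, so up to a negligible shift $\Lambda(e_{b_1},e_{b_2},\eps)$ becomes $\{a:|\widehat{\nu}(a)|\ge\eps\}$. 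The key quantitative input is a Riesz-product estimate for the density of this set: writing $\widehat{\nu}(a)=\prod_k\bigl(p_k+(1-p_k)\cos(2\pi a3^{-k})\bigr)$ and averaging $|\widehat{\nu}(a)|^2$ over $a\in\{0,\dots,3^L-1\}$, the base-$3$ lacunarity of the frequencies $3^{-k}$ forces all cross terms of integer total frequency to vanish, giving $3^{-L}\sum_{a<3^L}|\widehat{\nu}(a)|^2=\prod_{k=1}^L q_k$ with $q_k=p_k^2+\tfrac12(1-p_k)^2$. Since $q_k\le 1-c\delta$ whenever $p_k\le 1-\delta$, Chebyshev yields the weak-mixing speed $|[-3^L,3^L]\cap\{|\widehat{\nu}|\ge\eps\}|/3^L\lesssim_\eps (1-c\delta)^{N_L}$.

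It then remains to control the cocycle factor and to combine the two estimates, and this is where I expect the main obstacle. Using $|\sum_{k=0}^{a-1}e^{2\pi\ri kt}|\le\min(a,|\sin\pi t|^{-1})$ I would bound $\max_{|a|\le 3^L}\|c_a\|^2\lesssim\sum_{m\le L}r_m+S_L$, where the fine-scale term $S_L=3^{2L}p_1\cdots p_L\approx 3^{2L}\nu([-3^{-L},3^{-L}])$ is \emph{not} dominated by $\sum_{m\le L}r_m$ in general. The resolution is to choose the scale cleverly: since $S_L=r_{L+1}/\bigl(9(1-p_{L+1})\bigr)$, I would take $L=L_j-1$ where $L_j\le n_j$ is the largest \emph{spread} index ($p_{L_j}\le 1-\delta$) along a subsequence $n_j$ realizing \eqref{eq.our sufficient cond}; then $S_{L_j-1}\le r_{L_j}/(9\delta)$, so $\max_{|a|\le 3^{L_j-1}}\|c_a\|^2\le C_\delta\sum_{m\le n_j}r_m$, while $N_{L_j-1}=N_{n_j}-1\to+\infty$ (note $N_{n_j}\to\infty$ since $\sum_m r_m=\infty$ by part~1) and $\sum_{m\le L_j-1}r_m/N_{L_j-1}\to 0$. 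Taking logarithms, the left-hand side of \eqref{eq.weak-mixing-ass} at radius $3^{L_j-1}$ has logarithm $\log C_\eps+(N_{n_j}-1)\bigl(-c'\delta+\kappa C_\delta\,\sum_{m\le n_j}r_m/(N_{n_j}-1)\bigr)\to-\infty$, so its $\liminf$ is $0$ and Theorem~\ref{thm.criterion-nilpotent} gives weak mixing and stable type III$_1$.

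For part~3 I would exploit the long runs $p_n,\dots,p_{n+k}\in[1-\delta,1]$ to build near-return times $a_m=3^{n(m)}$. Writing a support point as $t=\sum_k\eps_k 3^{-k}$ with $\eps_k\in\{-1,0,1\}$ (and $\nu$-probability $p_k$ that $\eps_k=0$), one has $3^{n}t\equiv\sum_{j\ge1}\eps_{n+j}3^{-j}\pmod 1$; on the event $\{\eps_{n+1}=\dots=\eps_{n+k}=0\}$, of $\nu$-measure $p_{n+1}\cdots p_{n+k}\ge(1-\delta)^k$, this tail lies within $3^{-k}/2$ of $\Z$. Choosing $\delta=\delta_m\recht 0$ and $k=k_m\recht\infty$ with $k_m\delta_m\recht 0$, and pushing the corresponding runs out so that $n(m)\recht\infty$, I would split $\int(1-\cos 2\pi a_m t)\,|\xi(t)|^2\,\rd\nu(t)$ over the good and bad events; using $\nu(\text{bad})\recht 0$ together with absolute continuity of $|\xi|^2\nu$ on the bad event, $1-\cos\le 2$, and the $3^{-2k_m}$ bound on the good event, this shows $\|\pi_\nu(a_m)\xi-\xi\|\recht 0$ for every $\xi\in H$, i.e.\ $\pi_\nu(a_m)\recht 1$ strongly with $a_m\recht+\infty$.
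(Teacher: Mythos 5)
Your proposal is correct and follows the same overall architecture as the paper's proof: part~1 via the decomposition of $\supp\nu\setminus\{0\}$ into the annuli where the first nonzero ternary digit occurs at position $m$ (each carrying mass $p_1\cdots p_{m-1}(1-p_m)$ and contributing $\asymp r_m$ to $\int\sin^{-2}(\pi t)\,\rd\nu$); part~2 by feeding a density estimate for $\Lambda(\eps)$ and the cocycle bound $\max_{|a|\lesssim 3^n}\|c_a\|^2\leq C_\delta\sum_{m\leq n}r_m$ (anchored at a spread index so that the fine-scale term $3^{2n}p_1\cdots p_{n-1}$ is absorbed into $\delta^{-1}r_n$, exactly as in the paper's estimate \eqref{eq.me estim ca}) into Theorem~\ref{thm.criterion-nilpotent} via Remark~\ref{rem.slightly-more-general}; and part~3 by the same good/bad event splitting on runs of $p$'s close to $1$. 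The one genuinely different ingredient is the weak-mixing-speed estimate: the paper writes $a$ in balanced ternary, observes that each spread index $n_i$ with prescribed digits at positions $n_i-1,n_i-2$ contributes a factor $\leq 1-\delta/18$ to \eqref{eq.formula-pos-def}, and applies Hoeffding (after thinning the spread indices to have gaps $\geq 2$); you instead average $|\widehat\nu(a)|^2$ over a full period $\{0,\dots,3^L-1\}$, use the base-$3$ lacunarity to kill all cross terms in the Riesz product (the frequencies $\sum_{k\in S}\pm\eps_k3^{-k}$, $\eps_k\in\{1,2\}$, are never integers for finite nonempty $S$), get $\prod_{k\leq L}(p_k^2+\tfrac12(1-p_k)^2)$, and conclude by Chebyshev. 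Both yield the needed bound $O_\eps((1-c\delta)^{N_L})$; your second-moment route is slightly cleaner in that it needs no thinning of the spread indices and no Hoeffding input, at the cost of only controlling $\Lambda(\eps)$ up to the factor $\eps^{-2}$ (which is harmless here). Two cosmetic points: the identity $3^{-L}\sum_{a<3^L}|\widehat\nu(a)|^2=\prod_{k\leq L}q_k$ should be an inequality $\leq$ obtained by first discarding the factors with $k>L$ (each of modulus $\leq 1$), and in part~3 you should record, as the paper does, that a run of length $k+n_0$ contains a run of length $k$ starting beyond $n_0$, which is what lets you push $n(m)\recht\infty$.
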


In Example \ref{ex.concrete Z} below, we provide concrete examples of $p_n \in [0,1]$ such that all hypotheses in Theorem \ref{thm.weak-mixing-examples} hold.

\begin{proof}
Note that the probability measure $\nu$ can be written as
\begin{equation}\label{eq.map-theta-nu}
\begin{split}
& \theta : X = \{-1,0,1\}^\N \recht [-1/2,1/2] : \theta(x) = \sum_{n=1}^\infty 3^{-n} x_n \quad\text{and}\\
& \nu = \theta_*(\zeta) \quad\text{where}\quad \zeta = \prod_{n=1}^\infty \bigl(\frac{1-p_n}{2} \delta_{-1} + p_n \delta_0 + \frac{1-p_n}{2} \delta_{1}\bigr) \; .
\end{split}
\end{equation}
It follows that $\nu$ is atomic if and only if $\sum_{n=1}^\infty (1-p_n) < +\infty$. Also note that for every $a \in \Z$ and $t \in [-1/2,1/2]$, we have
$$c_a(t) = \frac{\exp(2\pi \ri at) - 1}{\exp(2\pi \ri t) - 1} \quad\text{and}\quad |c_a(t)|^2 = \frac{\sin^2(\pi a t)}{\sin^2(\pi t)} \; . $$

1.\ We have that $c$ is a coboundary if and only if
$$\int_{-1/2}^{1/2} \frac{1}{\sin^2(\pi t)} \, \rd\nu(t) < +\infty \; .$$
For every $m \geq 1$, define
$$\cU_m = \{x \in X \mid x_1 = x_2 = \cdots = x_{m-1} = 0 \;\;\text{and}\;\; x_m \neq 0 \} \; .$$
Note that $[-1/2,1/2] \setminus \{0\}$ is the union of the subsets $\theta(\cU_m)$, whose mutual intersections are finite sets. If $x \in \cU_m$, we have that $(2 \cdot 3^m)^{-1} \leq |\theta(x)| \leq (2 \cdot 3^{m-1})^{-1}$, so that $4^{-1} \cdot 3^{-2m} \leq \sin^2(\pi \theta(x)) \leq 25 \cdot 3^{-2m}$. We conclude that $c$ is a coboundary if and only if
$$+\infty > \sum_{m = 1}^\infty 3^{2m} \, \zeta(\cU_m) = \sum_{m=1}^\infty r_m \; .$$

2.\ Assume that $c$ is not a coboundary and that \eqref{eq.our sufficient cond} holds for a given $\delta > 0$. Since $\sum_{m=1}^\infty r_m = +\infty$, we have that $\limsup_n N_n = +\infty$. So, there are infinitely many $n \in \N$ with $p_n \leq 1 - \delta$. Fix $n \in \N$ with $p_n \leq 1- \delta$. Write $X$ as the disjoint union of $(\cU_m)_{m=1,\ldots,n-1}$ and $\cV_{n-1} = \{x \in X \mid x_1 = x_2 = \cdots = x_{n-1} = 0\}$. When $x \in \cU_m$, we have $|\theta(x)| \geq (2 \cdot 3^m)^{-1}$ and thus $|c_a(\theta(x))|^2 \leq 4 \cdot 3^{2m}$ for all $a \in \Z$. We also have that $|c_a(t)|^2 \leq a^2$ for all $a \in \Z$, $t \in [-1/2,1/2]$. We conclude that for all $a \in \Z$ with $|a| \leq 2 \cdot 3^n$, we have
\begin{equation}\label{eq.me estim ca}
\begin{split}
\|c_a\|^2 & \leq a^2 \zeta(\cV_{n-1}) + 4 \sum_{m=1}^{n-1} 3^{2m} \zeta(\cU_m) = a^2 \cdot p_1 \cdots p_{n-1} + 4 \sum_{m=1}^{n-1} 3^{2m} \cdot p_1 \cdots p_{m-1} \cdot (1-p_m) \\ & \leq \delta^{-1} \cdot 4 \cdot 3^{2n} \cdot p_1 \cdots p_{n-1} \cdot (1-p_n) + 4 \sum_{m=1}^{n-1} r_m \leq 4 \delta^{-1} \sum_{m=1}^n r_m \; .
\end{split}
\end{equation}

We consider all indices $n \in \N$ such that $p_n \leq 1-\delta$. Removing at most half of these indices, we can take $2 \leq n_1 < n_2 < \cdots$ such that $p_{n_i} \leq 1-\delta$ and $n_{i+1} \geq 2 + n_i$ for all $i \geq 1$ and such that
\begin{equation}\label{eq.lim inf 0}
\liminf_{s \recht +\infty} \frac{1}{s} \sum_{m=1}^{n_s} r_m = 0 \; .
\end{equation}

In order to apply Theorem \ref{thm.criterion-nilpotent} and Remark \ref{rem.slightly-more-general}, we define $A_s = \Z \cap [-(3^{n_s}-1)/2,(3^{n_s}-1)/2]$ and $\Lambda(\xi_1,\xi_2,\eps) = \{a \in \Z \mid |\langle \pi_\nu(a) \xi_1,\xi_2\rangle| \geq \eps\}$. We prove the existence of $\al > 0$ such that
\begin{equation}\label{eq.estimate-non-mixing}
\frac{|A_s \cap \Lambda(\xi_1,\xi_2,\eps)|}{|A_s|} = O(\exp(- \al s)) \quad\text{when $s \recht +\infty$,}
\end{equation}
for all $\xi_1,\xi_2 \in H$ and $\eps > 0$. First note that it suffices to prove \eqref{eq.estimate-non-mixing} for all $\eps > 0$ and all $\xi_1,\xi_2$ in a total subset of $H$. Taking $\{\pi_\nu(a)1 \mid a \in \Z\}$ as total subset and noting that $\Lambda(\pi_\nu(a) 1, \pi_\nu(b)1,\eps) = \Lambda(1,1,\eps) + (b-a)$, it suffices to prove \eqref{eq.estimate-non-mixing} for all $\eps > 0$ and $\xi_1 = \xi_2 = 1$. Denote $\Lambda(\eps) = \{a \in \Z \mid |\langle \pi_\nu(a) 1,1\rangle| \geq \eps\}$.

Note that
\begin{equation}\label{eq.formula-pos-def}
\langle \pi_\nu(a) 1,1 \rangle = \int_{-1/2}^{1/2} \exp(2\pi \ri a t) \, \rd\nu(t) = \prod_{m=1}^\infty \bigl(1- 2 (1-p_m) \sin^2\bigl(3^{-m}\pi a\bigr)\bigr) \; .
\end{equation}

Fix $\eps > 0$. Take $L \in \N$ such that $(1-\delta/18)^L < \eps$. By Hoeffding's inequality, we find $s_0 \in \N$ and $\al > 0$ such that for every $s \geq s_0$, the probability that $s$ independent Bernoulli random variables, each having a success probability of $1/9$, are successful less than $L$ times is bounded above by $\exp(-\al s)$. Fix $s \geq s_0$. Every element $a \in A_s$ can be uniquely written as
$$a = \sum_{k=0}^{n_s-1} a_k 3^k \quad\text{with $a_k \in \{-1,0,1\}$.}$$
If $1 \leq i \leq s$ and if $a_{n_i-1} = 0$, $a_{n_i - 2} = 1$, then
$$3^{-n_i} a \in \bigl[\frac{1}{18},\frac{1}{6}\bigr] + \Z \quad\text{and thus}\quad 1 - 2 (1-p_{n_i}) \sin^2\bigl(3^{-n_i}\pi a\bigr) \in \bigl[ \frac{1}{2}, 1 - \frac{\delta}{18}\bigr] \; .$$
In combination with \eqref{eq.formula-pos-def}, it follows that $|\langle \pi_\nu(a)1,1\rangle| < \eps$ if there are at least $L$ elements $i \in \{1,\ldots,s\}$ such that $a_{n_i-1}=0$ and $a_{n_i-2} = 1$. By construction, all indices $n_i-1$ and $n_i-2$ are distinct. We conclude that for all $s \geq s_0$,
$$\frac{|A_s \cap \Lambda(\eps)|}{|A_s|} \leq \exp(-\al s) \; .$$
So, \eqref{eq.estimate-non-mixing} is proven. For every $s \in \N$, write
$$R_s = 4 \delta^{-1} \sum_{m=1}^{n_s} r_m \; .$$
By \eqref{eq.me estim ca}, we know that $\|c_a\|^2 \leq R_s$ for all $s \in \N$ and $a \in A_s$. By \eqref{eq.lim inf 0}, we get that
$$\liminf_{s \recht +\infty} \exp(-\al s) \exp(\kappa R_s) = 0 \quad\text{for all $\kappa > 0$.}$$
It thus follows from Theorem \ref{thm.criterion-nilpotent} and Remark \ref{rem.slightly-more-general} that $\alh^\lambda$ is weakly mixing and of stable type III$_1$ for all $\lambda > 0$.

3.\ Assume that the hypothesis of point~3 holds. Fix $n_0 \in \N$ and $\eps > 0$. It suffices to prove that there exists an $n \in \N$ with $n \geq n_0$ and
$$\langle \pi_\nu(3^n) 1,1 \rangle \geq 1-\eps \; .$$
Take $k \in \N$ such that
$$\prod_{m=k+1}^\infty \bigl(1 - 2 \sin^2\bigl(3^{-m} \pi\bigr)\bigr) \geq \sqrt{1-\eps} \; .$$
Take $0 < \delta < 1/2$ small enough such that $(1-2\delta)^k \geq \sqrt{1-\eps}$. By our assumption, we can take $n \in \N$ with $n \geq n_0$ such that $p_{n+1},\ldots,p_{n+k} \in [1-\delta,1]$. By \eqref{eq.formula-pos-def},
$$|\langle \pi_\nu(3^n) 1 , 1 \rangle | = \prod_{m=1}^\infty \bigl| 1 - 2 (1-p_{n+m}) \sin^2\bigl(3^{-m}\pi\bigr)\bigr| \geq \prod_{m=1}^k (1-2 \delta) \cdot \prod_{m=k+1}^\infty (1-2\sin^2\bigl(3^{-m} \pi\bigr)\bigr) \geq 1 -\eps \;.$$
So also point~3 is proven.
\end{proof}

\begin{example}\label{ex.concrete Z}
Let $\eps_n > 0$ be any decreasing sequence with $\eps_n \recht 0$ and $\sum_{n=1}^\infty \eps_n = +\infty$. Take e.g.\ $\eps_n = n^{-1}$. Define $p_n = 1$ if $n$ is not a square and
$$p_{n^2} = 3^{-(2+4n)} \, \frac{\eps_{n+1}}{\eps_n} \leq 3^{-6} \quad\text{for all $n \in \N$.}$$
Write $t_n = \eps_n \, 3^{-2n^2}$ and note that $p_{n^2} = t_{n+1}/t_n$. We thus find that $r_m = 0$ if $m$ is not a square and
$$r_{n^2} = \eps_n \, (1 - p_{n^2}) \, 9 \eps_1^{-1} \; .$$
Since $N_{n^2} = n$, the hypotheses of Theorem \ref{thm.weak-mixing-examples} hold and the associated nonsingular Gaussian action $\alh^\lambda$ is weakly mixing and of stable type III$_1$ for all $\lambda > 0$. Also, here exists a sequence $a_n \in \Z$ such that $a_n \recht +\infty$ and $\pi_\nu(a_n) \recht 1$ strongly.
\end{example}

\section{Isometric actions of general countable groups}\label{sec.general groups}

The proof of the criterion in Theorem \ref{thm.criterion-nilpotent}, providing a sufficient condition for nonsingular Gaussian actions of nilpotent groups to be ergodic and type III$_1$ relied on two ingredients: the ergodicity criterion for diagonal actions $G \actson X \times Y$ proved in Proposition \ref{prop.product-action} and Lemma \ref{lem.product-action-weaker}, which both use variants of the ratio ergodic theorem, and the fact that isometric actions of nilpotent groups are automatically evanescent. While a less explicit version of the ratio ergodic theorem was proven for actions of arbitrary amenable groups in \cite{Dan18}, no such result is known for nonamenable groups. We will make use instead of the measure preserving ergodic averages introduced in \cite[Lemma 4.1]{BKV19}.

Evanescence of an isometric action means that the associated $1$-cocycle is cohomologous to $1$-cocycles taking values in a decreasing sequence of closed subspaces $H_k \subset H$ having trivial intersection. Isometric actions of nonamenable groups are typically not evanescent. In an entirely general context, we can also not rely on the method of \cite[Theorem 9.7]{AIM19}, where for concrete isometric actions given by groups $G$ acting on trees, one uses the existence of large subgroups $G_0 \subset G$ such that the restricted $1$-cocycle $c|_{G_0}$ is cohomologous to a $1$-cocycle taking values in a proper subspace $H_0 \subset H$. Instead we adapt the method of \cite[Section 6]{BKV19}.

We thus prove the following result, which will be applied to affine isometric actions given by groups acting on proper metric spaces of negative type, so as to prove Theorem \ref{thm.actions on proper metric spaces}.

\begin{theorem}\label{thm.general-type-criterion}
Let $G$ be a countable group and $\al : G \actson H : \al_g(\xi) = \pi(g)\xi + c_g$ an isometric action of $G$ on a real Hilbert space $H$. Assume that $\al$ has no fixed point. Denote by $G \actson (\Hh,\mu)$ the associated nonsingular Gaussian action.

Let $\cF_n \subset G$ be a sequence of finite subsets and write $s_n = \max\{ \|c_g\|^2 \mid g \in \cF_n^{-1} \cF_n\}$. Let $\kappa > 1$. Assume that for all $\xi_1,\xi_2$ in a total subset of $H$ and all $\eps > 0$,
\begin{equation}\label{eq.general-weak-mixing-ass}
\frac{|\{(g,h) \in \cF_n \times \cF_n \mid |\langle \pi(g) \xi_1,\pi(h)\xi_2\rangle| \geq \eps\}|}{|\cF_n|^2} \; \exp(\kappa s_n) \recht 0 \; .
\end{equation}
Then $G \actson (\Hh,\mu)$ is weakly mixing.

If one of the following extra assumptions hold, then $G \actson (\Hh,\mu)$ is of stable type III$_1$.
\begin{enumlist}
\item The isometric action $\al$ is evanescent.
\item The subspace $H_0 \subset H$ of vectors $\xi \in H$ satisfying
\begin{equation}\label{eq.boundedness-inner-product}
\sup \bigl\{|\langle \xi,c_g \rangle| \bigm| g \in \cF_n^{-1} \cF_n , n \in \N \bigr\} < +\infty \; ,
\end{equation}
is dense in $H$.
\end{enumlist}
\end{theorem}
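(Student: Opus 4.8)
The plan is to follow the strategy of the proof of Theorem \ref{thm.criterion-nilpotent}, replacing the ratio ergodic theorem and Lemma \ref{lem.product-action-weaker} (which are unavailable for nonamenable $G$) by the measure preserving ergodic averages over the sets $\cF_n$ introduced in \cite[Section 4]{BKV19}. The technical core I would isolate is a product-ergodicity criterion in the spirit of the implication 3 $\Rightarrow$ 1 of Proposition \ref{prop.product-action}: if $G \actson (X,\mu)$ is a nonsingular action whose Radon--Nikodym cocycle is controlled over $\cF_n^{-1}\cF_n$ by $\exp(\,\cdot\, s_n)$, and $G \actson (Y,\eta)$ is a pmp action whose reduced Koopman representation $\pi_Y$ satisfies the sparsity hypothesis \eqref{eq.general-weak-mixing-ass}, then every $G$-invariant $F \in L^\infty(X \times Y)$ lies in $L^\infty(X)^G \ot 1$. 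Since $\pi$ is orthogonal, $\langle \pi(g)\xi_1,\pi(h)\xi_2\rangle = \langle \pi(h^{-1}g)\xi_1,\xi_2\rangle$ with $h^{-1}g \in \cF_n^{-1}\cF_n$, so \eqref{eq.general-weak-mixing-ass} is precisely a decay statement for the matrix coefficients of $\pi$ along $\cF_n^{-1}\cF_n$; this is what makes the pmp Gaussian action $\widehat\pi$, whose reduced Koopman representation is the direct sum of the symmetric tensor powers of $\pi$, the natural choice of auxiliary $Y$.

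I expect the proof of this criterion to be the main obstacle. The double index set $\cF_n \times \cF_n$ and the appearance of $\cF_n^{-1}\cF_n$ through $s_n$ point to a second-moment computation: testing the $\cF_n$-averages against functions on $Y$ and squaring produces a sum over pairs $(g,h)$ with $h^{-1}g$ ranging over $\cF_n^{-1}\cF_n$. A Cauchy--Schwarz estimate exactly as in Lemmas \ref{lem.estim-fraction} and \ref{lem.est-maharam} then separates the Radon--Nikodym factor, bounded after integration by $\exp(\,\cdot\, s_n)$, from the representation-theoretic factor $|\langle \pi_Y(h^{-1}g)\xi_1,\xi_2\rangle|$, and \eqref{eq.general-weak-mixing-ass} forces the off-diagonal contribution to vanish. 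The delicate point, absent in the amenable setting, is that there is no pointwise ratio ergodic theorem to identify these averages with an invariant conditional expectation; following \cite{BKV19}, one must instead work throughout with measure preserving averages built from the weakly mixing pmp auxiliary and a variance bound, which is exactly why $\kappa > 1$ (rather than $\kappa = 1$) is needed: the passage to a Maharam extension, via Lemma \ref{lem.est-maharam}, inflates the integrated inverse Radon--Nikodym derivative by a factor $\exp((1+\delta)s_n)$ for small $\delta > 0$, which the slack $\kappa > 1$ absorbs.

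With the criterion established, weak mixing follows by an argument patterned on the proof of Theorem \ref{spectral criterion}. Given an arbitrary ergodic pmp action $G \actson (Z,\zeta)$, I would apply the criterion with $X = \widehat H \times Z$ carrying the nonsingular action $\widehat\alpha^t \times Z$ (whose Radon--Nikodym cocycle is that of $\widehat\alpha^t$ alone, since $Z$ is pmp) and with $Y = \widehat H$ carrying the pmp Gaussian action $\widehat\pi$, to conclude that the $G$-invariant functions of $(\widehat\alpha^t \times Z) \times \widehat\pi$ lie in $L^\infty(\widehat H \times Z)^{\widehat\alpha^t \times Z} \ot 1$. The rotation trick of Section \ref{Rotation trick} intertwines $\widehat\alpha \times \widehat\pi$ with $\widehat\alpha^p \times \widehat\alpha^q$, and, exactly as in the proof of Theorem \ref{spectral criterion}, the resulting invariance under the translations $\widehat j_\xi$ for $\xi$ in the rotated subspace $K$, together with ergodicity of the translation action \cite[Proposition 4.13]{AIM19}, forces the invariant functions to be constant. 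This gives ergodicity of $\widehat\alpha^t \times Z$ for every ergodic pmp $Z$, that is, weak mixing of $\widehat\alpha^t$.

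For stable type III$_1$ it remains, after applying the criterion to the Maharam extension of $\widehat\alpha^t \times Z$, to rule out dependence of the invariant functions on the $\R$-coordinate. Under assumption (1) I would argue exactly as in the proof of Theorem \ref{thm.criterion-nilpotent}: evanescence yields vectors $\xi_k$ and decreasing invariant subspaces $H_k$ with $\bigcap_k H_k = \{0\}$ reducing the cocycle, the criterion applied with $Y = \widehat{K_k}$ confines the invariants to $\cM(\xi_k + H_k) \ot L^\infty(Z)$, and since $\alpha$ has no fixed point one has $\|\xi_k\| \recht +\infty$, so $\bigcap_k (\xi_k + H_k) = \emptyset$ and Proposition \ref{prop.intersection spaces} forces the invariants into $L^\infty(Z)^G = \C 1$. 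Under assumption (2), where evanescence is unavailable, I would instead eliminate the $\R$-coordinate by the weak-limit method of Section \ref{sec.weak convergence techniques}: the density of the subspace $H_0$ of vectors whose inner products against $\{c_g \mid g \in \cF_n^{-1}\cF_n\}$ are uniformly bounded allows one, after passing to differences and subsequences as in the proofs of Theorems \ref{thm.translation action} and \ref{thm.non proper mixing direction}, to produce vectors $\zeta_j$ with $\zeta_j \recht 0$ weakly and $\|\zeta_j\| \recht r > 0$ along which Lemma \ref{weak convergence maharam} gives $J_{\zeta_j} \recht \id \ot \Phi_r$; any Maharam-invariant function is then fixed by $\id \ot \Phi_r$ and hence, by Lemma \ref{convolution constant}, independent of $\R$. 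In both cases, combined with the already established ergodicity of the base, this yields ergodicity of the Maharam extension of $\widehat\alpha^t \times Z$, that is, stable type III$_1$.
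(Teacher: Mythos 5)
Your treatment of weak mixing and of the evanescent case (assumption~1) is essentially the paper's own route: the same product-ergodicity criterion built from the measure preserving averages of \cite[Lemma 4.1]{BKV19} (this is Proposition \ref{prop.product-action-general} in the paper), the same Cauchy--Schwarz/second-moment estimate via Lemmas \ref{lem.estim-fraction} and \ref{lem.est-maharam} producing the pairs $(g,h)$ with $h^{-1}g \in \cF_n^{-1}\cF_n$, and the same rotation-trick endgame reducing to ergodicity of the translation action. One small correction: the slack $\kappa>1$ is consumed in \emph{two} places, not only in the Maharam inflation of Lemma \ref{lem.est-maharam} but already in the weak mixing step, where the rotation forces you to work with $\alh^t$ for $t=1/\cos\theta>1$ and the integrated inverse Radon--Nikodym derivative becomes $\exp(t^2 s_n)$; one needs $t^2\leq\kappa$.

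The genuine gap is in your handling of assumption~2. You propose to kill the $\R$-coordinate by producing vectors $\zeta_j\recht 0$ weakly with $\|\zeta_j\|\recht r>0$ and invoking $J_{\zeta_j}\recht\id\ot\Phi_r$ as in Section \ref{sec.weak convergence techniques}. But a function invariant under the Maharam extension of the $G$-action is fixed by $J_{tc_g}\circ(\pih(g)\ot\id)$ for $g\in G$; it is \emph{not} fixed by the translation Maharam extensions $J_\xi$ for $\xi\in H_0$, because the vectors of $H_0$ are not group elements and assumption~2 provides no sequence $g_n\in G$ with $\pi(g_n)\recht 0$ weakly and $\|c_{g_n}\|$ bounded (that is exactly the hypothesis of Theorem \ref{thm.non proper mixing direction}, which is absent here). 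So the identity $J_{\zeta_j}(a)=a$ that drives Lemmas \ref{weak convergence maharam} and \ref{convolution constant} is unavailable, and the argument does not start.

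What the paper actually does under assumption~2 is considerably more involved: it follows \cite[Lemma 6.5]{BKV19}. One first splits according to the period group $\per(F_1(x,\cdot,y))$; in the aperiodic case one builds, from the same $\cF_n$-averages, operators $\Gamma_{\rho,n}$ and $\Psi_{\rho,n}$ that compare $L_\xi(F_1)$ with convolutions of $F_1$ in the $\R$-direction by probability measures supported on $\{\langle\xi,c_{g^{-1}}\rangle \mid g\in\cF_n^{-1}\cF_n\}$ (this is where the uniform boundedness \eqref{eq.boundedness-inner-product} enters, to get uniform $L^1$-bounds and compact supports). The Choquet--Deny theorem then shows these measures converge to Dirac masses, yielding $F_1(x+\xi,t,y)=F_1(x,t+s(\xi),y)$. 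Note that this does \emph{not} say $F_1$ is independent of $t$; it only says translation by $\xi$ shifts the $\R$-coordinate. The contradiction is reached only at the very end, by analysing the closed subgroup $L\subset H\times\R$ of all such pairs $(\xi,s)$, showing $L=\{(\xi,kp-\langle\xi,\psi\rangle)\}$ for some $\psi\in H$, and deducing from $G$-invariance that $c_g=\psi-\pi(g)\psi$, contradicting the absence of a fixed point. This final cohomological step has no counterpart in your outline and cannot be bypassed.
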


When $\pi$ is a mixing representation, \eqref{eq.general-weak-mixing-ass} becomes $|\cF_n|^{-1} \, \exp(\kappa s_n) \recht 0$. Also note that in general
\begin{equation}\label{eq.factor-two}
\|c_{g^{-1}h}\|^2 = \|c_{g^{-1}} + \pi(g)^* c_h\|^2 \leq 2(\|c_g\|^2 + \|c_h\|^2) \; .
\end{equation}
Moreover, for the affine isometric actions coming from group actions on proper metric spaces of negative type, the function $g \mapsto \|c_g\|^2$ is a length function on $G$ and the factor $2$ can be removed from \eqref{eq.factor-two}. Together with a further improvement of the constants for the mixing case, we deduce below the following corollary from Theorem \ref{thm.general-type-criterion}.

\begin{corollary}\label{cor.general-type-criterion-mixing}
Let $\al : G \actson H : \al_g(\xi) = \pi(g) \xi + c_g$ be an isometric action of $G$ on the real Hilbert space $H$. Take $1 \leq M \leq 2$ such that $\|c_{gh}\|^2 \leq M(\|c_g\|^2 + \|c_h\|^2)$ for all $g,h \in G$. Assume that $\pi$ is mixing and assume that one of the assumptions 1 or 2 in Theorem \ref{thm.general-type-criterion} hold.

For every $t < \sqrt{\delta(\al)/M}$, the nonsingular Gaussian action $\alh^t$ is weakly mixing and of stable type III$_1$.
\end{corollary}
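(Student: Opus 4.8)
The plan is to apply Theorem \ref{thm.general-type-criterion} to the rescaled action $\al^t$, whose linear part is still the mixing representation $\pi$ and whose cocycle is $tc$, so that its relevant quantity is $s_n = t^2 \max\{\|c_g\|^2 : g \in \cF_n^{-1}\cF_n\}$. First I would record how mixing collapses the hypothesis \eqref{eq.general-weak-mixing-ass}. Fix $\xi_1,\xi_2$ in a total subset and $\eps > 0$; since $\pi$ is mixing, the set $S = \{k \in G : |\langle \pi(k)\xi_1,\xi_2\rangle| \geq \eps\}$ is finite. Writing $\langle \pi(g)\xi_1,\pi(h)\xi_2\rangle = \langle \pi(h^{-1}g)\xi_1,\xi_2\rangle$, a pair $(g,h) \in \cF_n \times \cF_n$ contributes to the numerator only when $h^{-1}g \in S$, so there are at most $|S|\,|\cF_n|$ such pairs. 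Hence the first factor of \eqref{eq.general-weak-mixing-ass} is $O(1/|\cF_n|)$, and the whole hypothesis is implied by $|\cF_n|^{-1}\exp(\kappa s_n) \to 0$.

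Next I would choose $\cF_n = \{g \in G : \|c_g\|^2 \leq R_n\}$, where $R_n \to +\infty$ is a sequence realizing the $\limsup$ defining $\delta(\al)$, so that $|\cF_n| \geq \exp((\delta(\al)-\eps)R_n)$ along this sequence. Using $\|c_{g^{-1}}\| = \|c_g\|$ together with the standing inequality $\|c_{ab}\|^2 \leq M(\|c_a\|^2 + \|c_b\|^2)$, every $g \in \cF_n^{-1}\cF_n$ satisfies $\|c_g\|^2 \leq 2MR_n$, hence $s_n \leq 2Mt^2 R_n$. Inserting this, the mixing condition reads $\exp(-\log|\cF_n| + 2M\kappa t^2 R_n) \to 0$, which along our sequence holds as soon as $2M\kappa t^2 < \delta(\al)$; letting $\kappa \downarrow 1$ this only yields $t < \sqrt{\delta(\al)/(2M)}$, a factor $\sqrt2$ short of the claimed threshold.

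The main obstacle is exactly to remove this factor $\sqrt2$, and this is where the ``improvement of the constants for the mixing case'' enters. The point is that the conservativeness cost in the proof of Theorem \ref{thm.general-type-criterion} is controlled by an integrated Radon--Nikodym estimate of the type in Lemma \ref{lem.estim-fraction}: for the Gaussian action $\alh^t$ and its Maharam extension one has $\int \om(g,\cdot)^{-1}\,\rd\mu = \exp(t^2\|c_g\|^2)$ (as in the computation preceding Theorem \ref{thm.criterion-nilpotent}, with Lemma \ref{lem.est-maharam} absorbing the $\rho$--perturbation into $\kappa > 1$), and the Cauchy--Schwarz step there produces a square root, i.e. an exponent $\tfrac{\kappa}{2}s_n$ rather than $\kappa s_n$. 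I expect the content of the mixing improvement to be precisely that the required condition can be sharpened to $|\cF_n|^{-1}\exp(\tfrac{\kappa}{2}s_n) \to 0$. Combined with $s_n \leq 2Mt^2 R_n$, the two factors of $2$ cancel: the condition becomes $\exp(-\log|\cF_n| + M\kappa t^2 R_n) \to 0$, which holds for our chosen $\cF_n$ exactly when $M\kappa t^2 < \delta(\al)$, covering all $t < \sqrt{\delta(\al)/M}$ as $\kappa \downarrow 1$. Theorem \ref{thm.general-type-criterion} then delivers weak mixing.

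Finally, for the stable type III$_1$ conclusion I would check that the chosen data still satisfy one of the two extra hypotheses of Theorem \ref{thm.general-type-criterion}, both of which are scale invariant. Evanescence of $\al$ (assumption~1) passes to $\al^t$ with the same decreasing family of invariant subspaces, and the subspace $H_0$ of \eqref{eq.boundedness-inner-product} is unchanged under $c \mapsto tc$ and depends only on the sets $\cF_n^{-1}\cF_n$, which for our cocycle balls do not depend on $t$. Hence whichever of assumptions~1 or~2 is granted in the corollary persists for $\al^t$ with the sequence $\cF_n$, and Theorem \ref{thm.general-type-criterion} upgrades weak mixing to stable type III$_1$ for all $t < \sqrt{\delta(\al)/M}$.
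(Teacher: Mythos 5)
Your setup is correct and matches the paper's: you rescale so that the statement becomes $\delta(\al)>M$ for $t=1$, you observe that mixing makes the sets $\Lambda(\xi_1,\xi_2,\eps)$ finite so that the pair-count in \eqref{eq.general-weak-mixing-ass} is $O(|\cF_n|)$, you choose $\cF_n$ as cocycle-balls of radius $r_n$ with $|\cF_n|^{-1}\exp(\kappa M r_n)\recht 0$, you bound $s_n\leq 2M r_n$, and you correctly identify that a direct application of Theorem \ref{thm.general-type-criterion} only reaches $t<\sqrt{\delta(\al)/(2M)}$. You even guess the right target: the condition must be sharpened to (effectively) $|\cF_n|^{-1}\exp(\tfrac{\kappa}{2}s_n)\recht 0$. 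But the mechanism you propose for this sharpening does not work. The Cauchy--Schwarz step in Lemma \ref{lem.estim-fraction} gives the bound $\bigl(\tfrac{|B|}{|A|}\exp(s_n)\bigr)^{1/2}$: the square root applies to the counting factor $|B|/|A|$ \emph{and} to the exponential simultaneously, so the requirement that this tend to zero is exactly $\tfrac{|B|}{|A|}\exp(s_n)\recht 0$ --- identical to the unimproved condition. A square root of a product cannot halve the exponent of one factor while leaving the other factor's exponent intact, which is what your claimed condition $|\cF_n|^{-1}\exp(\tfrac{\kappa}{2}s_n)\recht 0$ would amount to. As written, your argument proves the corollary only for $t<\sqrt{\delta(\al)/(2M)}$, i.e.\ it does not remove the factor $\sqrt2$ that is the entire point of the statement.

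The paper's actual mechanism is different and is where the real work lies. Using mixing one reduces to singletons $\Lambda$, and via the computation of \cite[Formula (4.5)]{BKV19} the quantity \eqref{eq.should-to-zero} is controlled by $\int \tfrac{1}{|\cF_n|}\sum_{g\in\cF_n}\bigl(\sum_{h\in\cF_n^{-1}g}\om_\rho(h,x,t)\bigr)^{-1}\,\rd\mu_\rho$. Since $e\in\cF_n^{-1}g$, the inner sum is at least $\om_\rho(e,x,t)=1$, so one may replace the exponent $-1$ by $-\al$ for any $0<\al<1$. Convexity of $s\mapsto s^{-\al}$ then yields the bound $|\cF_n|^{-(2+\al)}\sum_g\sum_h\int\om_\rho(h,\cdot)^{-\al}$, and the Gaussian identity $\int\om(h,\cdot)^{-\beta}\,\rd\mu=\exp\bigl(\tfrac12\beta(\beta+1)\|c_h\|^2\bigr)$ (together with Lemma \ref{lem.est-maharam} to absorb $\rho$) shows that the cost per unit of $\al$ is $\tfrac{\beta+1}{2}$, which tends to $\tfrac12$ as $\al\recht 0$ --- whereas at $\al=1$ (the Cauchy--Schwarz regime) it equals $1$. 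Letting $\al\recht 0$ gives the bound $\bigl(|\cF_n|^{-1}\exp(\kappa M r_n)\bigr)^{\al}\recht 0$, with the counting factor at full strength and the exponent halved; this is precisely the gain you need, but it comes from the limit $\al\recht 0^+$ and the quadratic dependence of $\log\int\om^{-\beta}$ on $\beta$, not from a square root. You would need to supply this (or an equivalent) argument to close the gap.
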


We use Corollary \ref{cor.general-type-criterion-mixing} to prove Theorem \ref{thm.actions on proper metric spaces} saying that the nonsingular Gaussian actions naturally associated with group actions on proper metric spaces of negative type are weakly mixing and of stable type~III$_1$.

Recall that a metric space $(X,d)$ is called \emph{proper} if every closed bounded subset is compact. Then its isometry group $\mathrm{Isom}(X,d)$ is a locally compact group for the topology of pointwise convergence on $X$. Suppose that $d : X \times X \rightarrow \R$ is \emph{conditionally of negative type}. This means that there exists an embedding $\iota : X \rightarrow H$ into a real Hilbert space $H$ such that $\| \iota(x)-\iota(y) \|^2=d(x,y)$ for all $x,y \in X$. If we require that the affine subspace spanned by $\iota(X)$ is dense in $H$, then this embedding is unique up to a unique affine isometry, i.e.\ if $\iota' : X \rightarrow H'$ is another such embedding, then there exists a unique affine isometry $V : H \rightarrow H'$ such that $V \circ \iota=\iota'$. Recall from the introduction the broad list of examples of proper metric spaces of negative type.

In particular, if we fix an embedding $\iota : X \rightarrow H$ as above, the group $\mathrm{Isom}(X,d)$ acts continuously by affine isometries on $H$. The continuous affine isometric action of $\mathrm{Isom}(X,d)$ on $H$ is proper and its linear part is mixing. These affine isometric actions coming from actions on proper metric spaces of negative type are precisely those for which $M = 1$ in Corollary \ref{cor.general-type-criterion-mixing}.

Before proving Theorem \ref{thm.general-type-criterion} and Corollary \ref{cor.general-type-criterion-mixing}, we already deduce Theorem \ref{thm.actions on proper metric spaces}.

\begin{proof}[{Proof of Theorem \ref{thm.actions on proper metric spaces}}]
Note that the action $\alpha$ is proper and that its linear part is mixing. Moreover, $\delta(\alpha)=\delta(G)$. By \cite[Theorem B and C]{AIM19}, there exists $t_c$ with
$$ \sqrt{2 \delta(G)} \leq t_c \leq  2 \sqrt{2 \delta(G)} $$
such that $\widehat{\alpha}^t$ is weakly mixing for all $t < t_c$ and dissipative for all $t > t_c$. By Theorem \ref{proper aperiodic}, $\widehat{\alpha}^t$ is of type $\III_1$ or of type $\III_0$ for all $t < t_c$.

Denote by $c$ the $1$-cocycle associated with $\alpha$ and a choice of base point $x_0 \in X$. By construction, $\|c_{gh}\|^2 \leq \|c_g\|^2 + \|c_h\|^2$ for all $g,h \in G$. Also, $2 \langle c_h,c_g \rangle = d(g\cdot x_0,x_0) + d(x_0,h \cdot x_0) - d(g \cdot x_0,h \cdot x_0)$, so that $0 \leq \langle c_h , c_g \rangle \leq \|c_h\|^2$ for all $g,h \in G$. Therefore, $g \mapsto |\langle \xi,c_g\rangle|$ is bounded whenever $\xi \in \lspan \{c_h \mid h \in G\}$. It is identically zero when $\xi$ is orthogonal to $\lspan \{c_h \mid h \in G\}$. So all hypotheses of Corollary \ref{cor.general-type-criterion-mixing} are satisfied, with $M = 1$, and we conclude that $\widehat{\alpha}^t$ is of stable type $\III_1$ for all $t < \sqrt{\delta(G)}$.
\end{proof}

In the first part of the proof of Theorem \ref{thm.general-type-criterion}, we show that $G \actson \Hh$ is weakly mixing. For this, we use the following sufficient criterion for the ergodicity of a diagonal action $G \actson X \times Y$.

\begin{proposition}\label{prop.product-action-general}
Let $G \actson (X,\mu)$ be a nonsingular action of a countable group on a standard probability space, with Radon-Nikodym cocycle $\om : G \times X \recht (0,+\infty)$. Let $G \actson (Y,\eta)$ be a pmp action with associated Koopman representation $\pi : G \recht \cU(L^2(Y,\eta)) : (\pi(g)\xi)(y) = \xi(g^{-1}\cdot y)$. Write
$$\Lambda(\xi_1,\xi_2,\eps) = \{g \in G \mid |\langle \pi(g) \xi_1,\xi_2\rangle| \geq \eps \} \; .$$
If $\cF_n \subset G$ is a sequence of finite subsets such that for all $\xi_1,\xi_2$ in a total subset of $L^2(Y) \ominus \C 1$ and all $\eps > 0$,
\begin{equation}\label{eq.general-control-weak-mixing}
\frac{1}{|\cF_n|} \sum_{g \in \cF_n} \frac{\sum_{h \in \cF_n^{-1} g \cap \Lambda(\xi_1,\xi_2,\eps)} \om(h,x)}{\sum_{h \in \cF_n^{-1} g} \om(h,x)} \;\; \recht 0 \quad\text{in $L^1(X,\mu)$,}
\end{equation}
then $L^\infty(X \times Y)^G = L^\infty(X)^G \ot 1$, where $G \actson X \times Y$ acts diagonally.
\end{proposition}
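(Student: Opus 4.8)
The plan is to follow the template of the proof of Proposition~\ref{prop.product-action} (direction $3\Rightarrow1$), but to replace the ratio ergodic theorem, which is unavailable for general countable groups, by an \emph{exact} measure-preservation property of a suitable double-averaging operator, in the spirit of the ergodic averages of \cite[Lemma 4.1]{BKV19}.

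\emph{Reduction.} Since $G\actson(Y,\eta)$ is pmp, the map $\id\ot E_\eta$ integrating out the $Y$-variable commutes with the diagonal action, so for any $\Phi\in L^\infty(X\times Y)^G$ the function $\Psi=\Phi-(\id\ot E_\eta)\Phi$ is again $G$-invariant, bounded, and satisfies $\int_Y\Psi(x,y)\,d\eta(y)=0$ a.e. It therefore suffices to show that such a $\Psi$ vanishes. For a finite set $S\subset G$ put $W_S(x)=\sum_{h\in S}\om(h,x)$ and define
\[
(Q_S\Psi)(x,y)=\frac{1}{W_S(x)}\sum_{h\in S}\om(h,x)\,\Psi(h\cdot x,h\cdot y),\qquad P_{\cF_n}=\frac{1}{|\cF_n|}\sum_{g\in\cF_n}Q_{\cF_n^{-1}g}\; .
\]
Because $\Psi$ is $G$-invariant and the weights $\om(h,x)/W_S(x)$ sum to $1$, we have $Q_S\Psi=\Psi$, hence $P_{\cF_n}\Psi=\Psi$. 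Writing $\psi_x=\Psi(x,\cdot)\in L^2(Y)\ominus\C1$ and integrating out $y$ (using that $\eta$ is $G$-invariant and $(\pi(h^{-1})\psi_{h\cdot x})(y)=\psi_{h\cdot x}(h\cdot y)$), this yields, for every $n$,
\[
\|\Psi\|_2^2=\langle P_{\cF_n}\Psi,\Psi\rangle=\int_X\frac{1}{|\cF_n|}\sum_{g\in\cF_n}\frac{1}{W_{\cF_n^{-1}g}(x)}\sum_{h\in\cF_n^{-1}g}\om(h,x)\,\langle\pi(h^{-1})\psi_{h\cdot x},\psi_x\rangle\,d\mu(x)=:\mathcal A_n(\Psi,\Psi)\; .
\]

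\emph{Key step.} The crux is that $P_{\cF_n}$ preserves $\mu$, i.e.\ $\int_X P_{\cF_n}\psi\,d\mu=\int_X\psi\,d\mu$. Using $\int_X\om(h,x)G(h\cdot x)\,d\mu=\int_X G\,d\mu$ together with the cocycle identity $W_S(h^{-1}\cdot x)=\om(h^{-1},x)^{-1}W_{Sh^{-1}}(x)$, each summand transforms into $\int_X \psi(x)\,\om(h^{-1},x)\,W_{\cF_n^{-1}gh^{-1}}(x)^{-1}\,d\mu$; the reindexing $f=gh^{-1}$ runs over $\cF_n$ as $h$ runs over $\cF_n^{-1}g$ (with $h^{-1}=g^{-1}f$), and $\sum_{g\in\cF_n}\om(g^{-1}f,x)=W_{\cF_n^{-1}f}(x)$, so the total weight telescopes to $1$. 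From this, two applications of Cauchy--Schwarz (over the probability weights $\om(h,x)/W_{\cF_n^{-1}g}(x)$ and over $x$) give the uniform bound $|\mathcal A_n(\Psi',\Psi'')|\le\|\Psi'\|_2\,\|\Psi''\|_2$ for the obvious sesquilinear extension of $\mathcal A_n$. This is exactly the role played by the ratio ergodic theorem in Proposition~\ref{prop.product-action}.

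\emph{Approximation and weak mixing.} I then approximate $\Psi$ in $L^2$ by $\Psi_0=\sum_{j=1}^N F_j\ot\xi_j$ with $F_j\in L^\infty(X)$, $\xi_j\in L^\infty(Y)\ominus\C1$ and $\|\Psi-\Psi_0\|_2\le\delta$; the uniform bound yields $\|\Psi\|_2^2=\mathcal A_n(\Psi_0,\Psi_0)+O(\delta)$, uniformly in $n$. For the main term one has $\langle\pi(h^{-1})\psi^0_{h\cdot x},\psi^0_x\rangle=\sum_{j,k}F_j(h\cdot x)\overline{F_k(x)}\,\langle\pi(h^{-1})\xi_j,\xi_k\rangle$, and since $|\langle\pi(h^{-1})\xi_j,\xi_k\rangle|=|\langle\pi(h)\xi_k,\xi_j\rangle|$ one splits each inner sum at the threshold $\eps$: the part with $h\notin\Lambda(\xi_k,\xi_j,\eps)$ contributes at most $\eps\sum_{j,k}\|F_j\|_\infty\|F_k\|_\infty$ (using $\frac1{|\cF_n|}\sum_g W_{\cF_n^{-1}g}(x)^{-1}\sum_{h\in\cF_n^{-1}g}\om(h,x)=1$), whereas the part with $h\in\Lambda(\xi_k,\xi_j,\eps)$ is precisely the quantity controlled by \eqref{eq.general-control-weak-mixing} and tends to $0$ in $L^1(X)$ as $n\recht\infty$. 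Hence $\limsup_n|\mathcal A_n(\Psi_0,\Psi_0)|\le\eps\sum_{j,k}\|F_j\|_\infty\|F_k\|_\infty$; letting $\eps\recht0$ gives $\lim_n\mathcal A_n(\Psi_0,\Psi_0)=0$, so $\|\Psi\|_2^2\le O(\delta)$, and $\delta\recht0$ forces $\Psi=0$. Thus $L^\infty(X\times Y)^G=L^\infty(X)^G\ot1$.

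The main obstacle I expect is establishing the exact measure-preservation of $P_{\cF_n}$, equivalently the uniform bound $|\mathcal A_n(\Psi',\Psi'')|\le\|\Psi'\|_2\|\Psi''\|_2$: this is what makes the \emph{double} average (a Cesàro average over $g\in\cF_n$ of the ratio averages over $\cF_n^{-1}g$) the correct substitute for an ergodic theorem, whereas the individual operators $Q_{\cF_n^{-1}g}$ are not measure-preserving. A minor point to verify is that it suffices to check \eqref{eq.general-control-weak-mixing} for $\xi_j,\xi_k$ in a total subset, which follows exactly as in \eqref{eq.sufficient check total}.
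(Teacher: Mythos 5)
Your proof is correct and follows essentially the same route as the paper: both arguments rest on the measure-preserving double-averaged ratio operators of \cite[Lemma 4.1]{BKV19} (your $P_{\cF_n}$ are exactly the paper's $\theta_n$), the fact that they fix $G$-invariant functions, their contractivity to reduce to elementary tensors $F \ot \xi$ with $\xi \in L^\infty(Y) \ominus \C 1$, and the splitting at the threshold $\eps$ via $\Lambda(\xi_1,\xi_2,\eps)$ combined with \eqref{eq.general-control-weak-mixing}. The only differences are presentational: you work with the $L^2$ quadratic form $\langle P_{\cF_n}\Psi,\Psi\rangle$ and rederive the measure preservation by the telescoping reindexation, whereas the paper pairs $\theta_n(\Ftil)$ in $L^1$ against test functions $\xi_1 \in L^\infty(Y)$ and cites \cite[Lemma 4.1]{BKV19} directly.
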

\begin{proof}
By \cite[Lemma 4.1]{BKV19}, we can define the measure preserving positive maps
$$\theta_n : L^\infty(X \times Y) \recht L^\infty(X \times Y) : \theta_n(F)(x,y) = \frac{1}{|\cF_n|} \sum_{g \in \cF_n} \frac{\sum_{h \in \cF_n^{-1}g} \om(h,x) F(h\cdot x,h \cdot y)}{\sum_{h \in \cF_n^{-1} g} \om(h,x)} \; .$$
Fix $H \in L^\infty(X \times Y) \ominus (L^\infty(X) \ot 1)$ and $\xi_1 \in L^\infty(Y)$. Define $H_n(x) = \int_Y \theta_n(H)(x,y) \xi_1(y) \, \rd\eta(y)$. We claim that $\|H_n\|_1 \recht 0$. Since $\|\theta_n(F)\|_1 \leq \|F\|_1$ for all $F \in L^\infty(X \times Y)$ and since $\|H_n\|_1 \leq \|\xi_1\|_\infty \, \|\theta_n(H)\|_1$, it suffices to prove this claim for $H = F \ot \xi_2$ where $F \in L^\infty(X)$ with $\|F\|_\infty \leq 1$ and $\xi_2 \in L^\infty(Y) \ominus \C 1$. But in that case, writing $\xi_3(y) = \xi_1(y) - \int_Y \xi_1(z) \, \rd\eta(z)$,
$$|H_n(x)| \leq \frac{1}{|\cF_n|} \sum_{g \in \cF_n} \frac{\sum_{h \in \cF_n^{-1} g} |\langle \pi(h)\xi_3,\xi_2 \rangle| \, \om(h,x)}{\sum_{h \in \cF_n^{-1} g} \om(h,x)} \leq \eps + \frac{1}{|\cF_n|} \sum_{g \in \cF_n} \frac{\sum_{h \in \cF_n^{-1} g \cap \Lambda(\xi_3,\xi_2,\eps)} \om(h,x)}{\sum_{h \in \cF_n^{-1} g} \om(h,x)} \; ,$$
for all $\eps > 0$, so that the claim follows from the assumption \eqref{eq.general-control-weak-mixing}.

Take $F \in L^\infty(X \times Y)^G$. Since $\eta$ is a $G$-invariant measure, the function $\Ftil = F - (\id \ot \eta)(F) \ot 1$ is $G$-invariant and belongs to $L^\infty(X \times Y) \ominus (L^\infty(X) \ot 1)$. Choose an arbitrary $\xi_1 \in L^\infty(Y)$ and define $H_n(x) = \int_Y \theta_n(\Ftil)(x,y) \xi_1(y) \, \rd\eta(y)$. By the claim above, $\|H_n\|_1 \recht 0$. Since $\Ftil$ is $G$-invariant, we have $H_n(x) = \int_Y \Ftil(x,y) \xi_1(y) \, \rd\eta(y)$. So this expression is zero for every $\xi_1 \in L^\infty(Y)$. It follows that $\Ftil = 0$ and the proposition is proved.
\end{proof}

\begin{proof}[{Proof of Theorem \ref{thm.general-type-criterion}}]
Choose an ergodic pmp action $G \actson^\beta (Y,\eta)$. To prove that $G \actson \Hh \times Y$ is ergodic, we use the rotation method of \cite{AIM19}; see Section \ref{Rotation trick}. Take $\theta > 0$ small enough such that $\cos^2 \theta \geq \kappa^{-1}$. Put $t = 1/\cos \theta$ and $s = - \sin \theta / \cos \theta$. The rotation
$$R_\theta : H \times H \recht H \times H : R_\theta(\xi_1,\xi_2) = (\xi_1 \cos \theta + \xi_2 \sin \theta , - \xi_1 \sin \theta + \xi_2 \cos \theta)$$
induces the measure preserving transformation $\Rh_\theta$ of $\Hh \times \Hh$ satisfying
$$\Rh_\theta \circ (\alh^t_g \times \pih_g) = (\alh_g \times \alh^s_g) \circ \Rh_\theta \quad\text{for all $g \in G$.}$$

Let $F \in L^\infty(\Hh \times Y)^G$. We have to prove that $F$ is essentially constant. Define $F_1 \in L^\infty(\Hh \times \Hh \times Y)$ given by $F_1(x_1,x_2,y) = F(x_1,y)$. Define $F_2 = F_1 \circ (\Rh_\theta \times \id)$. Then $F_2$ is invariant under the action $\alh^t_g \times \pih_g \times \beta_g$. We apply Proposition \ref{prop.product-action-general} to the nonsingular action $\alh^t_g \times \beta_g$ and the pmp action $\pih_g$. Denote by $\om_t : G \times \Hh \recht (0,+\infty)$ the Radon-Nikodym cocycle of $\alh^t$. The Koopman representation of $\pih$ on $L^2(\Hh,\mu) \ominus \C 1$ is contained in the direct sum of the tensor powers of the representation $\pi$. To check that \eqref{eq.general-control-weak-mixing} holds, we define $\Lambda(\xi_1,\xi_2,\eps) = \{g \in G \mid |\langle \pi(g)\xi_1,\xi_2\rangle| \geq \eps\}$ and make the following estimate, using Lemma \ref{lem.estim-fraction}.
\begin{equation}\label{eq.to-check-conditions}
\begin{split}
\int_{\Hh} \frac{1}{|\cF_n|} \sum_{g \in \cF_n} &\ \frac{\sum_{h \in \cF_n^{-1} g \cap \Lambda(\xi_1,\xi_2,\eps)} \om_t(h,x)}{\sum_{h \in \cF_n^{-1} g} \om_t(h,x)} \, \rd\mu(x) \\ & \leq
\frac{1}{|\cF_n|} \sum_{g \in \cF_n}  \Bigl( \frac{|\cF_n^{-1} g \cap \Lambda(\xi_1,\xi_2,\eps)|}{|\cF_n|} \, \exp(t^2 s_n) \Bigr)^{1/2} \\
& \leq \Bigl( \frac{1}{|\cF_n|} \sum_{g \in \cF_n} \frac{|\cF_n^{-1} g \cap \Lambda(\xi_1,\xi_2,\eps)|}{|\cF_n|} \, \exp(t^2 s_n) \Bigr)^{1/2} \\
& = \Bigl( \frac{|\{(g,h) \in \cF_n \times \cF_n \mid |\langle \pi(g) \xi_1,\pi(h)\xi_2\rangle| \geq \eps\}|}{|\cF_n|^2} \, \exp(t^2 s_n) \Bigr)^{1/2} \; .
\end{split}
\end{equation}
Since $t^2 \leq \kappa$, this expression tends to zero. So Proposition \ref{prop.product-action-general} can be applied and we conclude that $F_2$ does not depend on the second variable. This means that $F_2$ is invariant under translation (in $\Hh \times \Hh$) by vectors of the form $(0,\xi)$ for all $\xi \in H$. Hence, $F_1$ is invariant under translation by $(-\sin \theta \xi,\cos \theta \xi)$ for all $\xi \in H$. So, $F$ is invariant under translation by all $\xi \in H$. Therefore, $F$ only depends on the $Y$-variable. Since $G \actson (Y,\eta)$ is ergodic, it follows that $F$ is essentially constant.

Under the additional assumption that $\al$ is evanescent, we proceed in precisely the same way as in the proof of Theorem \ref{thm.criterion-nilpotent}, but using Proposition \ref{prop.product-action-general} instead of Lemma \ref{lem.product-action-weaker}. By the estimate in \eqref{eq.to-check-conditions}, Proposition \ref{prop.product-action-general} is indeed applicable.

For the rest of the proof, we assume that the subspace $H_0 \subset H$ of vectors $\xi$ satisfying \eqref{eq.boundedness-inner-product} is dense. We have to prove that
$$G \actson \Hh \times \R \times Y : g \cdot (x,t,y) = (\pih(g)x + c_g , t - \|c_g\|^2/2 + \langle x, c_{g^{-1}} \rangle, g \cdot y)$$
is ergodic. We closely follow the proof of \cite[Lemma 6.5]{BKV19}. Let $F_1 \in L^\infty(\Hh \times \R \times Y)$ be a $G$-invariant function. We have to prove that $F_1$ is essentially constant.

For every $F$ in $L^\infty(\R)$, denote by $\per(F)$ the closed subgroup of $\R$ consisting of all $p \in \R$ such that $F(t+p) = F(t)$ for a.e.\ $t \in \R$. Since $G \actson \Hh \times Y$ is ergodic, it follows from \cite[Lemma 6.6]{BKV19} that we are in precisely one of the following situations.
\begin{enumlist}
\item For a.e.\ $(x,y) \in \Hh \times Y$, we have that $\per(F_1(x,\cdot,y)) = \R$.
\item For a.e.\ $(x,y) \in \Hh \times Y$, we have that $\per(F_1(x,\cdot,y)) = \{0\}$.
\item There exists a $p > 0$ such that for a.e.\ $(x,y) \in \Hh \times Y$, we have that $\per(F_1(x,\cdot,y)) = p \Z$.
\end{enumlist}
We have to prove that 1 holds, because this means that $F_1$ does not depend on the $\R$-variable and hence, $F_1$ is constant a.e.\ by the ergodicity of $G \actson \Hh \times Y$.

Assume that 2 holds. Fix $\xi \in H_0$. We prove that there exists an $s \in \R$ such that $F(x + \xi,t,y) = F(x,t+s,y)$ for a.e.\ $(x,t,y) \in \Hh \times \R \times Y$.

For every $\rho > 0$, denote by $\mu_\rho$ the probability measure on $\Hh \times \R$ given by the product of $\mu$ and $(\rho/2) \exp(-\rho|t|) \rd t$. Denote by $\om_\rho : G \times \Hh \times \R \recht (0,+\infty)$ the Radon-Nikodym cocycle for the Maharam extension $G \actson \Hh \times \R$ w.r.t.\ the probability measure $\mu_\rho$. Define, for every $n \in \N$, $\rho > 0$ and $(x,t) \in \Hh \times \R$, the finitely supported probability measure $p_{\rho,n}(x,t)$ on $G$ given by
$$p_{\rho,n}(x,t) = \frac{1}{|\cF_n|} \sum_{g \in \cF_n} \sum_{h \in \cF_n^{-1}g} \frac{\om_\rho(h,x,t)}{\sum_{k \in \cF_n^{-1} g} \om_\rho(k,x,t)} \; \delta_h \; .$$
Note that $p_{\rho,n}(x,t)$ is supported on $\cF_n^{-1} \cF_n$. Fix $\rho_0 > 0$ small enough such that $(1+\rho_0/2)(1+\rho_0) \leq \kappa$. Denote $\eta_\rho = \mu_\rho \times \eta$.

By \cite[Lemma 4.1]{BKV19}, the positive maps
$$
\Theta_{\rho,n} : L^\infty(\Hh \times \R \times Y) \recht L^\infty(\Hh \times \R \times Y) : \Theta_{\rho,n}(F)(x,t,y) = \sum_{g \in G} p_{\rho,n}(x,t)(g) \, F(g \cdot (x,t,y))
$$
satisfy $\|\Theta_{\rho,n}(F)\|_{1,\eta_\rho} \leq \|F\|_{1,\eta_\rho}$ for all $F \in L^\infty(\Hh \times \R \times Y)$. Given our fixed $\xi \in H_0$, we define the translation operator
$$L_\xi : L^\infty(\Hh \times \R \times Y) \recht L^\infty(\Hh \times \R \times Y) : L_\xi(F)(x,t,y) = F(x+\xi,t,y) \; .$$
We also define the positive maps
\begin{align*}
\Gamma_{\rho,n} :\ & L^\infty(\Hh \times \R \times Y) \recht L^\infty(\Hh \times \R \times Y) : \\ &\Gamma_{\rho,n}(F)(x,t,y) = \sum_{g \in G} p_{\rho,n}(x+\xi,t)(g) \, F(g \cdot (x,t + \langle \xi,c_{g^{-1}}\rangle,y)) \; .
\end{align*}
We claim that the following two properties hold, which also motivates why we introduce the maps $\Gamma_{\rho,n}$.
\begin{enumlist}
\item There exists a $C > 0$ such that $\|\Gamma_{\rho,n}(F)\|_{1,\eta_\rho} \leq C \, \|F\|_{1,\eta_\rho}$ for all $F \in L^\infty(\Hh \times \R \times Y)$, $0<\rho<\rho_0$, $n \in \N$.
\item For all $0<\rho<\rho_0$ and all $F \in L^\infty(\Hh \times \R \times Y)$, we have that $\|L_\xi(\Theta_{\rho,n}(F)) - \Gamma_{\rho,n}(F)\|_{1,\eta_\rho} \recht 0$ as $n \recht +\infty$.
\end{enumlist}
To prove 1, we use the following estimates. First note that
\begin{equation}\label{eq.formula-om-rho}
\om_\rho(g,x,t) = \exp\bigl( - (1/2) \|c_g\|^2 + \langle x,c_{g^{-1}} \rangle \bigr) \; \exp\bigl( - \rho |t - (1/2) \|c_g\|^2 + \langle x,c_{g^{-1}} \rangle| + \rho |t|\bigr) \; .
\end{equation}
By the boundedness assumption \eqref{eq.boundedness-inner-product}, we find a constant $C_1 > 0$ such that for all $0<\rho<\rho_0$, $g \in \bigcup_n \cF_n^{-1} \cF_n$, $x \in \Hh$, $t \in \R$,
$$C_1^{-1/2} \; \om_\rho(g,x,t) \leq \om_\rho(g,x+\xi,t) \leq C_1^{1/2} \; \om_\rho(g,x,t) \; ,$$
implying that
\begin{equation}\label{eq.estim-a}
p_{\rho,n}(x+\xi,t)(g) \leq C_1 \, p_{\rho,n}(x,t) \; .
\end{equation}
Note that it also follows from \eqref{eq.formula-om-rho} that
\begin{equation}\label{eq.estim-b}
\exp(-2 \rho |t_1-t_2|) \leq \frac{p_{\rho,n}(x,t_1)(g)}{p_{\rho,n}(x,t_2)(g)} \leq \exp(2\rho |t_1-t_2|)
\end{equation}
for all $t_1,t_2 \in \R$, $\rho > 0$, $g \in G$ and $x \in \Hh$. Take $C_2 > 0$ such that $\exp(\rho_0 |\langle \xi,c_{g^{-1}}\rangle|) \leq C_2$ for all $g \in \bigcup_n \cF_n^{-1} \cF_n$. Define $C = C_1 C_2^3$. Making the change of variables $t \mapsto t-\langle \xi,c_{g^{-1}}\rangle$ and using \eqref{eq.estim-a} and \eqref{eq.estim-b}, we find that
\begin{align*}
\|\Gamma_{\rho,n}(F)\|_{1,\mu_\rho} &\leq \sum_{g \in \cF_n^{-1}\cF_n} p_{\rho,n}(x+\xi,t-\langle \xi,c_{g^{-1}}\rangle)(g) \, \exp(\rho |\langle \xi,c_{g^{-1}}\rangle|) \\ & \hspace{5cm} \int_{\Hh \times \R \times Y} |F(g \cdot (x,t,y))| \, \rd\eta_\rho(x,t,y) \\
& \leq C_1 C_2^3 \, \sum_{g \in \cF_n^{-1}\cF_n} p_{\rho,n}(x,t)(g) \, \int_{\Hh \times \R \times Y} |F(g \cdot (x,t,y))| \, \rd\eta_\rho(x,t,y) \\
& = C \, \|\Theta_{\rho,n}(|F|)\|_{1,\eta_\rho} \leq C \, \|F\|_{1,\eta_\rho} \; .
\end{align*}
So statement 1 in the claim above is proven.

To prove statement 2 in the claim, fix $0 < \rho < \rho_0$. Note that for all $F \in L^\infty(\Hh)$, the Cauchy-Schwarz inequality implies that
\begin{align*}
\|L_\xi(F)\|_1 & = \int_{\Hh} |F(x+\xi)| \, \rd\mu(x) = \int_{\Hh} |F(x)| \, \exp(-(1/2)\|\xi\|^2 + \langle x,\xi\rangle) \, \rd\mu(x) \\
& \leq \exp(\|\xi\|^2 / 2) \, \|F\|_2 \leq \exp(\|\xi\|^2 / 2) \, \sqrt{\|F\|_\infty \, \|F\|_1} \; .
\end{align*}
It then follows that $L_\xi$ is uniformly continuous (for $\|\cdot\|_{1,\eta_\rho}$) on the $\|\cdot\|_\infty$-bounded subsets of $L^\infty(\Hh \times \R \times Y)$. Since we also have uniform bounds for $\Theta_{\rho,n}$ and $\Gamma_{\rho,n}$, it suffices to prove statement~2 for all $F$ in the dense $*$-algebra of functions $F \in L^\infty(\Hh \times \R \times Y)$ of the following form: $F(x,t,y) = F_0(\Ph_K(x),t,y)$, where $K \subset H$ is a finite dimensional subspace, $\Ph_K : \Hh \recht \Kh = K$ is the induced measure preserving factor map and $F_0$ is bounded and uniformly continuous in the first variable. We may further assume that $\|F_0\|_\infty \leq 1$.

Choose $\eps > 0$. Take $\delta > 0$ such that $|F_0(x_1,t,y) - F_0(x_2,t,y)| \leq \eps$ for all $t \in \R$, $y \in Y$ and $x_1,x_2 \in K$ satisfying $\|x_1 - x_2\| \leq \delta$. Define
$$\Lambda = \{g \in G \mid \|P_K(\pi(g)\xi_0)\| \geq \delta \} \; .$$
Write $\Lambda(\xi_1,\xi_2,\delta_0) = \{g \in G \mid |\langle \pi(g)\xi_1,\xi_2\rangle| \geq \delta_0\}$. The same computation as in \eqref{eq.to-check-conditions}, using Lemma \ref{lem.est-maharam}, implies that
\begin{equation}\label{eq.should-to-zero}
\int_{\Hh \times \R} \frac{1}{|\cF_n|} \sum_{g \in \cF_n}  \frac{\sum_{h \in \cF_n^{-1} g \cap \Lambda(\xi_1,\xi_2,\delta_0)} \om_\rho(h,x,t)}{\sum_{h \in \cF_n^{-1} g} \om_\rho(h,x,t)} \, \rd\mu_\rho(x,t) \recht 0
\end{equation}
for all $\xi_1,\xi_2 \in H$ and $\delta_0 > 0$. Let $e_1,\ldots,e_l$ be an orthonormal basis of $K$. Then, $\Lambda \subset \bigcup_{i=1}^l \Lambda(\xi_0,e_i,\delta/\sqrt{l})$.
Using \eqref{eq.estim-a}, we find that
$$\sum_{g \in \Lambda} \int_{\Hh \times \R} p_{\rho,n}(x,t)(g) \, \rd\mu_\rho(x,t) \recht 0 \quad\text{and}\quad \sum_{g \in \Lambda} \int_{\Hh \times \R} p_{\rho,n}(x+\xi,t)(g) \, \rd\mu_\rho(x,t) \recht 0$$
when $n \recht +\infty$. Since
$$F(g \cdot (x+\xi,t,y)) = F_0(\Ph_K(\pih(g)x) + P_K(c_g) + P_K(\pi(g)\xi), t - (1/2) \|c_g\|^2 + \langle x + \xi,c_{g^{-1}}\rangle, g \cdot y) \; ,$$
it follows that for all $g \in G \setminus \Lambda$ and all $(x,t,y) \in \Hh \times \R \times Y$,
$$|F(g \cdot (x+\xi,t,y)) - F(g \cdot (x,t+ \langle \xi,c_{g^{-1}}\rangle,y))| \leq \eps \; .$$
We conclude that $\limsup_n \|L_\xi(\Theta_{\rho,n}(F)) - \Gamma_{\rho,n}(F)\|_{1,\eta_\rho} \leq \eps$, for all $\eps > 0$. So, statement~2 in the claim above is proven.

We now apply this to our given $G$-invariant function $F_1 \in L^\infty(\Hh \times \R \times Y)$. Defining the positive maps
\begin{align*}
\Psi_{\rho,n} :\ & L^\infty(\Hh \times \R \times Y) \recht L^\infty(\Hh \times \R \times Y) : \\ &\Psi_{\rho,n}(F)(x,t,y) = \sum_{g \in G} p_{\rho,n}(x+\xi,t)(g) \, F(x,t + \langle \xi,c_{g^{-1}}\rangle,y) \; ,
\end{align*}
we conclude that for every $0 < \rho < \rho_0$, we have $\|L_\xi(F_1) - \Psi_{\rho,n}(F_1)\|_{1,\eta_\rho} \recht 0$. So, for every $0 < \rho < \rho_0$, there exists a sequence $n_k \recht +\infty$ such that $\Psi_{\rho,n}(F_1) \recht L_\xi(F_1)$ almost everywhere. Fixing a sequence $0 < \rho_k < \rho_0$ that converges to zero, we can then find $n_k \recht +\infty$ such that $\Psi_{\rho_k,n_k}(F_1) \recht L_\xi(F_1)$ almost everywhere.

It follows from \eqref{eq.estim-b} that for all $(x,t) \in \Hh \times \R$,
$$\sum_{g \in G} |p_{\rho_k,n_k}(x+\xi,t)(g) - p_{\rho_k,n_k}(x+\xi,0)(g)| \recht 0 \quad\text{when $k \recht +\infty$.}$$
Define the following maps $\zeta_k$ from $\Hh \times H_0$ to the space of probability measures on $\R$.
\begin{equation}\label{eq.def-zeta-k}
\zeta_k(x,\xi) = \sum_{g \in G} p_{\rho_k,n_k}(x + \xi,0)(g) \, \delta_{\langle \xi,c_{g^{-1}}\rangle} \; .
\end{equation}
For any probability measure $\zeta$ on $\R$ and function $F \in L^\infty(\R)$, we denote by $\zeta * F$ the convolution product. We have proved that for all $\xi \in H_0$ and for a.e.\ $(x,y) \in \Hh \times Y$
\begin{equation}\label{eq.conv-limit}
\zeta_k(x,\xi) * F_1(x,\cdot,y) \recht F_1(x + \xi,\cdot,y) \quad\text{a.e.}
\end{equation}
We claim that there is a measurable map $s : \Hh \times H_0 \recht \R$ such that for all $\xi \in H_0$ and for a.e.\ $x \in \Hh$, the probability measures $\zeta_k(x,\xi)$ converge weakly to the Dirac measure in $s(x,\xi)$. To prove this claim, fix $\xi \in H_0$. Fix $x \in \Hh$ such that for a.e.\ $y \in Y$, we have that \eqref{eq.conv-limit} holds for $\xi$, $(x,y)$ and for $-\xi$, $(x+\xi,y)$, and that $\per(F_1(x,\cdot,y)) = \{0\}$. The set of $x \in \Hh$ satisfying all these properties has a complement of measure zero. By the boundedness assumption \eqref{eq.boundedness-inner-product}, there exists a $C_3 > 0$ such that $\zeta_k(x,\xi)$ and $\zeta_k(x+\xi,-\xi)$ are supported on $[-C_1,C_1]$ for all $k$. Let $\zeta_0$ be any weak limit point of the sequence $\zeta_k(x,\xi)$ and let $\zeta_1$ be any weak limit point of the sequence $\zeta_k(x+\xi,-\xi)$. By \eqref{eq.conv-limit}, we have that
$$\zeta_0 * F_1(x,\cdot,y) = F_1(x+\xi,\cdot,y) \quad\text{and}\quad \zeta_1 * F_1(x+\xi,\cdot,y) = F_1(x,\cdot,y) \; .$$
It follows that $(\zeta_1 * \zeta_0) * F_1(x,\cdot,y) = F_1(x,\cdot,y)$. Since $\per(F_1(x,\cdot,y)) = \{0\}$, we conclude from the Choquet-Deny theorem (see \cite[Th\'{e}or\`{e}me 1]{CD60}) that $\zeta_1 * \zeta_0$ is the Dirac measure in $0$. Therefore, $\zeta_0$ is the Dirac measure in a point $s \in \R$. It follows that $F_1(x+\xi,t,y) = F_1(x,t+s,y)$ for a.e.\ $t \in \R$. Since $\per(F_1(x,\cdot,y)) = \{0\}$, there is at most one $s \in \R$ with this property. So, we have proven that $\zeta_k(x,\xi) \recht \delta_s$ weakly. The claim then follows.

From \eqref{eq.estim-a} and \eqref{eq.def-zeta-k}, we get that for every $\xi' \in H_0$, there exists a $C' > 0$ such that $\zeta_k(x+\xi',\xi) \leq C' \, \zeta_k(x,\xi)$ for all $x \in \Hh$, $\xi \in H_0$ and $k \in \N$. It follows that $\delta_{s(x+\xi',\xi)} \leq C' \, \delta_{s(x,\xi)}$ for a.e.\ $x \in \Hh$. We conclude that $s(x+\xi',\xi) = s(x,\xi)$ a.e. Since $H_0$ is dense in $H$, the translation action of $H_0$ on $\Hh$ is ergodic and the map $s$ therefore does not depend on the $x$-variable. We have proved that for every $\xi \in H_0$, there exists an $s \in \R$ such that $F_1(x+\xi,t,y) = F_1(x,t+s,y)$ for a.e.\ $(x,t,y) \in \Hh \times \R \times Y$. We have reached this conclusion under the assumption that $\per(F_1(x,\cdot,y)) = \{0\}$ for a.e.\ $(x,y) \in \Hh \times Y$.

We now prove that the same conclusion holds when $p > 0$ is such that $\per(F_1(x,\cdot,y)) = p \Z$ for a.e.\ $(x,y) \in \Hh \times Y$. In that case, we can view $F_1$ as a $G$-invariant element of $L^\infty(\Hh \times \R/p\Z \times Y)$ with the property that $\per(F_1(x,\cdot,y)) = \{0\} \subset \R/p\Z$ for a.e.\ $(x,y) \in \Hh \times Y$. The same argument as above gives us for every $\xi \in H_0$ an element $s \in \R/p\Z$ such that $F_1(x+\xi,t,y) = F_1(x,t+s,y)$ for a.e.\ $(x,t,y) \in \Hh \times \R/p\Z \times Y$. Choosing a lift of $s$ to an element of $\R$, we have reached exactly the same conclusion as in the previous paragraph.

Define the closed (additive) subgroup $L \subset H \times \R$ consisting of all vectors $(\xi,s)$ with the property that $F_1(x+\xi,t+s,y) = F_1(x,t,y)$ for a.e.\ $(x,t,y) \in \Hh \times \R \times Y$. Then the image of $L$ under the projection $H \times \R \recht H$ onto the first coordinate contains $H_0$ and $L \cap (\{0\} \times \R) = \{0\} \times p \Z$ for some $p \geq 0$.

Choose a sequence of vectors $\xi_n \in H_0$ such that $\lspan \{\xi_n \mid n \in \N\}$ is dense in $H$. Fix $n \in \N$. Then, $L_n := L \cap (\R \xi_n \times \R)$ is a closed subgroup of $\R \xi_n \times \R$ with the following properties: the projection onto $\R \xi_n$ is surjective and the intersection with $\{0\} \times \R$ equals $\{0\} \times p \Z$. There thus exists $a_n \in \R$ such that $L_n = \{(\lambda \xi_n, \lambda a_n + k p) \mid \lambda \in \R, k \in \Z\}$. We conclude that $\R (\xi_n,a_n) \subset L$ for all $n$. Define $L_0 \subset L$ as the closed subgroup generated by all $\R(\xi_n,a_n)$, $n \in \N$. Then, $L_0$ is a closed subspace of $H \times \R$. Since $L_0 \cap (\{0\} \times \R) \subset \{0\} \times p \Z$, we have that $L_0 \neq H \times \R$. By construction, the image of $L_0$ under the projection $H \times \R \recht H$ is dense. We conclude that $L_0$ is the orthogonal complement of $(\psi,1)$ for some $\psi \in H$. This means that $(\xi,-\langle \xi,\psi\rangle) \in L$ for all $\xi \in H$. Since $L \cap (\{0\} \times \R) = \{0\} \times p \Z$, it follows that
\begin{equation}\label{eq.formula-L}
L = \{(\xi,k p - \langle \xi,\psi\rangle) \mid \xi \in H, k \in \Z\} \; .
\end{equation}
For all $(\xi,s) \in H \times \R$, denote by $T_{(\xi,s)}$ the translation operator $T_{(\xi,s)}(x,t,y) = (x+\xi,t+s,y)$. Since
$$g \cdot (T_{(\xi,s)}(g^{-1} \cdot (x,t,y))) = T_{(\pi(g)\xi,s+\langle \xi,c_{g^{-1}}\rangle)}(x,t,y) \; ,$$
the $G$-invariance of $F_1$ implies that $(\pi(g)\xi,s+\langle \xi,c_{g^{-1}}\rangle) \in L$ for all $g \in G$ and $(\xi,s) \in L$. Using \eqref{eq.formula-L}, it follows that
$$\langle \pi(g)\xi,\psi \rangle - \langle \xi,\psi\rangle + \langle \xi,c_{g^{-1}}\rangle \in p \Z$$
for all $g \in G$ and $\xi \in H$. Taking arbitrary multiples of $\xi$, it follows that the expression must actually be zero for all $g \in G$ and $\xi \in H$. This means that $c_g = \psi - \pi(g) \psi$ for all $g \in G$, contradicting our assumption that $\al$ has no fixed point.
\end{proof}

\begin{proof}[{Proof of Corollary \ref{cor.general-type-criterion-mixing}}]
We assume that $\delta(\al) > M$ and prove that $\alh$ is weakly mixing and of stable type III$_1$. Since $\delta(\al) > M$, we can take $\kappa > 1$, $r_n \recht +\infty$ and finite subsets $\cF_n \subset G$ such that $\|c_g\|^2 \leq r_n$ for all $g \in \cF_n$ and $|\cF_n|^{-1} \exp(\kappa M r_n) \recht 0$.

Note that if $\delta(\al) > 2M$, we could make this choice such that $|\cF_n|^{-1} \exp(2 \kappa M r_n) \recht 0$. Since $\|c_{g^{-1}h}\|^2 \leq 2 M r_n$ for all $g,h \in \cF_n$, the result then follows immediately from Theorem \ref{thm.general-type-criterion}. The rest of the proof serves to get rid of the extra factor $2$.

In the proof of Theorem \ref{thm.general-type-criterion}, we used the assumption \eqref{eq.general-weak-mixing-ass} to prove that \eqref{eq.should-to-zero} holds for $\rho>0$ close enough to zero. When $\pi$ is mixing, the sets $\Lambda(\xi_1,\xi_2,\eps)$ are finite and we only have to prove that \eqref{eq.should-to-zero} holds when $\Lambda(\xi_1,\xi_2,\eps)$ is a singleton. By the computation made in \cite[Formula (4.5)]{BKV19}, this means that we have to prove that for every $\rho > 0$ small enough
\begin{equation}\label{eq.new-goal}
\int_{\Hh \times \R} \frac{1}{|\cF_n|} \sum_{g \in \cF_n}  \Bigl(\sum_{h \in \cF_n^{-1} g} \om_\rho(h,x,t)\Bigr)^{-1} \, \rd\mu_\rho(x,t) \recht 0 \; .
\end{equation}
For every $g \in \cF_n$, we have that $e \in \cF_n^{-1} g$. So, for every $0 < \al < 1$, the left hand side of \eqref{eq.new-goal} is bounded above by
$$\frac{1}{|\cF_n|} \sum_{g \in \cF_n} \int_{\Hh \times \R} \Bigl(\sum_{h \in \cF_n^{-1} g} \om_\rho(h,x,t)\Bigr)^{-\al} \, \rd\mu_\rho(x,t) \; .$$
By the convexity of $s \mapsto s^{-\al}$, this expression is bounded above by
$$\frac{1}{|\cF_n|^{2+\al}} \sum_{g \in \cF_n} \sum_{h \in \cF_n^{-1} g} \int_{\Hh \times \R} \om_\rho(h,x,t)^{-\al} \, \rd\mu_\rho(x,t) \; .$$
Using Lemma \eqref{lem.est-maharam}, we find that for all $0 < \al < 1$ and $\rho > 0$ close enough to zero, the previous expression is bounded above by
\begin{align*}
\frac{1}{|\cF_n|^{2+\al}} \sum_{g \in \cF_n} \sum_{h \in \cF_n^{-1} g} \exp(\al (\kappa / 2) \|c_h\|^2) &\leq \frac{1}{|\cF_n|^{2+\al}} \sum_{g \in \cF_n} \sum_{h \in \cF_n^{-1} g} \exp(\al \kappa M r_n)\\ &= \bigl(|\cF_n|^{-1} \exp(\kappa M r_n)\bigr)^\al \recht 0 \; .
\end{align*}
This concludes the proof of the corollary.
\end{proof}

\section{Skew product actions}\label{sec.skew-product-results}

Recall from \eqref{eq.skew product action} and from Section \ref{skew product actions} the skew product action associated with any affine isometric action $G \actson H$. The main goal of this section is to prove the following result. Note that Theorem \ref{thm.main-skew-product} stated in the introduction is an immediate consequence of Theorem \ref{thm.skew-product} and Proposition \ref{prop.counterexample locally finite} below.

Recall from Section \ref{sec.subspaces evanescence} the notion of an evanescent $1$-cocycle. Also recall from \cite[Propositions 2.8 and 2.9]{AIM19} that evanescence is automatic in the following two cases: if the group is nilpotent and the orthogonal representation has no nonzero invariant vector; if the group is amenable and the orthogonal representation is contained in a multiple of the regular representation.

\begin{theorem}\label{thm.skew-product}
Let $G$ be a countable group that is not locally finite. Let $\pi : G \recht \mathcal{O}(H)$ be any orthogonal representation and $c \in Z^1(\pi,H)$ any $1$-cocycle that is not a coboundary. Consider the skew product action $\be$ given by \eqref{eq.skew product action}.

\begin{enumlist}
\item If $G$ contains a finitely generated subgroup that is not virtually cyclic, then $\be$ is conservative.
\item If there is no $\xi \in H$ with $\xi \neq 0$ and $\pi(g) \xi = \pm \xi$ for all $g \in G$, then $\be$ is conservative.
\item If $\pi$ is mixing and $c$ is evanescent, then $\be$ is weakly mixing.
\item If $\pi$ is mixing and $G$ has at least one element of infinite order, then $\be$ is weakly mixing.
\item If $\pi$ has stable spectral gap, then $\be$ is strongly ergodic.
\item If $\pi$ is contained in a multiple of the regular representation, then $\be$ is ergodic.
\end{enumlist}
\end{theorem}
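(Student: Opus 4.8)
The plan is to treat the six assertions with three distinct mechanisms, exploiting throughout the commuting translation flow $\Phi_r:(\om,t)\mapsto(\om,t+r)$ on $\Hh\times\R$, which lets one read off dynamical properties of $\be$ from the unitary representations $\theta_a$ (equivalently $\rho_{2a}$) supplied by Lemma \ref{koopman extension}. Fourier analysis in the $\R$-fibre decomposes $L^2(\Hh\times\R)$ as a direct integral of spaces on which $G$ acts through $\theta_s\cong\rho_{2s}$. An invariant function for $\be$ (or, for weak mixing, for $\be\times Y$ with $Y$ an ergodic pmp action) thus produces, frequency by frequency, an invariant vector for $\rho_{2s}\ot(\text{Koopman of }Y)$, while the frequency-zero part is governed by the pmp Gaussian action $\pih$ of the linear part. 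Ergodicity, weak mixing and strong ergodicity of $\be$ therefore reduce to statements about the (almost) invariant vectors of the $\rho_{2s}$ together with weak mixing of $\pi$; the passage from $\be$ to the $\widehat{\alpha}^s$ carrying these Koopman representations can also be organized through the rotation conjugacy $\sigma\times\be\cong\sigma^p\times\sigma^q$ of Proposition \ref{rotation maharam}.

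For the conservativeness statements (1) and (2) I would argue directly on the base, using that $\be$ is the skew product of the pmp action $\pih$ by the real cocycle $\Om(g,\om)=\langle\om,c(g^{-1})\rangle$, whose distribution under $\mu$ is the centered Gaussian of variance $\|c_g\|^2$. Conservativeness is equivalent to recurrence of this cocycle, and the natural sufficient condition is divergence of a Poincar\'e-type series whose $g$-th term combines the Gaussian return factor $\sim\|c_g\|^{-1}$ with the overlap $\langle\pih(g)\mathbf 1_A,\mathbf 1_A\rangle$ in the base. For (1), a finitely generated subgroup that is not virtually cyclic has super-linear growth, while $\|c_g\|$ grows at most linearly in word length, which I expect to force divergence. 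For (2), the absence of a vector with $\pi(g)\xi=\pm\xi$ is used to pass to a weak limit of a subsequence $\pi(g_n)$ that is neither the identity nor a reflection on any line, and hence to produce recurrence of $\Om$ along $g_n$; this is exactly where the $\pm1$-eigenvector hypothesis rules out a dissipative one-dimensional direction.

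For the ergodicity-type statements I would feed the reduction of the first paragraph into the existing machinery. When $\pi$ is mixing (parts (3) and (4)), the matrix coefficients $\langle\rho_{2s}(g)\xi,\xi\rangle$ vanish as $g\recht\infty$, so $\rho_{2s}$ has no nonzero invariant vector for $s\neq0$; evanescence of $c$ (part (3)) then lets me apply Proposition \ref{prop.intersection spaces} to the decreasing family of affine subspaces carrying $c$, exactly as in the proof of Theorem \ref{thm.criterion-nilpotent}, whereas an element of infinite order (part (4)) supplies a copy of $\Z$ on which to run the recurrence and averaging argument; combined with weak mixing of $\pi$ for the frequency-zero part this yields weak mixing of $\be$. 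For (5), stable spectral gap of $\pi$ upgrades ``no invariant vector'' to ``trivial representation not weakly contained'' uniformly across the relevant $\rho_{2s}$, so that any bounded almost-invariant sequence for $\be$ must be trivial, giving strong ergodicity as in the proof of Theorem \ref{spectral criterion}. For (6), containment of $\pi$ in a multiple of the regular representation transports, through the Gaussian functor, to a regularity of the Koopman representations that makes the measure-preserving ergodic averages of \cite[Lemma 4.1]{BKV19} applicable, and I would conclude ergodicity via Proposition \ref{prop.product-action-general}, this being the only route when $G$ is merely non-locally-finite and neither spectral gap nor mixing is assumed.

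The hardest step I anticipate is the recurrence estimate underlying (1) and (2): since the Gaussian return probabilities decay like $\|c_g\|^{-1}$ and $\|c_g\|$ may tend to infinity, the Poincar\'e-type series is genuinely borderline, and extracting enough mass from the group—either from its growth in (1) or from the spectral structure of $\pi$ in (2)—requires a careful quantitative argument rather than a soft weak-limit trick. A secondary difficulty is the ``aperiodicity'' needed for the weak-mixing claims: one must ensure that the essential-value group of $\Om$ is all of $\R$ and not a lattice, which is precisely where the distinction between $\be$ on $\Hh\times\R$ and $\be'$ on $\Hh\times\R/a\Z$ enters, and where mixing of $\pi$, rather than mere weak mixing, does the work.
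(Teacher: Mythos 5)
Your overall architecture has a structural flaw that affects points 3--5: you propose to Fourier-decompose a $\be$-invariant function in the $\R$-fibre so as to produce, ``frequency by frequency,'' invariant vectors for $\rho_{2s}$. But $\be$ preserves the \emph{infinite} measure $\mu\otimes\mathrm{Leb}$ on $\Hh\times\R$, so a nonzero invariant function in $L^\infty$ is not square-integrable and admits no such decomposition; this is exactly why the paper only uses the decomposition of Lemma \ref{koopman extension} for the \emph{periodic} quotient $\be'$ on $\Hh\times\R/a\Z$, and proves points 3--5 by entirely different means. Point 3 is proved by combining conservativeness (from point 2) with the Schmidt--Walters mild mixing theorem \cite{SW81} applied to the mixing Gaussian factor $\widehat{K}$ for each invariant subspace $K$ with $c(G)\subset K^\perp$, and then intersecting via Proposition \ref{prop.intersection spaces}; point 4 reduces to the subgroup $\Z=\langle a\rangle$ and splits into the cases $c|_{\Z}$ not a coboundary (point 3 for $\Z$) and $c|_{\Z}=0$ (a direct computation); point 5 does not analyse almost invariant sequences at all but invokes the criterion of \cite{HMV17} for strong ergodicity of skew products, reducing the problem to showing that $a\mapsto\Om_a$ is a homeomorphism onto its image in $H^1(\pih)$ -- a statement about $\T$-valued cocycles, which is where Lemma \ref{koopman extension} and the spectral gap of the $\rho_s$ legitimately enter. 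For point 6 you miss the short route: if $G$ is nonamenable then $\pi\subset\infty\cdot\lambda$ has stable spectral gap (use point 5), and if $G$ is amenable then $c$ is automatically evanescent and $\pi$ is mixing (matrix coefficients of the regular representation vanish at infinity), so point 3 applies; your proposed detour through Proposition \ref{prop.product-action-general} would require unavailable quantitative control on $\exp(\kappa s_n)$.

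The most serious gap is point 2, where your sketch (``pass to a weak limit of $\pi(g_n)$ \dots\ and hence produce recurrence of $\Om$'') contains no actual mechanism. The paper's proof writes $G=\bigcup_n G_n$ with each $G_n$ infinite and virtually cyclic (point 1 having disposed of the other case), picks infinite-order elements $a_n\in G_n$, and distinguishes two cases: if $P_{\langle a_0\rangle}(c(a_0))=0$ then the restriction of $\be$ to $\langle a_0\rangle\cong\Z$ is conservative by Atkinson's recurrence theorem for mean-zero integrable cocycles \cite{Atk75} (Lemma \ref{lem.conservative-criterion-skew-product-2}); otherwise a delicate algebraic argument with the finite normal subgroups $N_n\lhd G_n$ shows that the nonzero vector $\xi_0=P_{\langle a_0\rangle}(c(a_0))$ satisfies $\pi(g)\xi_0=\pm\xi_0$ for all $g\in G$, contradicting the hypothesis. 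Both ingredients -- Atkinson's theorem and the coherence argument forcing a global $\pm1$-eigenvector -- are absent from your proposal and neither is recoverable from a weak-limit argument. Finally, for point 1 your heuristic (quadratic growth of non-virtually-cyclic groups versus linear growth of $\|c_g\|$) is the right one, but divergence of the correlation series $\sum_g(\mu\times\nu)(\be_g(\cU)\cap\cU)$ does not by itself imply conservativeness; the paper instead verifies the Hopf criterion $\int_G\frac{\rd(\be_g(\mu\times\nu))}{\rd(\mu\times\nu)}\,\rd g=+\infty$ a.e.\ via a convexity estimate against a heavy-tailed reference measure (Proposition \ref{prop.dichotomy-dissipative-conservative-skew-product}), which is where the quantitative work actually happens.
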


Note that shortly after the first version of this paper appeared on arXiv, an alternative proof of statement~3 for the special case $G = \Z$ was provided in \cite{DL20}.

Before proving Theorem \ref{thm.skew-product}, we establish the following criterion for the conservativeness of the skew product action. Note that the constant $\delta$ appearing in \eqref{eq.constant delta} differs substantially from the Poincar\'{e} exponent of $c$ which governs the conservativeness of the nonsingular Gaussian action, since the denominator $s$ is replaced by $\log s$. As we will see in Proposition \ref{prop.structure group delta}, it follows that all Gaussian skew products of a finitely generated infinite group $G$ are automatically conservative if $G$ is not virtually cyclic. Also for countable groups $G$, the constant $\delta$ in \eqref{eq.constant delta} can only be strictly smaller than $1$ if $G$ is a locally finite group.

\begin{proposition}\label{prop.dichotomy-dissipative-conservative-skew-product}
Let $G$ be a locally compact group and $\pi : G \recht \cO(H)$ a continuous orthogonal representation. Let $c \in Z^1(\pi,H)$ be a continuous $1$-cocycle. Denote by $\lambda$ the Haar measure on $G$ and define
\begin{equation}\label{eq.constant delta}
\delta := \limsup_{s \recht +\infty} \frac{\log \lambda\bigl(\{g \in G \mid \|c(g)\| \leq s\}\bigr)}{\log s} \in [0,+\infty] \; .
\end{equation}
If $\delta < 1$, the skew product action $\be$ is dissipative. If $\delta > 1$, the skew product action $\be$ is conservative.
\end{proposition}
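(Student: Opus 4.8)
The plan is to analyze the skew product $\be$ through the recurrence of its orbits in the $\R$-coordinate, reducing both claims to the convergence or divergence of a series built from the cocycle $c$. The key observation is that $\be_g(\om,t) = (\pih(g)\om, t + \langle \om, c(g^{-1})\rangle)$ preserves the infinite measure $\mu \otimes \mathrm{Leb}$, and conservativeness is equivalent to saying that for (almost) every point, its orbit returns to any fixed neighborhood infinitely often. The natural tool is a Borel--Cantelli type criterion: $\be$ is dissipative precisely when the return sums are almost everywhere finite, and conservative when a suitable divergence forces infinite recurrence. First I would fix a box $A = B \times (-1,1) \subset \Hh \times \R$ with $\mu(B) > 0$ and examine, for each $g$, the measure of the set of points $(\om,t) \in A$ with $\be_g(\om,t) \in A$; since the first coordinate is the pmp Gaussian action, this measure is essentially controlled by the probability that $|\langle \om, c(g^{-1})\rangle| < 2$, and because $\langle \cdot, c(g^{-1})\rangle$ is a centered Gaussian of variance $\|c(g^{-1})\|^2 = \|c(g)\|^2$, this probability is comparable to $\min\{1, \|c(g)\|^{-1}\}$.

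The heart of the argument is therefore to relate $\sum_{g \in G} \min\{1,\|c(g)\|^{-1}\}$ (or the corresponding integral over $G$ against Haar measure $\lambda$) to the exponent $\delta$ in \eqref{eq.constant delta}. Writing $N(s) = \lambda(\{g \mid \|c(g)\| \leq s\})$, the defining property of $\delta$ is that $N(s)$ grows like $s^{\delta + o(1)}$. A dyadic decomposition of $G$ into the shells $\{g \mid 2^k \leq \|c(g)\| < 2^{k+1}\}$ shows that $\int_G \min\{1,\|c(g)\|^{-1}\}\,\rd\lambda(g)$ behaves like $\sum_k 2^{-k} N(2^{k+1})$, which converges when $\delta < 1$ and diverges when $\delta > 1$. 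The case $\delta < 1$ then yields, via the first Borel--Cantelli lemma (integrating the return counts against $\mu \otimes \mathrm{Leb}$ restricted to $A$), that almost every orbit visits $A$ only finitely often, so $\be$ is dissipative; the countability/local-compactness is used here to sum or integrate over $G$ legitimately.

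For the conservative direction $\delta > 1$, the divergence of the series is not by itself enough, since the increments $\langle \om, c(g^{-1})\rangle$ are not independent. Here I would invoke the version of the rotation trick and the recurrence technology already developed in the paper: Proposition \ref{rotation maharam} identifies $\sigma \times \be$ with $\sigma^p \times \sigma^q$, tying the skew product to Maharam extensions, and the recurrence arguments behind Lemma \ref{recurence bounded} (following \cite[Theorem 2.3]{SW81} and the recurrent-set machinery of \cite[Definition 7.12, Lemma 7.15]{AIM19}) give a second-moment or strong-conservativeness criterion under which divergence of $\sum_g \min\{1,\|c(g)\|^{-1}\}$ does force conservativeness. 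The main obstacle, and the step requiring the most care, is precisely this: establishing that the divergent first-moment estimate upgrades to genuine recurrence despite the dependence among the Gaussian increments. I expect to handle it by a Chung--Erd\H{o}s style lower bound, controlling the second moment $\sum_{g,h}$ of joint return probabilities via the Gaussian covariance $\langle c(g^{-1}), c(h^{-1})\rangle$ and showing the diagonal (first-moment) term dominates, so that the Paley--Zygmund inequality keeps the recurrence set of positive measure.
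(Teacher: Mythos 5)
Your first half (dissipativity for $\delta < 1$) is essentially the paper's argument: fix a bounded window in the $\R$-direction, bound $(\mu\otimes\mathrm{Leb})(\be_g(\cU)\cap\cU)$ by $\min\{1, C\|c(g)\|^{-1}\}$ using that $\langle\,\cdot\,,c(g^{-1})\rangle$ is a centered Gaussian of variance $\|c(g)\|^2$, and check via the distribution function $N(s)=\lambda(\{g : \|c(g)\|\le s\})$ that this is $\lambda$-integrable when $\delta<1$; this is exactly what the paper does (it uses $\int_G (1+\|c(g)\|)^{-1}\rd\lambda = \int_0^\infty N(s)(1+s)^{-2}\rd s$ rather than dyadic shells, an immaterial difference).

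The conservative half, however, has a genuine gap. You correctly identify that divergence of $\int_G \min\{1,\|c(g)\|^{-1}\}\,\rd\lambda(g)$ does not by itself give recurrence, but the Chung--Erd\H{o}s/Paley--Zygmund route you sketch does not close. The needed second-moment bound is an inequality of the form $\Pr(E_g\cap E_h)\lesssim \Pr(E_g)\Pr(E_h)$ for the correlated Gaussian return events $E_g=\{|\langle\om,c(g^{-1})\rangle|\le 2M\}$; for jointly Gaussian variables the Gaussian correlation inequality gives the \emph{reverse} inequality, and when $c(g^{-1})$ and $c(h^{-1})$ are nearly parallel (which nothing in the hypotheses excludes) one has $\Pr(E_g\cap E_h)\approx\min\{\Pr(E_g),\Pr(E_h)\}$, so the off-diagonal sum can swamp $(\sum_g\Pr(E_g))^2$. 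Moreover, even if Paley--Zygmund applied, it would only produce a positive-measure set of recurrent points in one box, not the vanishing of the dissipative part; and the appeal to Proposition \ref{rotation maharam} and Lemma \ref{recurence bounded} does not help, since that recurrence machinery requires the translation parts $\|g_n(0)\|$ to stay \emph{bounded}, the opposite of the regime here. The paper avoids all of this with a different and much cleaner device: it uses the Hopf-type criterion that $\be$ is conservative once $\int_G \frac{\rd(\be_g(\mu\times\nu))}{\rd(\mu\times\nu)}(\om,t)\,\rd g=+\infty$ a.e.\ for a finite measure $\nu$ equivalent to Lebesgue, and chooses $\nu$ with heavy polynomial tail $\rd\nu(t)=c(1+|t|^\al)^{-1}\rd t$ with $1<\al<\al_0$, where $s_n^{\al_0}\le\lambda(B_n)$ on sets $B_n$ with $\|c(g)\|\le s_n$. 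A direct computation bounds $\int \frac{\rd(\mu\times\nu)}{\rd(\be_g(\mu\times\nu))}\,\rd(\mu\times\nu)$ by $D+C\|c(g)\|^\al$, and Jensen's inequality applied to $r\mapsto r^{-1}$ over $B_n$ then forces the divergence of the Radon--Nikodym integral; no independence or second-moment control of the Gaussian increments is ever needed. To repair your proof you would need either to import this argument or to find a genuinely different way to handle the correlations.
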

\begin{proof}
First assume that $\delta < 1$. Fix any probability measure $\nu$ on $\R$ in the measure class of the Lebesgue measure. Fix $M > 0$ and write $\cU = \Hh \times [-M,M]$. Note that for every $g \in G$,
$$\be_g(\cU) \cap \cU \subset \{\om \in \Hh \mid |\langle c(g) , \om \rangle| \leq 2 M\} \times \R \; .$$
The probability that a standard normal random variable lies in the interval $[-\eps,\eps]$ is bounded by $\eps$. It follows that
$$(\mu \times \nu)(\be_g(\cU) \cap \cU) \leq \min\bigl\{1, 2M \, \|c(g)\|^{-1}\bigr\} \quad\text{for all $g \in G$.}$$
Since
$$\int_G \frac{1}{1+\|c(g)\|} \, \rd\lambda(g) = \int_0^{+\infty} \frac{\lambda\bigl(\{g \in G \mid \|c(g)\| \leq s\}\bigr)}{(1+s)^2} \, \rd s$$
and since $\delta < 1$, the function $g \mapsto (\mu \times \nu)(\be_g(\cU) \cap \cU)$ is integrable on $G$. Therefore, $\cU$ belongs to the dissipative part of $\be$. Taking the union over $M \recht +\infty$, it follows that $\be$ is dissipative.

Next assume that $\delta > 1$. Fix $\al_0 > 1$, a sequence $s_n \recht +\infty$ and Borel sets $B_n \subset G$ such that for all $n \in \N$, we have $\|c(g)\| \leq s_n$ for all $g \in B_n$ and $s_n^{\al_0} \leq \lambda(B_n) < +\infty$. Fix $1 < \al < \al_0$ and define the probability measure $\nu$ on $\R$ given by $\rd\nu(t) = c (1 + |t|^\al)^{-1} \, \rd t$, where $c>0$ is the appropriate normalization constant. Note that
$$\frac{\rd(\beta_g(\mu \times \nu))}{\rd(\mu \times \nu)}(\om,t) = \frac{1+|t|^\al}{1+|t+\langle c(g),\om \rangle|^\al} \quad\text{for all $g \in G$, $\om \in \Hh$, $t \in \R$.}$$
Write $D = 1 + 2^\al$. Since
$$\frac{1+|s+t|^\al}{1+|t|^\al} \leq D (1+|s|^\al) \quad\text{for all $s,t \in \R$,}$$
we find that
$$\int_{\Hh \times \R} \frac{\rd(\mu \times \nu)}{\rd(\beta_g(\mu \times \nu))} \, \rd(\mu \times \nu) \leq D \int_{\Hh} (1+|\langle c(g),\om\rangle|^\al) \, \rd\mu(\om)
= D + C \|c(g)\|^\al \; ,$$
where $C = (2\pi)^{-1/2} D \int_\R |t|^\al \exp(-t^2/2) \, \rd t$.

By the convexity of $r \mapsto r^{-1}$, we get for all $(\om,t) \in \Hh \times \R$ that
$$\Bigl(\int_{B_n} \frac{\rd(\beta_g(\mu \times \nu))}{\rd(\mu \times \nu)}(\om,t) \, \rd g \Bigr)^{-1} \leq \frac{1}{\lambda(B_n)^2} \int_{B_n} \frac{\rd(\mu \times \nu)}{\rd(\beta_g(\mu \times \nu))}(\om,t) \, \rd g \; .$$
Integrating over $\Hh \times \R$, it follows that
\begin{align*}
\int_{\Hh \times \R} & \Bigl(\int_{B_n} \frac{\rd(\beta_g(\mu \times \nu))}{\rd(\mu \times \nu)}(\om,t) \, \rd g \Bigr)^{-1} \, d(\mu \times \nu)(\om,t) \leq \frac {1}{\lambda(B_n)^2} \int_{B_n} (D + C \|c(g)\|^\al) \, \rd g \\
& \leq \frac{D}{\lambda(B_n)} + \frac{C \, s_n^\al}{\lambda(B_n)} \leq D s_n^{-\al_0} + C s_n^{\al-\al_0} \recht 0 \; .
\end{align*}
We conclude that for a.e.\ $(\om,t) \in \Hh \times \R$,
$$\int_{G} \frac{\rd(\beta_g(\mu \times \nu))}{\rd(\mu \times \nu)}(\om,t) \, \rd g = +\infty \; ,$$
meaning that $\be$ is conservative.
\end{proof}

For the following proposition, we say that a locally compact group $G$ is \emph{virtually cyclic} if $G$ is either compact or admits a copy of $\Z$ as a closed and cocompact subgroup. When $G$ is countable, this amounts to saying that $G$ admits a finite index cyclic subgroup.

\begin{proposition}\label{prop.structure group delta}
Let $\al : G \actson H$ be a continuous affine isometric action of a locally compact group $G$. Define $\delta$ as in \eqref{eq.constant delta}.
\begin{enumlist}
\item If $\delta < 1$, every compact neighborhood of $e \in G$ generates a compact open subgroup of $G$.
\item If $\delta < 2$, every compact neighborhood of $e \in G$ generates an open subgroup of $G$ that is virtually cyclic.
\end{enumlist}
So, once there exists a compact neighborhood of $e$ in $G$ generating a subgroup that is not virtually cyclic, every Gaussian skew product action of $G$ is conservative.
\end{proposition}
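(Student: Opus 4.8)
The plan is to tie the exponent $\delta$ to the volume growth of the open subgroup generated by the given compact neighborhood, and then read off the structure of that subgroup from a growth dichotomy. Fix a compact neighborhood $K$ of $e$; we may assume $K$ is symmetric and put $\Gamma = \langle K \rangle = \bigcup_n K^n$, which is an open (hence closed) subgroup of $G$. Since $c$ is continuous and $K$ compact, $R := \sup_{k \in K}\|c(k)\| < \infty$, and the cocycle inequality $\|c(gh)\| \le \|c(g)\| + \|c(h)\|$ gives $K^n \subseteq \{g \in G \mid \|c(g)\| \le nR\}$. Restricting the Haar measure to the open subgroup $\Gamma$, this yields $\lambda(\{g \mid \|c(g)\|\le nR\}) \ge \lambda(K^n)$ and therefore
\[
\delta \;\ge\; \limsup_{n}\frac{\log \lambda(K^n)}{\log n} \;=:\; \gamma(\Gamma),
\]
the volume growth exponent of $\Gamma$. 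The two assertions thus reduce to: (i) $\Gamma$ non-compact $\Rightarrow \gamma(\Gamma)\ge 1$; and (ii) $\Gamma$ not virtually cyclic $\Rightarrow \gamma(\Gamma)\ge 2$. Taking contrapositives then proves parts~1 and~2, since $\Gamma$ is automatically open.

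For (i) I would give a self-contained linear lower bound. If $\Gamma$ is non-compact then $K^n \subsetneq \Gamma$ for every $n$, so the word length $|g|_K := \min\{n \mid g \in K^n\}$ is unbounded. Choosing $g$ with $|g|_K = N$ and a geodesic factorisation $g = k_1\cdots k_N$, the prefixes $x_i = k_1\cdots k_i$ satisfy $|x_i|_K = i$. Writing $V = \operatorname{int}(K)$ (a nonempty open set containing $e$), the translates $x_iV$ lie in $K^{N+1}$, and $x_iV \cap x_jV \ne \emptyset$ forces $x_i^{-1}x_j \in K^2$, hence $\bigl| |x_j|_K - |x_i|_K \bigr| \le 2$, i.e.\ $|j-i|\le 2$. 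Thus the sets $x_{3i}V$ are pairwise disjoint, giving $\lambda(K^{N+1}) \ge \lfloor N/3\rfloor\,\lambda(V)$ with $\lambda(V)>0$. Hence $\lambda(K^n)\gtrsim n$ and $\gamma(\Gamma)\ge 1$. This proves part~1: if $\delta<1$ then $\Gamma$ is compact, so $K$ generates a compact open subgroup.

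For (ii) the key input is a growth gap: a compactly generated locally compact group whose volume growth is strictly subquadratic is virtually cyclic; equivalently, a non virtually cyclic such group satisfies $\limsup_n \lambda(K^n)/n^2 > 0$, whence $\gamma(\Gamma)\ge 2$. I would invoke this through the polynomial growth theorem for locally compact groups, or by passing to the word metric $(\Gamma,d_K)$ with $d_K(g,h)=|g^{-1}h|_K$ — a proper metric space on which $\Gamma$ acts geometrically, i.e.\ to the Cayley–Abels / quasi-transitive graph setting — and applying the corresponding gap theorem. Combined with the displayed inequality, non virtual cyclicity of $\Gamma$ forces $\delta \ge 2$, and contraposing gives part~2. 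This growth gap is the genuine obstacle: the linear bound in (i) is elementary, but excluding growth strictly between linear and quadratic for non virtually cyclic groups is exactly where the deep structure theory enters, and some care is needed to state and apply it in the non-discrete, possibly non-unimodular locally compact setting.

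Finally, the concluding statement is immediate from part~2 and Proposition~\ref{prop.dichotomy-dissipative-conservative-skew-product}. The hypothesis — existence of a compact neighborhood of $e$ generating a non virtually cyclic subgroup — does not involve the cocycle, so part~2 applies to the $\delta$ attached to \emph{every} $1$-cocycle $c$ and gives $\delta \ge 2 > 1$ in each case. By the conservative half of Proposition~\ref{prop.dichotomy-dissipative-conservative-skew-product}, the skew product $\be$ is therefore conservative for every affine isometric action of $G$.
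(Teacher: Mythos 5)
Your proof is correct and follows essentially the same route as the paper: both bound the Haar measure of the word balls $K^n$ of the generated open subgroup via the cocycle inequality $\|c(gh)\|\le\|c(g)\|+\|c(h)\|$, so that $\delta$ dominates the volume growth exponent, and then invoke the growth dichotomy for compactly generated locally compact groups (the paper cites Breuillard's polynomial growth theorem for both parts, exactly the input you name for part~2). The only difference is cosmetic: you supply a self-contained disjoint-translates argument for the linear growth lower bound in part~1, where the paper simply cites the same reference.
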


Conversely, we prove in Proposition \ref{prop.counterexample locally finite} below that whenever $G$ is a noncompact group that can be written as the union of an increasing sequence of compact open subgroups, then $G$ admits a $1$-cocycle for the regular representation such that $\delta < 1$ and thus, the skew product is dissipative.

Note that for countable groups $G$, Proposition \ref{prop.structure group delta} is saying that $\delta < 1$ can only happen when $G$ is locally finite, while $\delta < 2$ can only happen if every finitely generated subgroup of $G$ is virtually cyclic.

\begin{proof}
Let $K_0 \subset G$ be a compact neighborhood of $e \in G$. Write $K_1 = K_0 \cup K_0^{-1}$ and denote by $G_1 < G$ the open subgroup generated by $K_0$. Write $\kappa = \max \{\|c(g)\| \mid g \in K_1\}$. Define the length function $g \mapsto |g|$ on $G_1$ associated with the symmetric generating set $K_1$. Then, $\|c(g)\| \leq \kappa \, |g|$ for all $g \in G_1$. Define $B_n = \{g \in G_1 \mid |g| \leq n\}$. Denote by $\lambda$ the Haar measure on $G$. Assume that $\delta < 2$. We then find $\al < 2$ and $C \geq 0$ such that $\lambda(B_n) \leq C n^\al$ for all $n \in \N$. It follows from \cite[Theorem 1.2]{Bre07} that $G_1$ is virtually cyclic. When $\delta < 1$, we can take $\al < 1$ and it follows that $G_1$ is compact.

For the final statement, it suffices to apply Proposition \ref{prop.dichotomy-dissipative-conservative-skew-product}.
\end{proof}

When $G = \Z$ and $\pi$ is the trivial one-dimensional representation of $\Z$ on $\R$, we can define $c : G \recht \R$ given by $c(n) = n$. Obviously, the skew product action $\be$ is dissipative. As the following lemma shows, this is essentially the only thing that can prevent a Gaussian skew product of $\Z$ of being conservative.

\begin{lemma}\label{lem.conservative-criterion-skew-product-2}
Let $\pi : \Z \recht \cO(H)$ be any orthogonal representation. Denote by $H_0 = \{\xi \in H \mid \pi(1) \xi = \xi \}$ the subspace on which $\pi$ is trivial. For every $1$-cocycle $c \in Z^1(\pi,H)$ with $c(1) \in H_0^\perp$, the action $\be$ is conservative.
\end{lemma}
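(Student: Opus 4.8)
The plan is to recognize $\be$ as a real skew-product (an $\R$-extension) over the probability-measure-preserving base $\pih(1) : \Hh \recht \Hh$ and to prove conservativeness through recurrence of the associated cocycle. Writing $T = \pih(1)$ and $\phi(\om) = \langle \om, c(-1)\rangle$, the cocycle sums are $\phi_n(\om) = \sum_{k=0}^{n-1}\phi(T^k\om) = \langle \om, \sum_{k=0}^{n-1}\pi(-k)c(-1)\rangle = \langle \om, c(-n)\rangle$, so that $\be_n(\om,t) = (T^n\om, t+\phi_n(\om))$ and $\phi_n$ is a centered Gaussian of variance $\|c(n)\|^2$. Since the base $\pih(1)$ preserves the probability measure $\mu$ it is conservative, and the standard equivalence for $\R$-extensions says that $\be$ is conservative if and only if $\phi$ is recurrent, i.e.\ $\liminf_n |\phi_n(\om)| = 0$ for a.e.\ $\om$. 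Thus everything reduces to recurrence of $\phi$.

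The role of the hypothesis $c(1) \in H_0^\perp$ is precisely to force $\phi$ to have conditional mean zero over the invariant $\sigma$-algebra. Since $\pi$ is orthogonal, $H_0 = \ker(\pi(1)-\id)$ is $\pi(\Z)$-invariant and so is $H_0^\perp$; as $c(-1) = -\pi(1)^{-1}c(1) \in H_0^\perp$, the mean ergodic theorem for the orthogonal operator $\pi(-1)$ gives
$$\frac{1}{N}\, c(-N) = \frac{1}{N}\sum_{k=0}^{N-1}\pi(-k)c(-1) \recht P_{H_0}\,c(-1) = 0 \quad\text{in } H.$$
Hence $\tfrac1N \phi_N(\om) = \langle \om, \tfrac1N c(-N)\rangle \recht 0$ in $L^1(\Hh,\mu)$ and a.e., and by the Birkhoff ergodic theorem the conditional expectation of $\phi$ onto the $\pih(1)$-invariant $\sigma$-algebra $\mathcal I$ vanishes.

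Finally I would deduce recurrence from Atkinson's recurrence theorem, applied not to $\pih(1)$ directly but to its ergodic components. Decomposing $\mu = \int m_\xi \, d\xi$ into ergodic components of $\pih(1)$, the function $\phi$ lies in $L^1(m_\xi)$ for a.e.\ $\xi$, and $\int \phi \, dm_\xi$ equals the value of the $\mathcal I$-conditional expectation, hence is $0$ for a.e.\ $\xi$. Atkinson's theorem then yields that the $\R$-extension is conservative over each ergodic component, and conservativeness is fibered over the ergodic decomposition (a wandering set of positive measure would have positive measure in a positive-measure set of fibres), so $\be$ is conservative. The main obstacle is exactly this non-ergodicity of the base: the Gaussian action $\pih$ is ergodic only when $\pi$ is weakly mixing, which is far stronger than the absence of $\pi(1)$-fixed vectors, so one cannot apply a zero–one law to upgrade ``recurrence with positive probability'' to ``recurrence almost everywhere'' directly; passing to ergodic components (where the mean-zero condition survives thanks to $c(1)\in H_0^\perp$) is what circumvents this.
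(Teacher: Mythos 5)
Your proof is correct and follows essentially the same route as the paper: reduce to recurrence of the cocycle $\om \mapsto \langle \om, c(-1)\rangle$ over the pmp base $\pih(1)$, show its conditional expectation onto the invariant $\sigma$-algebra vanishes via the mean ergodic theorem (using that $c(1) \in H_0^\perp$ and $H_0^\perp$ is $\pi$-invariant), and then apply Atkinson's recurrence theorem fibrewise over the ergodic decomposition. Your write-up is if anything slightly more explicit than the paper's about why the zero-mean condition passes to almost every ergodic component.
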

\begin{proof}
In \cite{Atk75}, it is shown that for any ergodic pmp action $\Z \actson (X,\mu)$ and any integrable function $\psi : X \recht \R$ with integral zero, the skew product transformation $T(x,t) = (1 \cdot x, t + \psi(x))$ is conservative. Since the function $\psi : \om \mapsto \langle \om, c(1) \rangle$ is integrable, by considering the ergodic decomposition of the Gaussian action $\pih : \Z \actson \Hh$, it suffices to prove that the conditional expectation $E(\psi)$ of $\psi$ on the $\pih(\Z)$-invariant functions equals $0$.

Since $\psi \in L^2(\Hh,\mu)$, we find that
$$\frac{1}{n+1} \sum_{k=0}^n \psi(\pih(k)\om) \recht E(\psi)(\om) \quad\text{in $L^2(\Hh,\mu)$.}$$
The left hand side equals $\langle \om, \xi_n \rangle$ with
$$\xi_n = \frac{1}{n+1} \sum_{k=0}^n \pi(k)^* c(1) \recht 0 \; .$$
Because $c(1) \in H_0^\perp$, we have $\xi_n \recht 0$. It thus follows that $E(\psi) = 0$ a.e.
\end{proof}

\begin{proof}[{Proof of Theorem \ref{thm.skew-product}}]
1.\ This follows from Proposition \ref{prop.structure group delta}.

2.\ Write $G = \bigcup_n G_n$ where $G_n$ is an increasing sequence of finitely generated subgroups. Since $G$ is not locally finite and because of point~1, we may assume that all $G_n$ are infinite and virtually cyclic. Then, $G_n$ admits a finite normal subgroup $N_n \lhd G_n$ such that $G_n / N_n$ is isomorphic to either $\Z$ or the infinite dihedral group. Pick $a_n \in G_n$ of infinite order such that the subgroup $\langle a_n \rangle N_n$ of $G_n / N_n$ is either the entire group, or a normal subgroup of index $2$.

Whenever $\Gamma < G$ is a subgroup, denote by $P_\Gamma$ the orthogonal projection of $H$ onto the subspace of $\pi(\Gamma)$-invariant vectors. Define $\xi_0 = P_{\langle a_0 \rangle}(c(a_0))$. If $\xi_0 = 0$, it follows from Lemma \ref{lem.conservative-criterion-skew-product-2} that the subgroup $\langle a_0 \rangle \cong \Z$ acts conservatively. So, we may assume that $\xi_0 \neq 0$. We will deduce the existence of a nonzero vector $\xi \in H$ such that $\pi(g) \xi = \pm \xi$ for all $g \in G$. This will conclude the proof of point~2.

We claim that for every $n \in \N$, the vector $\xi_0$ is invariant under $\langle a_n \rangle N_n$. To prove this claim, fix $n \in \N$. Define $\xi_1 = \sum_{h \in N_n} c(h)$ and define the cohomologous $1$-cocycle $\ctil(g) = c(g) + \pi(g)\xi_1 - \xi_1$. By construction, $\ctil(h) = 0$ for all $h \in N_n$. It then also follows that $\ctil(g)$ is $\pi(N_n)$-invariant for all $g \in G_n$. We can thus view $\ctil$ as a $1$-cocycle of $G_n/N_n$ with its representation on $P_{N_n}(H)$.

Since $\ctil(a_0) = c(a_0) + \pi(a_0) \xi_1 - \xi_1$ and since $\pi(a_0)$ commutes with $P_{\langle a_0 \rangle}$, it follows that $P_{\langle a_0 \rangle}(\ctil(a_0)) = \xi_0$. Since $\ctil(a_0) \in P_{N_n}(H)$, we have that
$$P_{\langle a_0 \rangle}(\ctil(a_0)) = P_{\langle a_0 \rangle N_n}(\ctil(a_0)) \; .$$
In particular, $\xi_0$ is invariant under $\langle a_0 \rangle N_n$.

Since $a_0 N_n$ is an element of infinite order in $G_n / N_n$, we have $a_0 N_n = a_n^{k_n} N_n$ for $k_n \in \Z \setminus \{0\}$. Replacing $a_n$ by $a_n^{-1}$ if necessary, we may assume that $k_n \geq 1$. Viewing $\ctil$ as a $1$-cocycle of $G_n/N_n$, we have
$$\ctil(a_0) = \sum_{i = 0}^{k_n-1} \pi(a_n^i) \ctil(a_n) \; .$$
Since $a_n^i$ normalizes $\langle a_0 \rangle N_n$, the projection $P_{\langle a_0 \rangle N_n}$ commutes with $\pi(a_n^i)$ and we get that
$$\xi_0 = P_{\langle a_0 \rangle}(\ctil(a_0)) = P_{\langle a_0 \rangle N_n}(\ctil(a_0)) = \sum_{i = 0}^{k_n-1} \pi(a_n^i) \xi_2
\quad\text{where}\quad \xi_2 = P_{\langle a_0 \rangle N_n}(\ctil(a_n)) \; .$$
We conclude that $(\pi(a_n) -1)\xi_0 = (\pi(a_n^{k_n}) - 1) \xi_2 = (\pi(a_0) - 1)\xi_2 = 0$. So our claim is proven.

Our claim implies that $\Gamma := \{g \in G \mid \pi(g) \xi_0 = \xi_0\}$ is a subgroup of index at most $2$. If $\Gamma = G$, we have found a nonzero $\pi(G)$-invariant vector. If $[G:\Gamma] = 2$, it follows that $\pi$ defines a representation of $\Z/2\Z \cong G/\Gamma$ on the nonzero Hilbert space $P_\Gamma(H)$. There thus exists a nonzero vector $\xi \in H$ such that $\pi(g) \xi = \pm \xi$ for all $g \in G$.

3.\ Let $G \actson (Z,\zeta)$ be any ergodic pmp action and consider the product action $\gamma : G \actson (\Hh \times \R) \times Z$. Assume that $K \subset H$ is a closed $\pi(G)$-invariant subspace and that $c(g) \in K^\perp$ for all $g \in G$. Identifying $\Hh$ with $\widehat{K} \times \widehat{K^\perp}$, we can view $\gamma$ as the product of the Gaussian action $G \actson \widehat{K}$ given by $\pi|_K$, the skew product action $G \actson \widehat{K^\perp} \times \R$ and the action $G \actson Z$. By point~2, the action $G \actson \widehat{K^\perp} \times \R \times Z$ is conservative. By our assumption, the action $G \actson \Kh$ is mixing. By \cite[Theorem 2.3]{SW81}, we conclude that
\begin{equation}\label{eq.star star}
L^\infty(\Hh \times \R \times Z)^G = 1 \ot L^\infty(\widehat{K^\perp} \times \R \times Z)^G \; .
\end{equation}
By our assumption $c$ is evanescent. We can thus take a decreasing sequence of closed $\pi(G)$-invariant subspaces $H_n \subset H$ and vectors $\xi_n \in H_n^\perp$ such that $c(g) + \pi(g)\xi_n - \xi_n \in H_n$ for all $g \in G$, $n \in \N$. Since $H$ does not admit nonzero $\pi(G)$-invariant vectors, the vectors $\xi_n$ are uniquely determined and $P_{H_n^\perp}(\xi_m) = \xi_n$ for all $m \geq n$. Since $c$ is not a coboundary, we have that $\|\xi_n\| \recht +\infty$ when $n \recht +\infty$. Using the notation of Section \ref{sec.subspaces evanescence}, \eqref{eq.star star} says that
$$L^\infty(\Hh \times \R \times Z)^G \subset \cM(\xi_n + H_n) \ovt L^\infty(Z) \quad\text{for all $n \in \N$.}$$
By Proposition \ref{prop.intersection spaces}, we get that $L^\infty(\Hh \times \R \times Z)^G \subset 1 \ot L^\infty(Z)^G = \C 1$. So, the skew product action $\be$ is weakly mixing.

4.\ Take an element $a \in G$ of infinite order and denote by $\Gamma \cong \Z$ the subgroup of $G$ generated by $a$. If the restriction of $c$ to $\Gamma$ is not a coboundary, it follows from point~3 that the restriction of $\be$ to $\Gamma$ is weakly mixing. A fortiori, $\be$ is weakly mixing. So we may assume that $c(g) = 0$ for all $g \in \Gamma$. Given any ergodic pmp action $G \actson (Z,\zeta)$, since $\pi|_\Gamma$ is mixing and $c|_\Gamma = 0$, we have
$$L^\infty((\Hh \times \R) \times Z)^G \subset L^\infty((\Hh \times \R) \times Z)^\Gamma = 1 \ovt L^\infty(\R) \ovt L^\infty(Z)^\Gamma \; .$$
Let $F \in L^\infty((\Hh \times \R) \times Z)^G$ and write $F(\om,t,z) = f(t,z)$ for some $f \in L^\infty(\R \times Z)$. Since $c$ is not a coboundary, we can take $g \in G$ such that $c(g) \neq 0$. Since $F$ is $G$-invariant, we get that
$$f(t+\langle \om,c(g^{-1})\rangle,g \cdot z) = f(t,z) \quad\text{for a.e.\ $(\om,t,z) \in \Hh \times \R \times Z$.}$$
It follows that $f(t+s,g\cdot z) = f(t,z)$ for a.e.\ $(s,t,z) \in \R \times \R \times Z$. Hence $f \in 1 \ot L^\infty(Z)$. Since $G \actson Z$ is ergodic, we conclude that $F \in \C 1$. So, $\be$ is weakly mixing.

5.\ Since $\pi$ has stable spectral gap, the Gaussian action $\pih : G \actson \Hh$ is strongly ergodic. Consider the abelian Polish group $Z^1(\pih)$ of $1$-cocycles $\Om : G \times \Hh \recht \T$ for the Gaussian action $\pih$. Since $\pih$ is strongly ergodic, the subgroup of coboundaries is closed and $H^1(\pih)$ is a Polish group. For every $a \in \R$, define $\Om_a \in Z^1(\pih)$ by
$$\Om_a(g,\om) = \exp(\ri a \langle c(g^{-1}) , \om \rangle) \; .$$
By \cite[Theorem B]{HMV17}, it suffices to prove that the map $\R \recht H^1(\pih) : a \mapsto \Om_a$ is a homeomorphism onto its range. We thus have to prove that if $a_n \in \R$ and $\Om_{a_n} \recht 1$ in $H^1(\pih)$, then $a_n \recht 0$.

Assume the contrary. After passage to a subsequence, we may assume that there exists an $\eps > 0$ such that $|a_n| \geq \eps$ for all $n \in \N$. Since $\Om_{a_n} \recht 1$ in $H^1(\pih)$, we find $\vphi_n : \Hh \recht \T$ such that for all $g \in G$, we have
$$\lim_{n \recht +\infty} \int_{\Hh} \bigl| \Om_{a_n}(g,\om) \, \vphi_n(\pih(g)\om) - \vphi_n(\om)|^2 \, \rd\mu(\om) = 0 \; .$$
Define the unitary representations
$$\zeta_n : G \recht \cU(L^2(\Hh,\mu)) : (\zeta_n(g^{-1}) \xi)(\om) = \Om_{a_n}(g,\om) \, \xi(\pih(g) \om) \; .$$
We put $\zeta = \bigoplus_{n \in \N} \zeta_n$. Viewing $\vphi_n$ as a unit vector in the $n$'th direct summand, it follows that $\zeta$ weakly contains the trivial representation. By Lemma \ref{koopman extension}, we find that $\zeta \cong \bigoplus_{n \in \N} \rho_{a_n / (2\pi)}$, where $\rho_s$ denotes the Koopman representation of $\alh^s$ and $\al$ is the affine isometric action given by $\pi$ and $c$. Write $s_0 = \eps/(2\pi)$. By \cite[Theorem 6.3(iv)]{AIM19}, the unitary representation $\rho_{s_0}$ does not weakly contain the trivial representation. By \cite[Theorem 6.7]{AIM19}, the spectral radius of $\rho_s$ decreases if $|s|$ increases. It thus follows that $\zeta$ does not weakly contain the trivial representation. We have reached a contradiction, concluding the proof of point~4.

6.\ If $G$ is nonamenable, $\pi$ has stable spectral gap and the conclusion follows from point~4. If $G$ is amenable, the cocycle $c$ is evanescent by \cite[Proposition 2.9]{AIM19} and the conclusion follows from point~3.
\end{proof}

We now illustrate with a few examples that Theorem \ref{thm.skew-product} is quite sharp. The first example illustrates why Theorem \ref{thm.skew-product} does not hold for locally finite groups and provides, more generally, dissipative Gaussian skew products for any locally compact group that can be written as an increasing union of compact open subgroups.

\begin{proposition}\label{prop.counterexample locally finite}
Let $G$ be a locally compact group that can be written as $G = \bigcup_n K_n$ where $K_n < G$ is an increasing sequence of compact open subgroups. There exists a $1$-cocycle $c : G \recht L^2_\R(G)$ for the left regular representation of $G$ such that $\delta = 0$ and the skew product action $\be$ is dissipative.
\end{proposition}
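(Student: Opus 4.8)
The plan is to reduce everything to Proposition \ref{prop.dichotomy-dissipative-conservative-skew-product}: it suffices to construct a continuous $1$-cocycle $c$ for the left regular representation $\pi$ of $G$ on $L^2_\R(G)$, given by $(\pi(g)\xi)(x) = \xi(g^{-1}x)$, whose associated constant $\delta$ in \eqref{eq.constant delta} equals $0$, since $\delta = 0 < 1$ then forces $\be$ to be dissipative. Write $V_n = \lambda(K_n) < +\infty$, where $\lambda$ is the left Haar measure used to define $\pi$, so that $\lambda(gK_n) = V_n$ for all $g$. I would build $c$ as a superposition of the elementary coboundaries attached to the subgroups $K_n$: fixing positive reals $a_n$ to be chosen later, set
$$c(g) = \sum_{n} a_n\bigl(\pi(g)\mathbf{1}_{K_n} - \mathbf{1}_{K_n}\bigr) = \sum_n a_n \bigl(\mathbf{1}_{gK_n} - \mathbf{1}_{K_n}\bigr).$$
For fixed $g$, if $m$ is minimal with $g \in K_m$, then $gK_n = K_n$ for all $n \geq m$, so every term with $n \geq m$ vanishes and $c(g)$ is in fact a finite sum, hence a genuine element of $L^2_\R(G)$ no matter how large the $a_n$ are. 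Each summand is the coboundary $a_n(\pi(g)-1)\mathbf{1}_{K_n}$, so the cocycle identity $c(gh) = c(g) + \pi(g)c(h)$ holds termwise and therefore for $c$; continuity of $c$ follows since on each open set $K_m$ it coincides with the finite continuous sum $\sum_{n<m} a_n(\pi(\cdot)-1)\mathbf{1}_{K_n}$.

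The heart of the argument is a lower bound showing that the sublevel set $\{g : \|c(g)\| \leq s\}$ is trapped inside a single $K_j$. I claim that $\|c(g)\| \geq a_j\sqrt{V_j}$ whenever $g \notin K_j$. To see this, let $m$ be minimal with $g \in K_m$, so that $m > j$, and evaluate $c(g)$ on the left coset $gK_j$, which has measure $V_j$. For every $n \leq m-1$ the set $gK_j$ is disjoint from $K_n$: an intersection would force $g \in K_nK_j = K_{\max(n,j)} \subseteq K_{m-1}$, contradicting minimality of $m$. Hence all the $-a_n\mathbf{1}_{K_n}$ contributions vanish on $gK_j$, while for $j \leq n \leq m-1$ one has $gK_j \subseteq gK_n$, so $\mathbf{1}_{gK_n} \equiv 1$ there. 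As all $a_n > 0$ and the remaining terms are nonnegative, this gives $c(g) \geq a_j$ pointwise on $gK_j$, whence $\|c(g)\|^2 \geq a_j^2 V_j$, proving the claim.

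It follows that $\{g : \|c(g)\| < a_j\sqrt{V_j}\} \subseteq K_j$, so with $s_j := a_j\sqrt{V_j}$, which I arrange to be strictly increasing to $+\infty$, we get $\lambda(\{g : \|c(g)\| \leq s\}) \leq V_{j+1}$ for all $s < s_{j+1}$. Thus for $s_j \leq s < s_{j+1}$ the ratio occurring in \eqref{eq.constant delta} is at most $\log V_{j+1} / \log s_j$, and since I am completely free in the choice of the $a_n$, I would pick them large enough that $\log s_j = \log a_j + \tfrac12 \log V_j \geq j\,\log V_{j+1}$ for every $j$. Then the ratio is $\leq 1/j \to 0$, so $\delta = 0$ and Proposition \ref{prop.dichotomy-dissipative-conservative-skew-product} finishes the proof. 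The only genuinely delicate point is the localization estimate of the previous paragraph, in particular the cancellation-free lower bound obtained by restricting to the coset $gK_j$ together with the disjointness $gK_j \cap K_n = \emptyset$; once this is in place, the decay of $\delta$ to $0$ is entirely under our control through the free parameters $a_n$.
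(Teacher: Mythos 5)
Your proposal is correct, and it uses the very same construction as the paper: the cocycle $c(g)=\sum_n a_n(1_{gK_n}-1_{K_n})$, the observation that the sum is finite for each fixed $g$, the localization of the sublevel sets $\{g \mid \|c(g)\|\leq s\}$ inside a single $K_j$, and the reduction to Proposition \ref{prop.dichotomy-dissipative-conservative-skew-product} via $\delta=0$. The one place where you genuinely diverge is the key lower bound. The paper isolates the dominant term and estimates $\|c(g)\|\geq \al_n\sqrt{2\lambda(K_n)}-\sum_{k<n}\al_k\sqrt{2\lambda(K_k)}$ by the triangle inequality, which forces the recursive growth condition $\al_n \geq \lambda(K_{n+1})^{n+1}+\sum_{k=0}^{n-1}\al_k\sqrt{2\lambda(K_k)}$ so that earlier terms cannot cancel the $n$-th one. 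You instead restrict $c(g)$ to the coset $gK_j$ and use the disjointness $gK_j\cap K_n=\emptyset$ for $n\leq m-1$ (which follows correctly from $K_nK_j=K_{\max(n,j)}\subseteq K_{m-1}$ and the minimality of $m$) to see that all contributions there are nonnegative and at least $a_j$ pointwise, giving $\|c(g)\|^2\geq a_j^2\lambda(K_j)$ with no cancellation to worry about. This buys you complete freedom in the choice of the $a_n$ (only the mild condition $\log(a_j\sqrt{V_j})\geq j\log V_{j+1}$ is needed), at the cost of a slightly more geometric argument; the paper's version is more mechanical but needs the coefficients to outgrow the accumulated earlier mass. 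Both yield $\delta=0$ and hence dissipativity.
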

\begin{proof}
Let $\lambda$ be the Haar measure on $G$ normalized such that $\lambda(K_0) = 1$. Put $\al_0 = 1$ and choose inductively $\al_n > 0$ such that for all $n \geq 1$,
$$\al_n \geq \lambda(K_{n+1})^{n+1} + \sum_{k=0}^{n-1} \al_k \sqrt{2 \lambda(K_k)} \; .$$
Define the $1$-cocycle $c$ for the left regular representation by
$$c : G \recht L^2_\R(G) : c(g) = \sum_{n=0}^\infty \al_n \, (1_{g K_n} - 1_{K_n}) \; .$$
Note that for every $g \in G$, there are only finitely many nonzero terms in the sum defining $c(g)$. Whenever $n \geq 0$ and $g \in K_{n+1} \setminus K_n$, we have
$$c(g) = \al_n (1_{g K_n} - 1_{K_n}) + \xi \quad\text{where}\quad \xi = \sum_{k=0}^{n-1} \al_k \, (1_{g K_k} - 1_{K_k}) \; .$$
By our choice of $\al_n$, we have $\|c(g)\| \geq \lambda(K_{n+1})^{n+1}$ for all $g \in K_{n+1} \setminus K_n$. So, when $\lambda(K_n)^n \leq s < \lambda(K_{n+1})^{n+1}$, we have
$$\lambda\bigl(\{g \in G \mid \|c(g)\| \leq s \}\bigr) \leq \lambda(K_n) \leq s^{1/n} \; .$$
The constant $\delta$ in \eqref{eq.constant delta} is thus equal to zero. It follows from Proposition \ref{prop.dichotomy-dissipative-conservative-skew-product} that the skew product action $\be$ is dissipative.
\end{proof}

In Theorem \ref{thm.skew-product}, we have seen that the skew product action $\be$ is ergodic whenever $\pi$ is a mixing action of a group with at least one element of infinite order and $c$ is not a coboundary. The following example shows that weak mixing is not sufficient to guarantee ergodicity of $\be$.

\begin{proposition}
Let $\Gamma$ be a countable group that can be embedded as a dense subgroup of a noncompact, locally compact group $G$ that can be written as $G = \bigcup_n K_n$ where $K_n < G$ is an increasing sequence of compact open subgroups. For example, take a prime number $p$ and $\Gamma = \Z[1/p] \subset \Q_p = G$.

Then $\Gamma$ admits a weakly mixing orthogonal representation $\pi : \Gamma \recht \cO(H)$ and a $1$-cocycle $c \in Z^1(\pi,H)$ such that $c$ is not a coboundary and the skew product action $\be$ is conservative, of type II$_\infty$, but not ergodic.
\end{proposition}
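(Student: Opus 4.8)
The plan is to take $H = L^2_\R(G)$ with $\pi = \lambda_G|_\Gamma$ the restriction to $\Gamma$ of the left regular representation $\lambda_G$ of $G$, and to let $c$ be the restriction to $\Gamma$ of the $1$-cocycle $c : G \recht L^2_\R(G)$ constructed in Proposition \ref{prop.counterexample locally finite}. By that proposition the Gaussian skew product $\be$ of the whole group $G$ is dissipative, and the whole point will be that restricting this dissipative action to the dense countable subgroup $\Gamma$ produces a conservative, non-ergodic action of type II$_\infty$. I write $\be_G$ and $\be_\Gamma$ for the skew products of $G$ and of $\Gamma$.

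First I would record the representation-theoretic input. Since $G$ is noncompact, $\lambda_G$ is weakly mixing (for $G = \Q_p$ this is immediate, as the Plancherel measure on $\widehat{G}$ is the nonatomic Haar measure, so $\lambda_G$ has no finite-dimensional subrepresentation). Any finite-dimensional $\pi(\Gamma)$-invariant subspace $V \subset H$, or any nonzero $\xi$ with $\pi(\gamma)\xi = \pm \xi$ for all $\gamma$, would be preserved by the closed subgroup $\{g \in G \mid \lambda_G(g) V = V\}$, resp.\ would produce a homomorphism $\Gamma \recht \{\pm 1\}$; being closed conditions and $\Gamma$ being dense, they would extend to all of $G$, contradicting weak mixing of $\lambda_G$. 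Hence $\pi$ is weakly mixing and admits no nonzero $\xi$ with $\pi(\gamma)\xi = \pm\xi$. Next, $c$ is not a coboundary for $\Gamma$: the cocycle $g \mapsto c(g)$ of Proposition \ref{prop.counterexample locally finite} is continuous with $\|c(g)\| \recht \infty$, hence unbounded; were $c|_\Gamma = \xi - \pi(\cdot)\xi$ a coboundary, then by continuity and density the equality $c(g) = \xi - \lambda_G(g)\xi$ would hold on all of $G$, forcing $c$ bounded, a contradiction. With these two facts, Theorem \ref{thm.skew-product}(2) applies and shows that $\be_\Gamma$ is conservative.

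It remains to identify the type and prove non-ergodicity, and this is where the restriction from $G$ to $\Gamma$ enters. The action $\be_G$ is essentially free: for $g \neq e$ the operator $\lambda_G(g)$ has no nonzero invariant vector (on the Fourier side its fixed space is carried by the Haar-null subgroup $\{\chi \mid \chi(g) = 1\} \subset \widehat{G}$), so the Gaussian transformation $\pih(g)$ is ergodic and its fixed set is null. Being dissipative and essentially free, $\be_G$ is measurably isomorphic to $W \times G$ with $G$ acting by left translation on the second coordinate, where $L^\infty(W) = L^\infty(\Hh \times \R)^{\be_G}$ is the algebra of $G$-invariant functions. Here $W$ is nontrivial: if $\be_G$ were transitive a single orbit would be conull, but the image of an orbit under the factor map $\Hh \times \R \recht \Hh$ is a single orbit of the ergodic, nontransitive pmp Gaussian action $\pih_G$, hence $\mu$-null. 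Restricting the model $W \times G$ to $\Gamma$ gives $\be_\Gamma \cong \mathrm{id}_W \times (\Gamma \actson (G,\mathrm{Haar}))$ with $\Gamma$ acting by translation. Since $\Gamma$ is dense, this translation action is ergodic (a $\Gamma$-invariant $L^\infty$ function is invariant under the closed subgroup $\overline{\Gamma} = G$, hence constant) and conservative (every compact open $K_n$ is recurrent, since $\Gamma \cap K_n$ is an infinite subgroup fixing $K_n$ and its cosets), and it preserves the infinite Haar measure with no equivalent finite invariant measure; thus it is of type II$_\infty$. Consequently $L^\infty(\Hh \times \R)^{\be_\Gamma} = L^\infty(W)$, which is nontrivial, so $\be_\Gamma$ is not ergodic, while all of its ergodic components are copies of $\Gamma \actson (G,\mathrm{Haar})$ and hence of type II$_\infty$.

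The main obstacle will be the last paragraph: justifying the dissipative model $\be_G \cong W \times G$ and, through it, the equality $L^\infty(\Hh\times\R)^{\be_\Gamma} = L^\infty(\Hh\times\R)^{\be_G}$ together with the non-triviality of $W$. The delicate points are the essential freeness of $\be_G$ and the ergodicity of the dense subgroup $\Gamma$ acting by translation on $(G,\mathrm{Haar})$, which together upgrade ``$G$-dissipative and nontransitive'' into ``$\Gamma$-conservative, non-ergodic, of type II$_\infty$''.
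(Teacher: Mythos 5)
Your architecture is the same as the paper's (restrict the dissipative skew product of $G$ from Proposition \ref{prop.counterexample locally finite} to the dense subgroup $\Gamma$, via the model $W\times G$), but your proof of essential freeness of $\be_G$ rests on a false claim. Since $G=\bigcup_n K_n$ with $K_n$ compact open, every $g\in G$ lies in some $K_n$, and then $\lambda_G(g)1_{K_n}=1_{gK_n}=1_{K_n}$: the operator $\lambda_G(g)$ has nonzero invariant vectors for \emph{every} $g$. Your Fourier-side justification fails for the same reason: for $G=\Q_p$ and $g\neq 0$, the set $\{\chi\mid\chi(g)=1\}$ contains the annihilator of the compact open subgroup $\overline{\langle g\rangle}$, which is a compact open subgroup of $\widehat{G}$ of \emph{positive} Haar measure, not a null set. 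Hence $\pih(g)$ is never ergodic, and the step ``ergodic, so its fixed set is null'' collapses. There is a second, independent problem: $G$ is uncountable, so even if each individual fixed-point set $\mathrm{Fix}(\pih(g))$, $g\neq e$, were null, Fubini would only give that $\Stab(\om)$ is Haar-null for a.e.\ $\om$, not that it is trivial; the structure theorem you invoke for dissipative actions (conjugacy with translation on $W\times G$) needs trivial stabilizers almost everywhere. The paper repairs both points at once: $\lambda_G|_{K_n}$ is an infinite multiple of the regular representation of $K_n$, so $\widehat{\pi_G}|_{K_n}$ is an infinite product of copies of the faithful Gaussian pmp action of the compact group $K_n$, whence $\Stab(\om)\cap K_n=\{e\}$ for a.e.\ $\om$; taking the countable union over $n$ gives $\Stab(\om)=\{e\}$ a.e. You need to replace your freeness argument by something of this kind.

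The rest of your proposal is essentially the paper's proof and is sound: the non-coboundary argument via continuity, density and unboundedness of $\|c(g)\|$ is correct; the nontriviality of the base $W$ (an orbit of an essentially free pmp action of a noncompact group is null) is the same device the paper uses to show $Y$ is nonatomic; and the identification of $\be_\Gamma$ with $\id_W\times(\Gamma\actson G)$, together with ergodicity, conservativeness and type II$_\infty$ of the dense translation action, gives the conclusion. Two small remarks: the appeal to Theorem \ref{thm.skew-product}(2) for conservativeness is redundant (it falls out of the model $W\times G$) and is anyway not available for those $\Gamma$ in the statement that happen to be locally finite; and weak mixing of $\lambda_G$ for a general noncompact $G$ needs the standard ``no finite-dimensional subrepresentation of the regular representation'' argument rather than the abelian Plancherel picture, but that is a routine point.
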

\begin{proof}
Write $H = L^2_\R(G)$ and consider the left regular representation $\pi_G : G \recht \cO(H)$. By Proposition \ref{prop.counterexample locally finite}, we can take a $1$-cocycle $c_G : G \recht H$ such that the associated Gaussian skew product action $\be_G$ is dissipative. We claim that $\be_G$ is conjugate to the action of $G$ on $G \times Y$ given by $g \cdot (h,y) = (gh,y)$ for some nonatomic probability space $Y$. Before proving this claim, we show that the Gaussian pmp action $\widehat{\pi_G} : G \actson \Hh$ is essentially free, in the sense that for a.e.\ $\om \in \Hh$, the subgroup $\Stab_{\widehat{\pi_G}}(\om) := \{g \in G \mid \widehat{\pi_G}(g)\om = \om\}$ equals $\{e\}$.

For every $n \in \N$, the restriction of $\pi_G$ to $K_n$ is an infinite multiple of the left regular representation of $K_n$. Therefore, the restriction of $\widehat{\pi_G}$ to $K_n$ can be viewed as an infinite product of the faithful Gaussian pmp action of $K_n$ associated with the left regular representation of $K_n$. It follows that for a.e.\ $\om \in \Hh$, we have $\Stab_{\widehat{\pi_G}}(\om) \cap K_n = \{e\}$. Since this holds for all $n \in \N$, we conclude that $\Stab_{\widehat{\pi_G}}(\om) = \{e\}$ for a.e.\ $\om \in \Hh$.

Since the action $\widehat{\pi_G} : G \actson \Hh$ is a factor of $\be_G$, also $\be_G$ is essentially free. Since $\be_G$ is dissipative, it follows that $\be_G$ is conjugate to the action $G \actson G \times Y$ given by $g \cdot (h,y) = (gh,y)$ for some probability space $Y$. Assume that $Y$ admits an atom $y_0$. Since $\widehat{\pi_G}$ is ergodic and is a factor of $\be_G$, restricting this factor map to $G \times \{y_0\}$ implies that the essentially free pmp action $\widehat{\pi_G}$ is a factor of the translation action $G \actson G$. Since $G$ is noncompact, this is absurd. So, $Y$ is nonatomic and our claim is proven.

It now suffices to restrict $\pi_G$ and $c_G$ to the dense subgroup $\Gamma < G$. Since $\pi_G$ does not admit nonzero finite dimensional invariant subspaces, also the restriction $\pi$ is weakly mixing. By construction, the associated skew product action is conjugate to the action $\Gamma \actson G \times Y$, which is conservative, of type II$_\infty$, but not ergodic.
\end{proof}

For the group $\Z$, it is proven in \cite[Proposition 6]{LLS99} that the multiplicative skew product action $\beta'$ on $\Hh \times \R/a \Z$ is weakly mixing if and only if the $1$-cocycle $c$ is not a coboundary. We prove that this holds for all countable groups $G$.

\begin{proposition}\label{prop.multiplicative gaussian skew product}
Let $\al : G \actson H$ be an affine isometric action of a countable group $G$. Fix $a > 0$. The following statements are equivalent.
\begin{enumlist}
\item The skew product $\beta'$ is weakly mixing.
\item The skew product $\beta'$ is ergodic.
\item The representation $\pi$ is weakly mixing and $c$ is not a coboundary.
\end{enumlist}
\end{proposition}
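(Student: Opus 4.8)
The plan is to prove the cycle $1 \Rightarrow 2 \Rightarrow 3 \Rightarrow 1$. The implication $1 \Rightarrow 2$ is immediate, since weak mixing implies ergodicity. For $2 \Rightarrow 3$, I would first observe that the projection $\Hh \times \R/a\Z \recht \Hh$ intertwines $\be'$ with the Gaussian action $\pih$, so that ergodicity of $\be'$ forces $\pih$ to be ergodic, i.e.\ $\pi$ to be weakly mixing. To see that $c$ cannot be a coboundary, I would exhibit an explicit nonconstant invariant function: if $c_g = \xi - \pi(g)\xi$, then using $\langle \om, \pi(g^{-1})\xi\rangle = \langle \pih(g)\om,\xi\rangle$ one checks that $(\om,t) \mapsto \exp(\ri b (t + \langle\om,\xi\rangle))$ with $b = 2\pi/a$ is $\be'$-invariant and well defined on $\R/a\Z$, contradicting ergodicity.

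The substance is $3 \Rightarrow 1$. I would fix an ergodic pmp action $\sigma : G \actson (Z,\zeta)$ and a $G$-invariant $F \in L^\infty(\Hh \times \R/a\Z \times Z)$, and show $F$ is constant. Expanding $F = \sum_n F_n \ot e_n$ into Fourier modes along $\R/a\Z$, the invariance of $F$ decouples into the relations
\[
F_n(\pih(g)\om, \sigma_g z) = \exp\bigl(-\ri b_n \langle c(g^{-1}),\om\rangle\bigr)\, F_n(\om,z), \qquad b_n = 2\pi n/a,
\]
for all $g$ and a.e.\ $(\om,z)$. For $n=0$ this says $F_0$ is invariant under $\pih \times \sigma$; since $\pi$ is weakly mixing, $\pih$ is weakly mixing, so $\pih \times \sigma$ is ergodic and $F_0$ is constant. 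This is the only point where weak mixing of $\pi$, rather than the noncoboundary hypothesis on $c$, is used.

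For $n \neq 0$, writing $b = b_n \neq 0$, I would eliminate $F_n$ by passing to its partial Gaussian Fourier transform in $\om$, namely $\widehat{F_n}(\xi,z) = \int_{\Hh} F_n(\om,z)\, e^{-\ri\langle\om,\xi\rangle}\,\rd\mu(\om)$ for $\xi \in H$. Substituting $\om \mapsto \pih(g^{-1})\om$ and using $\langle \pih(g^{-1})\om,\xi\rangle = \langle\om,\pi(g)\xi\rangle$, the relation above becomes
\[
\widehat{F_n}(\pi(g)\xi, \sigma_g z) = \widehat{F_n}\bigl(\xi + b\, c(g^{-1}), z\bigr).
\]
Taking $L^2(Z)$-norms in $z$ and using that $\sigma$ is measure preserving, the nonnegative function $m(\xi) = \int_Z |\widehat{F_n}(\xi,z)|^2\,\rd\zeta(z)$ becomes invariant under the auxiliary affine isometric action $\xi \mapsto \pi(g)\xi + b\,c(g)$. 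Since $|\widehat{F_n}(\xi,z)| \leq \|F_n(\,\cdot\,,z)\|_2$ and $e^{\ri\langle\,\cdot\,,\xi\rangle} \recht 0$ weakly in $L^2(\Hh,\mu)$ as $\|\xi\| \recht +\infty$ (because $\langle e^{\ri\langle\cdot,\xi\rangle}, e^{\ri\langle\cdot,\eta\rangle}\rangle = e^{-\|\xi-\eta\|^2/2}$), dominated convergence gives $m(\xi) \recht 0$ as $\|\xi\| \recht +\infty$. Now the noncoboundary hypothesis enters: the affine action $\xi \mapsto \pi(g)\xi + b\,c(g)$ has no fixed point, and being isometric it has only unbounded orbits (a bounded orbit would give a fixed point at its circumcenter, hence $c$ a coboundary). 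A function constant along an unbounded orbit and vanishing at infinity must be $0$ there, so $m \equiv 0$; totality of the exponentials then yields $F_n = 0$. Thus $F = F_0$ is constant, $\be' \times \sigma$ is ergodic, and $\be'$ is weakly mixing.

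The main obstacle is precisely the treatment of the modes $n \neq 0$, i.e.\ ruling out a measurable trivialization of the phase cocycle $g \mapsto \exp(\ri b\langle c(g^{-1}),\,\cdot\,\rangle)$ of the Gaussian action. The device that makes this tractable is the partial Gaussian characteristic-function transform, which converts the cohomological equation into an orbit-invariance statement for the auxiliary affine action with cocycle $b\,c$; there the hypothesis that $c$ is not a coboundary is used cleanly through the absence of bounded orbits, while weak mixing of $\pi$ is needed only for the zeroth mode.
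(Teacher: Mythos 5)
Your proof is correct, and for the main implication $3 \Rightarrow 1$ it takes a genuinely more self-contained route than the paper's. Both arguments begin with the same decomposition of $L^\infty(\Hh\times\R/a\Z\times Z)$ into Fourier modes along the circle, and both dispose of the zero mode using weak mixing of $\pih$; the difference lies in how the modes $n\neq0$ are killed. The paper argues representation-theoretically: by Lemma \ref{koopman extension} the $n$'th mode carries the representation $\pi_Z\ot\rho_{4\pi n/a}$ (with $\pi_Z$ the Koopman representation of $Z$), and it quotes the proof of \cite[Theorem 6.3(i)]{AIM19} for the fact that these representations are weakly mixing when $c$ is not a coboundary. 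You instead prove the required vanishing directly: the partial Gaussian characteristic-function transform (which is, in disguise, the Fourier conjugacy of Proposition \ref{fourier intertwining}) turns the cohomological equation for $F_n$ into invariance of $m(\xi)=\int_Z|\widehat{F_n}(\xi,z)|^2\,\rd\zeta(z)$ under the auxiliary affine isometric action $\xi\mapsto\pi(g)\xi+b\,c(g)$; its orbits are unbounded by the circumcenter lemma precisely because $c$ is not a coboundary, while $m$ vanishes at infinity because $\langle e^{\ri\langle\cdot,\xi\rangle},e^{\ri\langle\cdot,\eta\rangle}\rangle=e^{-\|\xi-\eta\|^2/2}$ --- this decay is essentially the content of Lemma \ref{sequence unbounded mixing}, which you reprove in the special case you need. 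The net effect is an elementary argument that does not outsource the key step to \cite{AIM19}. The only details worth writing out in full are the routine passage from $m\equiv 0$ to $F_n=0$ (continuity of $\xi\mapsto\widehat{F_n}(\xi,z)$, a countable dense set of $\xi$'s, and totality of the exponentials in $L^2(\Hh,\mu)$); and your $2\Rightarrow 3$ witness $\exp(\ri b(t+\langle\om,\xi\rangle))$ in the coboundary case is a correct, more explicit version of what the paper leaves implicit.
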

\begin{proof}
The implication $1 \Rightarrow 2$ is trivial. The Gaussian action $G \actson \Hh$ is ergodic if and only if $\pi$ is weakly mixing, and is a factor of $\be'$. So also $2 \Rightarrow 3$ is immediate.

Assume that $3$ holds. Let $G \actson (Z,\zeta)$ be an ergodic pmp action and consider the product action $G \actson Z \times (\Hh \times \R/a \Z)$. Denote by $\pi : G \actson L^2(Z,\zeta)$ the Koopman representation of $G \actson (Z,\zeta)$. By Lemma \ref{koopman extension}, the Koopman representation of the product action $G \actson Z \times (\Hh \times \R/a \Z)$ is unitarily conjugate to the direct sum of $G \actson L^2(Z \times \Hh)$ and the representations $\pi \ot \rho_{4 \pi n / a}$, $n \in \Z \setminus \{0\}$. Since $c$ is not a coboundary, it follows from \cite[Proof of Theorem 6.3(i)]{AIM19} that all the representations $\pi \ot \rho_{4 \pi n / a}$, $n \in \Z \setminus \{0\}$ are weakly mixing. Since $G \actson \Hh$ is weakly mixing, the ergodicity of the product action $G \actson Z \times (\Hh \times \R/a \Z)$ follows.
\end{proof}

Finally, also the weak limit techniques of Section \ref{sec.weak convergence techniques} can be used to prove ergodicity of skew product actions.

\begin{proposition}
Let $\pi : G \recht \cO(H)$ be an orthogonal representation and $c : G \recht H$ a $1$-cocycle that is not a coboundary. If there exists a sequence $g_n \in G$ such that $\pi(g_n) \recht 0$ weakly and $\sup_n \|c_{g_n}\| < \infty$, then the skew product action $\be$ is ergodic.
\end{proposition}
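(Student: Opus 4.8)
The plan is to run the weak-limit method of the proof of Theorem \ref{thm.non proper mixing direction}, but applied directly to the infinite measure preserving transformations $\be_{g_n}$ instead of to the Maharam extension. First I would perform the same preliminary reductions as there: replacing $c$ by a cohomologous $1$-cocycle conjugates $\be$ to an isomorphic skew product, so after such a replacement and passing to a subsequence we may assume $c(g_n) \recht 0$ weakly; then, multiplying each $g_n$ on the right by a fixed $g$ with $c(g) \neq 0$ (possible since $c$ is not a coboundary) and extracting a further subsequence, we may also assume $\lim_n \|c(g_n)\| = r > 0$.

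I would work on the probability space $(\Hh \times \R, \mu \otimes \nu)$ with $\nu$ the standard Gaussian, so that $L^\infty(\Hh \times \R)$ and its weak$^*$ topology are exactly as in Lemmas \ref{Lp integrable}--\ref{weak convergence maharam}. Let $a \in L^\infty(\Hh \times \R)$ be $\be$-invariant; the goal is to show $a$ is constant. Writing $K_\xi(\om,t) = (\om, t + \langle \om,\xi\rangle)$, the key step is the identity
$$a \circ (\pih(g_n^{-1}) \times \id) = a \circ K_{-c(g_n)} \qquad\text{for all } n \, ,$$
obtained from the invariance $a \circ \be_{g_n} = a$ by evaluating at the point $(\pih(g_n^{-1})\om, t)$ and using that $\langle \pih(g_n^{-1})\om, c(g_n^{-1})\rangle = \langle \om, \pi(g_n) c(g_n^{-1})\rangle = -\langle \om, c(g_n)\rangle$.

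Next I would take weak$^*$ limits on both sides. Since $\pi(g_n^{-1}) = \pi(g_n)^* \recht 0$ weakly, the $\mu$-preserving Gaussian operators $\pih(g_n^{-1})$ converge to the projection onto the constants, so the left-hand side tends to $(\mu \otimes \id)(a) =: 1 \otimes b$ with $b \in L^\infty(\R)$. For the right-hand side I need the exact analogue of Lemmas \ref{Lp integrable} and \ref{weak convergence maharam} for the transformations $K_\xi$: if $\xi_n \recht 0$ weakly and $\|\xi_n\| \recht r$, then $a \circ K_{\xi_n} \recht (\id \otimes \Psi_r)(a)$ weak$^*$, where $\Psi_r$ is convolution by the \emph{centered} Gaussian of variance $r^2$ (the mean $\tfrac12 r^2$ present in $\Phi_r$ is absent here because $K_\xi$ shifts the $\R$-coordinate only by the mean-zero quantity $\langle \om,\xi\rangle$). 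This follows from the same two computations: the Radon--Nikodym derivative of $(K_\xi)_*(\mu \otimes \nu)$ is again uniformly $L^p$-bounded for $\|\xi\| \leq R$ and some $p > 1$, and on the total set of functions $e^{\ri \widehat{\eta}} \otimes u_s$ one computes the weak$^*$ limit $e^{-\frac12 r^2 s^2}$ via Lemma \ref{exponential weak convergence}. Equating the two limits gives $1 \otimes b = (\id \otimes \Psi_r)(a)$. Applying $\mu \otimes \id$ yields $b = \Psi_r(b)$, so $b$ is constant by the centered-Gaussian version of Lemma \ref{convolution constant} (whose proof only uses symmetry of the Gaussian and $L^1$-density of its translates). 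Feeding this back, $(\id \otimes \Psi_r)(a)$ is constant, so $\Psi_r(a(\om,\cdot))$ is constant for a.e.\ $\om$, and Lemma \ref{convolution constant} forces $a$ to be independent of the $\R$-coordinate. Finally, a $t$-independent $\be$-invariant function is invariant under the pmp Gaussian action $\pih$; since $\pi(g_n) \recht 0$ weakly, $\pi$ has no nonzero finite-dimensional subrepresentation (an orthogonal operator on a nonzero finite-dimensional invariant subspace cannot tend weakly to $0$), hence $\pi$ is weakly mixing, $\pih$ is ergodic, and $a$ is constant.

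The main obstacle is proving the $K_\xi$-version of the weak-convergence lemma with the correct \emph{centered} Gaussian $\Psi_r$, together with the sign bookkeeping needed to derive the displayed identity; everything else is a direct transcription of the arguments already used for Theorem \ref{thm.non proper mixing direction}.
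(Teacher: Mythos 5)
Your proposal is correct and follows essentially the same route as the paper: the paper also reduces to a sequence with $c(g_n)\recht 0$ weakly and $\|c(g_n)\|\recht r>0$, derives from $\be$-invariance the identity comparing $F\circ(\pih(g_n)\times\id)$ with the shift $F(\om,t+\langle\om,\cdot\rangle)$, passes to the weak$^*$ limit to get $(\mu\ot\id)(F)=(\id\ot\Phi_r)(F)$ with $\Phi_r$ the convolution by the \emph{centered} Gaussian of variance $r^2$, and concludes via Lemma \ref{convolution constant} and the ergodicity of $\pih$. Your explicit distinction between the centered convolution and the mean-$\tfrac12 r^2$ operator of Lemma \ref{weak convergence maharam}, and your sign bookkeeping, match what the paper does implicitly.
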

\begin{proof}
As in the proof of Theorem \ref{thm.non proper mixing direction}, we find a sequence $g_n \in G$ such that $\pi(g_n) \recht 0$ weakly, $c(g_n) \recht 0$ weakly and $\|c(g_n)\| \recht r > 0$. If $F \in L^\infty(\Hh \times \R)$ is invariant under $\be$, we have $F(\pih(g_n)\om,t) = F(\om, t + \langle c(g_n), \om \rangle)$. Taking the weak$^*$ limit of the left and right hand side, we find that $(\mu \ot \id)(F) = (\id \ot \Phi_r)(F)$, where $\Phi_r : L^\infty(\R) \recht L^\infty(\R)$ is the convolution by the Gaussian probability measure with mean $0$ and variance $r^2$. As in the proof of Theorem \ref{thm.non proper mixing direction}, it follows that $F$ is constant a.e.
\end{proof}

\section{The Poincar\'{e} exponent and the Liouville property}\label{sec.poincare exponent}

This section is motivated by the following question: what is the class of finitely generated groups $\Gamma$ that admit an affine isometric action $\alpha : \Gamma \curvearrowright H$ such that the Poincar\'{e} exponent $\delta(\alpha)$ is $0$, or finite? A very partial result was obtained in \cite[Proposition 3.8 and 6.4]{AIM19}.

Let $\Gamma$ be a finitely generated group and let $\mu$ be an \emph{adapted} probability measure on $\Gamma$. This means that $\mu$ is finitely supported, symmetric and that its support generates the group $\Gamma$. We recall from \cite{Ave74} that the $\mu$-entropy of $\Gamma$  is given by
$$ h_\mu = \lim_n \frac{1}{n} H(\mu^{*n})$$
where
$$ H(\mu^{*n})=-\sum_{g \in \supp(\mu)}  \mu^{*n}(g)\log\mu^{*n}(g) \; .$$
Recall that $\Gamma$ has vanishing $\mu$-entropy if and only if it $\mu$-Liouville, i.e.\ every bounded $\mu$-harmonic function on $\Gamma$ is constant, which is equivalent to saying that the $\mu$-Furstenberg-Poisson boundary of $\Gamma$ is trivial (see \cite{KV82}). One says that $\Gamma$ has the Liouville property if it is $\mu$-Liouville for \emph{every} adapted $\mu$. It is not known if this is equivalent to being $\mu$-Liouville for \emph{some} adapted $\mu$.

It is known that $2 | \log(\rho_\mu) | \leq h_\mu$ where $\rho_\mu$ is the $\mu$-spectral radius of $\Gamma$ (see \cite{Ave74}). In particular, if $\Gamma$ is $\mu$-Liouville then it is amenable. The converse is not true. For example, the lamplighter group $\Z^d \ltimes \left( \Z/2\Z \right)^{\oplus {\Z^d} }$ has the Liouville property if and only if $d \leq 2$.

Let $\alpha : \Gamma \curvearrowright H$ be an affine isometric action. Following \cite{AIM19}, we denote
$$ \| \alpha \|_\mu^2= \inf_{x  \in H} \sum_{g \in \Gamma} \mu(g) \| \alpha_g(x)-x \|^2=\lim_n \frac{1}{n}\sum_{g \in \Gamma} \mu^{*n}(g) \| \alpha_g(0)\|^2 \; .$$

In \cite[Proposition 6.9]{AIM19}, the inequality $ | \log(\rho_\mu) |  \leq \delta(\alpha) \| \alpha \|_\mu^2$ was proved for every affine isometric action $\alpha$. The proof of the following stronger inequality is similar to the fundamental inequality of Guivarc'h (see \cite{GMM15}).
\begin{theorem} \label{inequality}
Let $\alpha : \Gamma \curvearrowright \cH$ be an affine isometric action and $\mu$ an adapted probability measure on $\Gamma$. If $\delta(\alpha) < \infty$, then
$$ h_\mu \leq \delta(\alpha) \| \alpha \|_\mu^2 \; .$$
\end{theorem}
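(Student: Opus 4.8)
The plan is to deduce the inequality from the nonnegativity of relative entropy (the Gibbs inequality) applied to a comparison measure on $\Gamma$ built out of the function $g \mapsto \|c_g\|^2$. This mirrors Guivarc'h's fundamental inequality, with the role ordinarily played by the exponential volume growth now taken by the Poincar\'e exponent $\delta(\alpha)$, and the role of the linear drift taken by $\|\alpha\|_\mu^2$.

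First I would fix $t > \delta(\alpha)$ and verify that the partition function $Z(t) = \sum_{g \in \Gamma} \exp(-t\|c_g\|^2)$ is finite. Writing $N(s) = |\{g \in \Gamma : \|c_g\|^2 \le s\}|$, the definition of $\delta(\alpha)$ gives, for any $\varepsilon$ with $\delta(\alpha)+\varepsilon < t$, a bound $N(s) \le \exp((\delta(\alpha)+\varepsilon)s)$ for all large $s$. Grouping the sum defining $Z(t)$ over the integer level sets $\{g : k \le \|c_g\|^2 < k+1\}$ then bounds it by $\sum_{k} e^{-tk} N(k+1)$, whose tail is a convergent geometric series since $t > \delta(\alpha)+\varepsilon$. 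Hence $\nu_t(g) := Z(t)^{-1}\exp(-t\|c_g\|^2)$ is a genuine probability measure on $\Gamma$, strictly positive at every point, so that the Gibbs inequality applies with $\nu_t$ as reference measure.

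Next, applying $H(p) \le -\sum_g p_g \log \nu_t(g)$ to $p = \mu^{*n}$ yields
$$H(\mu^{*n}) \le -\sum_{g\in\Gamma}\mu^{*n}(g)\log\nu_t(g) = \log Z(t) + t\sum_{g\in\Gamma}\mu^{*n}(g)\,\|c_g\|^2.$$
Dividing by $n$ and letting $n\to\infty$, the term $\tfrac1n\log Z(t)$ vanishes, while the identities $h_\mu = \lim_n \tfrac1n H(\mu^{*n})$ and $\|\alpha\|_\mu^2 = \lim_n \tfrac1n \sum_g \mu^{*n}(g)\|c_g\|^2$ (recall $\alpha_g(0)=c_g$) give $h_\mu \le t\,\|\alpha\|_\mu^2$. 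Since this holds for every $t > \delta(\alpha)$, taking the infimum over such $t$ produces $h_\mu \le \delta(\alpha)\,\|\alpha\|_\mu^2$, which is the claim (and is vacuously fine when $\delta(\alpha)<\infty$ as assumed).

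I do not expect a serious obstacle here. The only genuinely technical step is the finiteness $Z(t)<\infty$ for $t>\delta(\alpha)$, which is an immediate consequence of the definition of the Poincar\'e exponent, and the only external input is the standard Gibbs inequality. The conceptual heart of the argument is the choice of the Gibbs state $\nu_t \propto e^{-t\|c_g\|^2}$: its normalization is finite exactly when $t$ exceeds $\delta(\alpha)$, and it is this threshold that converts the growth exponent $\delta(\alpha)$ together with the drift $\|\alpha\|_\mu^2$ into an upper bound on the entropy.
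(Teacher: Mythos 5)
Your proof is correct, but it follows a genuinely different route from the paper's. You prove the inequality via the Gibbs (cross-entropy) inequality $H(\mu^{*n}) \le -\sum_g \mu^{*n}(g)\log\nu_t(g)$ applied to the reference measure $\nu_t(g) \propto \exp(-t\|c_g\|^2)$, whose normalization $Z(t)$ is finite precisely for $t > \delta(\alpha)$; all the steps check out (in particular $\delta(\alpha)<\infty$ does force the level sets $\{g : \|c_g\|^2 \le s\}$ to be finite, so $Z(t)$ is a legitimate sum, and $\mu^{*n}$ is finitely supported so there are no convergence issues in the entropy sums). The paper instead runs a counting argument: it sets $A_n = \{g \mid \|\alpha_g(0)\|^2 \le (\|\alpha\|_\mu^2+\varepsilon)n\}$, uses Markov's inequality to get $\liminf_n \mu^{*n}(A_n) > 0$, invokes the lemma of Gou\"ezel--Math\'eus--Maucourant (\cite[Lemma 2.4]{GMM15}, a Shannon-type statement) to deduce $|A_n| \ge e^{(h_\mu-\varepsilon)n}$, and compares with the upper bound $|A_n| \le e^{(\delta(\alpha)+\varepsilon)(\|\alpha\|_\mu^2+\varepsilon)n}$ coming from the definition of the Poincar\'e exponent. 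Your argument is more self-contained, since the only input is the nonnegativity of relative entropy rather than the external equipartition-type lemma; the paper's argument is the one that more directly parallels the usual presentation of Guivarc'h's fundamental inequality and makes the "mass concentrates on a set of controlled size" picture explicit. Both establish exactly the stated bound.
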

We need the following lemma from \cite[Lemma 2.4]{GMM15}.
\begin{lemma}
Let $\Gamma$ be a finitely generated group and $\mu$ an adapted probability measure on $\Gamma$. Then for every $\varepsilon > 0$ and $\eta > 0$, there exists $N$ such that for all $n \geq N$ and all $A \subset \Gamma$, we have
$$\mu^{*n}(A) > \eta \; \Rightarrow \; |A| \geq e^{(h_\mu-\varepsilon)n} \; .$$
\end{lemma}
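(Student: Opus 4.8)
The plan is to deduce the lemma from the Shannon theorem for random walks (the asymptotic equipartition property), which I would phrase as follows. Let $X_n = s_1 s_2 \cdots s_n$ be the right random walk on $\Gamma$ whose increments $s_i$ are i.i.d.\ with law $\mu$, so that $X_n$ has distribution $\mu^{*n}$. Then $-\tfrac{1}{n}\log\mu^{*n}(X_n) \recht h_\mu$ in probability. This is the key analytic input; granted it, the remainder is an elementary two-part counting argument.

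First I would fix $\varepsilon,\eta > 0$. Applying the convergence above with tolerance $\varepsilon/2$, there is an $N_0$ such that for all $n \geq N_0$ we have $\Prob\bigl(-\tfrac{1}{n}\log\mu^{*n}(X_n) < h_\mu - \varepsilon/2\bigr) < \eta/2$. Unwinding the definition of $X_n$, this says exactly that $\mu^{*n}(B_n) < \eta/2$, where $B_n = \{g \in \Gamma \mid \mu^{*n}(g) > e^{-(h_\mu - \varepsilon/2)n}\}$ is the set of atoms that are too heavy. Now take any $A \subset \Gamma$ with $\mu^{*n}(A) > \eta$ and split it as $A = (A \cap B_n) \sqcup (A \setminus B_n)$. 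The heavy part satisfies $\mu^{*n}(A \cap B_n) \leq \mu^{*n}(B_n) < \eta/2$, so the light part carries mass $\mu^{*n}(A \setminus B_n) > \eta/2$. Every atom $g \in A \setminus B_n$ has $\mu^{*n}(g) \leq e^{-(h_\mu - \varepsilon/2)n}$, whence
$$\frac{\eta}{2} < \mu^{*n}(A \setminus B_n) = \sum_{g \in A \setminus B_n} \mu^{*n}(g) \leq |A \setminus B_n| \, e^{-(h_\mu - \varepsilon/2)n} \leq |A| \, e^{-(h_\mu - \varepsilon/2)n} \; .$$
Rearranging gives $|A| > \tfrac{\eta}{2} e^{(h_\mu - \varepsilon/2)n} = \tfrac{\eta}{2}\, e^{(\varepsilon/2)n}\, e^{(h_\mu - \varepsilon)n}$. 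Since $\tfrac{\eta}{2} e^{(\varepsilon/2)n} \recht +\infty$, there is $N \geq N_0$ so that this prefactor exceeds $1$ for all $n \geq N$, which yields $|A| \geq e^{(h_\mu - \varepsilon)n}$ as required. Crucially $N$ depends only on $\varepsilon$ and $\eta$, not on $A$, since the threshold set $B_n$ was defined before $A$ was chosen.

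The main obstacle is the Shannon theorem itself. Only the lower-tail, in-probability version is needed, but even this does not follow from the first-moment identity $\tfrac{1}{n}H(\mu^{*n}) \recht h_\mu$ alone, because Markov's inequality controls the wrong tail of $-\log\mu^{*n}(X_n)$. The standard route is to apply Kingman's subadditive ergodic theorem to $n \mapsto -\log\mu^{*n}(X_n)$, viewed as a subadditive functional of the trajectory over the Bernoulli shift on $\Gamma^{\N}$; this is the Kaimanovich--Vershik argument and yields almost sure, hence in-probability, convergence. For finitely supported adapted $\mu$ as here, one may alternatively invoke the statement directly, as is done in \cite[Lemma 2.4]{GMM15}. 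Once this input is in hand, the counting argument above is routine.
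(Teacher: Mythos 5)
Your proof is correct. The paper itself gives no proof of this lemma—it imports it verbatim from \cite[Lemma 2.4]{GMM15}—and your argument (the Shannon theorem for random walks, obtained from Kingman's subadditive ergodic theorem as in Kaimanovich--Vershik, followed by splitting $A$ into heavy and light atoms relative to the threshold $e^{-(h_\mu-\varepsilon/2)n}$ and absorbing the prefactor $\tfrac{\eta}{2}e^{(\varepsilon/2)n}$ for large $n$) is precisely the standard proof from that reference, including your accurate remark that the first-moment convergence $\tfrac{1}{n}H(\mu^{*n}) \to h_\mu$ alone cannot control the relevant lower tail.
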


\begin{proof}[Proof of Theorem \ref{inequality}]
Fix $\varepsilon > 0$. For every $n \in \N$, define
$$A_n= \{ g \in \Gamma \mid \| \al_g(0) \|^2 \leq (\| \alpha \|_\mu^2+ \varepsilon )n \} \; .$$
Since
$$(1-\mu^{*n}(A_n)) \, (\| \alpha \|_\mu^2+ \varepsilon) \leq \frac{1}{n} \sum_{g \in \Gamma \setminus A_n} \mu^{*n}(g) \, \|\al_g(0)\|^2 \leq \frac{1}{n}\sum_{g \in \Gamma} \mu^{*n}(g) \, \|\al_g(0)\|^2 \recht \| \alpha \|_\mu^2 \; ,$$
we have $\liminf_n \mu^{*n}(A_n) > 0$. Thus for $n$ large enough, we have $|A_n| \geq \exp((h_\mu-\varepsilon)n)$. On the other hand, by definition of $\delta( \alpha)$, we have $|A_n| \leq \exp((\delta(\alpha)+\varepsilon)(\|\alpha\|_\mu^2+\varepsilon)n)$ for $n$ large enough. Thus we must have
$$h_\mu-\varepsilon \leq (\delta(\alpha)+\varepsilon)(\|\alpha\|_\mu^2+\varepsilon) \; .$$
Since $\varepsilon > 0$ is arbitrary, the conclusion follows.
\end{proof}

\begin{remark}
The inequality of Theorem \ref{inequality} is sharp. For example, the equality holds for the affine isometric action associated to the action of the free group $\F_n$ on its Cayley tree. Indeed, for this example, one can make the explicit computations $\delta(\alpha)=\log(2n-1)$, $\| \alpha \|_\mu^2=1-\frac{1}{n}$ and $h_\mu = (1-\frac{1}{n}) \log(2n-1)$.
\end{remark}

We immediately get the following corollary. The third statement strengthens a result of Guentner-Kaminker \cite[Section 5]{GK03} which gives only the amenability of the group.

\begin{corollary}\label{cor.connection Liouville}
Let $\Gamma$ be a finitely generated group. If one of the following properties hold, then $\Gamma$ has the Liouville property.
\begin{enumlist}
\item $\Gamma$ admits an affine isometric action with vanishing Poincar\'{e} exponent.
\item $\Gamma$ admits an affine isometric action that has almost fixed points and has a finite Poincar\'{e} exponent.
\item $\Gamma$ admits an affine isometric action $\alpha: \Gamma \actson H$ such that $\| \alpha_g(0)\| \succ  |g|^{1/2}$ where $| \cdot|$ is a word length function.
\end{enumlist}
\end{corollary}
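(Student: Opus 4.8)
The plan is to reduce all three statements to the fundamental inequality of Theorem \ref{inequality}, namely $h_\mu \le \delta(\alpha)\,\|\alpha\|_\mu^2$ whenever $\delta(\alpha)<\infty$, combined with the characterisation recalled above the corollary (and in \cite{KV82}) that $\Gamma$ is $\mu$-Liouville if and only if $h_\mu=0$. Since $\Gamma$ has the Liouville property precisely when $h_\mu=0$ for \emph{every} adapted probability measure $\mu$, it suffices in each case to establish $h_\mu=0$ for an arbitrary such $\mu$. I first record the harmless observation that for any finitely supported $\mu$ and any affine isometric action $\alpha$ one has $\|\alpha\|_\mu^2\le\sum_{g}\mu(g)\,\|\alpha_g(0)\|^2<\infty$, obtained by taking $x=0$ in the infimum defining $\|\alpha\|_\mu^2$. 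Thus the right-hand side of Theorem \ref{inequality} vanishes as soon as either $\delta(\alpha)=0$ or $\|\alpha\|_\mu^2=0$, and the whole proof consists in verifying one of these in each case.

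For statement~1 this is immediate: if $\delta(\alpha)=0$ then in particular $\delta(\alpha)<\infty$, so Theorem \ref{inequality} applies and gives $h_\mu\le 0\cdot\|\alpha\|_\mu^2=0$ for every adapted $\mu$, the product being $0$ because $\|\alpha\|_\mu^2$ is finite. For statement~2 I would argue that the existence of almost fixed points forces $\|\alpha\|_\mu^2=0$ for every finitely supported $\mu$: given $\varepsilon>0$, choosing $x\in H$ with $\|\alpha_g(x)-x\|<\varepsilon$ for all $g$ in the finite support of $\mu$ yields $\sum_g\mu(g)\,\|\alpha_g(x)-x\|^2\le\varepsilon^2$, whence $\|\alpha\|_\mu^2\le\varepsilon^2$, and letting $\varepsilon\to0$ gives $\|\alpha\|_\mu^2=0$. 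As the Poincaré exponent is assumed finite, Theorem \ref{inequality} again yields $h_\mu\le\delta(\alpha)\cdot 0=0$.

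Statement~3 is where the real work sits, and the plan is to show that the growth hypothesis $\|\alpha_g(0)\|\succ|g|^{1/2}$, read as $\|\alpha_g(0)\|^2/|g|\to+\infty$, in fact forces $\delta(\alpha)=0$, thereby reducing it to statement~1. Fix any $K>0$; by the hypothesis there is a finite set $F_K\subset\Gamma$ outside which $\|\alpha_g(0)\|^2\ge K|g|$, so $\{g:\|\alpha_g(0)\|^2\le s\}\subset F_K\cup\{g:|g|\le s/K\}$. Since a finitely generated group has at most exponential word-growth, the word-ball of radius $s/K$ has cardinality at most $\lambda^{s/K}$ for a constant $\lambda\ge 1$ depending only on the generating set, and therefore $\delta(\alpha)\le(\log\lambda)/K$. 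As $K$ was arbitrary, $\delta(\alpha)=0$, and statement~1 applies.

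The main obstacle is conceptual rather than computational and lies entirely in statement~3: one must read $\succ$ as \emph{strict} domination, i.e.\ super-linear growth of $\|\alpha_g(0)\|^2$, rather than mere comparability. Indeed, the borderline case $\|\alpha_g(0)\|^2\asymp|g|$ is realised by the free group acting on its Cayley tree, for which (as noted in the remark after Theorem \ref{inequality}) $\delta(\alpha)=\log(2n-1)$ is finite but positive and the group is not even amenable; so comparability alone yields no Liouville conclusion. Once the strict reading is fixed, the only genuine ingredient beyond Theorem \ref{inequality} is the exponential word-growth bound, which is precisely what converts super-linear growth of the cocycle into $\delta(\alpha)=0$; all remaining steps are routine.
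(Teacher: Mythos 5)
Your proposal is correct and follows exactly the route the paper intends: the corollary is stated as an immediate consequence of Theorem \ref{inequality}, with cases 1 and 2 killing the product $\delta(\alpha)\,\|\alpha\|_\mu^2$ through one factor each, and case 3 reducing to case 1 because superlinear growth of $\|\alpha_g(0)\|^2$ against the word length, combined with at most exponential volume growth of balls, forces $\delta(\alpha)=0$. Your reading of $\succ$ as strict domination is the right one, as your free-group example confirms.
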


\end{document}